\newtheorem{lemma}{Lemma}[section]
\newtheorem{theorem}[lemma]{Theorem}
\newtheorem{corollary}[lemma]{Corollary}
\newtheorem{proposition}[lemma]{Proposition}
\newtheorem{remark}[lemma]{Remark}
\newtheorem{definition}[lemma]{Definition}
\def\Span{\textup{Span}}
\def\S{\mathbb{S}}
\def\R{\mathbb{R}}
\def\T{\mathbb{T}}
\def\E{\mathcal{E}}
\renewcommand{\S}{\mathbb{S}}
\def\Z{\mathbb{Z}}
\def\N{\mathbb{N}}
\def\O{\mathcal{O}}
\def\LM#1{\hbox{\vrule width.2pt \vbox to#1pt{\vfill \hrule width#1pt
height.2pt}}}
\def\LL{{\mathchoice {\>\LM7\>}{\>\LM7\>}{\,\LM5\,}{\,\LM{3.35}\,}}}
\def\restr{{\LL}}
\renewcommand{\phi}{\varphi}
\def\p{\phi}
\def\Div{\textup{div}\,}
\def\dist{\textup{dist}}
\def\ds{\displaystyle}
\def\HH{\mathcal{H}^{d-1}}
\def\1{\mathbf{1}}
\def\loc{\mathrm{loc}}
\def\Int{\mathrm{int}}
\def\CA{\mathcal{CA}}
\def\XXint#1#2#3{{\setbox0=\hbox{$#1{#2#3}{\int}$ }
\vcenter{\hbox{$#2#3$ }}\kern-.57\wd0}}
\def\X{X}
\def\eps{\varepsilon}
\renewcommand{\subset}{\subseteq}
\def\MHrF{M_{H_r}(F)}
\def\MHrFf{M_{H_r}(F+f)}
\def\MKF{M_{K_r}(F)}
\def\MKFf{M_{K_r}(F+f)}
\DeclareMathOperator*{\argmin}{\arg\!\min}
\begin{document}

\title{Plane-like minimizers and differentiability of the stable norm}
\author{A. Chambolle \footnote{CMAP, Ecole Polytechnique, CNRS,
        Palaiseau, France, email: antonin.chambolle@cmap.polytechnique.fr},
\and
        M. Goldman
        \footnote{Max Planck Institute for Mathematics in the Sciences, Leipzig, Germany, email: goldman@mis.mpg.de}
        \and M. Novaga
        \footnote{Dipartimento di Matematica, Universit\`a di Padova,
        via Trieste 63, 35121 Padova, Italy, email: novaga@math.unipd.it}
}
\date{}
\maketitle
\begin{abstract}
In this paper we 
investigate the strict convexity and the differentiability properties of the stable norm, which corresponds to the homogenized surface tension for a periodic perimeter homogenization problem (in a regular and uniformly elliptic case).
We prove that it is always differentiable in totally irrational directions,
while in other directions, it is differentiable if and only if the corresponding plane-like minimizers satisfying a strong Birkhoff property foliate the torus.
We also discuss the issue of the uniqueness of the correctors
for the corresponding homogenization problem.
\end{abstract}

\section{Introduction}

We wish to investigate here a conjecture raised by Caffarelli and De La Llave in \cite{CDLL} concerning the differentiability of the so-called stable norm (or minimal action functional) in geometric Weak KAM theory. When considering the area functional in the Euclidean space, it is a classical result that hyperplanes are minimizers under compact perturbations. Caffarelli and De La Llave \cite{CDLL} proved the existence of plane-like minimizers for more general integrands 
of the form
\[\int_{\partial^* E} F(x,\nu^E) \ d\HH(x) + \int_E g(x) \ dx\]
 where $F(x,\nu)$ is periodic in $x$, convex and one-homogeneous in $\nu$ and where $g$ is a periodic function. It is then possible to define the homogenized energy $\p(p)$ which represents the average energy of a plane-like minimizer in the direction $p$. Our aim is to study the regularity properties of this function $\p$.  Our main results are the following:
\begin{itemize}
\item If $p$ is ``totally irrational''  then $\nabla\p(p)$
exists.
\item The same occurs for any $p$ such that the plane-like minimizers satisfying the strong Birkhoff property (see Section \ref{birk}) give rise to a foliation of the space.
\item If there is a gap in this lamination and if
 $(q_1, \dots , q_k)\in \Z^d$ is a maximal
family of independent vectors such that $q_i\cdot p=0$, then
$\partial \phi(p)$ is a convex set of dimension $k$, and
$\phi$ is differentiable in the directions which are orthogonal
to $\{q_1, \dots,q_k\}$. In particular if $p$ is not totally irrational then $\p$ is not differentiable at $p$.
\item  $\phi^2$ is strictly convex.
\end{itemize}
We also discuss the uniqueness of the minimizers (see Theorem~\ref{thmane} and Appendix~\ref{SecGeneric}).

Our approach is based on the cell formula for $\p$, introduced by the first author and Thouroude in \cite{CT} (see \eqref{defphi}). This formula provides a characterization of $\p$ as the support function of some  convex set $C$ which gives the expression of its subgradient. Since $\p$ is a convex function, it is differentiable at a point $p$ if and only if $\partial \p(p)$ is reduced to a point. The set $C$ being made of the integral over the unit cell of the calibrations in the direction $p$, the problem of the differentiability of $\p$ reduces to the investigation of whether  two different calibrations in a given direction $p$ can have or not a different mean. The idea
 is to prove that every calibration calibrates every plane-like minimizer satisfying the strong Birkhoff property (see Section \ref{birk}) and is thus prescribed on the union of these sets which form a lamination of the space. If there is no gap in this lamination  (i.e. if it is a foliation) then the value of any calibration is fixed everywhere and thus its mean is also prescribed and $\p$ is differentiable at $p$. If $p$ is totally irrational, every gap of the lamination must have finite volume.
This can be used to show that the integral over a gap of two different calibrations must coincide, implying again the differentiability of $\p$. For non totally irrational vectors, the gap will be of infinite volume and using different heteroclinic minimizers inside the gap, one can show that there exists two different calibrations with different means, proving that $\p$ is non-differentiable. 

{ } One of the interesting features of our work is the connections it makes between the plane-like minimizers constructed in \cite{CT}
(as level sets of appropriate correctors in a periodic homogenization problem),
and the class of recurrent plane-like minimizers satisfying the strong Birkhoff property (see Section \ref{birk}). We also give a clear presentation of the notion of calibration, and clarify the structure of the subgradient of functionals with linear growth (see also \cite{andreu}), which could be of independent interest. Among the by-products of our analysis we obtain a proof of Ma\~ne's conjecture in this setting (see Theorem \ref{thmane} and 
Appendix~\ref{SecGeneric}).

{ } In \cite{auerbangert,HJG} the authors study the differentiability properties of the stable norm of the homology class $H_{d-1}(M,\R)$ for compact orientable Riemannian manifolds $M$. If $M$ is the $d$-dimensional torus with a Riemannian metric,  the stable norm is exactly  our function $\p$. In this case, our results generalize \cite{auerbangert,HJG} in the sense that we consider interfacial energies which do not necessarily come from a Riemannian metric, and we also allow the presence of a volume term. In some sense, a contribution of our work is also a simplification of some of the arguments of \cite{auerbangert,HJG}, which comes from the use of the cell formula \eqref{defphi}.

{ } The stable norm is a geometric analog of the minimal action functional of KAM theory whose differentiability has first been studied by Aubry and Mather \cite{aubry,mather} for geodesics on the two dimensional torus. The results of Aubry and Mather  have then been extended by Moser \cite{moser2}, in the framework of non-parametric integrands,  
and more recently by Senn and Bessi \cite{S, bessi}. In this context, the study of the set of non-selfintersecting minimizers, which correspond to our plane-like minimizers satisfying the Birkhoff property has been performed by Moser and Bangert \cite{moser,Bangertmin}, 
whereas the proof of the strict convexity of the minimal action has been recently shown by Senn \cite{Sennstrict}. Another related problem is the homogenization of periodic Riemannian metrics (geodesics are objects of dimension one whereas in our problem the hypersurfaces are of codimension one). We refer to \cite{burago,massart,BrButFra} for more information on this problem.

{ } The plan of the paper is the following. 
In Section \ref{secnotation} we recall some well known facts about functions of bounded variation, 
sets of finite perimeter, convex anisotropies, pairings between $BV$ functions and bounded vector fields, and introduce the notion of Class A minimizers, plane-like minimizers.
We define the stable norm, which is, in fact, the homogenized surface tension of a periodic homogenization problem and can be recovered with a cell formula.
In Section \ref{propPL} we give some general properties of Class A Minimizers. 
In Section \ref{seccalibre}  we define the calibrations and prove an important regularity result (Theorem \ref{zfixPL}). 
In Section \ref{birk} we introduce the Birkhoff property and prove that every calibration calibrates every plane-like minimizer which satisfies this strong Birkhoff property. In Section \ref{sechetero} we construct heteroclinic surfaces in every gap and use them in Section \ref{secpropphi} to study the strict convexity and differentiability properties of $\p$. In Section \ref{examples} we give some simple examples of energies for which the existence or absence of gaps can be proven. Finally in Section \ref{Gclosure} we show that the set of stable norms that can be attained starting from isotropic energies is dense in the set of all symmetric anisotropies.
Appendix~\ref{AppBir} gives an elementary proof of a separation result in $\Z^d$,
while in Appendix~\ref{SecGeneric} it is shown a generic uniqueness result
for the correctors minimizing the cell formula.
 
\section*{Acknowledgments.}
The second author acknowledges very interesting discussions with G. Thouroude and B. Merlet. He also warmly thanks  Carnegie Mellon University for its hospitality during the finalization of this work.
The third author acknowledges partial support by the Fondazione CaRiPaRo Project
``Nonlinear Partial Differential Equations: models, analysis, and
control-theoretic problems''.
Some of the main ideas of this work have been clarified during a fruitful stay of the first two authors at the University of Padova.

\section{Notation and main assumptions}\label{secnotation}
\subsection{Basic notation}\label{notation}
We let $Q'=(0,1)^d$, $Q=[0,1)^d$ and $\T$ be the torus ($\R^d/\Z^d$). For $m\in \N$, we let $\mathcal{H}^m$ be the Hausdorff measure of dimension $m$. Given a vector $p\in \R^d$ we say that
\begin{itemize}
\item $p$ is totally irrational if there is no $q\in \Z^d\setminus \{0\}$ such that $p\cdot q=0$,
\item $p$ is not totally irrational if there exists such a $q\in \Z^d\setminus \{0\}$,
\item $p$ is rational if $p\in \R \cdot \Z^d$,
\item $p$ is irrational if it is not rational.
\end{itemize}
For a given $p\in \R^d$ which is not totally irrational we let $$\Gamma(p):=\{q\in \Z^d \,:\,  q\cdot p=0\}.$$ 
Then, there exists $(q_1, \dots, q_k)\in \Gamma(p)$ such that $\Gamma(p)=\Span_{\Z} (q_1, \dots, q_k)$. By a Gram-Schmidt procedure we can find $(\bar q_1, \dots, \bar q_k)\in \Gamma(p)$ such that $(\bar q_1, \dots, \bar q_k)$ is an orthogonal basis of $V^r(p):=\Span_{\R} (q_1, \dots, q_k)$. Notice that in general  $\Span_{\Z}(\bar q_1, \dots, \bar q_k)\neq \Gamma(p)$. We let $V^i(p):=
(\R p\oplus V^r(p))^\perp$ be the set of irrational directions,
that is, $V^i(p)\cap\Z^d=\{0\}$.

For $R>0$ we let $B_R$ be the open ball of radius $R$. Finally let $\S^{d-1}$ be the unit sphere of $\R^d$. In this paper we will take as a convention that all the sets are oriented by their inward normal.

\subsection{BV functions}
We briefly recall the definition of a function with bounded variation and a set of finite perimeter. For a complete presentation we refer to \cite{AFP,giusti}.

\begin{definition}
Let $\Omega$ be an open set of $\R^d$, we will say that a function $u\in L^1(\Omega)$ is a function of bounded variation if
\[\int_\Omega |Du|:= \sup_{\stackrel{z \in \mathcal{C}^1_c(\Omega)}{|z|_\infty \leq 1}} \int_{\Omega} u \ \Div z \ dx< +\infty.\]
We denote by $BV(\Omega)$ the set of functions of bounded variation in $\Omega$ (when $\Omega=\R^d$ we simply write $BV$ instead of $BV(\R^d)$). We define similarly the set $BV(\T)$ of periodic functions of bounded variation by 
choosing $\Omega=\T$ in the above definition.

We say that a set $E\subset \R^d$ is of finite perimeter if its characteristic function $\chi_E$ has bounded variation. We denote its perimeter in an open set $\Omega$ by $P(E,\Omega):=\int_{\Omega} |D \chi_E|$, and write simply $P(E)$ when $\Omega=\R^d$.
\end{definition} 
{ } A function $v\in BV(\T)$ can equivalently be seen
as a $Q$-periodic function, with $\int_\T |Dv|=\int_{Q} |Dv|$
(which is also equal to $\int_{Q'}|Dv|$ iff $|Dv|(\partial Q)=0$,
and $\int_{Q+x}|Dv|$ for a.e.~$x$). We also let $BV_{\loc}$ be the set of functions of locally bounded variation. Similarly we will say that a set is of locally finite perimeter if its characteristic function is in $BV_{\rm loc}$. 

\begin{definition}
Let $E$ be a set of finite perimeter and let $t\in [0;1]$. We then define
\[E^{(t)}:= \left\{ x \in \R^d \,:\, \lim_{r\downarrow 0} \frac{|E\cap B_r(x)|}{|B_r(x)|}=t\right\}.\]
We denote by $\partial E:= \left(E^{(0)}\cup E^{(1)}\right)^c$ the measure theoretical boundary of $E$. We define the reduced boundary of $E$ by:
\[\partial^* E:= \left\{ x \in \textrm{Spt}( |D \chi_E|) \,: \, \nu^E(x):= \lim_{r \downarrow 0} \frac{ D \chi_E (B_r(x))}{|D \chi_E|(B_r(x))} \; \textrm{exists and } \; |\nu^E(x)|=1 \right\}\subset E^{\left(\frac 1 2\right)}.\] 
The vector $\nu^E(x)$ is the measure theoretical inward normal to the set $E$. When no confusion can be made, we simply denote $\nu^E$ by $\nu$.
\end{definition} 
\begin{proposition}
If $E$ is a set of finite perimeter then $D\chi_E=\nu\  \HH \LL\partial^* E$, $P(E)=\HH(\partial^* E)$ and $\HH(\partial E \setminus \partial^* E)=0$.
\end{proposition}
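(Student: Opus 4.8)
This is the classical structure theorem of De Giorgi and Federer for sets of finite perimeter (see \cite{AFP,giusti}); here is the scheme one would follow. The first step is a pair of density estimates. Using the relative isoperimetric inequality in balls together with the fact that $r\mapsto|E\cap B_r(x)|$ is Lipschitz with derivative $\HH(E\cap\partial B_r(x))$ for a.e.\ $r$, one shows that there are dimensional constants $0<c\le C$ such that, for every $x\in\mathrm{Spt}\,|D\chi_E|$ and all small $r$,
\[
  c\,r^{d-1}\ \le\ |D\chi_E|(B_r(x))\ \le\ C\,r^{d-1},
  \qquad
  \min\bigl(|E\cap B_r(x)|,\,|B_r(x)\setminus E|\bigr)\ \ge\ c\,r^{d}.
\]
In particular $\mathrm{Spt}\,|D\chi_E|\subset\partial E$ and $\HH(\partial^*E)<+\infty$ locally.

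The heart of the matter is the blow-up. Fix $x\in\partial^* E$ and set $\nu:=\nu^E(x)$ and $E_{x,r}:=(E-x)/r$. By the upper density bound the sets $E_{x,r}$ have locally bounded perimeter uniformly in $r$, so by $BV$-compactness $\chi_{E_{x,r}}\to\chi_F$ in $L^1_{\loc}$ along a subsequence $r_j\downarrow0$, with $F$ of locally finite perimeter, and the lower density bound prevents $F$ from being equivalent to $\emptyset$ or to $\R^d$. The defining convergence $D\chi_E(B_r(x))/|D\chi_E|(B_r(x))\to\nu$ passes to the limit and forces $D\chi_F$ to be a nonnegative multiple of $\nu$ on every ball centred at the origin, i.e.\ $D\chi_F=\nu\,|D\chi_F|$; a set of locally finite perimeter whose measure-theoretic normal is $|D\chi_F|$-a.e.\ a fixed vector is, up to a Lebesgue-null set, the half-space $H_\nu:=\{y\cdot\nu\ge0\}$ (choose coordinates with $\nu=e_d$ and use the one-dimensional structure of the $BV$ slices). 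Since the limit is independent of the subsequence, $E_{x,r}\to H_\nu$ in $L^1_{\loc}$ as $r\downarrow0$; combining this with lower semicontinuity of the perimeter and the upper density bound upgrades it to $|D\chi_{E_{x,r}}|\rightharpoonup\HH\restr\nu^\perp$ weakly-$*$, whence
\[
  \lim_{r\downarrow0}\ \frac{|D\chi_E|(B_r(x))}{\omega_{d-1}\,r^{d-1}}\ =\ 1
  \qquad\text{for every }x\in\partial^* E .
\]

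From this uniform $(d-1)$-dimensional density the rest is standard measure theory. The existence of a hyperplane blow-up at every point of $\partial^* E$ implies, via a Besicovitch covering argument (or Federer's rectifiability criterion), that $\partial^* E$ is countably $\HH$-rectifiable and that $|D\chi_E|$ is mutually absolutely continuous with $\HH\restr\partial^* E$ with density $1$; the differentiation theorem for Radon measures then gives $|D\chi_E|=\HH\restr\partial^* E$, and since $\nu^E$ is by definition the Radon--Nikodym density of $D\chi_E$ with respect to $|D\chi_E|$, we obtain $D\chi_E=\nu^E\,\HH\restr\partial^* E$ and, evaluating on $\R^d$ (or on any open $\Omega$), $P(E)=\HH(\partial^* E)$. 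For the last assertion one invokes Federer's theorem: the lower density estimates show that $\HH$-a.e.\ point of $\R^d$ belongs to $E^{(0)}\cup E^{(1)}\cup E^{(1/2)}$, while the characterization of density-$1/2$ points together with the already proven identity $|D\chi_E|=\HH\restr\partial^* E$ yields $\HH(E^{(1/2)}\setminus\partial^* E)=0$; since $\partial E=(E^{(0)}\cup E^{(1)})^c$, this gives $\HH(\partial E\setminus\partial^* E)=0$.

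The main obstacle is clearly the blow-up step: establishing the density bounds (hence invoking the relative isoperimetric inequality) and proving that a set whose generalized normal is a.e.\ constant must be a half-space. Once the hyperplane blow-up and the exact density $1$ are in hand, the rectifiability, the identification of the two measures, and Federer's theorem are essentially mechanical. Since in the present paper this result is only recalled, one would in practice simply cite \cite{AFP,giusti} rather than reproduce the argument.
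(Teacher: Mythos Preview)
Your sketch of the De Giorgi--Federer structure theorem is correct and follows the standard route (density estimates, blow-up to a half-space, rectifiability, then Federer's theorem on the essential boundary). The paper itself gives no proof of this proposition: it is stated in the preliminaries as a classical fact, with \cite{AFP,giusti} as implicit references, exactly as you anticipate in your final remark.
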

An important link between $BV$ functions and sets of finite perimeter is given by the Coarea Formula:

\begin{proposition}
Let $u\in BV(\Omega)$. For a.e.~$t\in\R$, $\{u>t\}$ has finite perimeter and
it holds
\[
|Du|(\Omega)=\int_\R \HH(\partial^*\{u>t\}\cap \Omega) \, dt.
\]
\end{proposition}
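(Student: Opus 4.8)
The plan is to establish the two inequalities
\[
|Du|(\Omega)\ \le\ \int_\R P(\{u>t\},\Omega)\,dt
\qquad\text{and}\qquad
\int_\R P(\{u>t\},\Omega)\,dt\ \le\ |Du|(\Omega),
\]
after first checking that $t\mapsto P(\{u>t\},\Omega)$ is measurable, so that the integrals make sense. For measurability I would observe that $(x,t)\mapsto\chi_{\{u>t\}}(x)$ is jointly measurable, hence by Fubini $t\mapsto\int_\Omega\chi_{\{u>t\}}\,\Div z\,dx$ is measurable for each fixed $z\in\mathcal{C}^1_c(\Omega)$, and then recall that $P(\{u>t\},\Omega)$ can be obtained as the supremum of these quantities over a fixed countable family of vector fields $z$ with $|z|_\infty\le1$ which is dense for the purpose of computing the total variation.

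For the first (easy) inequality I would start from the layer--cake identity $u(x)=\int_0^{+\infty}\chi_{\{u>t\}}(x)\,dt-\int_{-\infty}^0\bigl(1-\chi_{\{u>t\}}(x)\bigr)\,dt$, insert it into $\int_\Omega u\,\Div z\,dx$ for $z\in\mathcal{C}^1_c(\Omega)$ with $|z|_\infty\le1$, exchange the two integrations by Fubini (the constant term drops out because $\int_\Omega\Div z\,dx=0$), so that $\int_\Omega u\,\Div z\,dx=\int_\R\bigl(\int_\Omega\chi_{\{u>t\}}\,\Div z\,dx\bigr)\,dt$, and bound each slice by $\int_\Omega\chi_{\{u>t\}}\,\Div z\,dx\le P(\{u>t\},\Omega)$. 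Taking the supremum over $z$ gives the claim.

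The reverse inequality, which is the main point, I would obtain by approximation. Choose $u_n\in C^\infty(\Omega)\cap BV(\Omega)$ with $u_n\to u$ in $L^1(\Omega)$ and $|Du_n|(\Omega)\to|Du|(\Omega)$ (strict approximation of $BV$ functions by smooth ones). For smooth functions the classical coarea formula, via Sard's theorem and the implicit function theorem, gives $|Du_n|(\Omega)=\int_\Omega|\nabla u_n|\,dx=\int_\R\HH(\partial^*\{u_n>t\}\cap\Omega)\,dt=\int_\R P(\{u_n>t\},\Omega)\,dt$, the reduced boundary of $\{u_n>t\}$ being the regular level set $\{u_n=t\}$ for a.e.\ $t$. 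Using the pointwise identity $\int_\R|\chi_{\{a>t\}}-\chi_{\{b>t\}}|\,dt=|a-b|$ together with Fubini, the convergence $u_n\to u$ in $L^1(\Omega)$ forces $\int_\R\|\chi_{\{u_n>t\}}-\chi_{\{u>t\}}\|_{L^1(\Omega)}\,dt\to0$, so along a subsequence $\chi_{\{u_n>t\}}\to\chi_{\{u>t\}}$ in $L^1(\Omega)$ for a.e.\ $t$. Lower semicontinuity of the perimeter then gives $P(\{u>t\},\Omega)\le\liminf_n P(\{u_n>t\},\Omega)$ for a.e.\ $t$, and Fatou's lemma yields
\[
\int_\R P(\{u>t\},\Omega)\,dt\ \le\ \liminf_n\int_\R P(\{u_n>t\},\Omega)\,dt\ =\ \lim_n|Du_n|(\Omega)\ =\ |Du|(\Omega).
\]

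Combining the two bounds gives $\int_\R P(\{u>t\},\Omega)\,dt=|Du|(\Omega)<+\infty$, so in particular $P(\{u>t\},\Omega)<+\infty$, i.e.\ $\{u>t\}$ has finite perimeter, for a.e.\ $t$; the previous Proposition then identifies $P(\{u>t\},\Omega)$ with $\HH(\partial^*\{u>t\}\cap\Omega)$, which is exactly the asserted formula. The only ingredients that go beyond routine measure theory are the classical coarea formula for $C^1$ functions and the existence of a strictly converging smooth approximation of a $BV$ function; both are standard (see \cite{AFP,giusti}), so I expect the main care to go into the measurability of $t\mapsto P(\{u>t\},\Omega)$ and into the extraction of a subsequence along which the level sets converge in $L^1$ for a.e.\ $t$.
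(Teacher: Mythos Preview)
Your proof is correct and follows the standard textbook argument (layer-cake plus Fubini for one inequality, strict smooth approximation plus the classical $C^1$ coarea formula and Fatou for the other). The paper does not actually prove this proposition: it is stated as a classical fact, with implicit reference to \cite{AFP,giusti}, so there is no original argument to compare against. Your sketch is essentially the proof one finds in those references.
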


\subsection{Anisotropies}
Let $F(x,p):\R^d \times \R^d\to \R$ be a convex one-homogeneous function in the second variable such that there exists $c_0$ with
\[c_0 |p|\le F(x,p)\le \frac{1}{c_0} |p| \qquad \forall (x,p)\in \R^d\times \R^d.\]
We say that $F$ is elliptic if for some $\delta>0$, the function $F-\delta |p|$ is still a convex function. We denote by $W(x):=\{p \,:\, F(x,p)\le1 \}$ the unit ball of $F(x,\cdot)$ (when $F$ does not depend on the space variable $x$ we will just denote it by $W$). We define the polar function of $F$ by $$F^\circ(x,z):=\sup_{\{F(x,p)\le1\}} z\cdot p$$ so that $(F^\circ)^\circ=F$. If we denote by $F^*(x,z)$ the convex conjugate of $F$ with respect to the second variable then $\{F^*(x,z)=0\}=\{F^\circ \le 1\}$. If $F(x,\cdot)$ is differentiable then, for every $p\in \R^d$, 
$$F(x,p)=p\cdot \nabla_p F(x,p)$$ and
\[ z\in \{F^\circ(x,\cdot)\le 1\} \textrm{ with } p\cdot z=F(x,p)\ \Longleftrightarrow \  z=\nabla_p F(x,p).\]
 	If $F$ is elliptic and $\mathcal{C}^2(\R^d \times \R^d\setminus\{0\})$ then $F^\circ$ is also elliptic and $\mathcal{C}^2(\R^d \times \R^d\setminus\{0\})$. Moreover for every $x,y,z \in \R^d$, there holds 
\begin{equation}\label{strictconvF}F^2(x,y)-F^2(x,z)\ge 2\left(F(x,z)\nabla_p F(x,z)\right)\cdot (y-z) + C|y-z|^2\end{equation}
for some constant $C$ not depending on $x$ (and the same holds for $F^\circ$). Inequality \eqref{strictconvF} just state that $F^2$ is uniformly convex. 
We refer to \cite{schneider} for a proof of these results and much more about convex bodies. 

\subsection{Pairings between measures and bounded functions}
We fix in the following an elliptic  anisotropy $F$. Following Anzellotti \cite{Anzelotti} we define a generalized trace $[z,Du]$ 
for functions $u$ with bounded variation and bounded vector fields $z$ with divergence in $L^d$.
\begin{definition}
let $\Omega$ an open set with Lipschitz boundary, let $u\in BV(\Omega)$ and
let $z \in L^{\infty}(\Omega,\R^d)$ with $\Div z\in L^d(\Omega)$. 
We define the distribution $[z,Du]$ by 
 \[ \langle [z,Du], \psi \rangle= -\int_\Omega u \,\psi\, \Div z - \int_\Omega u \,  z \cdot \nabla \psi  
 \qquad \forall \psi \in \mathcal{C}^\infty_c(\Omega). 
 \]
\end{definition}

{ } If $u\in BV(\T)$ and $z \in L^{\infty}(\T,\R^d)$, with $\Div z\in L^d(\T)$, we can easily define the distribution $[z,Du]$ in a similar way.

 \begin{theorem}
 The distribution $[z,Du]$ is a bounded Radon measure on $\Omega$ and if $\nu$ is the inward unit normal to $\Omega$, there exists a function $[z,\nu]\in L^\infty(\partial \Omega)$ such that the generalized Green's formula holds,
 \[ \int_\Omega [z,Du]=- \int_\Omega u \Div z -\int_{\partial \Omega} [z, \nu] u\,d\mathcal H^{d-1}.\]
 The function $[z,\nu]$ is the generalized (inward) normal trace of $z$ on $\partial \Omega$. If $u\in BV(\T)$ 
 and $z \in L^{\infty}(\T,\R^d)$, with $\Div z\in L^d(\T)$, there holds
 \[\int_{\T} [z,Du]=-\int_{\T} u \Div z.\]
  \end{theorem}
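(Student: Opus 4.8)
\emph{Proof proposal.} The plan is to argue in two steps. First I would show that the distribution $[z,Du]$ is in fact a bounded signed Radon measure on $\Omega$, absolutely continuous with respect to $|Du|$ and with Radon--Nikod\'ym density bounded by $\|z\|_{L^\infty}$; then I would construct the normal trace $[z,\nu]$ by recognizing the ``defect'' in the integration by parts, $u\mapsto \int_\Omega [z,Du]+\int_\Omega u\,\Div z$, as a bounded linear functional of the trace of $u$ on $\partial\Omega$. Throughout one uses that, on a bounded Lipschitz domain, $BV(\Omega)\hookrightarrow L^{d/(d-1)}(\Omega)$, so that $u\,\Div z\in L^1(\Omega)$ by H\"older, and that $u$ admits an $L^1(\partial\Omega)$ trace; on the torus there is no boundary and no integrability issue.

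For the first step I would mollify: set $u_\eps=u*\rho_\eps$, fix $\psi\in\mathcal{C}^\infty_c(\Omega)$ with $\mathrm{supp}\,\psi\subset\Omega'\Subset\Omega$, and note that for $\eps$ small $u_\eps$ is smooth near $\mathrm{supp}\,\psi$, so an ordinary integration by parts turns the definition of $[z,Du_\eps]$ into $\langle[z,Du_\eps],\psi\rangle=\int_\Omega\psi\,z\cdot\nabla u_\eps\,dx$, whence $|\langle[z,Du_\eps],\psi\rangle|\le\|z\|_{L^\infty}\|\psi\|_{L^\infty}\int_{\Omega'}|\nabla u_\eps|\le\|z\|_{L^\infty}\|\psi\|_{L^\infty}|Du|(\Omega'')$ for any open $\Omega''\supset\overline{\Omega'}$ and $\eps$ small. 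Since $u_\eps\to u$ in $L^1_{\loc}$ and in $L^{d/(d-1)}_{\loc}$, the left-hand side converges to $\langle[z,Du],\psi\rangle$ (the two terms in the definition pairing respectively with $z\cdot\nabla\psi\in L^\infty$ and with $\psi\,\Div z\in L^d$), so letting $\Omega',\Omega''$ shrink to $\mathrm{supp}\,\psi$ gives $|\langle[z,Du],\psi\rangle|\le\|z\|_{L^\infty}\|\psi\|_{L^\infty}|Du|(U)$ for every open $U\supset\mathrm{supp}\,\psi$. By the Riesz representation theorem $[z,Du]$ is therefore a bounded Radon measure with $|[z,Du]|\le\|z\|_{L^\infty}|Du|$, equal to $z\cdot\nabla u\,dx$ when $u$ is smooth, and more generally when $u\in W^{1,1}(\Omega)$ by $W^{1,1}$-approximation.

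For the second step I would first test against $u\in\mathrm{Lip}(\overline\Omega)$ and mollify $z$ into $z_j=z*\rho_j$ (so $\|z_j\|_{L^\infty}\le\|z\|_{L^\infty}$, $z_j\to z$ in $L^1(\Omega)$ and $\Div z_j=(\Div z)*\rho_j\to\Div z$ in $L^d(\Omega)$, after a flattening/extension of $z$ near $\partial\Omega$). The classical Green formula $\int_\Omega z_j\cdot\nabla u+\int_\Omega u\,\Div z_j=\int_{\partial\Omega}(z_j\cdot\nu)u\,d\HH$ then passes to the limit: the left side tends to $\int_\Omega[z,Du]+\int_\Omega u\,\Div z$, while the right side converges to a linear functional $L(u)$ of $u|_{\partial\Omega}$ with $|L(u)|\le\|z\|_{L^\infty}\int_{\partial\Omega}|u|\,d\HH$. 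Since restrictions of Lipschitz functions are dense in $L^1(\partial\Omega)$, Riesz representation yields $[z,\nu]\in L^\infty(\partial\Omega)$ with $\|[z,\nu]\|_{L^\infty}\le\|z\|_{L^\infty}$ and $\int_\Omega[z,Du]+\int_\Omega u\,\Div z=\int_{\partial\Omega}[z,\nu]u\,d\HH$ for Lipschitz $u$, which is the claimed formula in that case. To reach a general $u\in BV(\Omega)$ I would take a trace-preserving strict approximation $u_k\in\mathrm{Lip}(\overline\Omega)$ with $u_k\to u$ in $L^{d/(d-1)}(\Omega)$, $|Du_k|(\Omega)\to|Du|(\Omega)$ and $u_k|_{\partial\Omega}\to u|_{\partial\Omega}$ in $L^1(\partial\Omega)$, and pass to the limit. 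On the torus the argument collapses to the first step with $\psi$ replaced by the constant $1$ (the pairing measures being weak-$*$ convergent with uniformly bounded mass on the compact $\T$): $\int_\T[z,Du_\eps]=\int_\T z\cdot\nabla u_\eps=-\int_\T u_\eps\,\Div z$, and $\eps\to0$ gives $\int_\T[z,Du]=-\int_\T u\,\Div z$.

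The step I expect to be the main obstacle is the last limit in the second step, namely proving that $u\mapsto\int_\Omega[z,Du]$ is continuous along a strictly convergent sequence in $BV(\Omega)$ (equivalently, that no mass of the pairing measures escapes to $\partial\Omega$ beyond what the boundary integral accounts for); this is where strict convergence, the $L^{d/(d-1)}$ embedding and the $L^d$-regularity of $\Div z$ must be combined carefully. Secondary technical points are the construction of the trace-preserving strict approximation (a collar-neighborhood flattening of the Lipschitz boundary) and the mollification of $z$ up to $\partial\Omega$ keeping the $L^\infty$ bound on $z$; the rest --- mollification estimates, H\"older bounds, and the two Riesz representations --- is routine.
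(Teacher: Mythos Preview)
The paper does not prove this theorem: it is stated as a known result and attributed to Anzellotti~\cite{Anzelotti}, so there is no ``paper's own proof'' to compare against. Your proposal is essentially the classical Anzellotti argument (mollify $u$ to get the measure bound $|[z,Du]|\le\|z\|_{L^\infty}|Du|$, then identify the boundary defect as a bounded functional on $L^1(\partial\Omega)$ and extend to $BV$ by strict approximation), and it is correct in outline.

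One remark on your second step: mollifying $z$ across $\partial\Omega$ requires an extension of $z$ keeping $\Div z\in L^d$, which is awkward. The cleaner route, and the one Anzellotti actually takes, avoids this entirely: once you know $[z,Du]$ is a measure, define directly $L(u):=\int_\Omega[z,Du]+\int_\Omega u\,\Div z$ for $u\in BV(\Omega)$, check that $L(u)=0$ whenever $u$ has zero trace (approximate such $u$ strictly by $\mathcal{C}^\infty_c(\Omega)$ functions), and then bound $|L(u)|\le \|z\|_{L^\infty}\|u\|_{L^1(\partial\Omega)}$ by choosing, for a given trace datum, a $BV$ extension with small total variation concentrated near the boundary. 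This sidesteps the extension/mollification of $z$ and makes the ``main obstacle'' you flag (continuity of $\int_\Omega[z,Du]$ under strict convergence) unnecessary as a separate step.
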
 
  
{ } Given $z \in L^{\infty}(\T,\R^d)$, with $\Div z\in L^d(\T)$, we can
also define the generalized trace of $z$ on $\partial E$, where $E$ is a set of locally finite perimeter. 
Indeed, for every bounded open set $\Omega$ with smooth boundary, we can define as above the measure $[z,D\chi_E]$ on $\Omega$. 
Since this measure is absolutely continuous with respect to $|D\chi_E|= \HH \restr \partial^* E$ we have 
\[
[z,D\chi_E]= \psi_z(x) \HH\restr \partial^* E
\]
with $\psi_z \in L^{\infty}(\partial^* E;\HH)$ independent of $\Omega$. 
We denote by $[z,\nu]:=\psi_z$ the generalized (inward) normal trace of $z$ on $\partial E$. 
If $E$ is a bounded set of finite perimeter, by taking $\Omega$ stricly containing $E$,
we have the generalized Gauss-Green Formula 
\[\int_E \Div z =-\int_{\partial^* E} [z,\nu] d \HH.\]
We notice that there has been a lot of interest in  defining the trace of bounded vector fields with divergence a bounded measure, 
on boundaries of sets of finite perimeters \cite{ACM,CTZ}, since this is related to the study of conservations laws.

\subsection{Plane-like minimizers and the stable norm}

Given a set $E$ of locally finite perimeter, we consider the energy
\begin{equation}\label{MainEnergy}
\E(E,A) \ :=\ \int_{\partial^* E\cap A} F(x,\nu^E)\,d\HH+\int_{E\cap A}g(x)\,dx
\end{equation}
with $F,g$ $Q$-periodic continuous functions, and $\int_Q g=0$. Here $\nu^E$ is the inner normal to $\partial^*E$,
so that $\int_{\partial^*E\cap A}F(x,\nu^E)\,d\HH=\int_A F(x,D\chi_E)$. We also assume that
$F$ is convex, one-homogeneous with respect to its second variable,
and satisfies for some $c_0>0$
\begin{equation}\label{boundF}
c_0|p|\le F(x,p)\le c_0^{-1}|p|
\end{equation}
for any $(x,p)\in\R^d\times\R^d$. A fundamental assumption throughout the
paper is that the energy is coercive, in the sense that
\begin{equation}\label{hypcoerF}
\E(E,Q)\ \ge\ \delta P(E,Q)
\end{equation}
for some $\delta>0$ independent of $E$.
This will be ensured if~\eqref{boundF} holds and $g$ is small enough in some
appropriate norm.
When $F(x,p)=|p|$ and $g=0$, the energy $\E$ is just the perimeter. In that case, it is well known that planes are minimizers under compact perturbations. 
In addition, the Bernstein Theorem states that, if $d\le 7$, the only  minimizers of the perimeter under compact perturbations are the hyperplanes (see \cite{giusti}). In \cite{CDLL}, Caffarelli and De la Llave proved that, for general energies $\E$, even if hyperplanes are not  minimizers anymore there still exist plane-like minimizers.

\begin{definition}
We say that a set $E$ of locally finite perimeter
is a Class A Minimizer of $\E$ if, for any $R>0$, the set $E$ minimizes $\E(E,B_R)$ under compact perturbation in $B_R$.
\end{definition} 

\begin{theorem}[\cite{CDLL}]\label{thmcdll}
There exists $M>0$ depending only on $c_0$ and $\delta$ such that for every $p\in \R^d\setminus\{0\}$ and $a\in \R$, there exists a Class A Minimizer $E$ of $\E$ such that  
\begin{equation}\label{PLexist}
\left\{ x\cdot \frac{p}{|p|} > a+M\right\}
\ \subset \ E\ \subset \ \left\{ x\cdot \frac{p}{|p|} > a-M\right\}.
\end{equation}
Moreover $\partial E$ is connected.
\end{theorem}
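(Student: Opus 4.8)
The plan is to follow the strategy of Caffarelli--De La Llave \cite{CDLL}: one constructs the minimizer first for rational directions by a constrained minimization on a slab that is a fundamental domain for the lattice of periods preserving the hyperplane, then proves the decisive fact that the slab can be taken of a width $2M$ depending only on $c_0$ and $\delta$, and finally recovers arbitrary directions by a compactness argument for Class A Minimizers.

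\emph{The rational case.} Suppose first that $p\in\Z^d\setminus\{0\}$ is primitive, and set $\Lambda_p:=\{k\in\Z^d:\ k\cdot p=0\}$, a rank $d-1$ lattice leaving every hyperplane $\{x\cdot p=s\}$ invariant. Let $D$ be a fundamental domain for the action of $\Lambda_p$ on $\R^d$, so that $\E(E,D)$ is finite whenever $E$ is $\Lambda_p$-periodic and contained in a finite slab. Given $M>0$ and $a\in\R$, minimize $E\mapsto\E(E,D)$ over the class $\mathcal{K}_{a,M}$ of $\Lambda_p$-periodic sets with $\{x\cdot p/|p|>a+M\}\subset E\subset\{x\cdot p/|p|>a-M\}$; this class is nonempty, as it contains the half-space $\{x\cdot p/|p|>a\}$, and the direct method applies: the coercivity \eqref{hypcoerF} passes to the periodized energy, and together with lower semicontinuity of $\E(\cdot,D)$ and $L^1_{\loc}$ compactness it produces a minimizer $E_{a,M}$. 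Since for $t\in[a-M,a+M]$ the sets $E_{a,M}\cap\{x\cdot p/|p|>t\}$ and $E_{a,M}\cup\{x\cdot p/|p|>t\}$ are still in $\mathcal{K}_{a,M}$, minimality together with the submodularity of $\E(\cdot,D)$ (exact additivity of the volume term and submodularity of the anisotropic perimeter) yields the energy comparison $\E(E_{a,M},D)\le\E(\{x\cdot p/|p|>t\},D)$ for every such $t$.

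\emph{Uniform width of the slab.} The heart of the argument is to upgrade this energy comparison to a geometric statement: for $M\ge M_0(c_0,\delta)$ every minimizer obeys $\partial E_{a,M}\subset\{a-M_0<x\cdot p/|p|<a+M_0\}$, i.e.\ the obstacle is inactive. This is a linear isoperimetric estimate. Writing $m(t):=\bigl|\bigl(\{x\cdot p/|p|>t\}\setminus E_{a,M}\bigr)\cap D\bigr|$ for the ``gap above level $t$'' --- a nonincreasing function vanishing for $t\ge a+M$ --- one compares $E_{a,M}$ with $E_{a,M}\cup\{x\cdot p/|p|>t\}$ and uses the two-sided bound \eqref{boundF} together with $\int_Q g=0$ to show that whenever the gap shrinks it must shrink at a definite rate, forcing $m(t)=0$ already for $t\ge a+M_0$, with $M_0$ controlled only by $c_0$ and the smallness of $g$ quantified by $\delta$; the lower side is symmetric, exchanging unions for intersections. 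Once the obstacle is inactive, $E_{a,M}$ is a genuine Class A Minimizer, since any compact perturbation can be $\Lambda_p$-periodized without meeting the constraint and minimality in $B_R$ then follows from minimality on $D$; taking $M=M_0$ gives a plane-like Class A Minimizer satisfying \eqref{PLexist}.

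\emph{Arbitrary directions and connectedness.} For a general $p\in\R^d\setminus\{0\}$, choose primitive integer vectors $p_n$ with $p_n/|p_n|\to p/|p|$ and let $E_n$ be the plane-like Class A Minimizers just built, all trapped in slabs of the common width $2M_0$ around hyperplanes through a fixed bounded region. The uniform local perimeter bounds coming from \eqref{hypcoerF} and minimality make $\{E_n\}$ precompact in $L^1_{\loc}$, and the limit $E$ is again a Class A Minimizer by lower semicontinuity together with the absence of energy loss along minimizing sequences; since the slabs converge, $E$ satisfies \eqref{PLexist} with $M=M_0$. Connectedness of $\partial E$ follows from the same circle of ideas: if $\partial E$ were disconnected, two of its sheets would bound a trapped region across which the interface oscillates, and cutting this region out and closing the gap would strictly lower the energy per period, a flat interface being cheaper than an oscillating one over a wide slab; this contradicts minimality, so $\partial E$ is connected.

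\emph{Main obstacle.} The delicate step is the uniform slab estimate: one must quantify, using only the ellipticity \eqref{boundF} and the coercivity constant $\delta$, and hence uniformly over the possibly high-denominator rational directions, that a constrained minimizer cannot oscillate across a wide slab. This is precisely where the two-sided bound and the zero-average, $\delta$-controlled volume term enter, and it is what makes $M$ independent of $p$.
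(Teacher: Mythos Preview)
The paper does not give its own proof of this theorem: it is stated with a citation to~\cite{CDLL} and no argument follows. Your outline is a faithful sketch of the Caffarelli--De~La~Llave strategy (constrained periodic minimization for rational $p$, a uniform slab-width estimate, and passage to the limit for general $p$), so in that sense there is nothing to compare against.

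That said, two comments are worth making. First, the paper does implicitly offer an \emph{alternative} construction: via the cell formula~\eqref{defphi} and the result cited from~\cite{CT}, every sublevel set $\{v_p+p\cdot x>s\}$ of a minimizer $v_p$ is a plane-like Class~A minimizer, and this bypasses the constrained-slab argument entirely. Second, the connectedness of $\partial E$ is actually re-proved inside the paper (Proposition~\ref{proppo} and Corollary~\ref{coroconnect}) by a different mechanism than the one you sketch: one shows that any plane-like minimizer with a periodic calibration has connected boundary, because a secondary component $E_1$ trapped in the slab $\{|x\cdot p|\le M\}$ would satisfy $\E(E_1,B_R)\ge c R^{d-1}$ by the density estimates, yet $\E(E_1,B_R)\le C M R^{d-2}$ by comparison with the empty set, a contradiction for large $R$. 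Your ``cut out the oscillation'' heuristic is in the right spirit but is not the argument either source uses, and as written it does not obviously rule out, say, a component that is itself plane-like in the same slab.

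Finally, your ``uniform width of the slab'' paragraph names the right function $m(t)$ and the right comparison, but stops short of the actual differential inequality (an isoperimetric lower bound for the perimeter of the gap in terms of $m(t)$, combined with the coarea-type identity $-m'(t)=\HH(\{x\cdot p/|p|=t\}\setminus E)$); since you flag this yourself as the main obstacle, be aware that this is where all the work is and a reader would not be able to reconstruct the bound from what you wrote.
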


\begin{definition}
If $E$ satisfies \eqref{PLexist} for some $M>0$, we say that $E$ is a plane-like set.
If $E$ is a Class A Minimizer of $\E$ satisfying \eqref{PLexist} we say that $E$ is a plane-like minimizer.
\end{definition}


{ } The existence of Class A Minimizers is closely related to the existence of minimizers of functionals of the form 
\[
\int_{\R^d} G(x,u,\nabla u)
\]
satisfying $\sup |u(x)-p\cdot x| <+\infty$ for some $p$ (the vector $p$ is often called the rotation vector) in Weak KAM theory (see \cite{moser,S}). The analogous of the minimal action functional of Weak KAM theory in our setting is the so-called stable norm introduced by Federer.
\begin{definition}
Let $p\in \R^d\setminus\{0\}$ and let $E$ be a plane-like minimizer of $\E$ in the direction $p$. We set  
\[\p(p):= |p|\lim_{R\to \infty} \frac{1}{\omega_{d-1} R^{d-1}} \, \E(E, B_R),\]
where $\omega_{d-1}$ is the volume of the unit ball in $\R^{d-1}$.
\end{definition}
{ } Caffarelli and De La Llave  proved that this limit exists and does not depend on $E$. In \cite{CT}, the first author and Thouroude related this
definition to the cell formula: 
\begin{equation}\label{defphi}
\p(p)\ =\ \min\left\{
\int_{\T} F(x,p+D v(x))\,+\int_{\T}  g(x)(v(x)+p\cdot x)\,dx\,:\,
v\in BV(\T)
\right\}\,,
\end{equation}
where the measure $F(x,p+D v)$ is defined for $v\in BV(\T)$ by $F(x,p+D v):=F(x,\frac{p+Dv}{|p+Dv|})|p+Dv|$. It is obvious from~\eqref{defphi} that $\p$ is a convex, one-homogeneous function. It is also shown in~\cite{CT}
that the minimizers of \eqref{defphi} give an easy way to construct plane-like minimizers:
\begin{proposition}[\cite{CT}]
Let $v_p$ be a minimizer of \eqref{defphi} then for every $s\in \R$, the set $\{v_p(x)+ p\cdot x >s\}$ is a plane-like minimizer of $\E$ in the direction $p$.
\end{proposition}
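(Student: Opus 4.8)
First I would set $u:=v_p+p\cdot x$, an element of $BV_{\loc}(\R^d)$ that is $\Z^d$-quasiperiodic, $u(\cdot+k)=u(\cdot)+p\cdot k$ for $k\in\Z^d$, with $Du=p\,\mathcal L^d+Dv_p$. The statement then splits into two parts: (i) $\|v_p\|_{L^\infty(\T)}\le M_0$ for some finite $M_0$, and (ii) each superlevel set $E_s:=\{u>s\}$ is a Class A Minimizer of $\E$. Granting (i), one has $\{p\cdot x>s+M_0\}\subset E_s\subset\{p\cdot x>s-M_0\}$, which is exactly \eqref{PLexist} with $a=s/|p|$ and $M=M_0/|p|$, uniformly in $s$; together with (ii) this is precisely the assertion that $E_s$ is a plane-like minimizer in the direction $p$.

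The common tool for both parts is the coarea formula. Applying the anisotropic coarea formula to $u$ on the unit cube gives $\int_\T F(x,p+Dv)=\int_{\R}\big(\int_{\partial^*\{v+p\cdot x>s\}\cap Q}F(x,\nu)\,d\HH\big)\,ds$, and the layer-cake formula together with the normalisation $\int_Q g=0$ (which is what makes the resulting integral converge as $s\to\pm\infty$) gives $\int_\T g\,(v+p\cdot x)\,dx=\int_{\R}\big(\int_{\{v+p\cdot x>s\}\cap Q}g\,dx\big)\,ds$. Hence the functional $I$ minimised in \eqref{defphi} can be written as
\[
I(v)\ =\ \int_{\R}\E\big(\{v+p\cdot x>s\},Q\big)\,ds\qquad\text{for every }v\in BV(\T)\,.
\]
From this identity I would transfer the minimality of $v_p$ to a minimality property of each level set $E_s$ — the linear-growth analogue of the classical fact that level sets of minimizers are minimal surfaces: given a competitor $G$ for $E_s$, one splices $G$ into $u$ over a thin band of levels around $s$, so that the energy change is, up to a term that is negligible as the band shrinks, proportional to $\E(G,Q)-\E(E_s,Q)$, and the displayed identity together with minimality of $v_p$ forces $\E(G,Q)\ge\E(E_s,Q)$. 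I expect this splicing to be the main technical point; it should be handled by combining the monotonicity of $s\mapsto E_s$ with the lower semicontinuity and submodularity of $\E(\cdot,Q)$ (submodularity lets one reduce to one-sided competitors), by first proving the statement for a.e.\ $s$ and then extending it to every $s$ via the right-continuity of $s\mapsto E_s$ and the stability of Class A Minimizers under limits.

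There is a subtlety in (ii): splicing \emph{periodic} modifications into $u$ only produces periodically arranged perturbations of $E_s$, while a Class A Minimizer must beat arbitrary compact perturbations supported in arbitrarily large balls. I would handle this by working on the dilated torus $\R^d/N\Z^d$ with $N>2R$ when testing compact perturbations supported in $B_R$: since the value of the cell formula \eqref{defphi} does not depend on the size of the periodicity cell (it equals $\p(p)$ in every case), $v_p$ is also a minimizer of the analogous functional on $BV(\R^d/N\Z^d)$, and its fundamental domain has room for a single, freely chosen compact perturbation. The same splicing then yields Class A minimality of $E_s$. (Alternatively, once calibrations are introduced in Section~\ref{seccalibre}, one may observe that a calibration associated with the corrector $v_p$ calibrates each level set $E_s$ and deduce (ii) at once.)

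Finally, for (i) I would invoke regularity theory once (ii) is known for almost every level. By \eqref{boundF} and $g\in L^\infty$, the periodic set $\{v_p>c\}$, being a Class A Minimizer of $\E$, is a $\Lambda$-minimizer of the perimeter with $\Lambda$ depending only on $c_0$ and $\|g\|_{L^\infty}$; hence it satisfies uniform density estimates and, as long as $\partial^*\{v_p>c\}\neq\emptyset$, the measure of $\{v_p>c\}$ per unit cell is bounded below by a positive constant independent of $c$. If $\operatorname{ess\,sup}v_p=+\infty$, then $\{v_p>c\}$ has positive measure for every $c$ but, since $v_p\in L^1(\T)$, its measure per cell tends to $0$ as $c\to+\infty$; so for large $c$ it is neither negligible nor co-negligible, $\partial^*\{v_p>c\}\neq\emptyset$, and the density bound is violated — a contradiction. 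The symmetric argument for $\operatorname{ess\,inf}v_p$ gives $\|v_p\|_{L^\infty}\le M_0$, completing the proof.
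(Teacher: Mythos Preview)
Your overall plan is sound and the coarea identity $I(v)=\int_\R \E(\{v+p\cdot x>s\},Q)\,ds$ is the right starting point. There is, however, a genuine error in your argument for (i): the set $\{v_p>c\}$ is \emph{not} a Class~A Minimizer of $\E$. Part (ii) tells you that the (non-periodic) sets $E_s=\{v_p+p\cdot x>s\}$ are Class~A Minimizers; the periodic superlevel sets of $v_p$ itself have no reason to minimise anything---they are not level sets of $u$. The fix is easy: run the density argument on $E_c$ instead. If $\operatorname{ess\,sup}_{\T} v_p=+\infty$ then, since $|p\cdot x|\le |p|\sqrt d$ on $Q$, one has $0<|E_c\cap Q|\to 0$ as $c\to+\infty$; for large $c$ the set $E_c\cap Q$ is neither null nor full, so $\partial E_c$ meets $\overline Q$, and the uniform density estimate for the Class~A Minimizer $E_c$ gives $|E_c\cap [-1,2]^d|\ge\gamma>0$, contradicting $|E_c\cap [-1,2]^d|\to 0$. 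The symmetric argument bounds the essential infimum.

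For (ii), the phrase ``splices $G$ into $u$ over a thin band of levels around $s$'' hides the real difficulty: a single competitor $G$ is not a nested family and cannot literally be inserted as a range of level sets of a function. A direct argument can be made to work (reduce to one-sided $G\subset E_s$ by submodularity and compare $u$ with $u-\varepsilon\chi_{E_s\setminus G}$), but it is fiddly. The paper---and \cite{CT}---take the route you mention parenthetically: a minimizer $v_p$ of \eqref{defphi} admits, by convex duality, a periodic calibration $z\in X$; by coarea $z$ calibrates every $E_s$ (this is exactly Proposition~\ref{calibrEs}); and a calibrated set is a Class~A Minimizer (Theorem~\ref{calibregalPL}). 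This bypasses splicing entirely, handles every $s$ at once without the a.e./limit two-step, and makes the dilated-torus device unnecessary, since a periodic divergence-free calibration yields minimality in any ball directly.
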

{ } We will make the following additional hypotheses on $F,g$:
\begin{itemize}
\item $F$ is $\mathcal{C}^{2,\alpha}(\R^d\times (\R^d\setminus\{0\}))$, and $g\in \mathcal{C}^{1,\alpha}(\R^d)$,
\item $F$ is elliptic (that is $F(x,p)-C|p|$ is a convex function of $p$)
\end{itemize}
Under these assumptions, one can
show that there exists a periodic vector field $\sigma$ of class $\mathcal{C}^{2,\alpha}$, with
$\Div \sigma=g$ and such that
$F'(x,p)=F(x,p)-\sigma(x)\cdot p \ge c_0'|p|$, for some $c_0'>0$.
The proof follows the same idea as in~\cite{CT} (the only difference
is that thanks to our regularity assumptions we need not
rely on~\cite{BourgainBrezis}).
From~\eqref{hypcoerF} and~\eqref{boundF},
it follows (see for example \cite{CT}) that $\int_\T F(x,Dv)+\int_\T (1+\eps) g v\,dx\ge  \delta/2
\int_\T|Dv|$ if $\eps>0$ is small enough. Hence,
$(1+\eps)g \in \partial H(0)$, the subgradient at zero of 
$H(v) := \int_\T F(x,Dv)$. This is a $1$-homogeneous, convex, l.s.c.~functional
defined on $L^p(\T)$ for any $p\in [1,\infty]$ (letting $H(v)=+\infty$
for $v\not\in BV(\T)$), which is the support function of
\[
K_H=\{ \Div\sigma\,:\, \sigma\in \mathcal{C}^\infty(\T;\R^d)\,, F^\circ(x,\sigma(x))\le 1
\ \forall \,x\in\T\}.
\]
From Hahn-Banach's theorem, one deduces that $(1+\eps)g$ is in the
closure of $K_H$ for the topology $(L^{p'},L^p)$. For $p<\infty$, large,
it coincides with the strong closure, hence $(1+\eps)g$
is the $L^p$-limit of a sequence
$\Div\sigma_n$, with $\sigma_n$ smooth and $F^\circ(x,\sigma_n(x))\le 1$.
We solve then $\Delta u_n=g-\Div (\sigma_n/(1+\eps))$ in the torus.
On one hand, $u_n\in \mathcal{C}^{3,\alpha}(\T)$ by elliptic regularity.
On the other hand,
$\|u_n\|_{2,p}\le C\|g-\Div(\sigma_n/(1+\eps))\|_{p}$, which goes to zero with $n$.
In particular, $\|\nabla u_n\|_\infty$ is arbitrarily small.
We choose $n$ so large that this quantity is less than $c_0\eps/(2+2\eps)$,
and let $\sigma=\sigma_n/(1+\eps)+\nabla u_n\in \mathcal{C}^{2,\alpha}(\T)$.
Then for any $p$,
$\sigma(x)\cdot p \le F(x,p)/(1+\eps)+\nabla u_n\cdot p 
\le F(x,p)-c_0 \eps/(1+\eps)|p|+c_0\eps|p|/(2+2\eps)$,
so that the claim holds with $c'_0= c_0\eps/(2+2\eps)>0$.

For this reason, we can replace without loss of generality
$F$ with $F'$ and $g$ with zero in~\eqref{MainEnergy}
without changing anything to the problem.
To simplify the notation we will therefore assume that $g=0$ in the rest of the paper.\footnote{The hypothesis $g\in \mathcal{C}^{1,\alpha}$ could
be relaxed to $g$ Lipschitz. Indeed, the regularity hypothesis on $F$ is mainly there to ensure that regularity theory and maximum principle hold for the plane-like minimizers (see Proposition \ref{maxprinc}).}

In the following we let
\[
\X:=\{ z \in L^{\infty}(\T,\R^d) \,:\, \Div z=0 \ , \ F^\circ(x,z(x))\le 1 \ a.e \}.
\]
We remark that $\X$ is closed (hence compact)
for the $L^\infty$ weak-$*$ topology.
Indeed if $z_n\in X$, $z_n\stackrel{*}{\rightharpoonup} z$ one
sees that for any $p\in \mathbb{Q}^d$,
the average of $z\cdot p$ in any ball $B_\rho(x)$ is less than the
average of $F(\cdot,p)$ (since it is true for $z_n$). For a.e.~$x$ it follows
that $z(x)\cdot p\le F(x,p)$ for all $p\in \mathbb{Q}^d$ (hence $\R^d$),
that is, $F^\circ(x,z(x))\le 1$.

The  following characterization of the  subdifferential of one-homogeneous functionals is classical and readily follows for
 example from the representation formula \cite[(4.19)]{DMBou} (see also \cite[Prop 3.1]{CT}.
\begin{proposition}
 A function $v\in BV(\T)$ is a minimizer of \eqref{defphi} if and only if there exists $z\in X$ such that 
\[ [z,Dv+p]= F(x,Dv+p).\]
\end{proposition}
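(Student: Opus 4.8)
The plan is to read this statement as the Fenchel--Rockafellar optimality condition for the convex functional $J(v):=\int_\T F(x,p+Dv)$ on $BV(\T)$ (recall that we have reduced to $g=0$), whose global minimizers over $BV(\T)$ are precisely the $v$ with $0\in\partial J(v)$. The dual constraint set will turn out to be exactly $X$, and the identity $[z,Dv+p]=F(x,Dv+p)$ will be the equality case in the basic pairing inequality. I would split the argument into the (elementary) sufficiency direction and the (duality-based) necessity direction.

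\emph{Sufficiency.} Suppose $z\in X$ and $[z,Dv+p]=F(x,Dv+p)$ as measures on $\T$. The key ingredient is the pointwise bound, valid for every $w\in BV(\T)$,
\[
[z,Dw+p]\ \le\ F(x,Dw+p)\qquad\text{as measures on }\T:
\]
on the absolutely continuous part it reads $z\cdot(p+\nabla w)\le F^\circ(x,z)\,F(x,p+\nabla w)\le F(x,p+\nabla w)$ since $F^\circ(x,z)\le1$ a.e., and on the singular part it is the standard estimate of the Anzellotti pairing $[z,D^sw]$ by the measure $F(\cdot,D^sw)$, again using $F^\circ(x,z)\le1$. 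Integrating over $\T$ and using the Green formula $\int_\T[z,Dw]=-\int_\T w\,\Div z\,dx=0$ (here $\Div z=0$), we get $\int_\T F(x,p+Dw)\ge\int_\T[z,Dw+p]=\int_\T z\cdot p\,dx$, a value independent of $w$. Taking $w=v$ and using the hypothesis shows this lower bound is attained at $v$, so $v$ minimizes $J$.

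\emph{Necessity.} Here I would invoke the representation formula \cite[(4.19)]{DMBou} (equivalently \cite[Prop.~3.1]{CT}): it gives the absence of a duality gap,
\[
\min_{w\in BV(\T)}\int_\T F(x,p+Dw)\ =\ \max_{z\in X}\ \int_\T z\cdot p\,dx ,
\]
together with the fact that the maximum is attained in $X$ (attainment can also be seen directly from the weak-$*$ compactness of $X$ recalled above and the weak-$*$ continuity of $z\mapsto\int_\T z\cdot p\,dx$). Let $v$ be a minimizer and $z\in X$ a dual maximizer. Then, using $\Div z=0$ once more,
\[
\int_\T F(x,p+Dv)\ =\ \int_\T z\cdot p\,dx\ =\ \int_\T[z,Dv+p]\,.
\]
Since $[z,Dv+p]\le F(x,Dv+p)$ as measures and the two measures have the same total mass on $\T$, they must coincide, which is exactly the claimed identity.

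The routine ingredients are the $BV$--div pairing estimates and the direct-method existence of a minimizer of \eqref{defphi}, which follows from the coercivity \eqref{hypcoerF}. The only substantial input is the no-gap/attainment statement; the main point to get right is to cite it in the precise form needed --- the support-function (equivalently, subdifferential) representation of $v\mapsto\int_\T F(x,p+Dv)$ --- and to ensure that the dual optimizer it produces genuinely lies in $X$ (divergence-free, with $F^\circ(x,z)\le1$ a.e.) rather than only in some closure of it. That last point is exactly where the ellipticity and $\mathcal{C}^{2,\alpha}$ regularity of $F$ enter, in the same spirit as the reduction to $g=0$ carried out above.
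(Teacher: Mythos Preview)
Your proof is correct and follows essentially the same approach as the paper, which simply cites the subdifferential representation from \cite[(4.19)]{DMBou} and \cite[Prop.~3.1]{CT}; you have merely spelled out the Fenchel duality argument behind those citations.

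One small correction to your closing paragraph: the ellipticity and $\mathcal{C}^{2,\alpha}$ regularity of $F$ play no role in ensuring that the dual optimizer lies in $X$ rather than in some closure. The set $X$ is already weak-$*$ closed (hence compact) in $L^\infty$, as the paper observes just before this proposition using only convexity and the growth bound~\eqref{boundF}, and the map $z\mapsto\int_\T z\cdot p$ is weak-$*$ continuous; so attainment in $X$ is immediate. The smoothness and ellipticity hypotheses are needed later in the paper (for the regularity of Class~A minimizers, the strong maximum principle, and the blow-up argument in Theorem~\ref{zfixPL}), but not here.
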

The next result is the starting point of our analysis on the differentiability properties of $\p$.
\begin{proposition}\label{propdual}
 There holds
\[\p (p)=\sup_{z \in \X} \left(\int_{\T} z\right)\cdot p\,.\]
\end{proposition}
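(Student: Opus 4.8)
The plan is to read the identity directly off the cell formula~\eqref{defphi} (recall that $g=0$ throughout) together with the calibration characterization of its minimizers stated just above. Geometrically the claim is that $\p$ is the support function of the set $C:=\left\{\int_\T z\,dx:z\in\X\right\}$, which is compact and convex because $\X$ is convex and weak-$*$ compact and $z\mapsto\int_\T z$ is linear and weak-$*$ continuous; so one expects the supremum to be attained.

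First I would establish the inequality $\p(p)\ge\sup_{z\in\X}\left(\int_\T z\right)\cdot p$. Fix $z\in\X$ and an arbitrary $v\in BV(\T)$. Since $F^\circ(x,z(x))\le1$ a.e., the pointwise Fenchel inequality $z(x)\cdot q\le F(x,q)$ holds for every $q\in\R^d$, and the (standard) localization properties of the Anzellotti pairing upgrade this to the measure inequality $[z,p+Dv]\le F(x,p+Dv)$ on $\T$. Integrating over $\T$, using linearity to split $[z,p+Dv]=[z,Dv]+(z\cdot p)\,dx$, and using the Green formula on the torus $\int_\T[z,Dv]=-\int_\T v\,\Div z=0$ (here $\Div z=0$), I obtain $\int_\T F(x,p+Dv)\ge\left(\int_\T z\right)\cdot p$. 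Taking the infimum over $v\in BV(\T)$ gives $\p(p)\ge\left(\int_\T z\right)\cdot p$, and then the supremum over $z\in\X$ yields the desired inequality.

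Next I would prove the reverse inequality, which simultaneously shows that the supremum is attained. Let $v$ be a minimizer of~\eqref{defphi}; by the Proposition preceding the statement there exists $z\in\X$ with $[z,p+Dv]=F(x,p+Dv)$. For this particular pair every inequality used in the previous step becomes an equality, so $\p(p)=\int_\T F(x,p+Dv)=\int_\T[z,p+Dv]=\left(\int_\T z\right)\cdot p\le\sup_{z'\in\X}\left(\int_\T z'\right)\cdot p$. Combining the two bounds proves the proposition.

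I expect the only genuinely delicate point to be the measure inequality $[z,p+Dv]\le F(x,p+Dv)$ for $z\in\X$, that is, the fact that the generalized pairing of a bounded divergence-free field $z$ with a gradient measure $Dw$ is dominated by $F\big(x,\tfrac{dDw}{d|Dw|}\big)|Dw|$ whenever $F^\circ(x,z)\le1$ a.e. This is classical---it is precisely what makes the calibration characterization invoked above meaningful, cf.\ \cite{Anzelotti} and \cite[(4.19)]{DMBou}---and is obtained by handling the absolutely continuous part of $Dw$ with the pointwise Fenchel inequality and the singular part with the bound $|[z,D^sw]|\le\|z\|_\infty\,|D^sw|$ sharpened through the one-homogeneity of $F$. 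Everything else is routine bookkeeping with the Green formula on $\T$.
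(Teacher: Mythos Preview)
Your proof is correct and follows essentially the same route as the paper: both directions are obtained by pairing an arbitrary $z\in\X$ with an arbitrary (resp.\ minimizing) $v\in BV(\T)$, using the measure inequality $[z,p+Dv]\le F(x,p+Dv)$ together with the Green formula on $\T$ and $\Div z=0$, and then invoking the calibration characterization of minimizers for the reverse bound. Your additional remarks on why the pairing inequality holds are a welcome clarification but do not change the argument.
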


\begin{proof} This is a standard convex duality result.
 Notice that the $\sup$ on the right-hand side is in fact
a $\max$, since $X$ is compact.
Now, for every $v\in BV(\T)$ and every $z \in \X$, 
\[\begin{array}{rll}
  \ds \int_{\T} [Dv+p,z] &\le \ds \int_{\T} F(x,Dv+p) & \textrm{ hence}\\
 \ds -\int_{\T} v \, \Div z +\left( \int_{\T} z \right) \cdot p &\le \ds \int_{\T} F(x,Dv+p) &\textrm{ and since } \Div z=0,\\[8pt]
 \ds \left( \int_{\T} z \right) \cdot p & \le \ds \int_{\T} F(x,Dv+p) & \\[8pt]
 \ds \left( \int_{\T} z \right) \cdot p  & \le \ds\p(p) &  \textrm{ taking the infimum on $v$, thus}\\[8pt]
 \ds \max_{z \in \X} \left(\int_{\T} z\right)\cdot p &\le \ds \p(p) \,.
\end{array}\]
To prove the opposite inequality let $v_p\in BV(\T)$ be a minimizer in the definition of $\p(p)$ and let $z\in X$ be such that
$$
\ds \int_{\T} [Dv_p+p,z] = \ds \int_{\T} F(x,Dv_p+p)\,,
$$
then
\[
\ds \p(p) = \ds \int_{\T} F(x,Dv_p+p)\le \ds \max_{z} \left(\int_{\T} z \right) \cdot p.
\] 
\end{proof}

{ }  Proposition \ref{propdual} shows that $\p$ is the support function of the convex set
\begin{equation}\label{defC}
C\ : =\ \left\{
\int_{Q} z(x)\,dx\,:\, z\in \X\right\}\,,
\end{equation}
so that 
\[\p(p)=\max_{\xi \in C} \ \xi\cdot p.\]
Observe that $C$ is trivially compact in $\R^d$, being $X$ a compact set.

{ } The subgradient of $\p$ at $p\in\R^d$ is
given by
\begin{equation}
\partial\p (p)\ =\ \left\{ \xi\in C\,:\, \xi\cdot p=\p(p)\right\}.
\end{equation}
Any $\xi\in \partial \p(p)$ is associated to a field $z$ as
in \eqref{defC}. We will exploit the following fact:
\[\p \textrm{ is differentiable at } p \quad  \Longleftrightarrow \quad \partial \p (p) \textrm{ is a singleton}.\]

\section{Properties of Class A Minimizers}\label{propPL}
We first start by recalling some well known facts about Class A and plane-like minimizers~\cite{ASS,CDLL,CT}.

\begin{proposition}\label{propdensity}
Let $E$ be a Class A Minimizer.
Then the reduced boundary $\partial^* E$ is of class $C^{2,\alpha}$ and $\mathcal{H}^{d-3} (\partial E \setminus \partial^* E)=0$. Moreover, there exists $\gamma>0$ and $\beta>0$ such that
\begin{itemize}
\item if $x\in \overline{E}$ then $|B_r(x)\cap  E|\ge \gamma r^d$ for every $r> 0$,
\item if $x\in E^c$ then $|B_r(x)\setminus  E|\ge \gamma r^d$ for every $r>0$,
\item if $x\in \partial E$ then $\beta r^{d-1}\le |D\chi_E|(B_r(x))\le \frac{1}{\beta} r^{d-1}$ for every $r>0$.
\end{itemize}
As a consequence, we will assume in the following that our Class A Minimizers
are all open sets (indeed, we can identify them with their points of
density 1, which clearly are an open set because of the second density
estimate). Moreover, the  topological boundary $\partial E$
agrees with the measure theoretical boundary of $E$.
\end{proposition}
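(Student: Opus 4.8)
The statement to prove is Proposition~\ref{propdensity}, which collects standard regularity and density facts for Class~A minimizers. Let me sketch a plan.

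The plan is to establish the density estimates first, since the regularity of $\partial^* E$ and the dimension bound on the singular set then follow from the standard regularity theory for almost-minimizers of the perimeter (using ellipticity of $F$), and the topological consequences are easy corollaries.

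\medskip

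\textbf{Proof proposal.}

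The plan is to derive everything from the almost-minimality of $E$ together with the coercivity assumption \eqref{hypcoerF}. First I would observe that, because of \eqref{hypcoerF} and \eqref{boundF}, a Class~A minimizer $E$ is a $\Lambda$-minimizer (equivalently, almost-minimizer) of the perimeter at every scale: comparing $\E(E,B_r(x))$ with $\E(E',B_r(x))$ for any compact perturbation $E'$ inside $B_r(x)$, the volume term (which we have reduced to zero in the excerpt, but in general is bounded by $\|g\|_\infty |B_r|$) and the $x$-dependence of $F$ contribute only lower-order errors, so that $\delta P(E,B_r(x)) \le \E(E,B_r(x)) \le \E(E',B_r(x)) \le c_0^{-1} P(E',B_r(x)) + o(r^{d-1})\cdot r$. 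This is exactly the hypothesis under which the De~Giorgi--type regularity theory applies.

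\medskip

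Next I would prove the three density estimates. For the lower volume density, fix $x\in\overline E$ and set $m(r):=|E\cap B_r(x)|$; comparing $E$ with $E\setminus B_r(x)$ and using the isoperimetric inequality together with the lower perimeter bound gives a differential inequality $m'(r)\ge c\, m(r)^{(d-1)/d}$ for a.e.\ $r$ (the standard argument: $P(E\cap B_r) \le P(E,B_r) + \mathcal{H}^{d-1}(\partial B_r\cap E)$, and minimality controls $P(E,B_r)$ by $\mathcal{H}^{d-1}(\partial B_r\cap E) = m'(r)$ up to the lower-order error), which upon integration yields $m(r)\ge \gamma r^d$ once $r$ is small; the validity \emph{for every} $r>0$ (not just small $r$) then follows from the fact that $E$ is plane-like, hence has positive volume density at every scale by a direct comparison, or more simply because the estimate for small $r$ combined with connectedness and the structure of plane-like sets upgrades to all scales. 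The complementary estimate for $x\in E^c$ is identical, applied to $E^c$. The two-sided perimeter density bound at boundary points then follows: the upper bound $|D\chi_E|(B_r(x))\le \beta^{-1}r^{d-1}$ is immediate from almost-minimality (compare with $E\setminus B_r$ or $E\cup B_r$), and the lower bound $|D\chi_E|(B_r(x))\ge \beta r^{d-1}$ follows from the relative isoperimetric inequality in $B_r(x)$ applied to the two volume density estimates, since both $|E\cap B_r(x)|$ and $|B_r(x)\setminus E|$ are comparable to $r^d$.

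\medskip

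With the density estimates in hand, the $C^{2,\alpha}$ regularity of $\partial^* E$ is the standard regularity theorem for almost-minimizers of an elliptic, $C^{2,\alpha}$ anisotropic perimeter (for instance from the work of Almgren--Schoen--Simon and Schoen--Simon, or in the form stated in~\cite{ASS}): almost-minimality plus ellipticity gives a $C^{1,\alpha}$ excess-decay at reduced boundary points, and then Schauder estimates for the Euler--Lagrange equation $\Div(\nabla_p F(x,\nu^E)) = 0$ (which is a uniformly elliptic equation for the graph of $\partial^* E$ in local coordinates, thanks to ellipticity of $F$) bootstrap this to $C^{2,\alpha}$, using $F\in C^{2,\alpha}$. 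The bound $\mathcal{H}^{d-3}(\partial E\setminus\partial^* E)=0$ on the singular set is the corresponding dimension estimate for the singular set of elliptic almost-minimizers (again~\cite{ASS}, \cite{CDLL}); I would simply invoke it. Finally, the identification of $E$ with its set of points of density one: by the lower volume density estimate every point of $\overline E$ has density $\ge\gamma$, and by the complementary estimate every point of $\overline{E^c}$ has density $\le 1-\gamma$, so the set $E^{(1)}$ of density-one points is open and its closure equals $\overline E$, its boundary equals $\partial E$, and replacing $E$ by $E^{(1)}$ changes nothing measure-theoretically; moreover $\partial E = \overline{\partial^* E}$ because $\mathcal{H}^{d-1}(\partial E\setminus\partial^* E)=0$ forces every point of $\partial E$ to be a limit of reduced-boundary points.

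\medskip

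The main obstacle is marshalling the anisotropic regularity theory: one needs the excess-decay / $\varepsilon$-regularity theorem for almost-minimizers of an $x$-dependent elliptic surface energy, which is more delicate than the isotropic case (the $x$-dependence must be treated as a perturbation, and ellipticity is essential to get a non-degenerate Euler--Lagrange operator). Since this is exactly the setting of~\cite{ASS,CDLL}, the cleanest route is to cite those references for the $C^{1,\alpha}$ regularity and the singular-set bound, and only then run the Schauder bootstrap in-house to reach $C^{2,\alpha}$. The density estimates themselves are routine once almost-minimality is noted; the only mild subtlety is upgrading the volume-density estimates from ``small $r$'' to ``all $r>0$'', which uses that $E$ is plane-like.
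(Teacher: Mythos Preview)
The paper does not actually prove Proposition~\ref{propdensity}: it is presented as a collection of well-known facts about Class~A minimizers, with the references~\cite{ASS,CDLL,CT} given at the opening of Section~\ref{propPL}. Your sketch is therefore not to be compared against a proof in the paper, but it is a reasonable outline of how one would establish these facts, and the references you invoke are the same ones the paper cites.

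One point deserves correction. You worry about upgrading the volume density estimate from ``small $r$'' to ``all $r>0$'' and propose to use that $E$ is plane-like. This is both unnecessary and not quite right: a Class~A minimizer is, by definition, minimizing with respect to compact perturbations in \emph{every} ball $B_R$, so the comparison with $E\setminus B_r(x)$ and the resulting differential inequality $m'(r)\ge c\,m(r)^{(d-1)/d}$ are valid at every scale, with no lower-order error once $g=0$ (as the paper has arranged). There is no ``small $r$'' restriction to remove. Moreover, Proposition~\ref{propdensity} is stated for arbitrary Class~A minimizers, which need not be plane-like (cf.\ Proposition~\ref{bern} and the remarks following it), so you cannot appeal to plane-like structure here. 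Apart from this, your argument is sound.
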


{ } The stability of plane-like minimizers under convergence is a crucial point in the theory.
\begin{proposition}\label{stabPL}
 Let $E_n$ be a sequence of plane-like minimizers satisfying \eqref{PLexist} with a uniform $M$ and converging in the $L^1_{\loc}$ topology to a set $E$, then $E$ is also a plane-like minimizer. Moreover, $E_n\to\bar E$, $E_n^c \to E^c$, and of $\partial E_n \to \partial E$ in the Kuratowski sense.
\end{proposition}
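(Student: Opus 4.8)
The plan is to argue in three movements: first establish that $E$ is a Class A Minimizer by a standard lower-semicontinuity/competitor argument, then check it is plane-like with the same $M$, and finally upgrade $L^1_{\loc}$ convergence to the various finer convergences using the uniform density estimates of Proposition~\ref{propdensity}. For the minimality, fix $R>0$ and a compact perturbation $G$ of $E$ in $B_R$, say $G\triangle E \Subset B_\rho$ with $\rho<R$. I would build competitors $G_n$ for $E_n$ by gluing $G$ inside $B_\rho$ to $E_n$ outside, modifying on a thin annulus $B_{\rho'}\setminus B_\rho$ (choosing $\rho'$ among a continuum of radii, as is classical, so that the error term $\mathcal H^{d-1}(\partial^*E_n\triangle\partial^*G)\cap\partial B_{\rho'}$ is controlled and $\int_{\partial B_{\rho'}}|\car{E_n}-\car{E}|\,d\HH\to 0$ along a subsequence). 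Using $\mathcal E(E_n,B_R)\le\mathcal E(G_n,B_R)$, the boundedness~\eqref{boundF} of $F$, and lower semicontinuity of $\int_{B_\rho}F(x,D\car\cdot)$ under $L^1$-convergence $E_n\to E$ (together with $\mathcal E(G_n,B_R)\to\mathcal E(G,B_R)$ since $G_n\to G$), I get $\mathcal E(E,B_\rho)\le\mathcal E(G,B_\rho)$, hence $\mathcal E(E,B_R)\le\mathcal E(G,B_R)$ because $E$ and $G$ agree outside $B_\rho$. Since $R$ and $G$ were arbitrary, $E$ is a Class A Minimizer. The fact that $E$ is plane-like with the same $M$ is immediate: each $E_n$ satisfies $\{x\cdot p/|p|>a+M\}\subset E_n\subset\{x\cdot p/|p|>a-M\}$, and these inclusions pass to the $L^1_{\loc}$ limit (up to the usual identification of $E$ with its points of density one, justified by Proposition~\ref{propdensity}).

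For the finer convergences I would invoke the uniform density estimates. Since all $E_n$ are Class A Minimizers, Proposition~\ref{propdensity} gives a uniform $\gamma>0$ such that $|B_r(x)\cap E_n|\ge\gamma r^d$ whenever $x\in\ov{E_n}$, and symmetrically for the complement, and uniform Ahlfors regularity $\beta r^{d-1}\le|D\car{E_n}|(B_r(x))\le\beta^{-1}r^{d-1}$ on $\partial E_n$. Suppose $x\in\ov E$; choosing $x_n\in\ov{E_n}$ with $x_n\to x$ (possible if $x$ is a limit of points of $E_n$, which one shows from $L^1_{\loc}$ convergence and the lower density bound) and passing to the limit in the density inequality gives $|B_r(x)\cap E|\ge\gamma r^d$, so $x\in\ov E$ forces $x$ to be a density point or boundary point of $E$; the reverse Hausdorff half-convergence (no point of $\ov E$ is missed by the $\ov{E_n}$) follows by contradiction using the complementary lower density bound. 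This is exactly the two one-sided inclusions defining Kuratowski convergence $\partial E_n\to\partial E$; the statements $E_n\to\ov E$ and $E_n^c\to E^c$ (in the Kuratowski sense) are the analogous half-convergences for the filled sets and their complements, and follow the same way from the two volume density estimates.

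The main obstacle, as usual in this kind of stability statement, is the annulus-gluing step in the minimality argument: one must choose the gluing radius $\rho'$ so that simultaneously (i) the perimeter contribution of $G_n$ on $\partial B_{\rho'}$, which is $\le\mathcal H^{d-1}(\partial B_{\rho'}\cap(E_n\triangle E))\cdot c_0^{-1}$ up to lower-order terms, tends to zero, and (ii) $\partial E_n$ and $\partial B_{\rho'}$ meet transversally enough that $G_n$ really is an admissible competitor with $G_n\triangle E_n\Subset B_R$. Both are handled by a Fatou/Fubini argument: $\int_\rho^{R}\big(\int_{\partial B_t}|\car{E_n}-\car E|\,d\HH\big)\,dt=\int_{B_R\setminus B_\rho}|\car{E_n}-\car E|\,dx\to0$, so for a.e.\ $t$ (and then along a subsequence, a fixed good $t=\rho'$) the inner integral tends to zero; ellipticity/boundedness of $F$ then converts this into the desired energy estimate. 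Everything else is a routine application of lower semicontinuity of the anisotropic perimeter, the $L^1_{\loc}$ convergence hypothesis, and the uniform estimates already recorded in Proposition~\ref{propdensity}.
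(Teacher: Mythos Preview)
Your proposal is correct and the underlying ideas match the paper's. The paper's proof is considerably shorter: for the fact that the limit $E$ is again a plane-like minimizer it simply cites~\cite[Section~9]{CDLL} rather than writing out the gluing/competitor argument you sketch (your annulus-choice argument is the standard one and is fine). For the Kuratowski convergences the paper uses exactly the density estimates of Proposition~\ref{propdensity}, but phrases the contradiction more directly than you do: fix $\eps>0$; if $x\in E$ with $d(x,\partial E)>\eps$ yet $x\notin E_n$, then the density estimate gives $|E_n\Delta E|\ge |B_\eps(x)\setminus E_n|\ge\gamma\eps^d$, contradicting $L^1_{\loc}$ convergence; the symmetric argument handles $E^c$, and this already yields $\partial E_n\subset\{d(\cdot,\partial E)\le\eps\}$; swapping the roles of $E_n$ and $E$ gives the reverse inclusion. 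Your version of this step is a bit roundabout (the sentence ``choosing $x_n\in\ov{E_n}$ with $x_n\to x$, possible if $x$ is a limit of points of $E_n$'' is close to assuming what you want), but the correct argument is clearly what you intend.
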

\begin{proof}
 The stability of the plane-like minimizers is a well known fact~\cite[Section 9]{CDLL}. The Kuratowski (or local Hausdorff)
convergence easily follows from the uniform density estimates for plane-like minimizers (Proposition~\ref{propdensity}).
Indeed, let $\eps>0$ be fixed and let $x \in E \cap \left\{y \,:\, d(y,\partial E)> \eps \right\}$.
If $x$ is not in $E_n$ then by the density estimates we have
\[
|E_n \Delta E|\geq |B_\eps(x)\setminus E_n|\geq \gamma \eps^d.
\] 
This is impossible if $n$ is big enough because $|E_n\Delta E|$ tends to zero. Similarly, we can show that for $n$ big enough, all the points of $E^c\cap\left\{y \,:\, d(y,\partial E)> \eps \right\}$ are outside $E_n$. This shows that  $\partial E_n \subset \left\{y \,:\, d(y,\partial E)\le \eps \right\}$. Inverting $E_n$ and $E$, the same argument proves that $\partial E \subset \left\{y \,:\, d(y,\partial E_n)\le \eps \right\}$ giving the Kuratowski convergence of $\partial E_n$ to $\partial E$.
\end{proof}

{ } Another simple (and classical) consequence of the density estimates is the following
\begin{proposition}\label{supDu}
 Let $u\in BV_{\loc}(\R^d)$ then for every $R>0$,
\[
Spt(|Du|)\cap B_R\ =\ B_R\cap 
\overline{\bigcup_{s}\partial^*\{u>s\}} .
\]
where in the union we consider only the levels for which
$\{u>s\}$ has finite perimeter in $B_R$.

{ } If in addition $v_p$ is a minimizer of \eqref{defphi} and $u(x)=v_p(x)+p\cdot x$, then 
\begin{itemize}
\item $P(\{u>s\}\cap B_R)<+\infty$ for every $s\in \R$;
\item $\overline{\partial^*\{u>s\}}=\partial \{u>s\}$; 
\item the function $u^+$ is u.s.c., and $u^-$ is l.s.c.; 
\item $Spt(|Du|)\cap B_R=\left(\bigcup_{s \in \R} \partial \{u>s\} \cap B_R\right)\cup \left(\bigcup_{s \in \R} \partial \{u\ge s\} \cap B_R\right)$.
\end{itemize}
\end{proposition}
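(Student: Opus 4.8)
The plan is to prove the general statement about the support of $|Du|$ for arbitrary $u\in BV_{\loc}$ first, and then exploit the regularity of plane-like minimizers (Proposition~\ref{propdensity} and Proposition~\ref{stabPL}) to establish the finer properties in the second part. For the first identity, the inclusion $\overline{\bigcup_s\partial^*\{u>s\}}\cap B_R\subseteq Spt(|Du|)\cap B_R$ is immediate from the Coarea Formula: if $x\in\partial^*\{u>s\}$ and $|Du|$ vanished on a neighbourhood $B_\rho(x)$, then $\HH(\partial^*\{u>t\}\cap B_\rho(x))=0$ for a.e.\ $t$, hence by coarea $\{u>t\}$ has density $0$ or $1$ a.e.\ in $B_\rho(x)$ for a.e.\ $t$, which forces $x\notin\partial^*\{u>s\}$ for those $t$ and (by monotonicity of level sets and a limiting argument) for $s$ too. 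Conversely, if $x\in Spt(|Du|)\cap B_R$ then $|Du|(B_\rho(x))>0$ for every small $\rho$, and coarea gives a positive-measure set of levels $t$ with $\HH(\partial^*\{u>t\}\cap B_\rho(x))>0$; choosing such $t$ for a sequence $\rho_n\downarrow 0$ yields points $x_n\in\partial^*\{u>t_n\}$ with $x_n\to x$, so $x$ is in the closure of $\bigcup_s\partial^*\{u>s\}$.

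For the statements concerning $u(x)=v_p(x)+p\cdot x$ with $v_p$ a minimizer of \eqref{defphi}, the key input is the Proposition (from \cite{CT}) stating that \emph{every} superlevel set $\{u>s\}$ is a plane-like minimizer of $\E$; in particular each such set has locally finite perimeter, which gives the first bullet (note that here we do not need to discard a null set of levels). Then Proposition~\ref{propdensity} applies to each $\{u>s\}$: its topological boundary coincides with its measure-theoretic boundary and hence with $\overline{\partial^*\{u>s\}}$, which is the second bullet. For the semicontinuity of $u^\pm$: recall $u^+(x)=\inf\{s:x\notin\overline{\{u>s\}}\}$ and $u^-(x)=\sup\{s:x\in\overline{\{u>s\}}\text{ in the density sense}\}$, more precisely $\{u^+<s\}=\bigcup_{t>s}\{u\ge t\}^{(1)}$-type identities; using the \emph{openness} of the Class A Minimizers (the density estimates let us identify $\{u>s\}$ with its points of density $1$, an open set) together with Kuratowski convergence of boundaries as $s'\uparrow s$ or $s'\downarrow s$ (Proposition~\ref{stabPL}), one checks that $\{u^+\ge s\}=\bigcap_{s'<s}\overline{\{u>s'\}}$ is closed and $\{u^-\le s\}$ is closed, giving upper/lower semicontinuity respectively. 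Finally, the last bullet follows by combining the first identity with the second and third: $Spt(|Du|)\cap B_R=\overline{\bigcup_s\partial^*\{u>s\}}\cap B_R=\overline{\bigcup_s\partial\{u>s\}}\cap B_R$, and the monotone family of \emph{closed} sets $\partial\{u>s\}$, together with the semicontinuity of $u^\pm$ controlling the jump set of the family in $s$, shows that the closure is already attained as the union of the $\partial\{u>s\}$ and the ``left-continuous'' versions $\partial\{u\ge s\}$ — no extra limit points appear because at a point $x$ with $u^-(x)<u^+(x)$ one has $x\in\partial\{u>s\}$ for all $s\in(u^-(x),u^+(x))$ anyway, while at a point where $u^-=u^+=s_0$ one has $x\in\partial\{u>s_0^-\}=\partial\{u\ge s_0\}$ or $x\in\partial\{u>s_0\}$.

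The main obstacle I expect is the bookkeeping in the last two bullets: carefully relating the pointwise representatives $u^+$ (u.s.c.) and $u^-$ (l.s.c.) to the \emph{closed} level sets $\partial\{u>s\}$ and $\partial\{u\ge s\}$, and verifying that the closure in the support formula introduces no genuinely new points. This requires using that each $\{u>s\}$ is open with $\partial\{u>s\}=\partial\{u\ge s'\}$-limits behaving monotonically and continuously from one side (via Kuratowski convergence of plane-like minimizers, Proposition~\ref{stabPL}), so that the only discontinuities of $s\mapsto\{u>s\}$ are ``jumps'' already captured by the pair $(\partial\{u>s\},\partial\{u\ge s\})$. The perimeter bounds and the estimate $\HH^{d-3}(\partial E\setminus\partial^*E)=0$ from Proposition~\ref{propdensity} guarantee all these boundaries are well-behaved $(d-1)$-rectifiable sets, so no pathology arises from the singular set.
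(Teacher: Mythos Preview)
Your approach is essentially the same as the paper's: Coarea Formula for the general identity, and density estimates (Proposition~\ref{propdensity}) for the four bullets in the minimizer case. The paper's proof of the second part is in fact even more terse than yours---it literally says the properties ``follow easily from the density estimates''---so your added detail on how each bullet is deduced is fine and in the same spirit.

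One point where you make life harder than necessary: for the inclusion $\overline{\bigcup_s\partial^*\{u>s\}}\cap B_R\subseteq Spt(|Du|)\cap B_R$, you argue by contradiction via Coarea, obtaining $\HH(\partial^*\{u>t\}\cap B_\rho(x))=0$ only for \emph{a.e.}\ $t$, and then invoke an unspecified ``monotonicity and limiting argument'' to reach the particular level $s$. This last step is not quite justified as written. The paper's argument avoids this entirely: if $x\notin Spt(|Du|)$ then $|Du|(B_\rho(x))=0$ for some $\rho>0$, hence $u$ is (a.e.) constant on $B_\rho(x)$, so $B_\rho(x)\cap\partial^*\{u>t\}=\emptyset$ for \emph{every} $t$, and $x$ is not in the closure. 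This is both simpler and cleaner than your route.
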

Here, $u^\pm(x)$ are classicaly defined as the approximate upper and lower
limits of $u$ at $x$, see~\cite{AFP}.

\begin{proof}
Let us show the first assertion. For $u\in BV_{\loc}(\R^d)$, if $x \notin Spt(|Du|)$, then there exists $\rho>0$ with $B_\rho(x) \subset \{Du=0\}$ and thus $u$ is constant on $B_\rho(x)$, which implies that 
$$
x \notin \overline{\bigcup_{s}
 \partial^*\{u>s\} \cap B_R}.
$$ 
On the contrary, if $x \in Spt(|Du|)$ then for every $\rho>0$, by the Coarea Formula,
\[|Du|(B_\rho(x))=\int_\R \HH(\partial^*\{u>s\} \cap B_\rho(x)) \ ds >0\]
thus for every $\rho>0$ there exists $x_\rho \in B_\rho(x)\cap \partial^*\{u>s_\rho\}$ for some $s_\rho$ since $x_\rho$ tends to $x$ when $\rho\to 0$, this proves the other inclusion.\\
Given a minimizer $v_p$, the other properties follow easily from the density estimates.
\end{proof}


{ } The following maximum principle for minimizers is a cornerstone of the theory.
\begin{proposition}\label{maxprinc}
Let $E_1\subset E_2$ be two Class A Minimizers with connected boundary, 
then $\mathcal{H}^{d-3} (\partial E_1 \cap \partial E_2)=0$.
\end{proposition}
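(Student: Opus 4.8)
The substance of the proposition is a strong maximum principle: I will argue that if $x_0\in\partial^*E_1\cap\partial^*E_2$ then $E_1=E_2$. Granting this, and since by Proposition~\ref{propdensity} the singular sets $\partial E_i\setminus\partial^*E_i$ are $\mathcal{H}^{d-3}$-negligible, while $\partial E_1\cap\partial E_2$ differs from $\partial^*E_1\cap\partial^*E_2$ only by a subset of $(\partial E_1\setminus\partial^*E_1)\cup(\partial E_2\setminus\partial^*E_2)$, it follows that $\mathcal{H}^{d-3}(\partial E_1\cap\partial E_2)=0$ whenever the two (distinct) minimizers satisfy $E_1\subsetneq E_2$, which is the assertion. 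So the plan is: (i) show that a touching point $x_0\in\partial^*E_1\cap\partial^*E_2$ forces $E_1$ and $E_2$ to coincide near $x_0$; (ii) propagate this coincidence along the connected set $\partial E_1$; (iii) deduce $E_1=E_2$.

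For (i) I would first check that the inward normals of $E_1$ and $E_2$ agree at $x_0$: blowing up at $x_0$, each $E_i$ converges to the half-space $\{y\cdot\nu^{E_i}(x_0)\ge0\}$, and since $E_1\subset E_2$ the first half-space is contained in the second, which forces $\nu^{E_1}(x_0)=\nu^{E_2}(x_0)=:\nu$. By Proposition~\ref{propdensity} each $\partial^*E_i$ is $C^{2,\alpha}$ near $x_0$, so after rotating so that $\nu=e_d$ we may write $\partial E_i\cap B_r(x_0)$ as a graph $x_d=u_i(x')$ with $u_1(x_0')=u_2(x_0')$, $\nabla u_1(x_0')=\nabla u_2(x_0')=0$, $E_i\cap B_r(x_0)=\{x_d>u_i(x')\}$, and hence $u_1\ge u_2$. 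Since $g=0$, each $u_i$ solves the anisotropic minimal surface equation associated with $F$, that is, the Euler--Lagrange equation obtained by writing the anisotropic area $\int F(x,\nu)\,d\HH$ as a graph functional over $x'$; this is a quasilinear second order equation whose principal coefficients are (convex combinations along the two graphs of) the second derivatives $F_{p_ip_j}$, $1\le i,j\le d-1$. These are of class $C^{0,\alpha}$ because $F\in C^{2,\alpha}$, and they are uniformly elliptic on the patch because $F$ is elliptic (so $D^2_pF$ is positive definite transversally to $p$, and here $p=(-\nabla u_i,1)$ stays away from the hyperplane $\{p_d=0\}$). Subtracting the two equations in the usual way, $w:=u_1-u_2\ge0$ solves a homogeneous linear uniformly elliptic equation with Hölder coefficients and satisfies $w(x_0')=0$; E.~Hopf's strong maximum principle then gives $w\equiv0$ near $x_0'$, i.e.\ $\partial E_1=\partial E_2$, hence $E_1=E_2$, in a ball $B_\rho(x_0)$.

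For (ii)--(iii) I would run an open--closed argument on the connected set $\partial E_1$. Let $A$ be the set of $x\in\partial E_1$ such that $\partial E_1$ and $\partial E_2$ coincide in some ball around $x$. By step (i), $A\neq\emptyset$, and $A$ is relatively open in $\partial E_1$ by construction. For relative closedness, take $x_n\to x\in\partial E_1$ with $x_n\in A$ and let $s_n>0$ be the largest radius of coincidence at $x_n$; if $\liminf_n s_n>0$ then clearly $x\in A$, while if $s_n\to0$ one shows that $x$ is again a regular point of both $E_1$ and $E_2$ — it is a limit of regular coincidence points and the inclusion $E_1\subset E_2$ pins the local geometry down — and re-applies the strong maximum principle at $x$, contradicting $s_n\to0$. (Equivalently one may work directly on the smooth manifold $\partial^*E_1$: since $\partial E_1\setminus\partial^*E_1$ has codimension at least three in $\partial E_1$, it does not disconnect the connected set $\partial E_1$, and the coincidence set is open and closed in $\partial^*E_1$.) Either way $A=\partial E_1$, so $\partial E_1=\partial E_2$; as both are open sets satisfying the density estimates of Proposition~\ref{propdensity} and $E_1\subset E_2$, this forces $E_1=E_2$.

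The key point is the strong maximum principle of the second paragraph, and it is there that the ellipticity of $F$ is genuinely used (for a crystalline $F$ ordered minimizers can touch along flat faces). The step I expect to be the most delicate to make fully rigorous is the relative closedness of $A$ in the third paragraph — i.e.\ controlling the coincidence radius $s_n$ from below, equivalently ruling out that the coincidence set accumulates onto the lower-dimensional singular parts of $\partial E_1$ and $\partial E_2$ in a way that obstructs the continuation; the bound $\mathcal{H}^{d-3}(\partial E_i\setminus\partial^*E_i)=0$ from Proposition~\ref{propdensity} is exactly what makes this go through.
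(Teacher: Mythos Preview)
Your proposal is correct and follows essentially the same route as the paper: write both boundaries locally as ordered $C^{2,\alpha}$ graphs, linearize to obtain a nonnegative solution $w$ of a uniformly elliptic equation, and use a strong maximum principle to force local (hence, by connectedness of $\partial E_1$, global) coincidence at any mutual regular point, so that $\partial^*E_1\cap\partial^*E_2=\emptyset$ and only the $\mathcal{H}^{d-3}$-null singular sets can meet. The only cosmetic difference is that the paper applies Hopf's boundary point lemma on a ball in $\{w>0\}$ tangent to $\{w=0\}$ rather than invoking the interior strong maximum principle directly, and it absorbs your open--closed continuation into the bare assertion that one can pick $\bar x\in\partial^*E_1\cap\partial^*E_2$ at which the boundaries do not coincide in any neighborhood; these are the same argument in different clothing, and the delicate closedness step you flag is equally implicit in the paper's version.
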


\begin{proof}

 We shall now prove that $\partial^*E_1\cap \partial^* E_2 =\emptyset$. Let us assume by contradiction that $\partial^*E_1\cap \partial^* E_2 \neq \emptyset$ then we can find $\bar x \in \partial^*E_1\cap \partial^* E_2$ such that $\partial^*E_1\cap B_r(\bar x) \neq \partial^* E_2 \cap B_r(\bar x)$ for every $r>0$.
 Since $E_1\subset E_2$, $E_1$ and $E_2$ have the same tangent space at $\bar x$
and they can be seen as graphs over the same domain $D$ of two functions
$v_1, v_2 \in C^{2,\alpha}(D)$, with $v_2\ge v_1$. 
For $(y,r,p)\in \R^{d-1}\times\R\times\R^{d-1}$, let 
\[\widetilde F(y,r,p):=F((y,r), (-p,1)).\]
The functions $v_i \in C^{2,\alpha}(D)$, $i=1,2$,  (locally) minimize the functional
\[\int_D \widetilde F(y,u,\nabla u) \ dy\]
and thus solve the elliptic PDE with H\"older continuous coefficients
\begin{equation}\label{ELu}
\frac{\partial \widetilde F}{\partial r}(y,v_i,\nabla v_i)-\Div[\nabla_p \widetilde F(y,v_i,\nabla v_i)]=0.
\end{equation}
Consider the function $w=v_2-v_1$. 
Up to reducing the domain, we can assume that $\bar x\in \partial D$,
 $w> 0$ in $D$ and $w(\bar x)=0$. We must then have $\nabla w(\bar x)=0$. Let 
 \begin{align*} A(x)&:= \int_0^1 \nabla^2_p \widetilde F( x, v_2(x), \nabla v_2(x) -\nabla v_1(x)) dt\\
B_1(x)&:= \int_0^1 \nabla_p \frac{\partial }{\partial r} \widetilde F (x, t v_2(x)+(1-t)v_1(x)), \nabla v_2(x)) dt \\
 B_2(x)&:= \int_0^1 \nabla_p \frac{\partial }{\partial r} \widetilde F (x, v_2(x), t\nabla v_2(x)+(1-t) \nabla v_1(x)) dt \\
 c_1(x)&:=\int_0^1 \frac{\partial^2}{\partial r^2} \widetilde F(x, t v_2+(1-t)v_1, \nabla v_2) dt \\
\end{align*}
then $w$ satisfies the linear non-degenerate elliptic PDE,
\[-\Div (A(x) \nabla^2 w)- \Div( B_1(x) w) +B_2(x)\cdot \nabla w +c_1(x) w  =0.\]
By Hopf's Lemma \cite[Lemma 3.4]{GT}, this implies that
 $\nabla w(\bar x)\cdot\nu_D < 0$, which gives a contradiction. 
Thus $\partial E_1$ and $\partial E_2$ can only intersects in singular points which are of $(d-3)$-Hausdorff measure zero~\cite{ASS}.
\end{proof}
\begin{remark}\rm
 In the case of isotropic functionals i.e. $F(x,\nu)=a(x) |\nu|$, \cite{Simon} shows that in fact two 
minimizers which are contained one in the other cannot touch at all.
\end{remark}

\begin{proposition}\label{bern}
Let $d=2$ and let $E$ be a Class A Minimizer, then $E$ is a plane-like minimizer and $\partial E$ is connected.  
\end{proposition}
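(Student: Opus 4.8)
The plan is to study $\partial E$ directly, using the regularity and density bounds of Proposition~\ref{propdensity}, the strong maximum principle Proposition~\ref{maxprinc}, and the existence Theorem~\ref{thmcdll}; recall also that by assumption $g\equiv 0$.

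\textbf{Structure of $\partial E$ and absence of closed components.} By Proposition~\ref{propdensity}, $\partial E=\partial^*E$ is a $C^{2,\alpha}$ one--dimensional manifold with $\mathcal H^{d-3}(\partial E\setminus\partial^*E)=0$; since $d=2$ we have $d-3<0$, so there are no singular points, and (using the lower density estimates, which make $\partial E$ closed in $\R^2$) $\partial E$ is a disjoint union of $C^{2,\alpha}$ curves, each either a closed curve or a properly embedded line. No component can be a closed curve: choosing an innermost one, it bounds a topological disc $D$ on which $\chi_E$ is constant, and since $g\equiv 0$ the set $E\cup D$ or $E\setminus D$ is a compact perturbation of $E$ removing $\partial D$ from the boundary, hence strictly lowering $\E$ by at least $c_0\,\mathcal H^1(\partial D)>0$, contradicting the Class A minimality. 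So every component of $\partial E$ is a properly embedded line.

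\textbf{Finitely many components; the extreme regions minimize.} Comparing $E$ with $E\setminus B_\rho$ and using \eqref{boundF} gives the linear perimeter growth $P(E,B_\rho)\le C\rho$; each unbounded connected component $\Gamma_i$ obeys $\mathcal H^1(\Gamma_i\cap B_\rho)\ge 2\rho-O(1)$ for large $\rho$, and the $\Gamma_i$ are pairwise disjoint, so there are only finitely many, say $\Gamma_1,\dots,\Gamma_N$. Pairwise disjoint properly embedded lines in $\R^2$ are linearly ordered; writing $R_0,\dots,R_N$ for the components of $\R^2\setminus\partial E$ (so that $\Gamma_i$ separates $R_{i-1}$ from $R_i$), the requirement $\Gamma_i\subset\partial E$ forces $\chi_E$ to alternate along $R_0,\dots,R_N$. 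In particular the outermost regions $R_0$ and $R_N$ are connected components of $E$ or of $E^c$, and each is itself a Class A minimizer with connected boundary ($\Gamma_1$, resp.\ $\Gamma_N$): a competitor improving $R_0$ on a ball $B_\rho$ may be taken to stay near $R_0$ (by the density/regularity of minimizing competitors), hence away from the remaining components of $E$, so glueing it with $E\setminus R_0$ would improve $E$.

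\textbf{Confinement to a slab and $N=1$.} It remains to show that each $\Gamma_i$ lies in a slab $\{\,|x\cdot\nu_i|\le M\,\}$ and that $N=1$. For the confinement I would argue by blow--down, in the spirit of Auer--Bangert: the rescalings $E/\rho$ are Class A minimizers of $\int F(\rho\,\cdot\,,\nu)\,d\HH$, and by the $\rho$--uniform density estimates they converge in $L^1_{\loc}$ (up to a subsequence) to a Class A minimizer of the homogenized, constant--coefficient anisotropy; in $\R^2$ such a minimizer has vanishing anisotropic curvature, so by the uniform convexity \eqref{strictconvF} of the squared anisotropy (hence injectivity of the normal map) its boundary is a single straight line. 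Kuratowski convergence of $\partial(E/\rho)$ to this line (again from the uniform density estimates, as in Proposition~\ref{stabPL}) pins $\partial E$ asymptotically to a line, and periodicity together with Proposition~\ref{maxprinc} upgrades this to the uniform bound $\Gamma_1\subset\{|x\cdot\nu|\le M\}$. Finally, once $\Gamma_1=\partial R_0$ lies in a slab, suppose $N\ge 2$: using Theorem~\ref{thmcdll} in the direction $\nu$ and sliding the resulting plane--like minimizers (which have connected boundary) until first contact with $R_0$ produces a Class A minimizer $P$, with connected boundary, nested with $R_0$ and with $\partial P\cap\partial R_0\ne\emptyset$; since for $d=2$ Proposition~\ref{maxprinc} says precisely that nested Class A minimizers with connected boundary have disjoint reduced boundaries, this is a contradiction. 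Hence $N=1$, $E=R_0$ is a plane--like minimizer, and $\partial E=\Gamma_1$ is connected.

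\textbf{Main obstacle.} The heart of the matter is the confinement step: crude comparisons on balls lose a dimensional constant (a factor $c_0^{\pm2}$ that is never favourable) and only bound the \emph{number} of components, not their spreading. One therefore needs the blow--down/homogenization argument — or an equally careful sliding comparison with the minimizers of Theorem~\ref{thmcdll} — and in either case Proposition~\ref{maxprinc}, in the sharp form valid when $d=2$ (``nested Class A minimizers with connected boundary have disjoint reduced boundaries''), to carry out the final bookkeeping.
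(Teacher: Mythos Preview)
Your route diverges sharply from the paper's at the confinement step, and that step contains real gaps.

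The paper's argument is elementary and direct. Once $\partial E$ is a locally finite union of smooth unbounded curves, fix a component $\gamma$ and take any plane-like minimizer $H$ with connected boundary supplied by Theorem~\ref{thmcdll}. By regularity and minimality, $\gamma\cap\partial H$ has at most one point (a second intersection bounds a finite region and a cut-and-paste produces a minimizer with a corner, contradicting regularity; a tangential contact is excluded by the Hopf argument of Proposition~\ref{maxprinc}). Since Theorem~\ref{thmcdll} provides such $H$ with $\partial H$ in \emph{every} strip of width $M$, this single-crossing constraint forces $\gamma$ into such a strip. Connectedness of $\partial E$ then comes from minimality: with two components in parallel strips, excising the strip-like region between them inside $B_R$ saves energy of order $R$ at cost $O(1)$.

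Your blow-down argument, by contrast, imports heavy machinery and still does not close. The limit of $E/\rho$ is governed by the homogenized anisotropy $\varphi$, not by $F$, so invoking \eqref{strictconvF} is a mis-citation; to deduce that the blow-down boundary is a single line you would need strict convexity of $\varphi$'s unit ball, i.e.\ Theorem~\ref{convex}, proved only later with the calibration apparatus of Section~\ref{seccalibre}. The convergence of rescaled Class A minimizers to Class A minimizers of the homogenized energy is a genuine $\Gamma$-convergence statement the paper does not establish (Proposition~\ref{stabPL} treats a fixed functional). And ``periodicity together with Proposition~\ref{maxprinc} upgrades asymptotic straightness to a uniform slab'' is asserted, not argued; the natural way to carry it out is precisely the paper's comparison with the minimizers of Theorem~\ref{thmcdll}, which makes the blow-down detour superfluous.

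Finally, your Step~5 does not show $N=1$. Sliding a plane-like minimizer to first contact with $R_0$ and invoking Proposition~\ref{maxprinc} yields either a contradiction or $R_0=P$; the latter just says $R_0$ is itself plane-like, which is compatible with $N\ge 2$ (e.g.\ $E$ a strip between two such $R_0$, $R_2$). The paper's energy comparison is what actually rules out extra components.
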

\begin{proof}
By Proposition \ref{maxprinc}, $\partial E$ is of class $C^{2,\alpha}$ and is composed of a locally finite union of curves. 
Moreover each of these curves has infinite lenght by the minimality of $E$. Let $\gamma$ be such a curve and $H$ be a plane-like minimizer with connected boundary. Due to the regularity and minimality of $E$ and $H$, 
$\partial H$ can intersect $\gamma$ in at most one point. Since by Theorem \ref{thmcdll} there exist such plane-like minimizers inside every strip of width $M$, it follows that $\gamma$ is included in such a strip. Using again the minimality of $E$, we then get that $\partial E$ is connected and $E$ is plane-like.   
\end{proof}
Proposition \ref{bern} is reminiscent of Bernstein Theorem, and
the same result also holds for $d=3$ under the additional assumption that $F$ does not depend on $x$ \cite{white}. 
However, in \cite{exMorgan} it is shown 
that it is non longer true in four dimensions, even for a function $F$ independent of $x$.

\section{Calibrations}\label{seccalibre}
We now introduce the notion of calibration. 
\begin{definition}
 We say that a vector field $z \in \X$ is a periodic calibration of a set $E$ of locally finite perimeter if, for every open set $A$, we have
\[\int_A [z,D \chi_E]=\int_A F(x,D \chi_E).\]
When no confusion can be made, by calibration we mean a periodic calibration.
\end{definition}

{ } The constant interest towards calibrations in the study of minimal surfaces, comes from the following result:
\begin{theorem}\label{calibregalPL}
 If $E$ is a set for which there exists a calibration (not necessarily periodic),  then $E$ is a Class A Minimizer.
\end{theorem}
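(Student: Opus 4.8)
The plan is to compare $E$ with an arbitrary competitor $E'$ that agrees with $E$ outside some ball $B_R$, and to show that the presence of a calibration $z\in\X$ forces $\E(E,B_R)\le\E(E',B_R)$. First I would fix $R>0$ and a compactly supported perturbation, so that $E\Delta E'\Subset B_R$; choose an open set $A$ with smooth boundary such that $E\Delta E'\Subset A\Subset B_R$ and apply the calibration identity on $A$. The key chain of inequalities is: $\E(E',A)\ge\int_A[z,D\chi_{E'}] = -\int_A\Div z\,\car{E'} - \int_{\partial A}[z,\nu]\chi_{E'}\,d\HH = -\int_{\partial A}[z,\nu]\chi_{E'}\,d\HH$, using $\Div z = 0$ (recall $z\in\X$). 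The first inequality is just the pointwise bound $[z,\nu]\le F(x,\nu)$ coming from $F^\circ(x,z(x))\le 1$ (more precisely the measure inequality $[z,D\chi_{E'}]\le F(x,D\chi_{E'})$, valid for any set of finite perimeter, which follows from the definition of the Anzellotti pairing and the polarity relation $z\cdot p\le F(x,p)$). For $E$ itself, the calibration hypothesis turns this into an equality: $\E(E,A)=\int_A F(x,D\chi_E)=\int_A[z,D\chi_E] = -\int_{\partial A}[z,\nu]\chi_E\,d\HH$.

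Now the point is that $\chi_E = \chi_{E'}$ on $\partial A$ (indeed $\HH$-a.e.\ on $\R^d\setminus B_R$, hence in particular on $\partial A$ if $A$ is chosen with $E\Delta E'\Subset A$), so the two boundary terms cancel: $\int_{\partial A}[z,\nu]\chi_E\,d\HH = \int_{\partial A}[z,\nu]\chi_{E'}\,d\HH$. Combining,
\[
\E(E,A)\ =\ -\int_{\partial A}[z,\nu]\chi_E\,d\HH\ =\ -\int_{\partial A}[z,\nu]\chi_{E'}\,d\HH\ \le\ \E(E',A).
\]
Since $E$ and $E'$ coincide on $A^c\cap B_R$, adding $\E(E,B_R\setminus\ov A)=\E(E',B_R\setminus\ov A)$ gives $\E(E,B_R)\le\E(E',B_R)$, which is exactly the Class A minimality on $B_R$. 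As $R$ was arbitrary, $E$ is a Class A Minimizer.

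A few technical points need care rather than cleverness. First, one must make sense of the Anzellotti pairing and the Gauss–Green formula for the \emph{non-periodic} bounded field $z$ restricted to the bounded set $A$ with Lipschitz boundary: this is exactly the content of the theorem on $[z,Du]$ recalled above applied to $u=\chi_{E'}\in BV(A)$, and it requires only $\Div z\in L^d(A)$ (here $\Div z=0$) and $z\in L^\infty$. Second, the inequality $[z,D\chi_{E'}]\le F(x,D\chi_{E'})$ as measures — equivalently $\psi_z(x)\le F(x,\nu^{E'}(x))$ $\HH$-a.e.\ on $\partial^*E'$ — should be justified from $F^\circ(x,z(x))\le1$ a.e.; this is standard (approximate $z$ by smooth fields, or use the definition of the normal trace as a Radon–Nikodym derivative together with the a.e.\ polarity bound), but it is the one place where the structure of $\X$ is really used. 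I expect the main obstacle, if any, to be bookkeeping with the boundary terms: making sure $\partial A$ can be chosen so that both $\chi_E$ and $\chi_{E'}$ have well-defined traces there that agree, which is handled by picking $A$ with $\partial A$ disjoint from $\partial E\cup\partial E'$ (possible since $E\Delta E'$ is compactly contained in $B_R$ and one has freedom in the radius of $A$), or simply by choosing $A$ a ball of generic radius so that its boundary meets the reduced boundaries transversally and $|D\chi_E|(\partial A)=|D\chi_{E'}|(\partial A)=0$. Everything else is a direct application of the Gauss–Green formula and $\Div z=0$.
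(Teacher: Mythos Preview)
Your proposal is correct and is exactly the argument the paper has in mind: the paper's entire proof reads ``It follows by integration by parts, using $\Div z=0$,'' and you have simply written out those details carefully. One small notational slip: you write $z\in\X$, but $\X$ is defined in the paper as the set of \emph{periodic} admissible fields, whereas the theorem explicitly allows non-periodic calibrations; your argument nowhere uses periodicity (as you yourself note later), so this is only a matter of notation.
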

\begin{proof}
It follows by integration by parts, using $\Div z=0$.
\end{proof}
{ } Calibrations are very stable objects, as shown by the following Proposition:

\begin{proposition}\label{stabilitycalib}
 Let $E_n$ be 
Class A minimizers converging in the $L^1_{\loc}$-topology to a set $E$.
Assume that the sets $E_n$ are calibrated by $z_n \in \X$ and that $z_n$ converges to a field $z$ weakly-$*$ in $L^\infty$. Then $z$ calibrates $E$ (which is
thus also a minimizer).
\end{proposition}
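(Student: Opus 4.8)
The plan is to pass to the limit in the two defining relations of a calibration, namely that $z_n\in\X$ (which is $L^\infty$-weak-$*$ closed by the remark immediately after the definition of $\X$) and that $\int_A[z_n,D\chi_{E_n}]=\int_A F(x,D\chi_{E_n})$ for every open $A$. The first point is immediate: since $z_n\stackrel{*}{\rightharpoonup}z$ and $\X$ is weakly-$*$ closed, $z\in\X$; in particular $\Div z=0$ and $F^\circ(x,z(x))\le 1$ a.e. So the content is to show that $z$ calibrates $E$, i.e. that $\int_A[z,D\chi_E]=\int_A F(x,D\chi_E)$ for all open $A$. Equivalently, by the generalized Gauss--Green formula and $\Div z_n=\Div z=0$, the calibration condition on $A$ can be reformulated in a way that only involves boundary integrals and the divergence-free character, so it suffices to test against characteristic functions of nice balls; and since $E$ is already a Class A Minimizer by Proposition \ref{stabPL} (the $E_n$ being plane-like — wait, here they are only assumed Class A Minimizers, so one uses instead Theorem \ref{calibregalPL} once the calibration is produced, or simply the lower semicontinuity of $\E$), it is enough to prove one inequality.

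Concretely, first I would use the distributional identity $\langle[z_n,D\chi_{E_n}],\psi\rangle=-\int u\,\psi\,\Div z_n-\int u\,z_n\cdot\nabla\psi$ with $u=\chi_{E_n}$ and $\Div z_n=0$, so that for every $\psi\in\mathcal{C}^\infty_c$,
\[
\langle[z_n,D\chi_{E_n}],\psi\rangle=-\int_{\R^d}\chi_{E_n}\,z_n\cdot\nabla\psi\,dx.
\]
Because $\chi_{E_n}\to\chi_E$ in $L^1_{\loc}$ and $z_n\stackrel{*}{\rightharpoonup}z$ in $L^\infty$, the product $\chi_{E_n}z_n$ converges weakly-$*$ in $L^\infty$ to $\chi_E z$ (this is the usual ``strong times weak'' convergence; here it is legitimate because one factor converges strongly in $L^1_{\loc}$ and the other is bounded and converges weakly-$*$). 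Hence $\langle[z_n,D\chi_{E_n}],\psi\rangle\to -\int\chi_E\,z\cdot\nabla\psi=\langle[z,D\chi_E],\psi\rangle$, so $[z_n,D\chi_{E_n}]\rightharpoonup[z,D\chi_E]$ as distributions, and since these are uniformly bounded Radon measures (the total variation of $[z_n,D\chi_{E_n}]$ on a ball $B_R$ is at most $|D\chi_{E_n}|(B_R)$, which is locally uniformly bounded by the density estimates of Proposition \ref{propdensity}), the convergence holds weakly-$*$ as measures on every $B_R$.

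Next, on the right-hand side, $F(x,D\chi_{E_n})=F(x,\nu^{E_n})\HH\restr\partial^*E_n$ are measures with $\int_A F(x,D\chi_{E_n})=\int_A[z_n,D\chi_{E_n}]\le\int_A|D\chi_{E_n}|/c_0$, hence again uniformly bounded on balls; up to a subsequence they converge weakly-$*$ to some measure $\mu$ on each $B_R$, and by the identity above $\mu=[z,D\chi_E]$ on every open set (first on open balls, then on general opens by regularity of the measures, being careful to choose radii avoiding the at-most-countably-many ``bad'' radii where the limit measures charge the sphere). It therefore remains to identify $\mu$ with $F(x,D\chi_E)=\int F(x,\nu^E)\,d\HH\restr\partial^*E$. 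One inequality, $F(x,D\chi_E)\le\mu$ in the sense of measures, follows from lower semicontinuity of the anisotropic perimeter $\E(\cdot,A)=\int_A F(x,D\chi_\cdot)$ under $L^1_{\loc}$ convergence (standard, since $F$ is continuous, convex and $1$-homogeneous in the second variable and bounded below by $c_0|\cdot|$), applied on every open $A$. The reverse inequality is automatic: since $z\in\X$ we always have the pointwise/measure bound $[z,D\chi_E]\le F(x,D\chi_E)$ (this is the defining inequality $z(x)\cdot\nu^E(x)\le F(x,\nu^E(x))$ coming from $F^\circ(x,z(x))\le1$, valid $\HH$-a.e.\ on $\partial^*E$), so $\mu=[z,D\chi_E]\le F(x,D\chi_E)\le\mu$ and hence $[z,D\chi_E]=F(x,D\chi_E)$ as measures; this is exactly the calibration identity on every open set $A$. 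Finally, $E$ being calibrated, Theorem \ref{calibregalPL} gives that $E$ is a Class A Minimizer, as claimed.

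I expect the main technical obstacle to be the passage to the limit \emph{as measures} on arbitrary open sets $A$ rather than merely as distributions: one must upgrade the distributional convergence $[z_n,D\chi_{E_n}]\rightharpoonup[z,D\chi_E]$ to weak-$*$ convergence of bounded Radon measures using the uniform mass bounds from the density estimates, and then handle the possible loss of mass on boundaries of test sets by a routine-but-necessary argument restricting to balls whose radii avoid a countable exceptional set. Everything else — weak-$*$ closedness of $\X$, the ``strong $\times$ weak'' product convergence, and the lower semicontinuity of the anisotropic perimeter — is standard and does not require the minimality of the $E_n$ (only the uniform density estimates, and these hold for any Class A Minimizer by Proposition \ref{propdensity}).
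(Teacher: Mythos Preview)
Your proof is correct and follows essentially the same approach as the paper: both use that $\X$ is weak-$*$ closed, pass to the limit in $-\int_{E_n} z_n\cdot\nabla\psi$ via strong-$L^1_{\loc}$ times weak-$*$-$L^\infty$ convergence, and combine the inequality $[z,D\chi_E]\le F(x,D\chi_E)$ with lower semicontinuity of the anisotropic perimeter to force equality. The only difference is that the paper tests directly against nonnegative $\phi\in\mathcal{C}^\infty_c$ and concludes equality of measures from equality of all such pairings, whereas you take a detour through subsequential weak-$*$ limits of measures and worry about ``bad radii'' of balls; this detour is unnecessary, since equality of two Radon measures on all nonnegative smooth test functions already gives equality on every open set.
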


\begin{proof}
Let $\phi\in \mathcal{C}_c^\infty(\R^d)$, $\phi\ge 0$. Observe
that since $X$ is compact, $z\in X$. Hence
\begin{multline*}
\int_{\partial^* E_n} \phi F(x,\nu^{E_n})\,d\HH\ =\ \int_{\R^d} \phi [z_n,D\chi_{E_n}]
\ =\ -\int_{E_n} z_n\cdot\nabla \phi 
\\  \stackrel{n\to\infty}{\longrightarrow}\ -\int_{E} z\cdot\nabla \phi 
\ =\ \int_{\R^d} \phi [z,D\chi_{E}]
\ \le\ \int_{\partial^* E} \phi F(x,\nu^{E})\,d\HH
\end{multline*}
and the reverse inequality follows by lower-semicontinuity of
the total variation. Hence $z$ is a calibration for $E$.

\end{proof}

{ } A natural way of producing calibrations for a set is through the cell problem \eqref{defphi}.
By Proposition \ref{propdual} there exists $z\in X$ such that, for any minimizer $v_p$ of~\eqref{defphi},
\begin{equation}
\int_\T [z, Dv_p+p]\ =\ \int_\T F(x,p+Dv_p)\ =\ \p(p)\,.
\end{equation}
We say that such a vector field $z$ is a calibration in the direction $p$.
Conversely, if $z\in X$ and $v\in BV(\T)$ are such that
\begin{equation}
\int_\T [z, Dv+p]\ =\  \int_\T F(x,p+Dv)\,,
\end{equation}
then $v$ is a minimizer of~\eqref{defphi}
and $z$ is a calibration in the direction $p$. Repeating almost verbatim the proof of the Coarea Formula \cite[Th. 3.40]{AFP}, there holds,

\begin{proposition}
 Let $A\subset \R^d$ be an open set. For $u\in BV(A)$, letting for $s\in \R$, $E_s:=\{u>s\}$, there holds
\[\int_{\R} \int_A F(x, D \chi_{E_s})  \ ds= \int_A F(x, D u).\]
\end{proposition}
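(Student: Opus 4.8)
The statement to prove is a "weighted coarea formula" for the anisotropic energy: for $u \in BV(A)$ and $E_s := \{u>s\}$,
\[
\int_{\R} \int_A F(x, D\chi_{E_s}) \, ds = \int_A F(x, Du).
\]
As the statement itself suggests, the plan is to imitate the proof of the classical coarea formula in \cite[Th.~3.40]{AFP}, inserting the anisotropic weight $F(x,\cdot)$. I would split the proof into the two inequalities.

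\emph{The inequality $\int_A F(x,Du) \le \int_\R \int_A F(x,D\chi_{E_s})\,ds$.} Here I would use the fact that $F(x,\cdot)$ is convex and one-homogeneous, so $p \mapsto F(x,p)$ is a supremum of linear functions: $F(x,p) = \sup\{z\cdot p : F^\circ(x,z)\le 1\}$. By the usual measure-theoretic duality (Reshetnyak-type / the definition of $\int_A F(x,Du)$ as a supremum over competitors $z$ with $F^\circ(x,z)\le 1$ and $z\in\mathcal{C}^1_c$), it suffices to test against a fixed smooth $z$ with $F^\circ(x,z(x))\le 1$: then $\int_A z\cdot Du = \int_\R \int_A z\cdot D\chi_{E_s}\,ds$ by the classical coarea formula applied coordinatewise (or by the generalized coarea for the scalar measure $z\cdot Du$), and $\int_A z\cdot D\chi_{E_s} \le \int_A F(x,D\chi_{E_s})$ pointwise in $s$ since $F^\circ(x,z)\le1$. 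Taking the supremum over $z$ on the left gives the inequality.

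\emph{The inequality $\int_\R \int_A F(x,D\chi_{E_s})\,ds \le \int_A F(x,Du)$.} This is the direction that genuinely repeats the structure of \cite[Th.~3.40]{AFP}. First establish it for $u$ such that $Du$ has no singular part and in fact for smooth or piecewise-constant $u$ by a direct computation: if $u$ is smooth, $D\chi_{E_s} = \nu^{E_s}\HH \restr \partial^*E_s$ with $\nu^{E_s} = -\nabla u/|\nabla u|$ on $\{u=s\}$, and the classical coarea formula plus Fubini turns $\int_\R \int_A F(x,\nu^{E_s})\,d\HH\,ds$ into $\int_A F(x,-\nabla u/|\nabla u|)|\nabla u|\,dx = \int_A F(x,Du)$. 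Then pass to general $u\in BV(A)$ by approximation: take $u_n \to u$ strictly in $BV(A)$ (smooth), so that $\int_A F(x,Du_n)\to \int_A F(x,Du)$ by continuity of the anisotropic total variation under strict convergence (Reshetnyak continuity, using that $F$ is continuous in $x$ and convex $1$-homogeneous in $p$). For the left-hand side, use that $u_n\to u$ in $L^1$ implies (along a subsequence, for a.e.\ $s$) $\chi_{\{u_n>s\}}\to\chi_{\{u>s\}}$ in $L^1_{\loc}$, hence by lower semicontinuity of $v\mapsto\int_A F(x,Dv)$,
\[
\int_A F(x,D\chi_{\{u>s\}}) \le \liminf_n \int_A F(x,D\chi_{\{u_n>s\}}),
\]
and conclude with Fatou's lemma:
\[
\int_\R \int_A F(x,D\chi_{\{u>s\}})\,ds \le \liminf_n \int_\R \int_A F(x,D\chi_{\{u_n>s\}})\,ds = \liminf_n \int_A F(x,Du_n) = \int_A F(x,Du).
\]

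\emph{Main obstacle.} The delicate point is not either inequality in isolation but making sure the measurability and integrability bookkeeping goes through: one must know $s\mapsto \int_A F(x,D\chi_{E_s})$ is measurable and a.e.\ finite (which follows from $c_0|D\chi_{E_s}|\le F(x,D\chi_{E_s})\le c_0^{-1}|D\chi_{E_s}|$ together with the classical coarea formula giving $\int_\R |D\chi_{E_s}|(A)\,ds = |Du|(A)<\infty$), and one must justify the Reshetnyak continuity of $v\mapsto\int_A F(x,Dv)$ under strict convergence for the $x$-dependent integrand — this is standard but should be cited (e.g.\ \cite{AFP}), since it is exactly what upgrades the smooth-case identity to the general $BV$ identity. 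Given the bounds in \eqref{boundF} this is routine, and that is precisely why the authors say the proof is "almost verbatim" the one in \cite{AFP}.
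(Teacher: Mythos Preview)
Your proposal is correct and follows precisely the route the paper indicates: the paper gives no proof of its own but simply says the argument repeats ``almost verbatim'' that of \cite[Th.~3.40]{AFP}, which is exactly the two-inequality scheme you outline (duality against admissible fields for one direction, smooth approximation plus Reshetnyak continuity and Fatou for the other). The technical caveats you flag---measurability of $s\mapsto \int_A F(x,D\chi_{E_s})$ via the bounds \eqref{boundF}, and Reshetnyak continuity under strict convergence for the $x$-dependent integrand---are the right ones and are indeed routine under the standing hypotheses.
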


We then deduce,

\begin{proposition}\label{calibrEs}
 Let $z\in \X$ be a calibration in the direction $p$,  then for every minimizer $v_p$ of \eqref{defphi} and every $s \in \R$, $z$ calibrates the set $E_s := \{ v_p + p\cdot x > s\}$. Conversely for $v \in BV(\T)$, if $z\in \X$ calibrates  all the sets $E_s$ then $v$ is a minimizer of \eqref{defphi}.
\end{proposition}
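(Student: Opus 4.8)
The plan is to reduce both statements to the coarea-type formula stated just above, together with its analogue for the Anzellotti pairing $[z,\cdot]$. I will use two elementary facts. First, for any set $G$ of locally finite perimeter and any $z\in\X$, one has $[z,D\chi_G]\le F(x,D\chi_G)$ as measures: $F^\circ(x,z(x))\le1$ a.e.\ forces $z(x)\cdot\xi\le F(x,\xi)$ for every $\xi$, and this bound passes to the generalized inward normal trace $[z,\nu]$ on $\partial^*G$; likewise $[z,p+Dw]\le F(x,p+Dw)$ for $w\in BV(\T)$. Consequently $z$ calibrates $G$ if and only if $[z,D\chi_G]=F(x,D\chi_G)$ as measures, and since the inequality is automatic, this is in turn equivalent to equality of the two total masses over any fixed open set containing $\partial^*G$. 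Second, a set admitting a calibration is a Class A Minimizer (Theorem~\ref{calibregalPL}), so the stability of calibrations (Proposition~\ref{stabilitycalib}) will be available for the relevant $E_s$.

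For the direct implication, let $v_p$ minimize~\eqref{defphi} and let $z$ be a calibration in the direction $p$, so $[z,p+Dv_p]=F(x,p+Dv_p)$ as measures on $\T$, with common total mass $\p(p)=(\int_\T z)\cdot p$. Set $u:=v_p+p\cdot x\in BV_{\loc}(\R^d)$; since $z$ and $v_p$ are $\Z^d$-periodic and $Du=Dv_p+p$, both $F(x,Du)$ and $[z,Du]$ are $\Z^d$-periodic measures whose mass over a (generic translate of a) unit cube $Q'$ equals $\p(p)$. Applying the preceding coarea formula to $u$ on $Q'$, together with the corresponding identity $\int_{Q'}[z,Du]=\int_{\R}\big(\int_{Q'}[z,D\chi_{E_s}]\big)\,ds$ — proved in the same way, or deduced from the generalized Gauss--Green formula using $\Div z=0$ and $\int_{\partial Q'}[z,\nu]\,d\HH=0$ (the latter absorbing the divergent constant arising in the layer-cake decomposition of $u$) — one obtains
\[
\int_{\R}\Big(\int_{Q'}\big(F(x,D\chi_{E_s})-[z,D\chi_{E_s}]\big)\Big)\,ds
\;=\;\int_{Q'}\big(F(x,Du)-[z,Du]\big)\;=\;0.
\]
The inner integrand is nonnegative by the first fact above, hence vanishes for a.e.\ $s$; exhausting $\R^d$ by cubes (or balls) then shows that $z$ calibrates $E_s$ for a.e.\ $s\in\R$. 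To reach every $s$, fix $s_0$ and choose $s_n\downarrow s_0$ with $z$ calibrating $E_{s_n}$; since $s_n\downarrow s_0$ the sets $E_{s_n}=\{u>s_n\}$ increase to $\{u>s_0\}=E_{s_0}$, so $\chi_{E_{s_n}}\to\chi_{E_{s_0}}$ in $L^1_{\loc}$, and as each $E_{s_n}$ is a Class A Minimizer calibrated by $z$, Proposition~\ref{stabilitycalib} gives that $z$ calibrates $E_{s_0}$.

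For the converse, assume $z\in\X$ calibrates every $E_s=\{v+p\cdot x>s\}$, i.e.\ $[z,D\chi_{E_s}]=F(x,D\chi_{E_s})$ for all $s$. Integrating in $s$ over a cube $Q'$ and using the two coarea identities in reverse for $u:=v+p\cdot x$ gives
\[
\Big(\int_\T z\Big)\cdot p=\int_{Q'}[z,Du]=\int_{\R}\int_{Q'}[z,D\chi_{E_s}]\,ds=\int_{\R}\int_{Q'}F(x,D\chi_{E_s})\,ds=\int_{Q'}F(x,Du)=\int_\T F(x,p+Dv).
\]
By Proposition~\ref{propdual}, $(\int_\T z)\cdot p\le\p(p)$, while $\int_\T F(x,p+Dv)\ge\p(p)$ since $v$ is admissible in~\eqref{defphi} (recall $g=0$). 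Hence $\int_\T F(x,p+Dv)=\p(p)$, so $v$ is a minimizer of~\eqref{defphi}.

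The main obstacle is the coarea identity for the pairing $[z,\cdot]$ and the bookkeeping of the passage between $\T$ and $\R^d$: as $u=v_p+p\cdot x$ is not a function on the torus, one must work on a cube (or a ball) and control the boundary normal traces, or average over translates of $Q$; one must also carefully justify the pointwise comparison $[z,D\chi_G]\le F(x,D\chi_G)$ on $\partial^*G$. Granting these, the vanishing-integral step and the upgrade from a.e.\ $s$ to every $s$ via stability of calibrations are routine.
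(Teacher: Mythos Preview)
Your proof is correct and follows essentially the same route as the paper: apply the coarea formula to both $F(x,Du)$ and to the pairing $[z,Du]$, use the pointwise inequality $[z,D\chi_{E_s}]\le F(x,D\chi_{E_s})$ to deduce equality for a.e.\ $s$, and then upgrade to every $s$ via $E_s=\bigcup_{s'>s}E_{s'}$ and the stability of calibrations (Proposition~\ref{stabilitycalib}). The only differences are cosmetic: the paper works directly on an arbitrary Borel set $A\subset\R^d$ and simply invokes \cite[Prop.~2.7]{Anzelotti} for the coarea identity for $[z,\cdot]$, which dissolves the ``main obstacle'' you flag about boundary traces on $\partial Q'$ and the layer-cake bookkeeping; and for the converse the paper just reverses the chain of equalities to obtain $[z,Dv+p]=F(x,Dv+p)$ and concludes via the characterization of minimizers stated immediately before Proposition~\ref{propdual}, rather than going through the inequality $(\int_\T z)\cdot p\le\p(p)$ as you do.
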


\begin{proof}
 If $z$ is a calibration in the direction $p$ then as noticed above, $z$ calibrates all the solution $v_p$ of $\eqref{defphi}$.
 Let $v_p$ be one of these solutions then by the Coarea Formula and \cite[Prop. 2.7]{Anzelotti}, for every Borel set $A\subset \R^{d}$
\begin{align*}
 \int_{\R} \int_A F(x, D \chi_{E_s})  \ ds&= \int_A F(x, D v_p +p)\\
					&=\int_A [z,Dv_p +p]\\
					&=\int_{\R} \int_A [z, D \chi_{E_s}] \ ds\\
					&\le \int_{\R} \int_A F(x, D \chi_{E_s})  \ ds
\end{align*}
and thus for almost every $s \in \R$, $z$ calibrates $E_s$. Since for every $s$, $\ds E_s=\cup_{s'>s} E_{s'}$, 
by Proposition \ref{stabilitycalib}, $z$ calibrates in fact every $E_s$. 
The converse implication follows by the same argument. 
\end{proof}

\begin{theorem}\label{zfixPL}
Let $E$ be a Class A Minimizer,
let $z\in \X$ and let $\bar x$ be a Lebesgue point of $z$. Then, if $z$ calibrates $E$ and if $\bar x \in \partial E$, 
we have that $\bar x  \in \partial^*E$ and
\begin{equation}\label{zeqnu}
z(\bar x)=\nabla_p F(\bar x,\nu^E(\bar x)).
\end{equation}
\end{theorem}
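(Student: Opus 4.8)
The plan is to exploit the $C^{2,\alpha}$ regularity of $\partial E$ (Proposition~\ref{propdensity}) together with the pointwise characterization of $\nabla_p F$ from the anisotropy subsection, namely that $z\in\{F^\circ(x,\cdot)\le 1\}$ with $p\cdot z=F(x,p)$ forces $z=\nabla_p F(x,p)$. First I would observe that since $\bar x\in\partial E$ and $\partial E\setminus\partial^*E$ has Hausdorff dimension at most $d-3$ (so in particular is $\HH$-negligible and, being closed, ``small'' in a strong sense), and since $\bar x$ is a Lebesgue point of $z$, I want to rule out $\bar x\in\partial E\setminus\partial^*E$. The cleanest route is: near any point of $\partial^*E$ the boundary is a $C^{2,\alpha}$ hypersurface, and singular points are isolated from the Lebesgue-point analysis one actually needs — alternatively, one first proves \eqref{zeqnu} at every point of $\partial^*E$ which is a Lebesgue point of $z$ (a full-measure subset of $\partial^*E$), then argues that if $\bar x$ were a singular boundary point it would be a limit of such points, and a blow-up/continuity argument forces a contradiction with $F^\circ(z(\bar x),\cdot)\le 1$ unless $\bar x$ is in fact regular. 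I would be prepared to phrase the first claim simply as ``a Lebesgue point of $z$ lying on $\partial E$ must lie on $\partial^*E$'' and defer the short topological argument for that.

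Granting $\bar x\in\partial^*E$, the heart of the proof is to upgrade the \emph{measure} identity $[z,D\chi_E]=F(x,D\chi_E)$ to a \emph{pointwise} identity at $\bar x$. Recall $[z,D\chi_E]=\psi_z(x)\,\HH\restr\partial^*E$ with $\psi_z=[z,\nu]$, and the calibration hypothesis says $\psi_z(x)=F(x,\nu^E(x))$ for $\HH$-a.e.\ $x\in\partial^*E$. Since $\partial^*E$ is a $C^{2,\alpha}$ graph near $\bar x$ and $\nu^E$ is continuous there, it suffices to show that at a Lebesgue point $\bar x$ of $z$ one has $z(\bar x)\cdot\nu^E(\bar x)=[z,\nu](\bar x)$, i.e.\ that the Anzellotti normal trace of $z$ on the smooth hypersurface $\partial^*E$ agrees with the pointwise dot product $z(\bar x)\cdot\nu^E(\bar x)$ at Lebesgue points of $z$. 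This is where I would do the only real computation: take a small cylinder around $\bar x$ adapted to the tangent plane, write Green's formula $\int[z,D\chi_E]=-\int_E\Div z-\int_{\partial\text{cyl}}[z,\nu]$ (using $\Div z=0$), and let the cylinder shrink; the boundary term on the lateral part is controlled because $\partial^*E$ is nearly flat there, and on the top/bottom faces (which are transverse to $\partial^*E$) the trace $[z,\nu]$ is an $L^\infty$ function whose averages, at a Lebesgue point of $z$, converge to $z(\bar x)\cdot(\pm e)$. Equivalently, and perhaps more cleanly, I would test the distributional identity $\langle[z,D\chi_E],\psi\rangle=-\int_E z\cdot\nabla\psi$ with $\psi$ a standard mollifier centered at $\bar x$: the right side converges to $z(\bar x)\cdot\nu^E(\bar x)$ (Lebesgue point of $z$, plus $E$ is, up to the negligible set, a half-space at scale zero thanks to $C^{2,\alpha}$ regularity), while the left side, by the Radon–Nikodym representation and continuity of $x\mapsto F(x,\nu^E(x))$ along $\partial^*E$, converges to $F(\bar x,\nu^E(\bar x))$.

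Having established $z(\bar x)\cdot\nu^E(\bar x)=F(\bar x,\nu^E(\bar x))$ and knowing $F^\circ(\bar x,z(\bar x))\le 1$ (since $z\in X$, and this inequality is preserved at Lebesgue points), I conclude by the equivalence recalled in the anisotropy subsection: a vector in the unit ball of $F^\circ(\bar x,\cdot)$ realizing equality in the defining inequality for $F(\bar x,\nu^E(\bar x))$ is exactly $\nabla_p F(\bar x,\nu^E(\bar x))$, which is \eqref{zeqnu}. The main obstacle I anticipate is the second step — making rigorous the passage from the global Anzellotti pairing to its pointwise value at a Lebesgue point of $z$; the subtlety is that the normal trace $[z,\nu]$ is a priori only an $L^\infty$ function on $\partial^*E$ defined through the pairing, not literally $z\cdot\nu$, so one must genuinely use both that $\bar x$ is a Lebesgue point of the $L^\infty$ field $z$ \emph{and} that $\partial^*E$ is a $C^{2,\alpha}$ (hence flat-at-first-order) hypersurface at $\bar x$, to make the two notions coincide there. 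Everything else — the continuity of $F(\cdot,\nu^E(\cdot))$, the Radon–Nikodym representation, and the final convex-analysis identification — is routine given the hypotheses already in place.
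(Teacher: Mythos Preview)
Your plan for the second half is sound: once $\bar x\in\partial^*E$ is granted, testing $[z,D\chi_E]=F(x,D\chi_E)$ against a rescaled bump at $\bar x$, using the $C^{2,\alpha}$ flatness of $\partial^*E$ and the Lebesgue-point property of $z$, does yield $z(\bar x)\cdot\nu^E(\bar x)=F(\bar x,\nu^E(\bar x))$, and the identification $z(\bar x)=\nabla_pF(\bar x,\nu^E(\bar x))$ follows by duality. The gap is in the first half, which you defer as a ``short topological argument'': showing that a Lebesgue point of $z$ on $\partial E$ must lie on $\partial^*E$ is in fact the main content of the theorem.

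Two concrete failures in your sketch. First, the assertion that the Lebesgue points of $z$ form an $\HH$-full subset of $\partial^*E$ is unjustified: the non-Lebesgue set of an $L^\infty$ field has $\mathcal L^d$-measure zero, but so does $\partial^*E$ itself, so a priori \emph{none} of $\partial^*E$ need consist of Lebesgue points of $z$ --- the hypothesis hands you exactly one such point, namely $\bar x$. Second, even granting a dense supply of such points near a putative singular $\bar x$, your limiting argument does not produce a contradiction: the Lebesgue-point condition controls \emph{volume averages} of $z$ near $\bar x$, not pointwise values along a sequence $x_n\to\bar x$ lying on $\partial^*E$, so you cannot infer $z(x_n)\to z(\bar x)$; and at a genuine singular point the normals $\nu^E(x_n)$ do not converge either, so there is nothing to contradict $F^\circ(\bar x,z(\bar x))\le 1$.

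The paper's route is to run the blow-up \emph{before} knowing anything about regularity at $\bar x$: the rescalings $z_\rho(y)=z(\bar x+\rho y)$ converge to the constant $\bar z=z(\bar x)$ precisely by the Lebesgue-point hypothesis, while the blow-ups $E_\rho=(E-\bar x)/\rho$ subconverge to a minimizer $\bar E$ calibrated by this constant (Proposition~\ref{stabilitycalib}). A constant calibration forces $\bar E$ to be a half-space --- this is where the strict convexity of $F(\bar x,\cdot)$ enters --- which both pins down $\bar\nu$ and, after a Reshetnyak-type continuity argument for the perimeters, gives $\bar x\in\partial^*E$. In short, the regularity conclusion is \emph{produced} by the Lebesgue-point-of-the-calibration hypothesis via blow-up; it cannot be treated as a routine preliminary to the ``real'' computation.
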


\begin{proof}
Letting $z_\rho(y) = z(\bar x+\rho y)$, the assumption 
that $\bar x$ is a Lebesgue point of $z$
yields that $z_\rho\to\bar z$ in $L^1(B_R)$, hence
also weakly-$*$ in $L^\infty(B_R)$, for any $R>0$,
where $\bar z\in \R^d$ is a constant vector.

{ } As usual, we let $E_\rho := (E-\bar x)/\rho$ 
and we observe that $E_\rho$ minimizes 
\[
\int_{\partial E_\rho\cap B_R} F(\bar x+\rho y,\nu^{E_\rho}(y))\,d\HH(y)
\]
with respect to compactly supported
perturbations of the set (in the fixed ball $B_R$).
In particular, the sets $E_\rho$ (and the boundaries $\partial E_\rho$)
satisfy uniform density bounds, and hence are compact with respect
to both local $L^1$ and Hausdorff convergence.

{ } Hence, up to extracting a subsequence, we can assume that $E_\rho\to \bar E$, with $0\in \partial \bar E$.
Proposition~\ref{stabilitycalib} shows that $\bar z$ is a calibration
for the energy $\int_{\partial \bar E\cap B_R} F(\bar x, \nu^{\bar E}(y))\,d\HH(y)$,
and that $\bar E$ is a  plane-like minimizer calibrated by $\bar z$.

{ } It follows that $[\bar z,\nu^{\bar E}]=F(\bar x,\nu^{\bar E}(y))$ for
$\HH$-a.e.~$y$ in $\partial \bar E$, but since $\bar z$ is a constant,
we deduce that $\bar E=\{ y\cdot \bar\nu\ge 0\}$ with
$\bar \nu=F(\bar \nu)\ \nabla_p F^\circ(\bar x,\bar z)$. In particular the limit $\bar E$
is unique, hence we obtain the global convergence of $E_\rho\to \bar E$, without passing to a subsequence.

{ } We want to deduce that $\bar x\in\partial^* E$, with
$\nu^E(\bar x)=F(\bar x,\nu^{E}(\bar x))\nabla_p F^\circ(\bar x,\bar z)$, which is equivalent to \eqref{zeqnu}. 
The last identity is obvious from
the arguments above, so that we only need to show that
\begin{equation}\label{reducbound}
\lim_{\rho\to 0} \frac{ D\chi_{E_\rho}(B_1) }{ |D\chi_{E_\rho}|(B_1) }\ =\ \bar \nu\,.
\end{equation}
Assume we can show that
\begin{equation}\label{convenergy}
\lim_{\rho\to 0} |D\chi_{E_\rho}|(B_R)\ =\ |D\chi_{\bar E}|(B_R)
\ \left(\ =\ \omega_{d-1}R^{d-1}\right)
\end{equation}
for any $R>0$, then for any $\psi\in C_c^\infty(B_R;\R^d)$ we would get
\begin{multline*}
\frac{1}{|D\chi_{E_\rho}|(B_R)}\int_{B_R} \psi\cdot D\chi_{E_\rho}
\ =\ -\frac{1}{|D\chi_{E_\rho}|(B_R)}\int_{B_R\cap E_\rho} \Div \psi(x)\,dx
\\ \longrightarrow\ -\frac{1}{|D\chi_{\bar E}|(B_R)}\int_{B_R\cap \bar E} \Div \psi(x)\,dx
\ =\ \frac{1}{|D\chi_{\bar E}|(B_R)}\int_{B_R} \psi\cdot D\chi_{\bar E}
\end{multline*}
and deduce that the  
measure $D\chi_{E_\rho}/(|D\chi_{E_\rho}|(B_R))$  weakly-$*$ converges to 
$D\chi_{\bar E}/(|D\chi_{\bar E}|(B_R))$.  
Using again \eqref{convenergy}), we then obtain that 
\begin{equation}\label{aeR}
\lim_{\rho\to 0} \frac{ D\chi_{E_\rho}(B_R) }{ |D\chi_{E_\rho}|(B_R) }\ =\ \bar \nu
\end{equation}
for almost every $R>0$.
Since $D\chi_{E_\rho}(B_{\mu R})/(|D\chi_{E_\rho}|(B_{\mu R}))=D\chi_{E_{\rho/\mu}}(B_{R})/(|D\chi_{E_{\rho/\mu}}|(B_{R}))$ for any $\mu>0$, 
\eqref{aeR} holds in fact for any $R>0$ and \eqref{reducbound} follows, so that
$\bar x\in \partial^* E$.

{ } It remains to show~\eqref{convenergy}. First, we observe that,
by minimality of $E_\rho$ and $\bar E$ plus the Hausdorff convergence of $\partial E_\rho$ in balls,
we can easily show the convergence of the energies
\begin{equation}
\lim_{\rho\to 0} \int_{\partial E_\rho\cap B_R}  F(\bar x+\rho y,\nu^{E_\rho}(y))\,d\HH(y)
\ =\ \int_{\partial \bar E\cap B_R}
F(\bar x,\nu^{\bar E}(y))\,d\HH(y)
\end{equation}
and, by the continuity of $F$,
\begin{equation}\label{convenergyrho}
\lim_{\rho\to 0} \int_{\partial E_\rho\cap B_R}  F(\bar x,\nu^{E_\rho}(y))\,d\HH(y)
\ =\ \int_{\partial \bar E\cap B_R}
F(\bar x,\nu^{\bar E}(y))\,d\HH(y)\,.
\end{equation}
Then, \eqref{reducbound} follows from a variant of Reshetnyak's continuity
theorem where instead of using the Euclidean norm as reference norm, we use the uniformly convex norm $F(\bar x, \cdot)$. Notice that this variant is covered by the original version of Reshetnyak \cite{Resh}. For the reader's convenience we sketch the proof here, following very closely the proof of \cite[Thm. 2.39]{AFP}.  In what follows, the point $\bar x$ is fixed and thus we will not specify the dependence of the functions on $\bar x$ (for example $F(p)$ will stand for $F(\bar x,p)$).

{ } Let now $\mu_\rho:=\nu^{E_\rho} \HH\LL \partial^* E_\rho$, $\mu:=\nu^{\bar E} \HH\LL \partial^* \bar E$, $\theta_\rho:=\frac{\nu^{E_\rho}}{F(\nu^{E_\rho})}$, $\theta=\frac{\nu^{\bar E}}{F(\nu^{\bar E})}$ and $W:=\{F(p)\le 1\}$. 
Then we define the measures $\eta_\rho$ on $ B_R \times \partial W$ by setting $\eta_\rho:=F(\mu_\rho)\otimes \delta_{\theta_\rho(x)}$. 
The sequence $\eta_\rho$ is bounded and thus there exists a weakly-$*$ converging subsequence to a measure $\eta$. 
Let $\pi: B_R \times \partial W \to B_R$ be the projection, then $F(\mu_\rho)=\pi_\# \eta_\rho$ and thus by \cite[Rk. 1.71]{AFP}, 
$F(\mu_\rho)$ weakly-$*$ converges to $\pi_\# \eta$, therefore by \eqref{convenergyrho} and \cite[Prop. 1.80]{AFP} we get $\pi_\#\eta=F(\mu)$.

{ } By the Disintegration Theorem \cite[Th. 2.28]{AFP}, there exists a $F(\mu)$-measurable map $x\to \eta_x$ such that $\eta_x(\partial W)=1$ and $\eta=F(\mu)\otimes \eta_x$. Arguing exactly as in \cite[Th. 2.38]{AFP}, we have
\begin{equation}\label{intpartialW}
\int_{\partial W} y d \eta_x =\theta(x) \qquad \textrm{for } F(\mu)-a.e. \ x \in B_R.
\end{equation}
The anisotropic ball $W$ being strictly convex and $\theta(x)$ being on its boundary, this will imply that indeed, $\eta_x=\delta_{\theta(x)}$ which will conclude the proof. Since $F$ is strictly convex, for every $y\in \partial W$, \eqref{strictconvF}, yields
\[
F(y)^2-F^2\left(\frac{\nu^{\bar E}}{F(\nu^{\bar E})}\right)\ge 2 F\left(\frac{\nu^{\bar E}}{F(\nu^{\bar E})}\right) 
\left( \nabla F\left(\frac{\nu^{\bar E}}{F(\nu^{\bar E})}\right)\right)\cdot \left(y-\frac{\nu^{\bar E}}{F(\nu^{\bar E})}\right) 
+ C\left|y-\frac{\nu^{\bar E}}{F(\nu^{\bar E})}\right|^2,
\]
from which it follows
\[
2\left( 1-\left( \nabla F\left(\frac{\nu^{\bar E}}{F(\nu^{\bar E})}\right)\right)\cdot y\right)\ge C\left|y-\frac{\nu^{\bar E}}{F(\nu^{\bar E})}\right|^2.
\]
Integrating this inequality on $\partial W$  and using \eqref{intpartialW} we get
\[0=2\left( 1-\left(\nabla F \left(\frac{\nu^{\bar E}}{F(\nu^{\bar E})}\right)\right)\cdot \left(\int_{\partial W} y \ d\eta_x\right)\right) \ge C\int_{\partial W}\left|y-\frac{\nu^{\bar E}}{F(\nu^{\bar E})}\right|^2\]
hence $\eta_x=\delta_{\theta(x)}$.  The proof of \eqref{convenergy} now easily follows. Indeed, since $\eta_\rho(B_R \times \partial W)$ converges to $F(\mu)(B_R)=\eta(B_R \times \partial W)$, using \cite[Prop 1.80]{AFP} we find
\begin{align*}
\lim_{\rho\to 0} \int |\theta_\rho|(x) d F(\mu_\rho)(x)&=\int_{B_R \times \partial W} |\theta_\rho(x)| \ d\eta_\rho (x,y)\\
=\int_{B_R\times \partial W} |y| d\eta(x,y) &=\int_{B_R} |\theta(x)|  d F(\mu)(x).
\end{align*}

{ } Since $|\theta_{\rho}(x)| dF(\mu_\rho)(x)=d |D\chi_{E_\rho}|(x)$ and $|\theta(x)|  d F(\mu)(x)=|D\chi_{\bar E}|(x)$, 
this gives \eqref{convenergy}.
\end{proof}
\begin{remark}\label{reman}\rm
In the isotropic case $a(x)|\nu|$, Auer and Bangert proved a similar result \cite[Th. 4.2]{auerbangert}. In that case, the monotonicity formula directly implies the convergence of the blow-up to a cone which is calibrated by the constant $\bar z$ and  is thus a plane. For minimal surfaces this classically implies that $\bar x \in \partial^* E$. 
\end{remark}
\begin{remark}
 In dimension $2$ and $3$, the converse is also true (see \cite{CGNnote}) meaning that calibrations have Lebesgue points at every regular point of a calibrated set.
\end{remark}

{ } Thanks to  Proposition \ref{maxprinc}, one can order the
 minimizers which are calibrated by a given vector field.

\begin{proposition}\label{order}
Let $z\in \X$ calibrates two  plane-like 
minimizers $E_1$ and $E_2$ with connected boundaries. Then, either $E_1\subset E_2$, or $E_2\subset E_1$. As a consequence $\mathcal{H}^{d-3}(\partial E_1  \cap \partial E_2)=0$.
\end{proposition}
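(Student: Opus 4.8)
The plan is: (i) show that $z$ calibrates $E_1\cap E_2$ and $E_1\cup E_2$, so that these are Class A Minimizers with $C^{2,\alpha}$ reduced boundary; (ii) deduce that $E_1$ and $E_2$ must coincide near any common boundary point; (iii) propagate this to a global nesting using the connectedness of $\partial E_1$; (iv) get the last assertion from Proposition~\ref{maxprinc}.

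For (i): since $\chi_{E_1\cap E_2}+\chi_{E_1\cup E_2}=\chi_{E_1}+\chi_{E_2}$ and $v\mapsto[z,Dv]$ is linear, $[z,D\chi_{E_1\cap E_2}]+[z,D\chi_{E_1\cup E_2}]=[z,D\chi_{E_1}]+[z,D\chi_{E_2}]$ as measures. Combining the bound $[z,D\chi_E]\le F(x,D\chi_E)$ valid for $z\in\X$, the hypothesis that $z$ calibrates $E_1$ and $E_2$, and the submodularity $F(x,D\chi_{E_1\cap E_2})+F(x,D\chi_{E_1\cup E_2})\le F(x,D\chi_{E_1})+F(x,D\chi_{E_2})$, one gets the chain
\[F(x,D\chi_{E_1})+F(x,D\chi_{E_2})=[z,D\chi_{E_1\cap E_2}]+[z,D\chi_{E_1\cup E_2}]\le F(x,D\chi_{E_1\cap E_2})+F(x,D\chi_{E_1\cup E_2})\le F(x,D\chi_{E_1})+F(x,D\chi_{E_2}),\]
which collapses to equalities: $z$ calibrates $E_1\cap E_2$ and $E_1\cup E_2$ (so Theorem~\ref{calibregalPL} and Proposition~\ref{propdensity} apply), and equality in the submodularity inequality forces $\nu^{E_1}=\nu^{E_2}$ $\HH$-a.e.\ on $\partial^*E_1\cap\partial^*E_2$. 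If the $E_i$ do not lie in a common strip, one already contains the other; otherwise $E_1\cap E_2$ and $E_1\cup E_2$ are plane-like.

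For (ii): argue by contradiction, assuming $A:=E_1\cap\Int(E_2^c)$ and $B:=E_2\cap\Int(E_1^c)$ are both nonempty (otherwise one inclusion holds). Connecting points of $A$ and $B$ to the half-space $\{x\cdot p/|p|\gg1\}\subseteq E_1\cap E_2$ inside the connected sets $E_1$ and $\Int(E_1^c)$ (connected because $\partial E_1$ is), one finds that $\partial E_1$ meets both $E_2$ and $\Int(E_2^c)$ in relatively open nonempty sets; connectedness of $\partial E_1$ then forces $\Gamma:=\partial E_1\cap\partial E_2\ne\emptyset$, and since a set of Hausdorff dimension $<d-2$ cannot disconnect $\partial E_1$ one has $\HH^{d-2}(\Gamma)>0$, so removing the $\HH^{d-3}$-negligible singular sets of $E_1,E_2,E_1\cap E_2$ yields a regular point $\bar x\in\partial^*E_1\cap\partial^*E_2$. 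At $\bar x$ the two boundaries cannot be transversal (that would make the singular set of the minimizer $E_1\cap E_2$ at least $(d-2)$-dimensional), and $\nu^{E_1}(\bar x)=-\nu^{E_2}(\bar x)$ is excluded because $z$ is a calibration for both sets (forcing $z(\bar x)=\nabla_pF(\bar x,\nu^{E_1}(\bar x))=\nabla_pF(\bar x,\nu^{E_2}(\bar x))$); hence $\nu^{E_1}(\bar x)=\nu^{E_2}(\bar x)$. Now, exactly as in the proof of Proposition~\ref{maxprinc}, write $\partial E_1,\partial E_2$ as graphs of $C^{2,\alpha}$ functions $v_1,v_2$ over a common domain $D\ni\bar y$ solving the same Euler--Lagrange equation; the nonnegative function $v_1-\min(v_1,v_2)$ (recall $\min(v_1,v_2)$ is the $C^{2,\alpha}$ graph of $\partial(E_1\cap E_2)$) solves a linear elliptic equation and vanishes at the interior point $\bar y$, so the strong maximum principle gives $v_1\equiv\min(v_1,v_2)$ on $D$, i.e.\ $v_1\le v_2$; symmetrically $v_2\le v_1$. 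Thus $E_1=E_2$ in a neighbourhood of $\bar x$.

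For (iii): the open set $N:=\{x:E_1=E_2\text{ near }x\}$ is nonempty; $N\cap\partial E_1$ is relatively open in $\partial E_1$, and also relatively closed up to a set of dimension $\le d-3$, since a limit of points of $N\cap\partial E_1$ lies in $\Gamma$ and, unless singular, is again a coincidence point by (ii) — and such a small exceptional set cannot disconnect the connected set $\partial E_1$. Hence $N\supseteq\partial E_1$, so $\partial E_1=\partial E_2$, and since $E_1$ and $E_2$ are both the connected component of the complement of this common connected boundary containing $\{x\cdot p/|p|\gg1\}$, we conclude $E_1=E_2$, contradicting $A,B\ne\emptyset$. Therefore $E_1\subseteq E_2$ or $E_2\subseteq E_1$, and $\HH^{d-3}(\partial E_1\cap\partial E_2)=0$ then follows from Proposition~\ref{maxprinc} (when $E_1\ne E_2$; trivial otherwise). \textbf{The main difficulty} is the bookkeeping with the $\HH^{d-3}$-negligible singular sets in steps (ii)--(iii): one has to know that they cannot disconnect $\partial E_1$ and that $\partial^*E_1\cap\partial^*E_2\ne\emptyset$ as soon as the full boundaries meet — both clean for $d\le7$, and obtained from dimension estimates in general.
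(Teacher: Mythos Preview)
Your argument is essentially correct, but you are working much harder than necessary. The paper's proof shares your step~(i) and then finishes in one line: since $E_1\cap E_2\subset E_1$ are both Class~A minimizers, Proposition~\ref{maxprinc} gives either $E_1\cap E_2=E_1$ (hence $E_1\subset E_2$), or $\mathcal{H}^{d-3}(\partial(E_1\cap E_2)\cap\partial E_1)=0$. In the second case, observe that $\partial E_1\cap E_2\subset\partial(E_1\cap E_2)\cap\partial E_1$ is a relatively open subset of the $(d-1)$-dimensional set $\partial E_1$, so it must be empty; thus $E_2$ avoids $\partial E_1$, and one concludes $E_2\subset E_1$. Your steps (ii)--(iii) instead locate a common regular point, invoke the strong maximum principle to get local coincidence, and propagate along $\partial E_1$ --- in effect you are re-proving the relevant case of Proposition~\ref{maxprinc} inline rather than citing it. That is fine, but it makes the bookkeeping with the singular sets (which you flag at the end) entirely avoidable.

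One genuine slip: the sentence ``If the $E_i$ do not lie in a common strip, one already contains the other'' is false when $E_1$ and $E_2$ are plane-like in non-parallel directions (a case the proposition must cover, since it is used precisely for that purpose in Theorem~\ref{convex}). Fortunately your contradiction argument (ii)--(iii) does not rely on this remark --- with non-parallel directions the boundaries certainly cross, so $\partial E_1$ meets both $E_2$ and $\mathrm{Int}(E_2^c)$ and the rest goes through --- so this is a presentational error rather than a logical gap. Also, the parenthetical ``trivial otherwise'' at the very end is wrong: if $E_1=E_2$ then $\mathcal{H}^{d-3}(\partial E_1\cap\partial E_2)$ is certainly not zero; the final assertion of the proposition is implicitly for $E_1\neq E_2$.
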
 
\begin{proof}
If $z$ calibrates $E_1$ and $E_2$ then it also calibrates $E_1\cap E_2$ and $E_1\cup E_2$ which are then also Class A
 minimizers by Theorem \ref{calibregalPL}. Since $E_1\cap E_2 \subset E_1$, by Proposition \ref{maxprinc}, either $E_1\cap E_2=E_1$ in which case $E_1\subset E_2$, 
or $\mathcal{H}^{d-3}(\partial (E_1\cap E_2) \cap \partial E_1)=0$ which implies that $E_2\subset E_1$.
\end{proof}

\subsection{Calibrations and the Birkhoff property}\label{birk}
We now define the class of  plane-like minimizers that we are going to consider in the analysis of the differentiability properties of $\p$. 
If $E=\{x\,:\,v_p(x)+p\cdot x>s \}$ for some $v_p$ which minimizes \eqref{defphi}, 
we have that
$E+q=\{v_p(x)+p\cdot x> s+p\cdot q\}$ for all $q\in \Z^d$, therefore $E=E+q$ if
$p\cdot q=0$, $E\subseteq E+q$ if $p\cdot q<0$, and $E+q\supseteq
E$ if $p\cdot q>0$. This is called the {\em Birkhoff property}. 
Notice that in this case we also have $E=\bigcup_{\{q\cdot p>0, \ q\in \Z^d\}} (E+q)$.  

\begin{definition}\label{defBirk}
Following \cite{HJG,S,UNIBan} we give the following definitions:
\begin{itemize}
\item we say that $E\subset\R^d$ satisfies the Birkhoff property if,
for any $q\in\Z^d$, either $E\subseteq E+q$ or $E+q\subseteq E$;
\item we say that $E$ satisfies the strong Birkhoff property
in the direction $p\in\Z^d$ if $E\subseteq E+q$ when $p\cdot q\le 0$
and $E+q\subseteq E$ when $p\cdot q\ge 0$;
\item we say that a  plane-like minimizer $E$ in the direction $p$ is recurrent if either $p$ is rational and $E$ has the strong Birkhoff property,
or if
\begin{equation}\label{PLapprox}
E=\bigcup_{q\cdot p >0, q\in \Z^d} (E+q) \quad \textrm{or} \quad E=\bigcap_{q\cdot p <0, q\in \Z^d} (E+q).
\end{equation}
\end{itemize}
\end{definition}
\begin{remark}\label{rkBirk}\textup{
Observe that if $E$ satisfies the Birkhoff property, there
 exists $p\in\R^d$ such that if  $q\in  \Z^d$, $q\cdot p>0$,
then $E+q\subseteq E$, while $E+q\supseteq E$ if $q\cdot p<0$
(the difference with the strong Birkhoff property is in the fact
that when $q\cdot p=0$, then one might not have $E+q=E$). The
vector $p$ (up to multiplication with a positive scalar) is uniquely
determined, unless $E+q=E$ for all $q\in\Z^d$. See~\cite{UNIBan},
or Lemma~\ref{lembirkhoffdir} in Appendix~\ref{AppBir}
for an elementary proof of this claim.
}\end{remark}
\begin{remark}\textup{A recurrent plane-like minimizer always enjoys the strong Birkhoff property (since the set $\bigcup_{q\cdot p>0} (E+q)$, for instance, obviously does).}\end{remark}

{ } We will let $\CA(p)$ be the set of all the plane-like minimizers in the direction $p$ which satisfy the strong Birkhoff property.

The following result can be deduced from~\cite{CDLL}.
\begin{lemma}\label{lemBKPL} If $E$ is a Class A minimizer
which satisfies the Birkhoff property, and $E \neq\emptyset$, $E\neq \R^d$,
then it is plane-like in the direction given by Remark \ref{rkBirk}. Moreover it satisfies \eqref{PLexist} with a constant $M$ depending only on the anisotropy $F$ and the dimension $d$.
\end{lemma}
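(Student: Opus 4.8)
\emph{Plan.} I would follow the strategy of~\cite{CDLL}, coupling the Birkhoff monotonicity with the interior density estimates and one minimality comparison; throughout, implicit constants in $\lesssim,\gtrsim$ depend only on $F$ and $d$. First I would apply Remark~\ref{rkBirk} to fix the direction $p\in\R^d\setminus\{0\}$, normalized to $\hat p:=p/|p|$. The Birkhoff property in direction $p$ says $E+q\subseteq E$ if $q\in\Z^d$, $q\cdot\hat p>0$, and $E+q\supseteq E$ if $q\cdot\hat p<0$; equivalently $E$ is upward closed ($\xi\in E$, $q\cdot\hat p>0\Rightarrow\xi+q\in E$) and $E^c$ is downward closed ($\eta\in E^c$, $q\cdot\hat p<0\Rightarrow\eta+q\in E^c$). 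Rounding $z-\xi$ to its nearest lattice point then shows: there is a dimensional constant $c_d$ (one may take $c_d=\sqrt d$) such that for every $\xi\in E$ the set $E$ is $\tfrac{c_d}2$-dense in $\{x\cdot\hat p>\xi\cdot\hat p+c_d\}$, and for every $\eta\in E^c$ the set $E^c$ is $\tfrac{c_d}2$-dense in $\{x\cdot\hat p<\eta\cdot\hat p-c_d\}$ (the relations $q\cdot\hat p=0$, about which Birkhoff says nothing, do not interfere, since the target levels lie at distance $\ge c_d>\sqrt d/2$ from the critical hyperplane).

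The crux is a \emph{slab estimate}: there exists $M_1=M_1(F,d)$ so that, if both $E$ and $E^c$ are $\tfrac{c_d}2$-dense in a slab $S_T:=\{0<x\cdot\hat p<T\}$, then $T\le M_1$. Indeed, for each $z\in S_T$ the ball $B_{c_d}(z)$ then meets both the open set $E$ and $E^c$, hence, being connected, meets $\partial E$; choosing $w\in\partial E\cap B_{c_d}(z)$ and invoking the perimeter lower bound of Proposition~\ref{propdensity}, $|D\chi_E|(B_{2c_d}(z))\ge|D\chi_E|(B_{c_d}(w))\ge\beta\,c_d^{d-1}$. Covering the cylinder $\mathcal{C}_R:=S_T\cap\{\,|x-(x\cdot\hat p)\hat p|<R\,\}$ by $\gtrsim(T/c_d)(R/c_d)^{d-1}$ such balls with bounded overlap gives $P(E,\mathcal{C}_R^+)\gtrsim\beta\,T R^{d-1}$ for the slightly dilated cylinder $\mathcal{C}_R^+$. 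For the matching upper bound I would compare $E$ with the competitor equal to $E$ outside $\mathcal{C}_R^+$ and to the half-space $\{x\cdot\hat p>T/2\}$ inside: by Class A minimality the energy of $E$ on $\mathcal{C}_R^+$ is at most that of this competitor, which by~\eqref{boundF} is $\lesssim R^{d-1}+TR^{d-2}$ (a flat cross-section of area $\lesssim R^{d-1}$ plus a lateral contribution $\lesssim TR^{d-2}$); the coercivity~\eqref{hypcoerF}, which after the reduction $g\equiv0$ holds in the form $\E(E,A)\ge c_0'P(E,A)$ on every open $A$, then bounds $P(E,\mathcal{C}_R^+)\lesssim R^{d-1}+TR^{d-2}$. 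Comparing the two bounds, dividing by $R^{d-1}$ and letting $R\to\infty$ yields $T\le M_1$.

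The lemma would then follow by soft arguments. Let $c^+:=\inf\{c\in\R:\{x\cdot\hat p>c\}\subseteq E\}$ and $c^-:=\sup\{c\in\R:\{x\cdot\hat p<c\}\subseteq E^c\}$. If $c^+=+\infty$ then $E^c$ has points of arbitrarily large height, so by the first paragraph $E^c$ is $\tfrac{c_d}2$-dense everywhere, while $E\neq\emptyset$ makes $E$ $\tfrac{c_d}2$-dense in some $\{x\cdot\hat p>L\}$; applying the slab estimate to $\{L<x\cdot\hat p<L+M_1+1\}$ is then a contradiction, so $c^+<+\infty$, and symmetrically $c^->-\infty$. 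One checks that $\sup_{E^c}x\cdot\hat p=c^+$ and $\inf_E x\cdot\hat p=c^-$, hence $E^c$ is $\tfrac{c_d}2$-dense in $\{x\cdot\hat p<c^+-c_d\}$ and $E$ in $\{x\cdot\hat p>c^-+c_d\}$, so the slab estimate on their common slab forces $c^+-c^-\le M_1+2c_d$. Moreover $\{x\cdot\hat p>c^+\}\subseteq E$ and $\{x\cdot\hat p<c^-\}\subseteq E^c$, with the reversed inclusions and the cases $E=\emptyset,\R^d$ excluded because $E\supseteq\{x\cdot\hat p<c\}$ for some $c$, together with $E+q\subseteq E$ and $q\cdot\hat p\to+\infty$, would force $E=\R^d$. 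Taking $a:=(c^++c^-)/2$ and $M:=M_1/2+2c_d$ (which depends only on $F$, through $c_0,\delta,\beta,c_0'$, and on $d$) yields~\eqref{PLexist}. The main obstacle will be the slab estimate itself: minimality and the density estimates alone do not suffice — a half-space competitor localized near a single far-out point of $\partial E$ exactly breaks even in energy — and it is precisely the Birkhoff monotonicity, which upgrades one overhang into full $\tfrac{c_d}2$-density of \emph{both} $E$ and $E^c$ throughout a whole slab, that tips the energy balance and closes the argument.
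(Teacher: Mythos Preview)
Your argument is correct and follows the same overall strategy as the paper's: use the Birkhoff monotonicity to propagate points of $E$ (resp.\ $E^c$) into $c_d/2$-density on an upper (resp.\ lower) half-space, then combine the perimeter density estimate of Proposition~\ref{propdensity} with a minimality comparison to rule out a wide overlap.

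The only substantive difference is the geometry of the comparison. The paper works with balls: if both $E$ and $E^c$ meet every unit cube in a half-space, one takes a ball $B_R$ inside that half-space, obtains $\E(E,B_R)\gtrsim R^d$ from the cube-by-cube lower bound, and contrasts it with the Class~A upper bound $\E(E,B_R)\lesssim R^{d-1}$ coming from the competitor $E\setminus B_R$ (or $E\cup B_R$). This yields a contradiction as $R\to\infty$, hence some cube lies entirely in $E$ and Birkhoff gives the half-space inclusion; the uniform $M$ follows by rerunning the same estimate with a ball inside the slab between the two half-spaces. Your cylindrical slab estimate, with the half-space competitor inside $\mathcal C_R^+$, packages these two steps into one: the bound $TR^{d-1}\lesssim R^{d-1}+TR^{d-2}$, after letting $R\to\infty$, directly gives $T\le M_1$ and hence the uniform width. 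This is a cleaner route to the quantitative conclusion, at the cost of a slightly more delicate competitor (one must account for both the flat cross-section and the lateral contributions on $\partial\mathcal C_R^+$, which you do correctly). Both approaches rest on exactly the same three ingredients: Birkhoff propagation, the uniform perimeter density bound, and one Class~A comparison.
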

\begin{proof} All the arguments can be found in the proofs of
Proposition~8.3, Proposition~8.4 and Lemma~8.5 in~\cite{CDLL}. First, if for
some $a\in \R^d$, $a+[0,1)^d\subset E$, then $E$ contains
the half-space $\{x\cdot p> a\cdot p+\sum_i |p_i|\}\subset\bigcup_{q\cdot p> 0} (q+a)+[0,1)^d$, and
similarly, if $(a+[0,1)^d)\cap E=\emptyset$, then $E$
is contained in a half-space.

Assume for instance that $E$ does not contain a half-space,
hence that $a+[0,1)^d\cap E^c\neq \emptyset$ for all $a\in \R^d$.
Then, by the density estimate, $|E^c\cap (a+[-1/2,3/2)^d)|>\delta>0$ 
for any $a$ and some constant $\delta$ which depends only on $c_0$ and the dimension $d$. 
Now, we also have that $b+[0,1)^d\cap E\neq \emptyset$ for some
$b\in\R^d$, otherwise $E$ would be empty. Then, for any $q\in \Z^d$ with $q\cdot p\ge 0$,
$(q+b+[0,1)^d)\cap E\subset (q+b+[0,1)^d)\cap (q+E) \neq \emptyset$.
Again it follows that $|(q+b+[-1/2,3/2)^d)\cap E|\ge \delta>0$.
We deduce that the energy $\int_{q+b+[-1/2,3/2)^d} F(x,D\chi_E)$ is bounded
from below, by some constant $\delta'>0$.
Hence, if $B_R(x_R)$ is a large ball contained in $\{ x\, : \, (x-b)\cdot p\ge 0\}$,
the energy in the ball is bounded below by $N \delta'$, 
where $N:=\# \{q\in \Z^d \, : \,  q+b+[-1/2,3/2)^d\subset B_R(x_R)\}\cong R^d$.
However, by Class A
minimality it is also less than $c_0^{-1}d\omega_d R^{d-1}$, a contradiction.
It follows that $E$ satisfies \eqref{PLexist}, with a constant $M$ independent on $E$.
\end{proof}

\begin{proposition}\label{propcalib}
Let $E$ be a Class~A Minimizer with the Birkhoff property,
then $E$ has a periodic calibration.
\end{proposition}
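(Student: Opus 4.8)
The plan is to construct a calibration for $E$ by a compactness/averaging argument starting from the plane-like minimizers produced by the cell formula \eqref{defphi}, exploiting the fact that $E$ has the Birkhoff property and hence (by Lemma~\ref{lemBKPL}) is plane-like in some direction $p$ with a uniform strip constant $M$. First I would fix the direction $p$ associated to $E$ by Remark~\ref{rkBirk}. If $p$ happens to be rational (or more generally if we can realize $E$ as a blow-down limit of minimizers of the cell problem), the strategy is cleanest: by Proposition~\ref{propdual} and Proposition~\ref{calibrEs} there is a field $z_p\in\X$ calibrating all level sets $E_s=\{v_p+p\cdot x>s\}$ of any cell-problem minimizer $v_p$; one then needs to show that $E$ itself, which by the Birkhoff property is squeezed between translates of such level sets, is also calibrated by $z_p$ (or by a suitable translate/limit thereof). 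The general case will require approximating $p$ by rationals $p_n\to p$ and passing to the limit.

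The key steps, in order, would be: (i) use Lemma~\ref{lemBKPL} to get that $E$ is plane-like in direction $p$ with uniform constant $M$; (ii) for rational directions, use the cell formula to produce $z\in\X$ calibrating a whole family (a lamination) of Birkhoff plane-like minimizers in that direction, and use the ordering of calibrated minimizers (Proposition~\ref{order}) together with the stability of calibrations under $L^1_{\loc}$ limits (Proposition~\ref{stabilitycalib}) to show that $E$ lies in the closure of this family and hence inherits the calibration; (iii) for irrational $p$, take rational $p_n\to p$ and Birkhoff plane-like minimizers $E_n$ (with uniform $M$) calibrated by $z_n\in\X$, with $E_n\to E$ in $L^1_{\loc}$ (using that $E$ is determined, up to the lamination structure, by its direction and the Birkhoff property, together with Proposition~\ref{stabPL}); extract a weak-$*$ limit $z$ of $z_n$ in $\X$ (which is weak-$*$ compact) and invoke Proposition~\ref{stabilitycalib} to conclude that $z$ calibrates $E$. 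A technical point along the way is to guarantee that one can actually arrange $E_n\to E$: one uses translations to align the strips $\{x\cdot p_n/|p_n|\in(a-M,a+M)\}$ and the density estimates of Proposition~\ref{propdensity} to get Kuratowski/$L^1_{\loc}$ compactness.

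The main obstacle I expect is step (iii): matching a given Birkhoff minimizer $E$ in an irrational direction with a sequence of cell-problem (hence calibrated) minimizers in nearby rational directions that actually converges to $E$. In the irrational case the cell problem still produces calibrated plane-like minimizers in the direction $p$ directly (Proposition~\ref{calibrEs}), so perhaps the cleaner route is to avoid rational approximation entirely: produce $z_p\in\X$ calibrating all level sets of a cell minimizer $v_p$, and then show any Birkhoff plane-like minimizer $E$ in direction $p$ is a limit of such level sets (or of their translates) — this is essentially the statement that the calibrated Birkhoff minimizers form a maximal lamination, which is exactly the kind of claim the introduction flags as central. The delicate part is ruling out that $E$ is a "new" Birkhoff minimizer not reachable from the cell problem; here one would argue that if $E$ had the Birkhoff property but were not in the closure of the calibrated family, one could still, via Proposition~\ref{order} and Proposition~\ref{maxprinc}, insert $E\cap E_s$ and $E\cup E_s$ into an ordered family and run a comparison argument forcing $E$ into the lamination. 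I would expect the actual proof in the paper to either cite \cite{CDLL} for the existence of a calibrated recurrent representative and then transfer the calibration via the ordering, or to build $z$ directly by a covering/averaging construction over translates of $E$ analogous to the derivation of $\sigma$ with $\Div\sigma=g$ earlier in the paper.
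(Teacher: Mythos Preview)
Your primary approach is circular. You want to take a cell-formula calibration $z_p\in\X$ and show it calibrates $E$, but the tools you invoke to place $E$ inside the calibrated lamination either presuppose the conclusion or amount to Theorem~\ref{calibrallPL}, which in the paper is proved \emph{after} and \emph{using} Proposition~\ref{propcalib}. Concretely: Proposition~\ref{order} requires a \emph{common} calibration for both sets before it gives an ordering, so you cannot use it to compare $E$ with the level sets $\{v_p+p\cdot x>s\}$ until you already know $z_p$ calibrates $E$. And a Birkhoff (not strong-Birkhoff) minimizer need not be a level set, nor a limit of level sets, of any cell minimizer; showing that it is is essentially the content of Proposition~\ref{Proprecurrent} and Theorem~\ref{calibrallPL}, both downstream of the result you are proving. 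Your route~(iii) has the same defect: there is no a~priori mechanism to force calibrated minimizers $E_n$ in nearby directions $p_n$ to converge to the \emph{given} $E$ rather than to some other element of the lamination.

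The paper's proof bypasses the cell formula entirely and works directly from the Birkhoff property. Since the translates $\{E+q\}_{q\in\Z^d}$ are totally ordered by inclusion, one stacks them into a single function $v_k=\sum_{|q|\le k}\chi_{E+q}\in BV(B_R)$ whose level sets are exactly the $E+q$. Each level set is a Class~A minimizer, so by the Coarea formula $v_k$ minimizes $\int_{B_R}F(x,Dv)$ with its own boundary datum; convex duality then furnishes a calibration $z_k^R$ for $v_k$, hence simultaneously for every $E+q$ with $|q|\le k$. One sends $k\to\infty$, then $R\to\infty$ (weak-$*$ compactness of $\X$ plus Proposition~\ref{stabilitycalib}) to get a global $z$ calibrating all translates $E+q$, and finally averages $z(\cdot-q)$ over $|q|\le k$ and passes to the limit to obtain a \emph{periodic} calibration. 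Your closing parenthetical (``build $z$ directly by a covering/averaging construction over translates of $E$'') is indeed the right instinct, but the key step you do not name is the stacking of the ordered translates into one $BV$ function, which is what produces a calibration for all of them at once without any reference to the cell problem.
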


\begin{proof}
Let $R>0$ and $k\ge 1$, and let 
\[
v_k(x)\ :=\ \sum_{q\in\Z^d, |q|\le k} \chi_{E+q} \ \in\ BV(B_R)
\]
where in the sum, we drop the terms which are $1$ a.e. on $B_R$.
Thanks to the Birkhoff property, the sets $E+q$, $|q|\le k$ are
exactly the level sets of $v_k$. Consider now $v\in BV(B_R)$
such that $v-v_k$ has support in $B_R$.
For $s\in \R$, the level set $\{v>s\}$ is a compactly 
supported perturbation  of the level set $\{v_k>s\}$. Since this
latter set is a Class~A Minimizer, one has
\begin{equation}
\int_{B_R} F(x,D\chi_{\{ v_k>s\}})\ \le\ \int_{B_R} F(x,D\chi_{\{v>s\}})\,.
\end{equation}
Hence,
\begin{multline}
\int_{B_R} F(x,Dv)\, =\, \int_{-\infty}^\infty \int_{B_R} F(x,D\chi_{\{v>s\}})\,ds
\\ \ge\, \int_{-\infty}^\infty \int_{B_R} F(x,D\chi_{\{v_k>s\}})\,ds
\, =\, \int_{B_R} F(x,D v_k)\,,
\end{multline}
so that $v_k$ is minimizing in $B_R$, with its own boundary datum.
This yields the existence of a calibration $z_k^R\in L^\infty(B_R;\R^d)$,
such that $F^\circ(x,z_k^R(x))\le 1$ a.e., $\Div z_k^R=0$, and
$[z_k^R, Dv_k]=F(x,Dv_k)$ (in the sense of measures).
By construction, the latter property is equivalent to
$[z_k^R, D\chi_{E+q}]=F(x,D\chi_{E+q})$ for any $q\in \Z^d$ with $|q|\le k$,
that is, $z_k^R$ is also a calibration for each minimizing set $E+q$,
inside $B_R$.

{ } Now, we let $k\to\infty$:  up to a subsequence, $z_k^R$ will
converges, weakly-$*$ in $L^\infty(B_R;\R^d)$,
to some $z_R$ which will be a calibration for all the sets $E+q$, $q\in\Z^d$,
in the ball $B_R$ (\textit{cf} Prop.~\ref{stabilitycalib}).
Then we can send $R\to\infty$, in that case $z_R$ (extended
by zero out of $B_R$) converges again, weakly-$*$ in $L^\infty(\R^d;\R^d)$,
to some $z$ which is a calibration for all the sets $E+q$, $q\in\Z^d$,
in any ball.
Let us now show that $z$ may be chosen to be periodic: indeed, clearly,
$z(x-q)$ is also a calibration for $E$ and all its translates, for
any $q\in\Z^d$. One may consider for any $k$
\begin{equation}
z'_k(x)\ 
:=\ \frac{1}{\#\{q\in\Z^d\,:\,|q|\le k\}}\sum_{q\in\Z^d\,, |q|\le k} z(x-q)\,.
\end{equation}
which again, will be a calibration for $E$ and all its translates.
Passing to the limit, it converges (up to subsequences) to a new
calibration $z'$, which is now periodic.
\end{proof}

\begin{proposition}\label{proppo}
Let $E$ be a plane-like minimizer with a periodic calibration, then
$E$ has the Birkhoff property and $\partial E$ is connected.
\end{proposition}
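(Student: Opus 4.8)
The plan is to use periodicity to turn the single calibrated set $E$ into the whole $\Z^d$-orbit of calibrated sets, and then to extract the Birkhoff ordering and the connectedness from the maximum principle. First I would note that, since $z\in\X$ is $\Z^d$-periodic, $F$ is $Q$-periodic and $\Div z=0$, the substitution $x\mapsto x+q$ transforms $[z,D\chi_E]=F(x,D\chi_E)$ into $[z,D\chi_{E+q}]=F(x,D\chi_{E+q})$; hence $z$ calibrates $E+q$ for every $q\in\Z^d$, and, exactly as in the proof of Proposition~\ref{order}, it also calibrates $E\cap(E+q)$ and $E\cup(E+q)$. By Theorem~\ref{calibregalPL} all of these are Class~A Minimizers.

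To get the Birkhoff property, fix $q\in\Z^d$ and set $G:=E\cap(E+q)$. Since $E$ and $E+q$ are plane-like in the direction $\nu:=p/|p|$, $G$ contains a half-space and is contained in $\{x\cdot\nu>a-M\}$, so $\emptyset\neq G\neq\R^d$; being a Class~A Minimizer with $\partial G\neq\emptyset$, $G$ then has $\mathcal H^{d-1}(\partial^*G)>0$ by the density estimates of Proposition~\ref{propdensity}. Suppose, for contradiction, $G\neq E$ and $G\neq E+q$. If $\partial E$ and $\partial(E+q)$ met transversally at a common regular point, $\partial G$ would contain a wedge along a $(d-2)$-dimensional set, violating $\mathcal H^{d-3}(\partial G\setminus\partial^*G)=0$; so they can only touch tangentially, and along the contact set $E$ and $E+q$ are locally ordered, whence Hopf's lemma --- as in the proof of Proposition~\ref{maxprinc}, which is a purely local argument --- forces them to coincide in a neighbourhood of each contact point. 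Therefore $\partial^*G$ misses $\partial^*E$ and $\partial^*(E+q)$, and since $\partial^*G\subseteq\partial G\subseteq\partial E\cup\partial(E+q)$ we get
\[
\partial^*G\ \subseteq\ \bigl(\partial E\setminus\partial^*E\bigr)\cup\bigl(\partial(E+q)\setminus\partial^*(E+q)\bigr),
\]
an $\mathcal H^{d-3}$- and hence $\mathcal H^{d-1}$-negligible set: contradiction. Thus $E\subseteq E+q$ or $E+q\subseteq E$ for all $q$, i.e.\ $E$ enjoys the Birkhoff property, and iterating the inclusions against~\eqref{PLexist} shows the direction of Remark~\ref{rkBirk} is $p$.

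Finally, for the connectedness of $\partial E$ --- which I expect to be the main obstacle --- I would first rule out compact components: a compact component of $\partial E$ would enclose a bounded bubble of $E$ in $E^c$ (or of $E^c$ in $E$), and erasing it strictly decreases $\E$ on a large ball, against minimality. Every component of $\partial E$ is then unbounded and, by the plane-like bounds, confined to the slab $\{\,|x\cdot\nu-a|\le M\,\}$. Using the Birkhoff property, for $q_0\in\Z^d$ with $q_0\cdot p<0$ the translates $\{E+nq_0\}_{n\in\Z}$ are strictly increasing with union $\R^d$ and empty intersection, and by the maximum principle their reduced boundaries are pairwise disjoint, so they cut $\R^d$ into lattice-congruent open layers separated by the copies $\partial E+nq_0$ of $\partial E$. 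If $\partial E$ split in two, a single layer would contain two disjoint unbounded minimal hypersurfaces in a slab, forcing an alternation of $E$ and $E^c$ that either admits a strictly energy-decreasing competitor localised in the layer or produces infinitely many interleaved sheets --- hence infinite perimeter in a ball, against the density estimates. This last step is the delicate one; it runs parallel to the connectedness statements in~\cite{CDLL,auerbangert} but is streamlined by the explicit ordered structure of the orbit $\{E+q\}$. (An alternative route, which also embeds $E$ into the lamination produced by the cell formula~\eqref{defphi}, is to show first --- by the divergence theorem on large balls together with the convergence of the averages of $z$ on $B_R$ to $\int_\T z$ --- that $\bigl(\int_\T z\bigr)\cdot p=\p(p)$, so that $z$ calibrates every level set of every minimizer of~\eqref{defphi}, and then to squeeze $\partial E$ between consecutive sheets via the maximum principle.) In short, the steps leading to the Birkhoff property are soft, while the connectedness genuinely needs minimality plus the global geometry of the $\Z^d$-lamination of the translates of $E$.
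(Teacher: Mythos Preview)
Your argument for the Birkhoff property contains a genuine error. From the regularity of $G=E\cap(E+q)$ and Hopf's lemma you correctly extract that $\partial^*E$ and $\partial^*(E+q)$ cannot meet at regular points, i.e.\ $\partial^*E\cap\partial^*(E+q)=\emptyset$. But the next step, ``therefore $\partial^*G$ misses $\partial^*E$ and $\partial^*(E+q)$'', is a non sequitur: when the two reduced boundaries do not meet, $\partial^*G$ consists precisely of the portion of $\partial^*E$ lying in the interior of $E+q$ together with the portion of $\partial^*(E+q)$ lying in the interior of $E$. Hence $\partial^*G\subset\partial^*E\cup\partial^*(E+q)$, not the complement, and the displayed inclusion that is supposed to yield a contradiction is simply false. (Note also that where the two boundaries ``coincide in a neighbourhood'', one has $\partial^*G=\partial^*E=\partial^*(E+q)$ locally, again the opposite of what you claim.) What the maximum principle actually gives you, once $\partial^*E\cap\partial^*(E+q)=\emptyset$, is only that each connected component of $\partial E$ lies entirely inside $E+q$ or entirely outside it; without connectedness of $\partial E$ this does not force a global ordering. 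This is exactly why Proposition~\ref{order} assumes connected boundaries.

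The paper proceeds in the opposite order, and this makes the proof short. One first shows that $E$ (and similarly $E^c$) is connected --- hence $\partial E$ is connected --- by observing that every connected component of $E$ is itself calibrated by $z$ and therefore a Class~A Minimizer. If $E_1$ is a component other than the one containing the half-space $\{x\cdot p/|p|>M\}$, then $E_1$ lies in the slab $\{|x\cdot p/|p||\le M\}$; the density estimates give $\E(E_1,B_R)\ge cR^{d-1}$, while minimality (comparison with the empty set across $\partial B_R$) gives $\E(E_1,B_R)\le CMR^{d-2}$, a contradiction for large $R$. Once $\partial E$ and $\partial(E+q)$ are known to be connected, the Birkhoff property is a one-line consequence of Proposition~\ref{order} applied to $E$ and $E+q$. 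Your connectedness sketch, besides relying on the not-yet-established Birkhoff property, is far more involved than needed; the ``component in a slab'' energy comparison replaces it entirely.
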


\begin{proof}
%
Without loss of generality we can assume that $E$ satisfies \eqref{PLexist} with $a=0$.
Since $E$ has a periodic calibration, by Theorem \ref{calibregalPL} it is a Class A Minimizer. Moreover, since every connected component 
$E$ is also calibrated, any of them is a Class A Minimizer. 
Let $E_0$ be the connected component of $E$ which contains the half space $\{x\cdot p>M\}$ and let $E_1$ be another connected component of $E$.
By Proposition \ref{propdensity}, for every $R>0$
we have $\E(E_1,B_{R})\ge c_0\beta R^{d-1}$. 
However, since $E_1\subset \{ |x\cdot p|\le M\}$, the minimality of
$E_1$ also yields $\E( E_1, B_{R})\le CM R^{d-2}$. This is a contradiction
if $R$ is large enough, and it follows that $E$ is connected.

An analogous argument gives that $E^c$ is also connected, thus implying the thesis. The Birkhoff property is deduced from Proposition~\ref{order}, applied
to $E$ and $q+E$, $q\in\Z^d$.
\end{proof}

From Lemma \ref{lemBKPL} and Propositions \ref{propcalib} and~\ref{proppo}
we obtain the following
\begin{corollary}\label{coroconnect}
Let $E$ be a Class A Minimizer with the Birkhoff property, then $\partial E$ is connected. 
\end{corollary}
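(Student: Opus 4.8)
The plan is to simply chain together the three results that have just been established. Suppose $E$ is a Class~A Minimizer with the Birkhoff property. If $E=\emptyset$ or $E=\R^d$ the conclusion is trivial (the boundary is empty), so we may assume otherwise. Then Lemma~\ref{lemBKPL} tells us that $E$ is plane-like in the direction $p$ given by Remark~\ref{rkBirk}, and in particular $E$ is a plane-like minimizer in the sense of the definition following Theorem~\ref{thmcdll}, satisfying \eqref{PLexist} with a universal constant $M$.

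Next, Proposition~\ref{propcalib} applies verbatim (its hypothesis is exactly ``Class~A Minimizer with the Birkhoff property'') and produces a periodic calibration $z\in\X$ of $E$. Thus $E$ is now a plane-like minimizer which admits a periodic calibration, which is precisely the hypothesis of Proposition~\ref{proppo}. That proposition gives directly that $\partial E$ is connected, which is the desired conclusion.

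There is really no obstacle here: the corollary is a bookkeeping consequence of Lemma~\ref{lemBKPL}, Proposition~\ref{propcalib} and Proposition~\ref{proppo}, and the only thing to check is that the output of each result feeds correctly into the hypothesis of the next — that a Class~A Minimizer with the Birkhoff property which is neither empty nor the whole space is plane-like (Lemma~\ref{lemBKPL}), hence that Proposition~\ref{propcalib} yields a periodic calibration, hence that Proposition~\ref{proppo} applies. I would also note in passing the trivial edge case $E\in\{\emptyset,\R^d\}$ where $\partial E=\emptyset$ is vacuously connected. So the proof is a two-line deduction; the substantive work was all done in the preceding propositions.
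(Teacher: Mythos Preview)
Your proof is correct and follows exactly the same approach as the paper, which simply states that the corollary follows from Lemma~\ref{lemBKPL} and Propositions~\ref{propcalib} and~\ref{proppo}. Your handling of the trivial case $E\in\{\emptyset,\R^d\}$ is a reasonable addition.
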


\begin{remark}\rm
An interesting question raised by Bangert in \cite{Bangertmin} for non-parametric integrands is whether every  plane-like minimizer necessarily satisfies the Birkhoff property. In \cite[Th. 8.4]{Bangertmin}, Bangert proves that, in the non-parametric case, it is true for totally irrational vectors $p$. 
Propositions \ref{propcalib} and \ref{proppo} show that, in the parametric case, this question  is equivalent to understand if every  plane-like minimizer has a periodic calibration. See also \cite{JGV} where a nice relation is given between this question of Bangert and De Giorgi's conjecture. 
\end{remark}

{ } We also show the following result:
\begin{proposition} \label{Proprecurrent}
$E$ is a recurrent plane-like minimizer in the direction $p$ 
if and only if there exists a minimizer $v_p$ of~\eqref{defphi} such that
$$E=\{x\,:\,v_p(x)+p\cdot x>0\} \quad
or \quad E=\{x\,:\,v_p(x)+p\cdot x\ge 0\}.$$
\end{proposition}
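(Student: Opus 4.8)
The plan is to prove the two implications separately; the ``if'' direction is essentially bookkeeping. Suppose $E=\{v_p+p\cdot x>0\}$ (the case with $\ge$ is identical) for a minimizer $v_p$ of \eqref{defphi}. By the Proposition of \cite{CT} recalled above, $E$ is a plane-like minimizer in the direction $p$, and the periodicity of $v_p$ gives, exactly as in the discussion preceding Definition~\ref{defBirk}, that $E+q=\{v_p+p\cdot x>q\cdot p\}$ for every $q\in\Z^d$; hence $E=E+q$ when $q\cdot p=0$, $E\subseteq E+q$ when $q\cdot p<0$ and $E+q\subseteq E$ when $q\cdot p>0$, i.e. $E$ has the strong Birkhoff property, so it is recurrent whenever $p$ is rational. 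If $p$ is irrational, $\{q\cdot p:q\in\Z^d\}$ is a dense subgroup of $\R$, so $\inf\{q\cdot p:q\cdot p>0\}=0$ and therefore $\bigcup_{q\cdot p>0}(E+q)=\bigcup_{q\cdot p>0}\{v_p+p\cdot x>q\cdot p\}=\{v_p+p\cdot x>0\}=E$, which is \eqref{PLapprox} (in the $\ge$ case one obtains instead $E=\bigcap_{q\cdot p<0}(E+q)$ from $\sup\{q\cdot p:q\cdot p<0\}=0$). Thus $E$ is recurrent.

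For the converse, assume $E$ is a recurrent plane-like minimizer in the direction $p$. As noted right after Definition~\ref{defBirk}, $E$ then has the strong Birkhoff property, hence the Birkhoff property, so by Proposition~\ref{propcalib} it admits a periodic calibration $z\in\X$; being periodic, $z$ also calibrates every translate $E+q$, $q\in\Z^d$, and by the Birkhoff property these sets are totally ordered. I would then introduce the Aubry--Mather type function
\[
u(x)\ :=\ \sup\{q\cdot p:q\in\Z^d,\ x\in E+q\}\,.
\]
Because $E$ satisfies \eqref{PLexist}, the set $S_x:=\{q\cdot p:x\in E+q\}$ is nonempty and bounded above, so $u$ is finite and $v_p:=u-p\cdot x$ is bounded; the strong Birkhoff property makes $S_x$ ``downward closed'' in the group $\{q\cdot p:q\in\Z^d\}$ (if $x\in E+q$ and $q'\cdot p\le q\cdot p$ then $x\in E+q\subseteq E+q'$). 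From this one reads off $u(x+e_i)=u(x)+p_i$, so that $v_p$ is $\Z^d$-periodic, and, for the superlevel sets,
\[
\{u>s\}=\bigcup_{q\cdot p>s}(E+q)\,,\qquad \{u\ge s\}=\bigcap_{q\cdot p<s}(E+q)
\]
for irrational $p$, whereas for rational $p$ the supremum defining $u$ is attained, each $\{u>s\}$ is itself a translate $E+q$, and one checks directly that $\{u\ge 0\}=E$ (since $0\in S_x$ if and only if $x\in E$, using $E+q=E$ for $q\cdot p=0$). Each $\{u>s\}$ is an increasing union of the nested, $z$-calibrated sets $E+q$, so by the stability of calibrations (Proposition~\ref{stabilitycalib}) it is itself calibrated by $z$; in particular it is a plane-like minimizer with a constant in \eqref{PLexist} uniform in $s$.

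Next I would check that $v_p\in BV(\T)$: since $v_p$ is periodic and bounded it is enough to bound $\int_{B_R}|Du|$, and by the coarea formula this equals $\int_{\R}\HH(\partial^*\{u>s\}\cap B_R)\,ds$; the integrand vanishes for $s$ outside a bounded interval and, each $\{u>s\}$ being a plane-like minimizer, satisfies the standard bound $\HH(\partial^*\{u>s\}\cap B_R)\le C R^{d-1}$ with $C$ independent of $s$, so $\int_{B_R}|Du|\le C' R^d<\infty$ and $u\in BV_{\loc}$. Since $z$ calibrates every set $\{v_p+p\cdot x>s\}=\{u>s\}$, the converse part of Proposition~\ref{calibrEs} then yields that $v_p$ is a minimizer of \eqref{defphi}. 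Finally I would match $E$ with a level of $u$: if $p$ is rational, $E=\{u\ge 0\}=\{v_p+p\cdot x\ge 0\}$; if $p$ is irrational and the first alternative in \eqref{PLapprox} holds, $E=\bigcup_{q\cdot p>0}(E+q)=\{u>0\}=\{v_p+p\cdot x>0\}$, and if the second holds, $E=\bigcap_{q\cdot p<0}(E+q)=\{u\ge 0\}=\{v_p+p\cdot x\ge 0\}$. This gives the statement.

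The delicate point is the analysis of the auxiliary function $u$: establishing its $BV_{\loc}$ regularity from the uniform perimeter bounds for plane-like minimizers, keeping track of the slightly different behaviour of its level sets (and of the identification of $E$ with one of them) in the rational versus the irrational case, and --- the essential structural input --- verifying via Proposition~\ref{stabilitycalib} that a single periodic calibration $z$ calibrates the whole one-parameter family of superlevel sets of $u$, which is precisely what makes $v_p$ a solution of the cell problem.
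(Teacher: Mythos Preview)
Your argument is correct and follows the same overall strategy as the paper: build an auxiliary function from the integer translates of $E$, show it is $BV_{\loc}$ with periodic ``sawtooth'' part $v_p$, prove that a periodic calibration $z$ calibrates all its superlevel sets so that (via Proposition~\ref{calibrEs}) $v_p$ solves~\eqref{defphi}, and finally identify $E$ with the appropriate level.

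Two technical choices distinguish your route from the paper's. First, you define $u(x)=\sup\{q\cdot p:x\in E+q\}$ directly, whereas the paper works with the truncations $v_k(x)=\sup\{q\cdot p:|q|\le k,\ x\in E+q\}$ and passes to the $L^1_{\loc}$-limit. Your direct definition makes periodicity of $v_p$ a one-line computation $u(x+e_i)=u(x)+p_i$; the paper, working with the limit function, has to argue periodicity more indirectly through the inclusions $\{v\ge t\}+q\subseteq\{v\ge t+p\cdot q\}$. Second, for the $BV_{\loc}$ bound the paper uses integration by parts against the calibration, $\int_{B_R}F(x,Dv_k)=-\int_{\partial B_R}v_k[z,\nu]$, which requires the truncation (so that $v_k$ is a priori $BV$) and yields a bound of order $R^{d-1}$; you use the converse Fleming--Rishel/coarea inequality together with the uniform perimeter bound for plane-like minimizers, getting a cruder $O(R^d)$ bound that nonetheless suffices. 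This converse coarea statement is standard (e.g.\ \cite[Thm.~3.40]{AFP}) but is not stated in the paper, so you should cite it explicitly rather than invoke ``the coarea formula'', which as written sounds circular. With that caveat, your approach is a clean and slightly more direct variant of the paper's proof.
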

\begin{proof}
The ``if part'' is straightforward, as already observed. \\
If $E$ is a recurrent  plane-like minimizer (hence with the
strong Birkhoff property), by
Proposition~\ref{propcalib} $E$ has a periodic calibration $z$.
We build a function $v_k$, $k\ge 1$, as follows:
since $E$ has the strong Birkhoff property, we can define in $BV_\loc(\R^d)$
a function $v_k$ such that $\{v_k \ge p\cdot q\}= E+q$ for
all $q$ with $|q|\le k$.
Indeed, $E$ being  plane-like, if $p\cdot q>0$ one cannot
have $E+q=E$, otherwise repeating the translation one would reach
a contradiction. Hence the function
\begin{equation}
v_k(x) \ :=\ \sup\{ p\cdot q\,:\, |q|\le k, x\in E+q\}
\end{equation}
has actually the right level sets. Now, since $E$ is  plane-like,
its oscillation is also uniformly bounded, and in fact
it is locally uniformly bounded in $L^\infty$.
By construction, $z$ is a calibration for $v_k$, which means in particular
that for any $R>0$,
\[
\int_{B_R} F(x,Dv_k) \ =\ \int_{B_R} [z, Dv_k]
\ =\ -\int_{\partial B_R} v_k [z,\nu^{B_R}] \ \le\ 2^d C_R c_0^{-1}
\]
where $C_R$ is a uniform bound for $\|v_k\|_{L^\infty(B_R)}$.
Hence the $v_k$ are uniformly bounded in $BV(B_R)$: up
to a subsequence, we may assume that $v_k\to v$ in $L^1_\loc(\R^d)$
with $v\in BV_\loc(\R^d)$. 

{ } If $|q|\le k$, then $E+q\subseteq \{v_k\ge p\cdot q\}$. Passing
to the limit it follows that $E+q\subseteq \{v\ge p\cdot q\}$.
Conversely, if $(E+q)^c\subseteq \{v_k< p\cdot q\}$. Hence
$(E+q)^c\subseteq \{v_k\le p\cdot q\}$. Then, for any $q\in\Z^d$,
\begin{equation}\label{inclusionEq}
 \{v > p\cdot q\}\subseteq E+q\subseteq \{v\ge p\cdot q\}.
\end{equation}
If $p$ is rational, then it is obvious that equality holds in
 \eqref{inclusionEq}, since, in fact, one can check that $v_k$ does not change
when $k$ is large enough.
If $p$ is not rational, since $E$ is recurrent we can assume that $E=\bigcup_{p\cdot q>0} (E+q)$, and we shall prove that $E\subset \{v>0\}$.
This will imply that $E=\{v>0\}$, and $E+q=\{v>p\cdot q\}$ for every $q\in \Z^d$.

{ } If $x\in E$ then, for some $q\in \Z^d$ with $p\cdot q >0$, we have $x\in E+q$ and thus, for $k\ge|q|$, we also have $v_k(x)\ge p\cdot q>0$. 
Since the sequence $v_k(x)$ is increasing, we get $v(x)>0$ and thus $E\subset \{v>0\}$. The case $E=\bigcap_{p\cdot q<0} (E+q)$ is similar and gives $E=\{v\ge 0\}$.

{ } Let us show that $v-p\cdot x$ is periodic. It is enough to
show that for almost every $t$ and for all $q\in\Z^d$, we have
$\{v\ge t\}+q\subset \{x\,:\, v\ge t+ p\cdot q\}$.
Then, letting $v_p(x)=v(x)-p\cdot x$, we will deduce that
$$v_p(x) \ge t-p\cdot x\ \Longrightarrow\ v_p(x+q)\ge t+ p\cdot q-p\cdot (x+q)
=t-p\cdot x$$ 
for almost every $t,x$ and for all $q\in\Z^d$,
yielding that $v_p$ is periodic (indeed, being $\Z^d$ countable,
the converse also holds for a.e.~$t$ and $x$).

{ } For a.e.~$t$ we have $E_t=\{v\ge t\}=\{v>t\}$. In that case, $E_t$ is
the Kuratowski limit of $E+q_n$ for some sequence $(q_n)$ in $\Z^d$, with $p\cdot q_n\to t$ as $n\to +\infty$.
In particular, $E_t$ is calibrated by $z$.
Now, for $k$ large enough and fixed $n$,
$E+q_n+q=\{v_k\ge p\cdot (q_n+q)\}$, hence in the limit $E+q_n+q
\subseteq \{v\ge t+p\cdot q\}$. Passing then to the limit in $n$
we find that $\{v\ge t\}+q\subseteq \{v\ge t+p\cdot q\}$, which
is our claim.

{ } As already observed, almost all level sets of $v$ are
calibrated by $z$. It follows easily that $v_p$ is a minimizer of~\eqref{defphi}. 
\end{proof}

{ } An example of a non recurrent plane-like minimizer satisfying the strong Birkhoff property would be a set $E$ with gaps on boths side of its
boundary, however we do not know whether this situation can occur.


{ } Thanks to Proposition \ref{Proprecurrent} we can now prove that every calibration calibrates every plane-like minimizer with the strong Birkhoff property, which implies that $\CA(p)$ is stable under union or intersection.

\begin{theorem}\label{calibrallPL}
Let  $z$ be a calibration in the direction $p$,
then $z$ calibrates every plane-like minimizer
with the strong Birkhoff property.
\end{theorem}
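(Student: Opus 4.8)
The plan is to reduce the general case to the case of \emph{recurrent} plane-like minimizers, for which Proposition~\ref{Proprecurrent} identifies the minimizer with a level set $\{v_p+p\cdot x>s\}$ (or $\{\ge s\}$) of a minimizer $v_p$ of \eqref{defphi}, and such level sets are calibrated by $z$ thanks to Proposition~\ref{calibrEs}. So the first step is to take an arbitrary $E\in\CA(p)$ and produce, out of $E$, two recurrent plane-like minimizers that sandwich $\partial E$: namely the ``upper recurrent hull''
\[
E^+\ :=\ \bigcup_{q\cdot p>0,\ q\in\Z^d}(E+q)
\]
and the ``lower recurrent hull''
\[
E^-\ :=\ \bigcap_{q\cdot p<0,\ q\in\Z^d}(E+q)\,.
\]
Using the strong Birkhoff property of $E$ one checks that $E^+\subseteq E\subseteq E^-$, that both $E^\pm$ again satisfy the strong Birkhoff property in the direction $p$, and — via the stability Proposition~\ref{stabPL} (each $E+q$ is a plane-like minimizer with the same constant $M$, and the monotone unions/intersections converge in $L^1_\loc$) — that $E^+$ and $E^-$ are themselves plane-like minimizers; by construction they are recurrent. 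Hence by Proposition~\ref{Proprecurrent} there are minimizers $v^+_p,v^-_p$ of \eqref{defphi} with $E^\pm$ equal to the corresponding super-level sets, and by Proposition~\ref{calibrEs} the given calibration $z$ calibrates $E^+$ and $E^-$.

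The second step is to squeeze $E$ between two sets calibrated by $z$. Since $E^+\subseteq E\subseteq E^-$ and $z$ calibrates both $E^+$ and $E^-$, I want to conclude that $z$ calibrates $E$. Here I would use the ordering/lattice structure exploited in Proposition~\ref{order}: if $z$ calibrates $E^+$ and $E^-$ then it calibrates $E^+\cup E^-=E^-$ and $E^+\cap E^-=E^+$ — which is automatic — but more to the point, for any Class~A minimizer $E$ with $E^+\subseteq E\subseteq E^-$ one has, for every open $A$,
\[
\int_A[z,D\chi_E]\ \le\ \int_A F(x,D\chi_E)
\]
always (since $z\in\X$), and the reverse inequality is what must be proved. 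To get it, integrate: using the generalized Gauss--Green formula and $\Div z=0$, the ``defect'' $\int_A F(x,D\chi_E)-\int_A[z,D\chi_E]$ is nonnegative and localised on $\partial^*E$; one shows it vanishes by a comparison argument on $\partial^*E$, exploiting that $\partial^*E$ is trapped in the region foliated by the translates $E+q$ between $\partial E^+$ and $\partial E^-$, on each of which $z=\nabla_pF(\,\cdot\,,\nu)$ by Theorem~\ref{zfixPL}. Concretely: at $\HH$-a.e.\ point $\bar x\in\partial^*E$, $\bar x$ lies on $\partial^*(E+q)$ for \emph{some} translate only in degenerate cases, so instead I blow up at a Lebesgue point $\bar x$ of $z$ on $\partial^*E$; by Theorem~\ref{zfixPL} applied directly to the Class~A minimizer $E$ (which is legitimate — $E$ is a Class~A minimizer and $z\in\X$), we get $z(\bar x)=\nabla_pF(\bar x,\nu^E(\bar x))$, which is exactly the pointwise identity $[z,\nu^E]=F(\bar x,\nu^E)$ that says $z$ calibrates $E$.

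In fact this last observation short-circuits the argument: Theorem~\ref{zfixPL} already gives, for \emph{any} Class~A minimizer $E$ and \emph{any} $z\in\X$, that $z(\bar x)=\nabla_pF(\bar x,\nu^E(\bar x))$ at every Lebesgue point $\bar x$ of $z$ on $\partial E$. So the real content is only to know that $z$ calibrates \emph{something} whose reduced boundary meets $\partial E$ in a set of positive $\HH$-measure — and then a covering/translation argument propagates the calibration identity to all of $\partial^*E$. The hulls $E^\pm$ are used precisely to guarantee, via Proposition~\ref{order} (the translates $E+q$ calibrated by $z$ are totally ordered) together with the Birkhoff foliation $E^+=\bigcup_{q\cdot p>0}(E+q)$, that $\partial E$ is covered up to $\HH$-null sets by the boundaries $\partial(E+q)$ of sets calibrated by $z$; then $[z,D\chi_E]=F(x,D\chi_E)$ follows on each piece and hence globally, and by Theorem~\ref{calibregalPL} $E$ is (re)confirmed to be a Class~A minimizer calibrated by $z$.

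I expect the main obstacle to be the second step, i.e.\ passing from ``$z$ calibrates the recurrent hulls $E^\pm$ sandwiching $E$'' to ``$z$ calibrates $E$ itself'': one must handle the possibility that $\partial E$ is \emph{not} contained in $\bigcup_q\partial(E+q)$ (the sandwich could be strict, with $\partial E$ living in a ``gap''). Ruling this out — or rather, showing that even in a gap the calibration identity $z=\nabla_pF(\cdot,\nu^E)$ persists on $\partial^*E$ — is where Theorem~\ref{zfixPL} and the uniform density estimates do the essential work, via a blow-up: at a Lebesgue point of $z$ on $\partial E$ the blow-up of $E$ is a half-space calibrated by the constant $\bar z=z(\bar x)$, forcing $z(\bar x)=\nabla_pF(\bar x,\nu^E(\bar x))$, and since $\HH$-a.e.\ point of $\partial^*E$ is such a Lebesgue point, $z$ calibrates $E$.
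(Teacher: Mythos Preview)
There is a genuine gap: your use of Theorem~\ref{zfixPL} is circular. That theorem has the explicit hypothesis ``if $z$ calibrates $E$''; it does \emph{not} assert that for an arbitrary $z\in\X$ and an arbitrary Class~A minimizer $E$ one has $z(\bar x)=\nabla_pF(\bar x,\nu^E(\bar x))$ at Lebesgue points of $z$ on $\partial E$. Your sentence ``Theorem~\ref{zfixPL} already gives, for \emph{any} Class~A minimizer $E$ and \emph{any} $z\in\X$, \dots'' is therefore a misreading. Likewise, in your final blow-up argument you claim that the blow-up of $E$ at a Lebesgue point $\bar x$ of $z$ is a half-space \emph{calibrated by the constant $\bar z$}. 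The half-space limit comes from $\bar x\in\partial^*E$, but the statement that $\bar z$ calibrates it is proved in Theorem~\ref{zfixPL} \emph{using} Proposition~\ref{stabilitycalib}, which needs the rescalings $z_\rho$ to calibrate the rescalings $E_\rho$ --- i.e.\ needs $z$ to calibrate $E$ in the first place. So this route assumes exactly what you are trying to prove. The other variants you sketch (covering $\partial^*E$ by boundaries $\partial(E+q)$, or invoking Proposition~\ref{order} for the translates) share the same defect: you do not yet know that $z$ calibrates any $E+q$, only that it calibrates the recurrent hulls $E^\pm$; and $\partial E$ need not meet $\partial E^+\cup\partial E^-$ in a set of positive $\HH$-measure when there is a genuine gap.

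The paper's proof handles the non-recurrent case by an entirely different, quantitative mechanism. One passes to the quotient $\T_r=\R^d/\Gamma(p)$ and uses that $E\setminus\widetilde E$ (with $\widetilde E=E^+$ in your notation) has \emph{finite volume} there, because $\widetilde E\subset E\subset \widetilde E+q$ for any $q$ with $q\cdot p<0$. A coarea/slicing argument then produces cylinders $C_{s_i}$ with $\HH((E\setminus\widetilde E)\cap\partial C_{s_i})\le 1/s_i$. Comparing the energy of $E$ with that of $(E\setminus C_{s_i})\cup(\widetilde E\cap C_{s_i})$, and integrating $\Div z=0$ by parts on $(E\setminus\widetilde E)\cap C_{s_i}$ (using only that $z$ calibrates $\widetilde E$, which you have), one obtains
\[
\int_{\partial^*E\cap C_{s_i}} F(x,\nu^E)\ \le\ \int_{\partial^*E\cap C_{s_i}}[z,\nu^E]\ +\ O(1/s_i),
\]
and since $[z,\nu^E]\le F(x,\nu^E)$ pointwise this forces equality on every bounded set. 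This finite-volume/integration-by-parts step is the missing idea in your proposal; without it, the ``sandwich'' $E^+\subset E\subset E^-$ alone does not force $z$ to calibrate $E$.
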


\begin{proof}
By Proposition \ref{Proprecurrent}
we know that every recurrent plane-like minimizer
is of the form $\{v_p + p\cdot x > s\}$ or $\{v_p + p\cdot x \ge s\}$, 
for some periodic function $v_p$ minimizing \eqref{defphi}, which implies that it is calibrated by every calibration.

{ } We can thus assume that $p$ is irrational and that we are given a non recurrent plane-like minimizer $E$, but which satisfies
the strong Birkhoff property. 
Then, the set $\widetilde E:=\bigcup_{q\cdot p >0} (E+q)$
is a plane-like recurrent Class A Minimizer (Proposition~\ref{stabPL}),
hence calibrated by $z$.
Moreover, it satisfies 
\begin{equation}\label{tildeE}
\widetilde E \subset E \qquad \textrm{ and } \qquad E\subset \widetilde E +q \quad  \forall q\in \Z^d \,,\quad  
 q\cdot p <0. 
\end{equation}

{ } Using the notation of Section \ref{notation}, we define  the quotient torus with respect to the rational directions orthogonal to $p$, 
$\T_r:= \R^d / \Gamma(p)$.

{ } If we are given a calibration $z$ in the direction $p$, since it is periodic and since $E$ and $\widetilde E$ are invariant by translations in $\Gamma(p)$, we can identify them with their equivalence class in $\T_r$. Thanks to \eqref{tildeE}, the measure $|E \setminus \widetilde E|$ is then finite.
Reminding that $V^r(p)=\Span_\R \Gamma(p)$, we let for every
$x\in\R^d$, $f(x):=\min_{x^r\in V^r(p)} |x-x^r|$. Then, the projection
of $f$ on $\T_r$ (still denoted by $f$)
is well defined, and satisfies $|\nabla f|=1$ a.e.~in $\T^r$.
Given $s\in \R$ we let $C_s:=\{x\in\T_r\,:\, f(x)\le s\}$. Then,
it follows (from the Coarea Formula)
that
\[
|E\setminus\widetilde{E}|
\ =\ \int_0^\infty \HH((E\setminus \widetilde E )\cap \partial C_s)\,ds  \,.
\]
Hence, there exists a sequence $s_i\to +\infty$ such that
\begin{equation}\label{estimdiff}
   \HH((E\setminus \widetilde E )\cap \partial C_{s_i}) \le \frac{1}{s_i}.
\end{equation}

Now, Proposition~\ref{propcalib} shows that every plane-like
minimizer which satisfies the Birkhoff property is calibrated
by a periodic calibration, and we can easily deduce that its projection
in $\T_r$
is also minimizing (with respect to compact perturbation inside the cylinder).
In particular, comparing the energy of $E$ with the energy
of $(E\setminus  C_{s_i}) \cup (\widetilde E \cap C_{s_i})$, it follows
from~\eqref{estimdiff}
\begin{equation}\label{AAA1}
\int_{\partial^* E \cap C_{s_i}} F(x,\nu^E) \le \int_{\partial^* \widetilde E \cap C_{s_i}} F(x,\nu^{\widetilde E}) + \frac{c_0^{-1}}{s_i}\,.
\end{equation}
Integrating by parts on $(E\setminus \widetilde E)\cap C_{s_i}$ and recalling that $z$ calibrates $\widetilde E$, we get (using~\eqref{estimdiff} once more)
\begin{equation}\label{AAA2}
\int_{\partial^* E \cap C_{s_i}} [z,\nu^E] \ 
=\ \int_{\partial^* \widetilde E \cap C_{s_i}} [z,\nu^{\widetilde E}]
- \int_{\partial C_{s_i} \cap (E\setminus \widetilde E)} [z,\nu^{C_{s_i}}]
\ \ge\ \int_{\partial^* \widetilde E \cap C_{s_i}} F(x,\nu^{\widetilde E}) - \frac{c_0^{-1}}{s_i}\,.
\end{equation}
Estimates~\eqref{AAA1}, \eqref{AAA2} yield
\[
\int_{\partial^* E \cap C_{s_i}} F(x,\nu^E) \ \le\ 
\int_{\partial^* E \cap C_{s_i}} [z,\nu^E]\,+\,2\frac{c_0^{-1}}{s_i}
\]
and since $[z,\nu^E]\le F(x,\nu^E)$ a.e.~on $\partial^* E$,
we easily deduce that in any $C_R$, we must have
$\int_{\partial^* E \cap C_R} [z,\nu^E]=\int_{\partial^* E \cap C_R} F(x,\nu^E)$
so that $z$ calibrates $E$.
\end{proof}

\begin{corollary}\label{corcomparison}
For every $p\in \R^d\setminus\{0\}$, given $E$, $F$ two plane-like
minimizers in
the direction $p$, if $E$ has the Birkhoff property, and $F$ the
strong Birkhoff property, then either $E\subset F$ or $F\subset E$.
\end{corollary}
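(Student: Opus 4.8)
The plan is to exhibit a single vector field in $\X$ calibrating both $E$ and $F$ and then to invoke Proposition~\ref{order}. Since $E$ is a plane-like minimizer in the direction $p$, it is confined to a slab orthogonal to $p$, so $E+q\subseteq E$ whenever $q\cdot p$ is large enough; by Remark~\ref{rkBirk} the Birkhoff direction of $E$ is therefore a positive multiple of $p$, and consequently $E+q\subseteq E$ for \emph{every} $q\in\Z^d$ with $q\cdot p>0$. Being a Class~A Minimizer with the Birkhoff property, $E$ admits a periodic calibration $z$ by Proposition~\ref{propcalib}, and by periodicity $z$ calibrates every translate $E+q$, $q\in\Z^d$.

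The core of the argument is to upgrade $z$ to a calibration \emph{in the direction $p$}. Set $\widetilde E:=\bigcup_{q\cdot p>0}(E+q)\subseteq E$, which --- exactly as in the proof of Theorem~\ref{calibrallPL} --- is a recurrent plane-like minimizer. Writing $\widetilde E$ as the increasing union of the finite unions $\bigcup_{i\le n}(E+q_i)$, each a Class~A Minimizer calibrated by $z$ (a finite union of $z$-calibrated sets is $z$-calibrated, since $z$ also calibrates unions and intersections of calibrated sets, cf.~the proof of Proposition~\ref{order}), Proposition~\ref{stabilitycalib} shows that $z$ calibrates $\widetilde E$ and likewise each translate $\widetilde E+q$. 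By Proposition~\ref{Proprecurrent} there is a minimizer $v_p$ of~\eqref{defphi} with $\widetilde E=\{v_p+p\cdot x>0\}$ (or $\ge 0$), so $\widetilde E+q=\{v_p+p\cdot x>p\cdot q\}$ (resp.\ $\ge p\cdot q$). Arguing as at the end of the proof of Proposition~\ref{Proprecurrent}, these translates accumulate in $L^1_\loc$ on every superlevel set of $v_p+p\cdot x$ --- because $\{p\cdot q:q\in\Z^d\}$ is dense in $\R$ when $p$ is irrational, and because the translates $\widetilde E+q$ already exhaust the level sets of $v_p+p\cdot x$ when $p$ is rational. By Proposition~\ref{stabilitycalib} all these superlevel sets are then calibrated by $z$, and the Coarea Formula gives $\int_\T[z,Dv_p+p]=\int_\T F(x,Dv_p+p)$; that is, $z$ is a calibration in the direction $p$.

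With this in hand, Theorem~\ref{calibrallPL} shows that $z$ calibrates every plane-like minimizer with the strong Birkhoff property, in particular $F$. Thus $z\in\X$ calibrates both $E$ and $F$, and both have connected boundary --- $E$ by Corollary~\ref{coroconnect}, and $F$ as well, the strong Birkhoff property implying the Birkhoff property (for $q\cdot p=0$ it forces $F+q=F$). Proposition~\ref{order} then yields $E\subseteq F$ or $F\subseteq E$.

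The step I expect to be the main obstacle is the second paragraph. Theorem~\ref{calibrallPL} requires a calibration \emph{in the direction $p$}, while Proposition~\ref{propcalib} only supplies a periodic calibration of the set $E$, which has only the (plain) Birkhoff property and a priori no relation to the cell problem~\eqref{defphi}. Passing to the recurrent core $\widetilde E$ of $E$ and invoking Proposition~\ref{Proprecurrent} is what links the two, and the point needing care is that knowing $z$ calibrates every translate $\widetilde E+q$ suffices to calibrate all superlevel sets of the corrector $v_p$ --- which relies on the density of $\{p\cdot q\}$ when $p$ is irrational.
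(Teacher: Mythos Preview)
Your proof is correct and follows the same strategy the paper sketches: obtain a periodic calibration $z$ for $E$ via Proposition~\ref{propcalib}, show that $z$ is in fact a calibration in the direction $p$ so that Theorem~\ref{calibrallPL} applies to give that $z$ also calibrates $F$, then conclude with Corollary~\ref{coroconnect} and Proposition~\ref{order}. The paper's one-line proof cites exactly these four ingredients and leaves the middle step implicit; you have correctly identified and filled this gap by passing to the recurrent set $\widetilde E=\bigcup_{q\cdot p>0}(E+q)$, invoking Proposition~\ref{Proprecurrent} to realize it as a superlevel set of a corrector $v_p$, and then using stability of calibrations (Proposition~\ref{stabilitycalib}) together with the density of $\{p\cdot q\}$ (irrational $p$) or the discreteness of the range of $v_p+p\cdot x$ (rational $p$, as in the construction in the proof of Proposition~\ref{Proprecurrent}) to conclude that $z$ calibrates every level set and hence is a calibration in the direction $p$.
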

\begin{proof}
If follows from Proposition~\ref{propcalib}, Corollary~\ref{coroconnect},
Proposition~\ref{order} and Theorem~\ref{calibrallPL}.
\end{proof}
\begin{corollary}
For every $p\in \R^d\setminus\{0\}$, the   plane-like minimizers of $\CA(p)$ form a lamination of $\R^d$ (possibly with gaps).
\end{corollary}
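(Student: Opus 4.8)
The plan is to assemble the corollary from three facts about the members of $\CA(p)$: each is a regular, uniformly plane-like leaf; the leaves are totally ordered with pairwise essentially disjoint boundaries; and the union of the boundaries is closed. Let $E\in\CA(p)$. Then $E$ is a Class A Minimizer with the (strong, hence ordinary) Birkhoff property, so by Lemma~\ref{lemBKPL} it satisfies~\eqref{PLexist} with a constant $M$ depending only on $F$ and $d$; by Proposition~\ref{propdensity} we may take $E$ open with $\partial E$ its topological boundary, a hypersurface of class $C^{2,\alpha}$ (with locally uniform estimates by standard regularity theory) outside a closed set of $\mathcal{H}^{d-3}$-measure zero; and by Corollary~\ref{coroconnect} $\partial E$ is connected. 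Note also that $\CA(p)\neq\emptyset$: for any minimizer $v_p$ of~\eqref{defphi} and any $s\in\R$ the set $\{x:v_p(x)+p\cdot x>s\}$ lies in $\CA(p)$ (as observed before Definition~\ref{defBirk}), and since $v_p+p\cdot x$ is unbounded, the boundaries of these sets already show that $L:=\bigcup_{E\in\CA(p)}\partial E$ is not contained in any slab orthogonal to $p$.

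For the ordering, take $E,E'\in\CA(p)$; both have the strong Birkhoff property, so Corollary~\ref{corcomparison} (with one of them in the role of the set with the Birkhoff property and the other in that of the set with the strong Birkhoff property) gives $E\subset E'$ or $E'\subset E$, say $E\subset E'$. Since both are Class A Minimizers with connected boundary (Corollary~\ref{coroconnect}), Proposition~\ref{maxprinc} yields $\mathcal{H}^{d-3}(\partial E\cap\partial E')=0$. Thus the leaves $\{\partial E:E\in\CA(p)\}$ are totally ordered along $p$ and pairwise disjoint up to a set of dimension at most $d-3$.

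The one step needing an argument is the closedness of $L$. Let $x_n\to x$ with $x_n\in\partial E_n$, $E_n\in\CA(p)$. By Lemma~\ref{lemBKPL} each $E_n$ satisfies~\eqref{PLexist} with the same $M$ and some $a_n\in\R$; since $x_n\in\partial E_n$ forces $\left|x_n\cdot\frac{p}{|p|}-a_n\right|\le M$ and $x_n\cdot\frac{p}{|p|}\to x\cdot\frac{p}{|p|}$, the $a_n$ are bounded, so up to a subsequence $a_n\to a$. All the $E_n$ then lie in a fixed strip, and being Class A Minimizers they obey uniform perimeter bounds on balls (Proposition~\ref{propdensity}), so $(\chi_{E_n})$ is precompact in $L^1_{\loc}$; after extracting a further subsequence, $E_n\to E$ with $E$ a plane-like minimizer in the direction $p$ and $\partial E_n\to\partial E$ in the Kuratowski sense (Proposition~\ref{stabPL}), whence $x\in\partial E$. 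Passing the inclusions $E_n\subset E_n+q$ for $q\in\Z^d$ with $q\cdot p\le0$ and $E_n+q\subset E_n$ for $q\cdot p\ge0$ to the $L^1_{\loc}$ limit (with the density-$1$ representatives) shows that $E$ has the strong Birkhoff property, so $E\in\CA(p)$ and $x\in L$. Hence $L$ is closed and $\R^d\setminus L$ is open; together with the local $C^{2,\alpha}$ graph structure and the ordering above, this exhibits $\{\partial E:E\in\CA(p)\}$ as a codimension-one lamination of $\R^d$, the connected components of $\R^d\setminus L$ being its gaps. The only delicate point is that the $E_n$ might drift off to infinity, which the uniform strip bound anchored at $x$ prevents; the stability of the strong Birkhoff property under $L^1_{\loc}$ convergence is then immediate.
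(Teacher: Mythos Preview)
Your proof is correct. In the paper this corollary is stated without proof, as an immediate consequence of the ordering in Corollary~\ref{corcomparison}; you have supplied the details the paper leaves implicit.

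The main thing you add beyond the paper is the explicit verification that $L=\bigcup_{E\in\CA(p)}\partial E$ is closed, via the compactness argument (uniform $M$ from Lemma~\ref{lemBKPL}, uniform perimeter bounds from Proposition~\ref{propdensity}, then Proposition~\ref{stabPL} and passing the strong Birkhoff inclusions to the $L^1_{\loc}$ limit). This is a genuine ingredient of what ``lamination'' means, and the paper does not spell it out; your argument fills that gap cleanly. The ordering and the $\mathcal{H}^{d-3}$-disjointness of leaves you derive exactly as the paper would, from Corollary~\ref{corcomparison} and Proposition~\ref{maxprinc}.
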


The following Lemma is reminiscent of \cite[Lemma 4.4]{UNIBan} and shows that for irrational vectors $p$, every recurrent plane-like minimizer is in the orbit of every other minimizer with the Birkhoff property.
\begin{lemma}\label{lemrecorbit}
Let $p\in \R^d \setminus (\R \cdot \Z^d)$ be an  irrational vector. Then for every  recurrent plane-like minimizer $E$ satisfying
\begin{equation}\label{recapproxbelow} 
E=\bigcup_{q\in \Z^d\!,\,q\cdot p >0} (E+q)
\end{equation}
and for every plane-like minimizer  $F$ with the Birkhoff property, there holds
\begin{equation}\label{recegal}
E =\bigcup_{q\in \Z^d\!,\, q\cdot p>\alpha } (F+q)
\end{equation} 
where $$\alpha:=\inf \{ q\cdot p \ : \ q\in \Z^d \textrm{ and }  \ F+q \subset E\}.$$ 
In particular, for every $u_p$ and $v_p$ minimizing \eqref{defphi} and every $s, t\in \R$, there exists $\alpha \in \R$ such that $\{u_p+p\cdot x >t\}=\{ v_p+p\cdot x> s+\alpha\}$. A similar result holds for the recurrent sets satisfying the second equality in \eqref{PLapprox}.
\end{lemma}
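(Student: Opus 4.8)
The plan is to use the ordering machinery we have already built---in particular Corollary~\ref{corcomparison}, which tells us that every plane-like minimizer $F+q$ (with the Birkhoff property) is comparable, by inclusion, with the recurrent set $E$ (which has the strong Birkhoff property). Set $\widetilde E:=\bigcup_{q\cdot p>\alpha}(F+q)$, where $\alpha$ is the infimum in the statement. First I would check that $\widetilde E$ is nonempty and not all of $\R^d$: since $F$ is plane-like, $F+q$ is asymptotically the half-space $\{x\cdot p/|p|>-q\cdot p/|p|+O(1)\}$, so for $q\cdot p$ large and positive $F+q$ eventually fills any given ball, giving $|E\setminus(F+q)|<\infty$ via Corollary~\ref{corcomparison} applied the ``wrong'' way around would be impossible---hence $F+q\subset E$ is ruled out and we must have $E\subset F+q$; conversely for $q\cdot p$ very negative $F+q$ recedes to $-\infty$. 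In particular the set $\{q\cdot p:F+q\subset E\}$ is nonempty and bounded below, so $\alpha\in\R$ is well defined, and $\widetilde E$ is a genuine plane-like set.

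The key point is that $\widetilde E$ is again a recurrent plane-like minimizer with the strong Birkhoff property and satisfies the approximation-from-below property~\eqref{recapproxbelow} (now with threshold $\alpha$), by exactly the stability argument of Proposition~\ref{stabPL} together with the Birkhoff relations for $F$. Then I would prove the two inclusions $\widetilde E\subset E$ and $E\subset\widetilde E$ separately. For $\widetilde E\subset E$: if $q\cdot p>\alpha$ then by definition of the infimum there is $q'$ with $q'\cdot p$ arbitrarily close to $\alpha$ (hence $<q\cdot p$) and $F+q'\subset E$; the strong Birkhoff property of $E$ then gives $E+(q-q')\supset E$ when $(q-q')\cdot p>0$... more directly, $F+q=(F+q')+(q-q')\subset E+(q-q')\subset E$ provided the strong Birkhoff inequality for $F$ lets us deduce $F+q\subset F+q'$ is false---so instead I use: $F+q'\subset E$ and $E\supset E+r$ for $r\cdot p>0$ is also false---the clean route is to observe $F+q'\subset E\Rightarrow F+q\subset E$ whenever $q\cdot p\ge q'\cdot p$, which follows because both $F+q,F+q'\subset$(some translate ordering) and by minimality/Corollary~\ref{corcomparison} the only possibility compatible with $F+q'\subset E$ is $F+q\subset E$. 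Taking the union over all such $q$ yields $\widetilde E\subset E$. For the reverse inclusion: given $x\in E$, use~\eqref{recapproxbelow} to find $q_0$ with $q_0\cdot p>0$ and $x\in E+q_0$; shifting, $E=\bigcup(E+q)$ over $q\cdot p>0$ and one shows $E+q_0\subset F+q_1$ for a suitable $q_1$ with $q_1\cdot p>\alpha$ by the comparability of $E+q_0$ (strong Birkhoff) with each $F+q_1$ and the definition of $\alpha$. Hence $x\in\widetilde E$.

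The main obstacle, I expect, is the bookkeeping in the two inclusions: one must exploit the \emph{strong} Birkhoff property of $E$ (the equality $E+q=E$ when $q\cdot p=0$, and the strict nesting otherwise) to upgrade ``$F+q'\subset E$'' to ``$F+q\subset E$ for all $q$ with $q\cdot p\ge q'\cdot p$'', and symmetrically to show no $F+q$ with $q\cdot p<\alpha$ can meet $E$ in the relevant sense; this is precisely the $\Z^d$-separation phenomenon made rigorous in Appendix~\ref{AppBir}, and irrationality of $p$ is used to guarantee that $\{q\cdot p:q\in\Z^d\}$ is dense in $\R$, so that $\alpha$ is actually attained as a limit and the union $\bigcup_{q\cdot p>\alpha}(F+q)$ does not ``jump''. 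Once~\eqref{recegal} is established, the final assertions about minimizers $u_p,v_p$ of~\eqref{defphi} follow immediately: by Proposition~\ref{Proprecurrent} the sets $\{u_p+p\cdot x>t\}$ and $\{v_p+p\cdot x>s\}$ are recurrent plane-like minimizers (they have the strong Birkhoff property and satisfy one of the approximation properties), so applying~\eqref{recegal} with $E=\{u_p+p\cdot x>t\}$ recurrent-from-below and $F=\{v_p+p\cdot x>s\}$ gives $\{u_p+p\cdot x>t\}=\bigcup_{q\cdot p>\alpha}(\{v_p+p\cdot x>s\}+q)=\{v_p+p\cdot x>s+\alpha\}$, using $\{v_p+p\cdot x>s\}+q=\{v_p+p\cdot x>s+p\cdot q\}$ and density of $\{p\cdot q\}$. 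The case of sets recurrent from above (the second equality in~\eqref{PLapprox}) is handled by the symmetric argument, replacing unions by intersections and reversing all inequalities.
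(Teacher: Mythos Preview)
Your strategy matches the paper's: set $\widetilde E=\bigcup_{q\cdot p>\alpha}(F+q)$ and prove both inclusions via Corollary~\ref{corcomparison}. The place where your argument goes off the rails is the inclusion $\widetilde E\subset E$: you repeatedly write Birkhoff inclusions in the wrong direction (for instance, strong Birkhoff of $E$ gives $E+(q-q')\subset E$ when $(q-q')\cdot p>0$, not $\supset$), and your claim that ``$F+q\subset F+q'$ is false'' is itself false---$F$ has the Birkhoff property in direction $p$ (Remark~\ref{rkBirk}), so $(q-q')\cdot p>0$ gives exactly $F+q\subset F+q'$. The one-line proof is: given $q\cdot p>\alpha$, pick $q'$ with $F+q'\subset E$ and $q'\cdot p<q\cdot p$; then $F+q\subset F+q'\subset E$.

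For $E\subset\widetilde E$ the paper argues by contradiction rather than directly, and this is cleaner: if the inclusion were strict, then since $\widetilde E$ has the strong Birkhoff property it is comparable with each $E+\bar q$; combined with $E=\bigcup_{\bar q\cdot p>0}(E+\bar q)$ this yields some $\bar q$ with $\bar q\cdot p>0$ and $\widetilde E\subset E+\bar q$ (else $E\subset\widetilde E$). Then $F+(q-\bar q)\subset E$ for every $q$ with $q\cdot p>\alpha$, and taking $q\cdot p$ close to $\alpha$ (density of $\{q\cdot p:q\in\Z^d\}$, since $p$ is irrational) produces $(q-\bar q)\cdot p<\alpha$, contradicting the definition of $\alpha$. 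Your direct argument (find $q_0$ with $x\in E+q_0$, then $q_1$ with $\alpha<q_1\cdot p<\alpha+q_0\cdot p$ forcing $E+q_0\subset F+q_1$) can be made to work, but it uses exactly the same comparability and density ingredients and is no shorter. The deduction for $u_p,v_p$ at the end is correct and matches the paper.
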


\begin{proof}
Let $E,\,F$ as above.
By Corollary \ref{corcomparison}, for all $q\in \Z^d$,
either $E\subset F+q$ or $F+q\subset E$. 
Notice that $\alpha$ is well-defined
since $E$ and $F$ are plane-like (in the direction $p$). Let 
\begin{equation}\label{eqinc}
\widetilde F:=\bigcup_{ q\in \Z^d \!,\, q\cdot p>\alpha} (F+q)\ \subset\ E
\end{equation} 
and assume that the inclusion in \eqref{eqinc} is strict.
Thanks to \eqref{recapproxbelow}, there exists $\bar q\in \Z^d$ with $\bar q\cdot p>0$ and such that for every $q\in \Z^d$ with $p\cdot q>\alpha$ there holds 
 \[F+q\ \subset\ \widetilde F\ \subset\ E+\bar q \ \subset\ E\]
 and thus 
 \[F+ (q-\bar q)\ \subset\ E\]
 which contradicts the definition of $\alpha$. 
 
Applying this to the recurrent sets $\{u_p+p\cdot x >t \}$ and $\{ v_p+p\cdot x> t+\alpha\}$ which both satisfy \eqref{recapproxbelow} 
and recalling that $\{ v_p+p\cdot x> s+\alpha\}=\bigcup_{q\in \Z^d\!,\,p\cdot q >\alpha} \{ v_p+p\cdot x> s\} +q$, we conclude the proof of the lemma.
\end{proof}

As a consequence we get the following uniqueness result.
\begin{theorem}\label{thmane}
Let $p\in \R^d \setminus (\R \cdot \Z^d)$ be an irrational vector, then the minimizer $v_p$ of \eqref{defphi} is unique up to an additive constant.
\end{theorem}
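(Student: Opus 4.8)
The plan is to deduce Theorem~\ref{thmane} directly from Lemma~\ref{lemrecorbit}, which already almost gives the result. First I would recall that, by the Proposition of~\cite{CT} quoted after~\eqref{defphi}, if $v_p$ and $u_p$ are two minimizers of~\eqref{defphi}, then for every $s\in\R$ the sets $\{v_p+p\cdot x>s\}$ and $\{u_p+p\cdot x>s\}$ are plane-like minimizers in the direction $p$; moreover, being super-level sets of functions of the form (periodic $+$ linear), they enjoy the strong Birkhoff property and satisfy the recurrence relation~\eqref{recapproxbelow}. In particular all these sets lie in $\CA(p)$.

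Next, I would apply Lemma~\ref{lemrecorbit}: taking $E=\{u_p+p\cdot x>t\}$ and the minimizer $v_p$, the Lemma produces a real number $\alpha$ (depending a priori on $s,t$) such that $\{u_p+p\cdot x>t\}=\{v_p+p\cdot x>s+\alpha\}$ for every prescribed $s$. Fixing $s=t$, this reads $\{u_p>t-p\cdot x\}=\{v_p>t-\alpha-p\cdot x\}$ at the level of the super-level sets in $x$, i.e.\ $\{u_p(x)-v_p(x)>-\alpha\}$ is either all of $\R^d$ or empty depending on how $t$ compares — more precisely, the equality of super-level sets for one value of $t$ forces $u_p(x)=v_p(x)+\alpha$ almost everywhere. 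To make this rigorous I would argue level set by level set: for a.e.\ $t$, $\{u_p+p\cdot x>t\}=\{v_p+p\cdot x>t+\alpha(t)\}$, and then show $\alpha(t)$ is in fact independent of $t$ (if $\alpha(t_1)\ne\alpha(t_2)$ with $t_1<t_2$, comparing the two identities contradicts the monotone nesting of super-level sets, since $\{u_p+p\cdot x>t_1\}\supset\{u_p+p\cdot x>t_2\}$ would force $\alpha(t_1)\le\alpha(t_2)$, and a strict gap would mean some set is strictly between two equal sets). Once $\alpha(t)\equiv\alpha$ is constant, the coarea formula / the identity of almost all super-level sets yields $u_p(x)+p\cdot x=v_p(x)+p\cdot x+\alpha$ a.e., hence $u_p=v_p+\alpha$.

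The main obstacle I anticipate is the bookkeeping around the exceptional set of levels and the constancy of $\alpha(t)$: Lemma~\ref{lemrecorbit} is stated for a fixed recurrent set $E$ with a fixed $F$, and one must check that the $\alpha$ it produces can be chosen consistently as the level $t$ varies, and that the ``almost every $t$'' in the coarea step does not lose information (it does not, because two functions in $BV_\loc$ with the same super-level sets for a.e.\ level coincide a.e.). Everything else — that super-level sets of minimizers are recurrent plane-like minimizers, that they satisfy~\eqref{recapproxbelow}, and that $\CA(p)$ is a lamination — is already available in the excerpt, so the argument is essentially a clean application of Lemma~\ref{lemrecorbit} plus an elementary monotonicity argument for $\alpha(t)$.
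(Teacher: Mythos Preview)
Your approach is the same as the paper's in spirit---both proofs reduce the result to Lemma~\ref{lemrecorbit}---but your execution has a gap precisely at the step you flag as the main obstacle, and the paper's argument bypasses that obstacle entirely.

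The issue is your constancy argument for $\alpha(t)$. From $\{u_p+p\cdot x>t\}=\{v_p+p\cdot x>t+\alpha(t)\}$ and the monotone nesting of superlevel sets you can only conclude that $t\mapsto t+\alpha(t)$ is nondecreasing, not that $\alpha(t)$ is constant. Your sentence ``a strict gap would mean some set is strictly between two equal sets'' does not pin down a contradiction. One can repair this by observing that both families of superlevel sets are equivariant under integer translations, so $\alpha(t+p\cdot q)=\alpha(t)$ for every $q\in\Z^d$, and then using that $\{p\cdot q:q\in\Z^d\}$ is dense in $\R$ (since $p$ is irrational) together with monotonicity of $t+\alpha(t)$ to force $\alpha$ constant. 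But this is extra work you did not spell out.

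The paper avoids all of this: it applies Lemma~\ref{lemrecorbit} \emph{once}, at level $0$, obtaining a single $\alpha$ with $\{v_p+p\cdot x>0\}=\{u_p+p\cdot x>\alpha\}$, and then uses the recurrence identity
\[
\{v_p+p\cdot x>t\}=\bigcup_{q\in\Z^d,\ q\cdot p>t}\big(\{v_p+p\cdot x>0\}+q\big)
\]
(and the analogous one for $u_p$) to transport this equality to every level $t$ simultaneously, with the \emph{same} $\alpha$. This is cleaner and sidesteps the constancy question entirely; I recommend you rewrite your proof along these lines.
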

\begin{proof}
Let $u_p$ and $v_p$ be two minimizers of \eqref{defphi}. By Lemma \ref{lemrecorbit} 
there exists $\alpha\in \R$ such that $\{v_p+ p\cdot x >0\}= \{u_p+ p\cdot x>\alpha\}$. Then, for any  $t\in \R$ there holds
\begin{multline*}
\{v_p+ p\cdot x >t\}\ =
\bigcup_{q\in \Z^d \!,\, q\cdot p> t} \{v_p+ p\cdot x >0\}\,+\, q
 = \bigcup_{q\in \Z^d \!,\, q\cdot p> t} \{u_p+ p\cdot x >\alpha\}\,+\, q
\\
\ = \bigcup_{q\in \Z^d \!,\, q\cdot p> t+\alpha} \{u_p+ p\cdot x >0\}\,+\,q
\ =\ \{u_p+ p\cdot x >t+\alpha\}\,.
\end{multline*}
It follows that $u_p=v_p+\alpha$.
\end{proof}

\begin{remark} \rm
In our context, the measure $D v_p + p$ plays the role of Mather's measures in Weak KAM Theory, and of the minimizing currents in the non-parametric setting. In that context, Bessi and Massart  \cite{bessiMassart} proved that for irrational directions every non self-intersecting minimizer gives rise to the same minimizing current. In some sense their result is stronger than ours since the measure $D v_p +p$ only accounts for the recurrent minimizers. 
In the same paper they also prove Ma\~n\'e's conjecture \cite{mane}, namely
that the uniqueness result generically holds also in the rational directions. 
See Appendix~\ref{SecGeneric} for a similar result in our context.
\end{remark}

{ } From Theorem \ref{calibrallPL}, Theorem \ref{zfixPL} and Proposition \ref{supDu} we have

\begin{theorem}\label{zfix}
Let $p\in \R^d\setminus\{0\}$ and $\Lambda:=\bigcup \{\partial^*E \,:\, E\in \CA(p)\}$ then for every calibration $z\in X$ in the direction $p$ we have
\[z=\nabla_p F(x,\nu^E(x)) \qquad a.e. \textrm{ on } \Lambda,\]
where $\nu^E(x)$ is the normal to the   plane-like minimizer passing through $x$. 
In particular, if $v_p$ is a minimizer of \eqref{defphi}, then $z$ is prescribed almost everywhere in $\textrm{Spt}(|Dv_p+p|)$. 
\end{theorem}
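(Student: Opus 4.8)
The plan is to assemble the conclusion from the three ingredients cited in the statement. First I would recall that, by definition, $\CA(p)$ consists of the plane-like minimizers in the direction $p$ satisfying the strong Birkhoff property, and that by Theorem \ref{calibrallPL} any calibration $z$ in the direction $p$ calibrates every such $E$. Thus for each $E\in\CA(p)$ we are in the hypotheses of Theorem \ref{zfixPL}: $z\in X$ calibrates the Class A Minimizer $E$. Applying that theorem at every Lebesgue point $\bar x$ of $z$ that lies on $\partial E$, we get $\bar x\in\partial^*E$ and $z(\bar x)=\nabla_pF(\bar x,\nu^E(\bar x))$. Since $z\in L^\infty$, almost every point of $\R^d$ (in particular $\HH$-a.e.\ point of $\partial^*E$, by the density estimates of Proposition \ref{propdensity} which guarantee $\HH\restr\partial^*E$ is absolutely continuous enough for Lebesgue-point arguments along the rectifiable set) is a Lebesgue point of $z$; hence $z=\nabla_pF(x,\nu^E(x))$ for $\HH$-a.e.\ $x\in\partial^*E$, for each $E\in\CA(p)$.

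Next I would address the well-posedness of the right-hand side on the union $\Lambda$. By Corollary \ref{corcomparison} (or the lamination corollary immediately following it), the members of $\CA(p)$ are totally ordered by inclusion, so their reduced boundaries $\partial^*E$ are pairwise disjoint up to $\HH^{d-3}$-negligible sets (Proposition \ref{order}); consequently through $\HH^{d-1}$-a.e.\ point $x\in\Lambda$ there passes exactly one plane-like minimizer $E\in\CA(p)$, and $\nu^E(x)$ is unambiguously defined. This makes the statement $z=\nabla_pF(x,\nu^E(x))$ a.e.\ on $\Lambda$ meaningful, and the previous paragraph establishes it on each $\partial^*E$ separately; since $\Lambda=\bigcup\{\partial^*E:E\in\CA(p)\}$ and the identity holds $\HH^{d-1}$-a.e.\ on each piece, it holds $\HH^{d-1}$-a.e.\ on $\Lambda$. (One should note that ``a.e.\ on $\Lambda$'' here is understood with respect to the $(d-1)$-dimensional measure carried by $\Lambda$; the set $\Lambda$ itself has Lebesgue measure zero when $p$ is, say, totally irrational, but positive measure in general — in either case the claim is the $\HH^{d-1}$-a.e.\ statement.)

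Finally, for the last assertion, let $v_p$ be a minimizer of \eqref{defphi} and put $u=v_p+p\cdot x$. By Proposition \ref{supDu}, $\mathrm{Spt}(|Dv_p+p|)\cap B_R=\bigl(\bigcup_s\partial\{u>s\}\cap B_R\bigr)\cup\bigl(\bigcup_s\partial\{u\ge s\}\cap B_R\bigr)$, and by Proposition \ref{calibrEs} each level set $\{u>s\}$ (and $\{u\ge s\}$) is calibrated by $z$; moreover each such level set satisfies the strong Birkhoff property as noted at the beginning of Section \ref{birk}, hence belongs to $\CA(p)$. Therefore $\mathrm{Spt}(|Dv_p+p|)$ is contained in $\ov\Lambda$, and on the reduced boundaries — which carry all of $|Dv_p+p|$, since $|Dv_p+p|=\int_\R F(x,D\chi_{\{u>s\}})\,ds$ and each $D\chi_{\{u>s\}}$ is concentrated on $\partial^*\{u>s\}$ — the identity $z=\nabla_pF(x,\nu^E(x))$ just proved applies. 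Thus $z$ is prescribed $|Dv_p+p|$-a.e., which is what is meant by ``$z$ is prescribed almost everywhere in $\mathrm{Spt}(|Dv_p+p|)$.'' The only genuinely delicate point is the Lebesgue-point/measure-theoretic matching in the first paragraph — reconciling ``Lebesgue points of $z$ as an $L^\infty$ function on $\R^d$'' with ``$\HH^{d-1}$-a.e.\ point of the rectifiable set $\partial^*E$'' — but this is standard for the trace $[z,\nu^E]=\psi_z$ constructed via Anzellotti's pairing, since $\psi_z$ agrees with the approximate limit of $z$ at $\HH^{d-1}$-a.e.\ point of $\partial^*E$; everything else is bookkeeping over the cited results.
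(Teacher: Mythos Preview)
Your assembly of Theorems~\ref{calibrallPL}, \ref{zfixPL} and Proposition~\ref{supDu} is exactly what the paper does (it states the theorem as an immediate consequence of these three results, with no further argument). The one point where you go astray is the parenthetical interpretation of ``a.e.\ on $\Lambda$'' as an $\HH^{d-1}$-a.e.\ statement. The paper means Lebesgue-a.e., as is clear from how the result is used in Propositions~\ref{totir} and~\ref{cinquepuntosei}: there one integrates $z-z'$ with respect to $\mathcal{L}^d$ and uses that it vanishes on $\Lambda$, and in the foliation case concludes that $z$ is prescribed $\mathcal{L}^d$-a.e.\ on all of $\R^d$. With this reading the ``delicate point'' you flag evaporates: the set of non-Lebesgue points of $z$ is $\mathcal{L}^d$-null, so at $\mathcal{L}^d$-a.e.\ $x\in\Lambda$ one may invoke Theorem~\ref{zfixPL} directly---no need to match Lebesgue points of an $L^\infty$ function against $\HH^{d-1}$-a.e.\ points of a single leaf, and no countability issue from the uncountable union over $\CA(p)$ (which would be a real problem under your $\HH^{d-1}$ reading, since an uncountable union of $\HH^{d-1}$-null sets need not be null). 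Relatedly, your claim that $\Lambda$ has Lebesgue measure zero when $p$ is totally irrational is incorrect: already in the trivial case $F(x,p)=|p|$ the minimizers are all half-spaces orthogonal to $p$ and $\Lambda=\R^d$; more generally, whenever the lamination is a foliation $\Lambda$ has full measure.
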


\subsection{Heteroclinic surfaces}\label{sechetero}
We consider now $p\in\R^d\setminus\{0\}$ a non totally irrational vector and let us assume that there exists $x\in \R^d$ such that no 
plane-like minimizer in the direction $p$, with the strong Birkhoff property,
passes through $x$. Let
\begin{equation}
E^+ \ =\ \bigcap \left\{ E\,:\, E\in \CA(p)\,, x\in E\right\}
\,,\qquad 
E^-\ =\ \bigcup \left\{ E\,:\, E\in \CA(p)\,, x\not\in E\right\}\,.
\end{equation}
Then, there exists an open set $G=\Int(E^+)\setminus \overline{E^-}$, called a gap,
which contains $x$, and through which no plane-like minimizer with
the strong Birkhoff property passes.

\begin{figure}[ht]
\centering{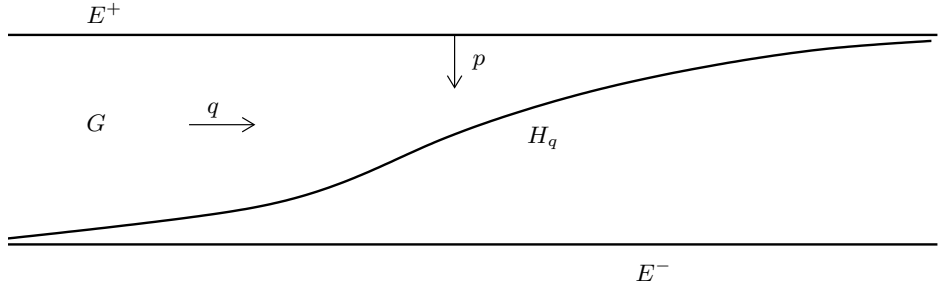}
\caption{Heteroclinic  surface inside a gap.}
\label{hetero}
\end{figure}

{ } We show that for any $q\in V^r(p)$, one can build inside $G$
a heteroclinic surface which is ``growing in the direction $q$'',
see Fig.~\ref{hetero}.
Let us remind that $\Gamma(p)=V^r(p)\cap \Z^d=\{\alpha\in \Z^d \,:\,  \alpha\cdot p=0\}$.
\begin{definition}
Let $p\in \R^d\setminus\{0\}$ be a non totally irrational vector and let $q\in V^r(p)$. If $H$ is a   plane-like minimizer in the direction $p$ satisfying the Birkhoff property we will say that $H$ is a heterocline in the direction $q$ if 
\begin{itemize}
 \item for every $\alpha \in \Gamma(p)$ with $\alpha\cdot q=0$, we have $H+\alpha=H$;
\item for every $\alpha \in \Gamma(p)$ with  $\alpha\cdot q>0$, there holds $H+\alpha\subset H$.
\end{itemize}
\end{definition}

\begin{proposition}\label{prophetero}
Let $p\in \R^d\setminus\{0\}$ be a  non  totally irrational vector, and assume that there exist $E^\pm$, $G$ as above. 
Then, there exists a compact set $K\subset G$ such that,  for every $q\in V^r(p)$, there exists a heterocline $H_q$ in the direction $q$ whose boundary intersects $K$. 
Moreover, letting 
\begin{equation}\label{eteroclinix}
H^+_q:= \bigcup_{\alpha\in \Gamma(p)} \left( H_q+ \alpha\right) \qquad H^-_q:=\bigcap_{\alpha\in \Gamma(p)} \left( H_q+ \alpha\right)
\end{equation}
we have 
\begin{equation}\label{eteroclinix2}
H^+_q=E^+ \quad and \quad H_q^-=E^-.
\end{equation}
\end{proposition}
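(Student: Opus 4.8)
The plan is to construct, for each fixed $q\in V^r(p)$, the heterocline $H_q$ as a limit of translates of a recurrent plane-like minimizer in $\CA(p)$, chosen so that its boundary cannot escape to the ``edges'' of the gap $G$. First I would fix a recurrent plane-like minimizer $E_0\in\CA(p)$ with $x\in E_0$ that is ``minimal'' in the sense that $E_0=E^+$ up to the gap (concretely, take $E_0$ to be a level set of a corrector as in Proposition~\ref{Proprecurrent}, translated so that $\partial E_0$ passes as close as possible to $x$). Because no set of $\CA(p)$ passes through $G$, the boundary $\partial E_0$ must lie in a fixed neighbourhood of $\partial G$. Now consider the family $\{E_0+\alpha\,:\,\alpha\in\Gamma(p),\ \alpha\cdot q\le 0\}$ and form $H_q^{(0)}:=\bigcup_{\alpha\in\Gamma(p),\,\alpha\cdot q\le 0}(E_0+\alpha)$. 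By Corollary~\ref{corcomparison} this is a nested union of plane-like minimizers with uniform constant $M$, so by Proposition~\ref{stabPL} it is again a plane-like minimizer, it satisfies the Birkhoff property, and by construction $H_q^{(0)}+\alpha=H_q^{(0)}$ whenever $\alpha\in\Gamma(p)$, $\alpha\cdot q=0$, and $H_q^{(0)}+\alpha\subseteq H_q^{(0)}$ whenever $\alpha\cdot q>0$. That is exactly a heterocline in the direction $q$. (If this construction accidentally produced a set with $H_q^{(0)}+\alpha=H_q^{(0)}$ for \emph{all} $\alpha\in\Gamma(p)$, i.e.\ an element of $\CA(p)$, it would pass through $G$, a contradiction; so the heterocline is genuinely ``moving''.)

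Next I would pin down the compact set $K$. Since $G=\Int(E^+)\setminus\overline{E^-}$ is a fixed open set containing $x$, and since the whole construction of $H_q$ only used translates $E_0+\alpha$ with $\alpha\in\Gamma(p)$ (which all lie in the strip $\{|y\cdot p/|p||\le M\}$), every $H_q$ satisfies $E^-\subseteq H_q^-\subseteq H_q\subseteq H_q^+\subseteq E^+$: indeed $H_q^+=\bigcup_{\alpha\in\Gamma(p)}(H_q+\alpha)$ is a set of $\CA(p)$ containing $x$ — it is invariant under all of $\Gamma(p)$ by construction and plane-like by Proposition~\ref{stabPL} — hence $H_q^+\supseteq E^+$; and $H_q^+\subseteq E^+$ because each $E_0+\alpha\subseteq E^+$. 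Symmetrically $H_q^-=E^-$. This already gives \eqref{eteroclinix2}. To extract $K$: the boundary $\partial H_q$ must meet the closure of $G$ (otherwise $H_q$ itself would be in $\CA(p)$ and pass through the gap, impossible, OR $H_q$ would coincide with $E^+$ or $E^-$ up to a null set, forcing $H_q\in\CA(p)$ again), so $\partial H_q\cap\overline G\ne\emptyset$; since all the $H_q$ lie between $E^-$ and $E^+$, the density estimates of Proposition~\ref{propdensity} confine $\partial H_q$ to a bounded subset of $\overline G$, and one takes $K$ to be that compact set (e.g.\ $K=\overline G\cap\{\dist(\cdot,\partial E^\pm)\ge\eps_0\}$ for a suitable $\eps_0>0$ coming from the uniform density constants $\gamma,\beta$).

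The main obstacle I expect is not the construction of $H_q$ itself — that follows rather directly from the nested-union/stability machinery — but rather verifying that $\partial H_q$ genuinely stays inside a \emph{fixed} compact subset of $G$ uniformly in $q$, and in particular that $\partial H_q$ cannot degenerate by sliding off to $\partial E^+$ or $\partial E^-$. The subtle point is: why does taking the union over $\alpha\cdot q\le 0$ not just recover $E^+$ (if $q$ is ``too irrational'' relative to $\Gamma(p)$, the half-lattice $\{\alpha\cdot q\le 0\}$ could be dense enough that its orbit fills up all of $E^+$)? Here one must use that $q\in V^r(p)=\Span_\R\Gamma(p)$ but that $\{\alpha\in\Gamma(p):\alpha\cdot q\le 0\}$ is a genuine half of the lattice $\Gamma(p)$, whose orbit cannot be all of $E^+$ because $E^+$ is \emph{not} invariant under $\Gamma(p)\cap\{q^\perp\}$-translations that would be needed — more precisely, one uses the separation result for $\Z^d$ proved in Appendix~\ref{AppBir} (the analogue of Lemma~\ref{lembirkhoffdir}) to guarantee that the orbit under $\{\alpha\cdot q\le 0\}$ has a well-defined ``boundary direction'' and hence leaves a gap on the $\{\alpha\cdot q>0\}$ side. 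Once that separation fact is in hand, the remaining estimates are routine applications of Proposition~\ref{propdensity} and Proposition~\ref{stabPL}.
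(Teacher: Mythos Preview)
Your construction has a fundamental gap. You begin with a recurrent plane-like minimizer $E_0\in\CA(p)$, but by the very definition of $\CA(p)$ (strong Birkhoff property), $E_0+\alpha=E_0$ for \emph{every} $\alpha\in\Gamma(p)$, since $\alpha\cdot p=0$ implies both $E_0\subseteq E_0+\alpha$ and $E_0+\alpha\subseteq E_0$. Hence your union
\[
H_q^{(0)}\ =\ \bigcup_{\alpha\in\Gamma(p),\,\alpha\cdot q\le 0}(E_0+\alpha)\ =\ E_0
\]
is trivial: it returns $E_0$ itself, an element of $\CA(p)$ whose boundary does \emph{not} meet $G$. Your parenthetical safeguard (``if this accidentally produced an element of $\CA(p)$ it would pass through $G$, a contradiction'') does not apply, because $E_0=E^+$ is already an element of $\CA(p)$ whose boundary lies on $\partial G$, not inside $G$. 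No union of $\Gamma(p)$-translates of sets in $\CA(p)$ can ever produce a genuine heterocline, and the separation lemma from Appendix~\ref{AppBir} cannot rescue this.

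The paper's approach is essentially different and avoids this trap: one takes plane-like minimizers $E_{p_n}\in\CA(p_n)$ in \emph{perturbed} directions $p_n=p+\tfrac{1}{n}q$. Since $p_n\cdot\alpha=\tfrac{1}{n}\alpha\cdot q\neq 0$ for $\alpha\in\Gamma(p)$ with $\alpha\cdot q\neq 0$, these sets are genuinely monotone (not invariant) under such translations. One first fixes the compact $K\subset G$ (a slab in the $\bar q_i$ directions intersected with $\{x\in G:\dist(x,\partial G)\ge\eta\}$), translates each $E_{p_n}$ by an integer vector so that $\partial E_{p_n}\cap K\neq\emptyset$, and passes to a Kuratowski limit $H_q$ along a subsequence. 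The limit inherits the Birkhoff property and the correct monotonicity in $\Gamma(p)$ (periodicity along $\alpha\cdot q=0$, strict inclusion along $\alpha\cdot q>0$), while $\partial H_q\cap K\neq\emptyset$ by Hausdorff convergence. The identities $H_q^\pm=E^\pm$ then follow from the fact that the limiting calibration $z=\lim z_n$ is a calibration in direction $p$ (hence calibrates $E^\pm$ and $H_q$ simultaneously), together with the maximum principle (Proposition~\ref{order}).
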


\begin{proof}

Let $(\bar q_1, \dots,\bar q_k)\in \Z^d$ be an orthogonal basis of $V^r(p)$.
We choose a (very) small $\eta>0$ such that $\{x\in G \, : \, \dist(x,\partial G)\ge \eta\}\neq \emptyset$, and let
\[
K:= \bigcap_{i=1}^k \left\{0\le x\cdot (\bar q_i/|\bar q_i|)\le |\bar q_i|\right\}\cap \{x\in G \, : \, \dist(x,\partial G)\ge \eta\}.
\]
Following~\cite{Bangertmin,HJG,S}, the idea consists in letting $p_n=p+\frac{1}{n}q$ and considering a   plane-like minimizer $E_{p_n}\in \CA(p_n)$ calibrated by a vector field $z_n$. Since $E_{p_n}$ is included in a strip with normal $p_n$,  up to an integer translation, we can assume that $\partial^* E_{p_n}\cap K \neq \emptyset$ for every $n$. Thus there exists a subsequence of $E_{p_n}$ converging to a plane-like set $H_q$ which is calibrated by $z:= \lim z_n$,
and such that $\partial H_q\cap K\neq \emptyset$ (by Hausdorff convergence
in $K$ of the boundaries).
Moreover, $z$ is a calibration in the direction $p$,
since by weak-$*$ convergence we also have
\[
\left(\int_Q z \right) \cdot p=\p(p)\,.
\]
By construction $H_q$ satisfies the Birkhoff property, is periodic in all the rational directions which are orthogonal to $p$ and $q$, and 
$H+\alpha\subset H$ for all $\alpha \in \Gamma(p)$ with $\alpha\cdot q>0$.

Define now $H^\pm_q$ as in~\eqref{eteroclinix}.
It is clear that $H^\pm_q \in \CA(p)$ and thus, by definition of $G$, 
$H_q^- \subset E^- \subset E^+ \subset H_q^+$. If $z$ is the  calibration of $H_q$ given above, $z$ calibrates also $H_q^\pm$ and $E^\pm$  and thus,  by the maximum principle, $H_q \subset E^+$.
Therefore,  

$H_q+\alpha \subset E^+$ for every $\alpha\in \Gamma(p)$, from which we get $H_q^+ \subset E^+$. Similarly we have $E^- \subset H^-_q$, so that~\eqref{eteroclinix2} is true.
\end{proof}

\begin{remark}\rm
The previous proposition asserts that the set $H_q$ is really a heteroclinic solution, in the sense that it is a   plane-like minimizer in the direction $p$ which connects two periodic plane-like minimizers in the same direction.
\end{remark}

\begin{remark}\rm
{ } When investigating the differentiability properties of $\p$ at a point $p$ in the direction $q$, it is natural to consider the heteroclinic minimizers constructed as above, which amounts to studying the asymptotic behavior of $\p(p+\frac{1}{n} q)$ as $n\to +\infty$.
\end{remark}
 
We can prove the following uniform energy estimate for the heteroclinic surfaces:

\begin{proposition}\label{estimdelta}
There exists $\delta>0$ and $R>0$ such that, for every $q\in V^r(p)$ and every $H_q, H_{-q}$ heteroclinic surfaces in the direction $q, -q$ respectively, such that $\partial^*H_q\cap \partial^*H_{-q}\cap K \neq \emptyset$, where $K$ is the compact set given in Proposition \ref{prophetero}, there holds 
\[
\int_{B_R\cap \partial^* H_q} F(x, \nu^{H_q})-[z,\nu^{H_q}]\ \ge\ \delta
\]
for every $z\in X$ calibrating $H_{-q}$.
\end{proposition}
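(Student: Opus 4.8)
The plan is to argue by contradiction, using the compactness of plane-like minimizers, the stability of calibrations under weak-$*$ convergence, and the strong maximum principle. Suppose the statement fails. Then, taking $\delta=1/n$ and $R=n$, for every $n$ there are $q_n\in V^r(p)$ (which I normalize so that $|q_n|=1$), heteroclinic surfaces $H_{q_n},H_{-q_n}$ in the directions $q_n,-q_n$ with $\partial^*H_{q_n}\cap\partial^*H_{-q_n}\cap K\ne\emptyset$, and a field $z_n\in X$ calibrating $H_{-q_n}$, such that $\int_{B_n\cap\partial^*H_{q_n}}(F(x,\nu^{H_{q_n}})-[z_n,\nu^{H_{q_n}}])\,d\HH\to0$. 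Since heteroclinic surfaces have the Birkhoff property, Lemma~\ref{lemBKPL} gives a uniform constant $M$ in \eqref{PLexist}, so by the density estimates of Proposition~\ref{propdensity} the sets $H_{\pm q_n}$ and their boundaries are precompact in $L^1_\loc$ and in the Kuratowski topology. Extracting a subsequence, I may assume $q_n\to q$ with $|q|=1$, $H_{q_n}\to H$, $H_{-q_n}\to H'$ (with Kuratowski convergence of boundaries, Proposition~\ref{stabPL}), and $z_n\stackrel{*}{\rightharpoonup}z\in X$ (recall $X$ is compact).

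Next I would show that $z$ calibrates \emph{both} limit sets. For $H'$ this is exactly Proposition~\ref{stabilitycalib}. For $H$, fix $R>0$ and $\phi\in\mathcal{C}^\infty_c(B_R)$ with $0\le\phi\le1$. Since the integrand $F(x,\nu^{H_{q_n}})-[z_n,\nu^{H_{q_n}}]$ is nonnegative and its integral over $B_n$ tends to $0$, its integral against $\phi$ tends to $0$ as well; on the other hand, using $\Div z_n=0$ and the fact that $\chi_{H_{q_n}}\to\chi_H$ strongly in $L^1_\loc$ while $z_n\stackrel{*}{\rightharpoonup}z$,
\[
\int_{\partial^*H_{q_n}}\phi\,[z_n,\nu^{H_{q_n}}]\,d\HH\ =\ -\int_{H_{q_n}}z_n\cdot\nabla\phi\ \longrightarrow\ -\int_{H}z\cdot\nabla\phi\ =\ \int_{\partial^*H}\phi\,[z,\nu^{H}]\,d\HH .
\]
Hence $\int_{\partial^*H_{q_n}}\phi\,F(x,\nu^{H_{q_n}})\,d\HH\to\int_{\partial^*H}\phi\,[z,\nu^{H}]\,d\HH$, and lower semicontinuity of the anisotropic total variation gives $\int_{\partial^*H}\phi\,F(x,\nu^H)\,d\HH\le\int_{\partial^*H}\phi\,[z,\nu^H]\,d\HH$; combined with the a.e.\ inequality $[z,\nu^H]\le F(x,\nu^H)$ this forces $[z,\nu^H]=F(x,\nu^H)$ $\HH$-a.e.\ on $\partial^*H\cap B_R$, for every $R$, i.e.\ $z$ calibrates $H$.

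Now I would derive the contradiction. As $z\in X$ is periodic and calibrates the plane-like minimizers $H$ and $H'$, by Proposition~\ref{proppo} they have the Birkhoff property and connected boundary, so by Proposition~\ref{order} they are ordered. They are moreover distinct: if $H=H'$, then passing to the limit the inclusions $H_{q_n}+\alpha\subseteq H_{q_n}$ (valid, for $n$ large, whenever $\alpha\in\Gamma(p)$ and $\alpha\cdot q>0$) and, by the Birkhoff property, $H_{-q_n}+\alpha\supseteq H_{-q_n}$ (same $\alpha$), one gets $H+\alpha=H$ for every $\alpha\in\Gamma(p)$ with $\alpha\cdot q\ne0$. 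Since $\{\alpha\in\Gamma(p):\alpha\cdot q\ne0\}$ generates $\Gamma(p)$ as a group — indeed, fixing any $\gamma\in\Gamma(p)$ with $\gamma\cdot q\ne0$, every $\beta\in\Gamma(p)$ with $\beta\cdot q=0$ equals $(\beta+\gamma)-\gamma$ — it would follow that $H+\alpha=H$ for all $\alpha\in\Gamma(p)$, i.e.\ (together with the Birkhoff property in the direction $p$) that $H$ has the strong Birkhoff property, so $H\in\CA(p)$; but $\partial H$ meets $K\subset G$, contradicting the definition of the gap. Hence $H\subsetneq H'$, up to relabeling. Finally, choosing $x_n\in\partial^*H_{q_n}\cap\partial^*H_{-q_n}\cap K$ and passing to a further subsequence, $x_n\to\bar x\in K\subset G$, and Kuratowski convergence gives $\bar x\in\partial H\cap\partial H'$. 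Applying Theorem~\ref{zfixPL} at $\bar x$ (to $H$ and to $H'$, both calibrated by $z$) shows $\bar x\in\partial^*H\cap\partial^*H'$, which contradicts the strong maximum principle of Proposition~\ref{maxprinc}, asserting $\partial^*H\cap\partial^*H'=\emptyset$ for ordered Class~A minimizers with connected boundary.

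The main obstacle is the very last step: Theorem~\ref{zfixPL} yields $\bar x\in\partial^*H\cap\partial^*H'$ only when $\bar x$ happens to be a Lebesgue point of $z$. In low dimensions ($d\le3$) this is not needed, since there $\partial E=\partial^*E$ for Class~A minimizers and a single common boundary point already contradicts $\partial^*H\cap\partial^*H'=\emptyset$; for $d\ge4$ one must argue that the touching point may be taken in the reduced boundaries, either by perturbing $\bar x$ within $\partial^*H\cap\partial^*H'$ to a Lebesgue point of $z$, or by establishing uniform $C^{1,\alpha}$ (indeed $C^{2,\alpha}$) estimates for the converging sheets $\partial^*H_{\pm q_n}$ near $x_n$ and passing them to the limit. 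This regularity/measure-theoretic point is the only delicate ingredient; the rest is a routine combination of the stability results already established.
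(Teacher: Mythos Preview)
Your compactness setup and the argument that $z$ calibrates both limits $H$ and $H'$ are correct and coincide with the paper. The divergence is in the endgame, and there the gap you flag is real and not easily repaired along the lines you suggest.

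In the case $H=H'$ your argument is fine: you deduce $H+\alpha=H$ for all $\alpha\in\Gamma(p)$, hence $H\in\CA(p)$, and the touching point in $K\subset G$ contradicts the definition of the gap. But in the case $H\neq H'$ you try to apply the strong maximum principle at the single touching point $\bar x\in\partial H\cap\partial H'$. Kuratowski convergence only gives $\bar x$ in the \emph{topological} boundaries; to invoke Proposition~\ref{maxprinc} (whose proof shows $\partial^*E_1\cap\partial^*E_2=\emptyset$) you need $\bar x\in\partial^*H\cap\partial^*H'$. Neither of your suggested fixes works without substantial extra input: there is no reason the set $\partial H\cap\partial H'$ is more than a point, so one cannot ``perturb to a Lebesgue point of $z$''; and uniform $C^{2,\alpha}$ estimates near the sequence $x_n\in\partial^*H_{\pm q_n}$ would require $x_n$ to stay uniformly away from the singular sets, which you have no control over.

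The paper avoids this regularity issue entirely by a construction that is, in fact, the natural extension of your $H=H'$ argument. Assume (after relabelling) $H_-:=H'\subset H_+:=H$. For each $\alpha\in\Gamma(p)$ the heteroclinic structure passes to the limit as the dichotomy: either $H_++\alpha\subset H_+$ and $H_-+\alpha\supset H_-$, or the reverse. Choose a $\Z$-basis $(\alpha_1,\dots,\alpha_k)$ of $\Gamma(p)$, flipping signs so that the first alternative holds for every $\alpha_i$. Then
\[
\widetilde H\ :=\ \bigcup_{n_1,\dots,n_k\ge 0}\Big(H_-+\sum_i n_i\alpha_i\Big)
\]
is a plane-like minimizer (all the sets in the union are calibrated by $z$), is contained in $H_+$ (since $H_-\subset H_+$ and $H_++\alpha_i\subset H_+$), and satisfies $\widetilde H+\alpha_i=\widetilde H$ for each $i$, hence is $\Gamma(p)$-invariant and therefore in $\CA(p)$. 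Since $\bar x\in\partial H_-\subset\overline{\widetilde H}$ and $\widetilde H\subset H_+$ with $\bar x\in\partial H_+$, one gets $\bar x\in\partial\widetilde H$; but $\bar x\in K\subset G$ and no element of $\CA(p)$ can have boundary in the gap. This contradiction uses only the topological boundary, so no Lebesgue-point or reduced-boundary issue arises. Your $H=H'$ case is the degenerate instance of this construction; the point you missed is that the same ``upgrade to strong Birkhoff'' can be carried out from the pair $H_-\subset H_+$ even when they are distinct.
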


\begin{proof}
Assume on the contrary that there exists $R_n\to +\infty$ and $\delta_n\to 0$  such that there exists $q_n\in V^r(p)$, with $|q_n|=1$, $H_{q_n}$ heteroclinic in the direction $q_n$, 
and $H_{-q_n}$ heteroclinic in the direction $-q_n$, calibrated by $z_n\in X$ and such that
\begin{equation}\label{quasicalib}
\int_{ B_{R_n}\cap \partial^* H_{q_n}} F(x, \nu^{H_{q_n}})-[z_n,\nu^{H_{q_n}}] \le \delta_n.
\end{equation}
Since for every $n$, $\partial^* H_{\pm q_n}$ intersects $K$, there exists a subsequence such that $q_n\to q$, $H_{\pm q_n} \to H_{\pm}$, with $\partial H_+\cap \partial H_{-}\cap K \neq \emptyset$ and $z_n$ converge weakly-$*$ to $z\in X$, where $z$ calibrates $H_-$.
By Proposition \ref{stabPL}, Proposition \ref{stabilitycalib} and Proposition \ref{propcalib}, $H_{\pm}$ are plane-like minimizers with the Birkhoff property. Even if it is not clear that $H_{\pm}$ are heteroclinic, 
for any $\alpha \in\Gamma(p)$ one of 
the following properties hold
 \begin{enumerate}
  \item  $H_+ +\alpha\subset H_+ $ and $H_- \subset H_- +\alpha$
  \item  $H_+  \subset H_++\alpha$ and $H_- +\alpha \subset H_- $.
\end{enumerate} 

Arguing as in the proof of Proposition \ref{stabilitycalib}, we
easily deduce from~\eqref{quasicalib} that $z$ is a calibration also for $H_+$.
By Proposition \ref{order} we deduce that
\begin{equation}\label{eqimp}
H_-\subset H_+ \qquad \textrm{or} \qquad H_+\subset H_-.
\end{equation}




Assume for instance that $H_-\subset H_+$, and
let $x\in \partial H_-\cap \partial H_+\cap K$.
Let also $(\alpha_1,\dots,\alpha_k)\in \Gamma(p)$ be an integer basis
of vectors such that 1.~holds: $H_++\alpha_i\subset H_+$ and $H_-\subset H_-+\alpha_i$ for $i=1,\dots,k$. Observe that for any integer $n\ge 0$,
\[
n\alpha_i+H_-\ \subset\ (n+1)\alpha_i+H_-
\ \subset\ (n+1)\alpha_i+H_+\ \subset \ H_+\,.
\]
Hence, letting $\widetilde{H}=\bigcup_{n_i\ge 0}(H_-+\sum_{i=1}^k n_i\alpha_i)$,
we obtain a plane-like minimizer such that
$\widetilde{H}\subset H_+\subset E_+$,
$x\in \overline{\widetilde{H}}$, and $\widetilde{H}$
satisfies the strong Birkhoff property: hence $\widetilde{H}=E^+$. It follows
that $H_+=E^+$, in contradiction with the fact that $K\cap\partial H_+\neq
\emptyset$.\end{proof}

\section{Differentiability and strict convexity of $\p$}\label{secpropphi}

We are now ready to prove our main result.

\begin{theorem}\label{main}
Let $F\in\mathcal{C}^{2,\alpha}(\R^d\times (\R^d\setminus\{0\}))$
be a convex, one-homogeneous and elliptic integrand.  
Then, the associated stable norm $\p$ has the following properties:
\begin{itemize}
\item $\phi^2$ is strictly convex;
\item if $p$ is totally irrational then $\nabla\p(p)$ exists;
\item the same occurs for any $p$ such that the plane-like minimizers satisfying the strong Birkhoff property  give rise to a foliation of $\R^d$;	
\item if, on the other hand, these minimizers form a lamination with a gap, then
$\partial \phi(p)$ is a convex set of dimension $\dim(V^r(p))$:
$\phi$ is differentiable in the directions of $(V^r(p))^\perp$ and is non-differentiable in the directions of $\R^d\setminus (V^r(p))^\perp$. 
\end{itemize}
\end{theorem}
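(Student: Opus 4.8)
The plan is to use two ingredients already available: first, that $\p$ is the support function of $C=\{\int_Q z:z\in\X\}$, so that $\partial\p(p)=\{\int_Q z:z\text{ a calibration in the direction }p\}$ and $\p$ is differentiable at $p$ iff this set is a singleton; second, Theorem~\ref{zfix}, which says that a calibration in the direction $p$ equals $\nabla_pF(x,\nu)$ on the union $\Lambda$ of the reduced boundaries of the members of $\CA(p)$. I would establish the four assertions separately.

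For the strict convexity of $\p^2$ I would use the duality between strict convexity of a gauge and smoothness of its polar body: $\p^2$ is strictly convex iff $C$ is smooth, i.e.\ iff no $\xi\in\partial C$ lies in $\partial\p(p_1)\cap\partial\p(p_2)$ with $p_1,p_2$ non-parallel, equivalently iff no $z\in\X$ is a calibration in two non-parallel directions. Assume such a $z$ existed; take minimizers $v_1,v_2$ of~\eqref{defphi} for $p_1,p_2$ and the level sets $E_i=\{v_i+p_i\cdot x>0\}$, which $z$ calibrates by Proposition~\ref{calibrEs}. Each $E_i$ is a plane-like minimizer in the direction $p_i$ with connected boundary (Corollary~\ref{coroconnect}), so Proposition~\ref{order} forces $E_1\subseteq E_2$ or $E_2\subseteq E_1$; but by~\eqref{PLexist} this would place a half-space with inner normal $p_1/|p_1|$ inside a half-space with inner normal $p_2/|p_2|$, forcing $p_1/|p_1|=p_2/|p_2|$ --- a contradiction.

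For the two differentiable cases: if $\CA(p)$ foliates $\R^d$ then $\Lambda$ has full Lebesgue measure, so by Theorem~\ref{zfix} every calibration in the direction $p$ is determined a.e.\ (its value being $\nabla_pF(x,\nu(x))$ for the normal $\nu(x)$ of the unique leaf through $x$), hence $\partial\p(p)$ is a singleton. If $p$ is totally irrational and there is no gap we are back in the foliation case, so suppose $p$ is totally irrational with a gap $G$, associated with $E^{\pm}\in\CA(p)$. The crucial point I would need is the lemma that \emph{every gap has finite Lebesgue measure when $p$ is totally irrational} (distinct gaps are disjoint and, $p$ being totally irrational, all $\Z^d$-translates of $G$ are pairwise distinct, which should squeeze $|G|<+\infty$ from the abundance of $q\in\Z^d$ with $p\cdot q>0$). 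Granting this, for two calibrations $z_1,z_2$ in the direction $p$ the field $w=z_1-z_2$ vanishes a.e.\ off the gaps, satisfies $\Div w=0$, and has zero normal trace on $\partial G$ (both $z_i$ calibrate $E^{\pm}\in\CA(p)$ by Theorem~\ref{calibrallPL}); integrating $w_\ell=\Div(x_\ell w)$ over $G\cap B_R$ and letting $R\to\infty$ along a sequence with $R\,\HH(G\cap\partial B_R)\to0$ --- available since $\int_0^\infty\HH(G\cap\partial B_R)\,dR=|G|<\infty$ --- gives $\int_Gw=0$ for every gap, and summing over the $\Z^d$-orbits of gaps yields $\int_Qw=0$, so $\partial\p(p)$ is a singleton.

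For the gap case I would prove the two inclusions for $\partial\p(p)$ separately, with $k=\dim(V^r(p))$. For $\partial\p(p)\subseteq\xi_0+V^r(p)$ (equivalently, differentiability along $(V^r(p))^\perp$) I would rerun the previous argument in the quotient torus $\T_r=\R^d/\Gamma(p)$: the $\Gamma(p)$-invariant sets $E^{\pm}$ and the gap descend to $\T_r$, where the gap again has finite measure for the same reason as above, and testing $w=z_1-z_2$ against $v\in(V^r(p))^\perp$ --- for which $x\mapsto x\cdot v$ descends to $\T_r$ --- the same divergence-free computation gives $\int_Qw\cdot v=0$, so $\int_Qw\in V^r(p)$. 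For the reverse inclusion, for each $q\in V^r(p)\setminus\{0\}$ I would take the heteroclines $H_{\pm q}$ of Proposition~\ref{prophetero}, calibrated by $z_{\pm q}$ (which are calibrations in the direction $p$); passing to the limit in $\p(p\pm q/n)$ identifies $\xi_q\cdot q=\p'(p;q)$ and $\xi_{-q}\cdot q=-\p'(p;-q)$ for $\xi_{\pm q}=\int_Qz_{\pm q}\in\partial\p(p)$, while Proposition~\ref{estimdelta} supplies a uniform energy defect $\delta>0$ of $z_{-q}$ on $H_q$ which, by periodicity, recurs at every $\Gamma(p)$-translate of the heteroclinic step in the direction $q$; summing these defects along a long cylinder in the direction $q$ should give $\int_Q(z_q-z_{-q})\cdot q>0$, i.e.\ $\p'(p;q)+\p'(p;-q)>0$, so $\p$ is non-differentiable at $p$ along every $q\in V^r(p)\setminus\{0\}$. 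Combined with the first inclusion this gives $\Span(\partial\p(p)-\xi_0)=V^r(p)$, so $\partial\p(p)$ has dimension $k$ and $\p$ is differentiable at $p$ exactly along $(V^r(p))^\perp$. The step I expect to be the main obstacle is the finite-measure lemma for gaps (in $\R^d$ when $p$ is totally irrational, in $\T_r$ in general), together with making rigorous the summation of the heteroclinic defects that upgrades Proposition~\ref{estimdelta} into the strict inequality $\int_Q(z_q-z_{-q})\cdot q>0$; the remaining steps are a fairly direct assembly of the results of Sections~\ref{seccalibre}--\ref{sechetero}.
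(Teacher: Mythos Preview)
Your proposal is correct and follows essentially the same route as the paper, which assembles Theorem~\ref{main} from Theorem~\ref{convex}, Proposition~\ref{totir} (and its Corollary), Proposition~\ref{cinquepuntosei}, and Proposition~\ref{finalnondiff}. Your duality framing of the strict convexity and the identification $\xi_q\cdot q=\p'(p;q)$ are minor reformulations of the paper's direct computations, and you have correctly isolated the two genuine technical steps --- the finite gap measure (proved exactly as you sketch, via disjointness of the $\Z^d$-translates of $G$) and the upgrade of Proposition~\ref{estimdelta} to $\int_Q(z_q-z_{-q})\cdot q>0$ --- the latter being carried out in the paper through the chain Proposition~\ref{egalmultipl}, \eqref{egsurf}, \eqref{egheter}, \eqref{egalmultiplheter}.
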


\subsection{Strict convexity}
\begin{theorem}\label{convex}  
The function $\phi^2$ is strictly convex.
\end{theorem}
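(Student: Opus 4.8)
The plan is to exploit the dual characterization $\phi(p)=\max_{\xi\in C}\xi\cdot p$, where $C=\{\int_Q z : z\in X\}$ is a compact convex set, together with the fact (Theorem~\ref{zfix}) that a calibration in the direction $p$ is prescribed a.e.\ on the support of any minimizer $v_p$ of~\eqref{defphi}. Suppose $\phi^2$ fails to be strictly convex. Since $\phi$ is one-homogeneous, $\phi^2$ is $2$-homogeneous and convex; a standard fact is that such a function is strictly convex if and only if $\phi$ is strictly convex \emph{away from rays}, i.e.\ whenever $p_0,p_1$ are not positive multiples of one another one has $\phi(p_0+p_1)<\phi(p_0)+\phi(p_1)$. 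So failure of strict convexity means there exist linearly independent $p_0,p_1$ with $\phi(p_0+p_1)=\phi(p_0)+\phi(p_1)$. Set $p=p_0+p_1$. The first step is to translate this equality into a statement about calibrations: pick $\xi\in\partial\phi(p)$ with associated field $z\in X$, so $\xi\cdot p=\phi(p)$; then from $\xi\cdot p_0\le\phi(p_0)$, $\xi\cdot p_1\le\phi(p_1)$ and the equality we get $\xi\cdot p_0=\phi(p_0)$ and $\xi\cdot p_1=\phi(p_1)$. Hence the \emph{same} field $z$ is simultaneously a calibration in the directions $p_0$, $p_1$ and $p_0+p_1$.

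The second step is to derive a contradiction from the existence of a single field $z$ calibrating in three directions $p_0,p_1,p$ that are not collinear. By Theorem~\ref{zfix}, on $\mathrm{Spt}(|Dv_{p_0}+p_0|)$ we have $z=\nabla_pF(x,\nu)$ where $\nu$ is the (unit) normal to the plane-like minimizer in direction $p_0$ through $x$; likewise for $p_1$ and for $p$. Now the idea is to produce a point $x$ lying on the boundary of plane-like minimizers in \emph{two different} directions, say $p_0$ and $p_1$, with distinct normals $\nu_0\ne\nu_1$. Since $F^\circ(x,\cdot)$ is elliptic (hence $\nabla_pF(x,\cdot)$ is injective on the sphere, by~\eqref{strictconvF} applied to $F$ and its polar), the values $\nabla_pF(x,\nu_0)$ and $\nabla_pF(x,\nu_1)$ are distinct, contradicting that both equal the single vector $z(x)$ at a common Lebesgue point. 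To find such an $x$: the plane-like minimizers in direction $p_0$ with the strong Birkhoff property sweep out $\R^d$ (up to a gap) by Corollary after Theorem~\ref{calibrallPL}; their boundaries are $(d-1)$-dimensional $C^{2,\alpha}$ hypersurfaces roughly orthogonal to $p_0$, and similarly for $p_1$ (roughly orthogonal to $p_1$). Because $p_0\not\parallel p_1$, a boundary sheet in direction $p_0$ and one in direction $p_1$ must meet somewhere (by a degree/topological argument, or simply because one family foliates a slab transverse to $p_0$ and the other transverse to $p_1$, and two transverse foliations of overlapping full-measure regions intersect), and at a generic intersection point the normals are transverse, hence distinct.

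The main obstacle is exactly the geometric step of locating a common boundary point with genuinely different normals while keeping track of the possible gaps in the laminations: if $x$ happens to fall inside a gap of the $p_0$-lamination, then $z$ is not prescribed there by Theorem~\ref{zfix}, and one must instead use the heteroclinic surfaces from Proposition~\ref{prophetero} (which \emph{are} calibrated by $z$, and whose boundaries do pass through the compact set $K\subset G$) to recover a prescription of $z$ inside the gap as well. I would handle this by first treating the case where both laminations are foliations, then reducing the gapped case to the foliated one by replacing, inside each gap, the missing sheets with a suitable family of heteroclinic minimizers calibrated by the same $z$; the density/compactness estimates of Propositions~\ref{propdensity}, \ref{stabPL}, \ref{stabilitycalib} guarantee that these surfaces still foliate (or at least densely fill) the gap with well-defined normals. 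Once $x$ and the two distinct normals are in hand, the contradiction via injectivity of $\nabla_pF(x,\cdot)$ is immediate, completing the proof that $\phi^2$ is strictly convex.
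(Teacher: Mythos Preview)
Your first step is exactly right and coincides with the paper's: from $\phi(p_0+p_1)=\phi(p_0)+\phi(p_1)$ one extracts a single $z\in X$ that is a calibration simultaneously in the directions $p_0$, $p_1$ and $p_0+p_1$.

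The second step, however, is where you diverge from the paper and where your argument becomes both unnecessarily complicated and genuinely incomplete. The paper does not look for a single point with conflicting normal prescriptions. Instead it invokes Proposition~\ref{order} directly: once $z$ calibrates a plane-like minimizer $E_0=\{v_{p_0}+p_0\cdot x>0\}$ and a plane-like minimizer $E_1=\{v_{p_1}+p_1\cdot x>0\}$ (both with connected boundary, by Proposition~\ref{proppo}), one concludes $E_0\subset E_1$ or $E_1\subset E_0$. But a plane-like set in direction $p_0$ cannot contain, nor be contained in, a plane-like set in a non-parallel direction $p_1$, and the proof is finished in one line. No gaps, no heteroclinics, no intersection geometry.

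Your route has two concrete problems. First, the set $\partial E_0\cap\partial E_1$ is generically $(d-2)$-dimensional, hence Lebesgue-null; there is no reason a Lebesgue point of $z$ should lie on it, so Theorem~\ref{zfixPL} may simply fail to apply at any intersection point. Second, even granting a Lebesgue point $x\in\partial E_0\cap\partial E_1$, Theorem~\ref{zfixPL} does not yield a contradiction: it forces $\nu^{E_0}(x)=\nu^{E_1}(x)$, i.e.\ the normals \emph{agree}, not differ. Your claim that ``at a generic intersection point the normals are transverse, hence distinct'' is precisely what the common calibration forbids, so you cannot assume it a priori. What you would actually need to argue is that equal normals along the whole intersection force the two hypersurfaces to coincide, and then that coinciding plane-like leaves force $p_0\parallel p_1$ --- which is exactly the ordering argument of Proposition~\ref{order}, reached by a longer road.

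Finally, your proposed handling of gaps via heteroclinic surfaces is vague: the heteroclinics of Proposition~\ref{prophetero} are constructed with their \emph{own} calibrations (limits of $z_n$), not with the given $z$, and there is no claim that they foliate the gap. The paper's use of Proposition~\ref{order} bypasses all of this.
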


\begin{proof}
Let $p_1,p_2$, with $p_1\neq p_2$, and  
let $p=p_1+p_2$. We want to show that, if $\p(p)=\p(p_1)+\p(p_2)$, then 
$p_1$ is proportional to $p_2$, which gives the thesis. 

Indeed, we have
\begin{align*}
\p(p) &= \int_\T [z_p, p+Dv_p]\\
&= \, \int_\T F(x,p+Dv_{p})\\
&\le\, \int_\T F(x,p+Dv_{p_1}+Dv_{p_2}) \\
&\le\, \int_\T F(x,p_1+Dv_{p_1})+F(x,p_2+Dv_{p_2})\\ 
&=\, \p(p_1)+\p(p_2)\,.
\end{align*}
Since $\p(p)=\p(p_1)+\p(p_2)$, it follows that $v_{p_1}+v_{p_2}$ is also a minimizer of \eqref{defphi} and,
in particular, $z_p$ satisfies
\[
[z_p,p_1+Dv_{p_1}]+[z_p,p_2+Dv_{p_2}]\ =\ 
F(x,p_1+Dv_{p_1})+F(x,p_2+Dv_{p_2})
\]
$(|p_1+Dv_{p_1}|+|p_2+Dv_{p_2}|)$-a.e., so that 
\[
[z_p, p_i+Dv_{p_i}]\ =\ F(x,p_i+Dv_{p_i}) \qquad i\in\{1,2\}\,.
\]
This means that $z_p$ is a calibration for the plane-like minimizers 
\[
\{v_p +p\cdot x\ge s\}\ ,\ \{v_{p_1}+p_1 \cdot x\ge s\}\ \textup{ and }\ \{v_{p_2} +p_2\cdot x\ge s\}
\]
for all $s\in\R$. By Proposition \ref{order}, it follows that they are included in one another
which is possible only if $p_1$ is proportional to $p_2$.
\end{proof}

\begin{remark}\rm
Observe that $\phi^2$ may fail to be uniformly convex.
The thesis of the theorem is
equivalent to the strict convexity of the level sets of $\phi$.
\end{remark}


\subsection{Differentiability of $\p$}
We now turn to the study of the differentiability of $\p$. As already noticed, the differentiability of $\p$ at a point $p\in \R^d$ is equivalent to the fact that $\partial \p(p)$ is a singleton, that is, for every calibration $z\in X$ in the direction $p$ the integral $\int_Q z \ dx$ has the same value.

{ } Let us first show that $\phi$  must is differentiable in the totally irrational directions.
 
\begin{proposition}\label{totir}
Assume $p$ is totally irrational.
Then for any two calibrations $z,z'$ in the direction $p$,
$\int_Q z\,dx=\int_Q z'\,dx$. As a consequence, 
$\partial \p(p)$ is a singleton and $\p$ is differentiable at $p$.
\end{proposition}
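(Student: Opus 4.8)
The plan is to exploit the rigidity structure already developed: by Theorem~\ref{zfix}, any calibration $z$ in the direction $p$ is completely prescribed on $\Lambda=\bigcup\{\partial^*E : E\in\CA(p)\}$, and since $p$ is totally irrational $\Gamma(p)=\{0\}$, so there are no rational symmetries to quotient by. Given two calibrations $z,z'$ in the direction $p$, I want to show $\int_Q z = \int_Q z'$. If $\CA(p)$ foliates $\R^d$ (no gap), then $\Lambda$ has full measure, $z=z'$ a.e., and we are done. So the real content is the case where there is a gap $G$.

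First I would set up the gap: fix a point $x\in\R^d$ through which no plane-like minimizer in $\CA(p)$ passes, and form $E^+$, $E^-$, $G=\Int(E^+)\setminus\overline{E^-}$ as in Section~\ref{sechetero}. Both $z$ and $z'$ calibrate $E^+$ and $E^-$ (Theorem~\ref{calibrallPL}), hence agree a.e.\ outside $G$; they differ at most on $G$. The key point is that, because $p$ is totally irrational, the gap $G$ has \emph{finite volume}: indeed $E^+\setminus E^-$ is invariant under no nonzero integer translation except $0$ would-be ones — more precisely, since $\Gamma(p)=\{0\}$, translating the gap by any $q\in\Z^d\setminus\{0\}$ moves it strictly (by the Birkhoff property of $E^\pm$, $E^\pm+q$ is comparable with $E^\pm$ and distinct from it), and the translates $G+q$ are pairwise disjoint; since they all sit inside the fixed-width strip where $\partial E^+$ and $\partial E^-$ live, only finitely many can have volume bounded below — in fact $\sum_q |G+q| \le |\{|x\cdot p/|p||\le 2M\}\cap(\text{one period in the other directions})|$-type bound forces $\sum_q|G+q|<\infty$, hence $|G|<\infty$. (This is the "every gap of the lamination must have finite volume" remark from the introduction.)

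Then I would run the integration-by-parts / calibration argument on $G$. Write $w:=z-z'$; it is supported in $\overline G$, is divergence free, and has $F^\circ(x,\pm$-combinations$)\le 1$ controlled. Consider $\int_G \Div(z) = 0$ type identities, or rather: since $\partial G \subset \partial E^+ \cup \partial E^-$ up to negligible sets and both $z,z'$ calibrate $E^\pm$, the normal traces $[z,\nu]$ and $[z',\nu]$ on $\partial G$ coincide (both equal $\nabla_pF(x,\nu^{E^\pm})\cdot\nu$ by Theorem~\ref{zfix}). Applying the generalized Gauss--Green formula to the divergence-free fields $z$ and $z'$ on the finite-volume set $G$:
\[
\int_Q (z-z') = \sum_{q\in\Z^d}\int_{G+q}(z-z') = \sum_q\Big(-\int_{\partial(G+q)}[z,\nu]+\int_{\partial(G+q)}[z',\nu]\Big) = 0,
\]
where the first equality uses that $z=z'$ a.e.\ off $\bigcup_q (G+q)$ and $\sum_q|G+q|<\infty$ makes the sum absolutely convergent (each $\int_{G+q}|z-z'|\le 2c_0^{-1}|G+q|$), and the last uses that the boundary normal traces match. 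Hence $\int_Q z=\int_Q z'$, so $\partial\p(p)$ is a singleton and $\p$ is differentiable at $p$.

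The main obstacle I anticipate is making the finite-volume claim for the gap fully rigorous — one must verify that for $q\ne 0$ the translate $G+q$ is genuinely disjoint from $G$ (using strict inclusions from the Birkhoff property of $E^\pm$ together with total irrationality, i.e.\ $q\cdot p\ne 0$ forces $E^\pm+q \subsetneq E^\pm$ or $\supsetneq$, hence the gaps cannot overlap), and that all these disjoint translates fit in a set of finite measure, which needs the uniform plane-like width $M$ from Lemma~\ref{lemBKPL}. A secondary technical point is justifying the term-by-term integration by parts over the (possibly infinitely many but summable) pieces $G+q$, and checking that the boundary contributions from $\partial(G+q)$ really cancel — this requires that $\HH^{d-1}(\partial G)$ is finite and that the normal traces of $z$ and $z'$ on the reduced boundary of $E^\pm$ agree, which is exactly the content of Theorem~\ref{zfix}; the only care needed is for the part of $\partial G$ that is "interior" to neither $E^+$ nor $E^-$, which has zero $\HH^{d-1}$-measure by the regularity of Class~A minimizers (Proposition~\ref{propdensity}).
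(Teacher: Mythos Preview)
Your overall strategy coincides with the paper's: use Theorem~\ref{zfix} to reduce to the gaps, use total irrationality to force each gap to have finite volume, and then show $\int_G(z-z')=0$ for each gap. But the execution of the last two steps contains genuine errors.

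\textbf{The Gauss--Green step is wrong as stated.} For a divergence-free field, Gauss--Green on a set of finite perimeter yields the \emph{scalar} identity $\int_{\partial^* G}[z,\nu]\,d\HH=0$, not a vector identity ``$\int_G z=-\int_{\partial G}[z,\nu]$''. To recover $\int_G(z-z')\cdot e_i$ one must integrate $\Div\big(x_i(z-z')\big)=(z-z')\cdot e_i$; but $x_i(z-z')$ is unbounded on the unbounded set $G$, so Gauss--Green does not apply directly. The paper handles this with a cutoff $\Psi_R$: one gets $\int_{C_s}\Psi_R(z-z')\cdot e_i=-\int_{C_s}x_i(z-z')\cdot\nabla\Psi_R$, bounded by $2c_0^{-1}(R{+}1)\,|K_R\cap C_s|$, and then uses $\sum_{R\ge 1}|K_R\cap C_s|\le |C_s|<\infty$ to find a subsequence along which the error vanishes. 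Your argument needs exactly this device (or, equivalently, the remark that $\chi_G(z-z')$ extended by zero is an $L^1$ divergence-free field on $\R^d$, whose integral must vanish).

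\textbf{The summation and the finite-volume claim are set up incorrectly.} By periodicity of $z-z'$ one has $\int_{G+q}(z-z')=\int_G(z-z')$ for every $q\in\Z^d$, so $\sum_q\int_{G+q}(z-z')$ is not $\int_Q(z-z')$ and is not absolutely convergent; likewise $|G+q|=|G|$, so $\sum_q|G+q|=\infty$ whenever $|G|>0$. The correct finiteness statement (and this is what the paper proves) is that total irrationality makes the sets $Q\cap(G-q)$, $q\in\Z^d$, pairwise disjoint in $Q$, whence $|G|=\sum_q|(Q{+}q)\cap G|\le |Q|=1$. The correct unfolding to the torus is then $\int_Q\chi_{\cup_q(G+q)}(z-z')=\sum_q\int_{(G+q)\cap Q}(z-z')=\int_G(z-z')$ via periodicity. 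Finally, there may be countably many $\Z^d$-orbits of gaps (the paper indexes them as the level sets $C_s=\{v_p+p\cdot x=s\}$), and one must sum over all of them, not just one.
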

\begin{proof}
Consider $z,z'$ two calibrations and a solution $v_p$ of~\eqref{defphi},
and let $\xi=\int_Q z(x)\, dx$, $\xi'=\int_Q z'(x)\,dx$.

{ } Let us show that, for any $s$,
\begin{equation}
\int_{\{v_p+p\cdot x=s\}} (z(x)-z'(x))\,dx\ =\ 0.
\end{equation}
Thanks to the density estimate, the level sets
$\{x\,:\, v_p(x)+p\cdot x=s\}$ are equivalent (up to a negligible set)
to an open set $C_s$ with 
$\partial C_s = \partial \{v_p+p\cdot x>s\}\cup \partial \{v_p+p\cdot x\ge s\}$.
Moreover, all the $C_s$ are empty except for a countable number of values.
Consider such a value $s$. Since $z$ and $z'$ calibrate $C_s$ which is a   plane-like minimizer we have $[z,\nu^{C_s}]=[z',\nu^{C_s}]$ on $\partial^* C_s$.

{ } Then, we observe that the sets $C_s^q=Q\cap (C_s-q)$, $q\in\Z^d$, are
all disjoint, so that their measures sum up to less than $1$.
Indeed, if there is a point $y\in (C_s-q_1)\cap (C_s-q_2)$
with $q_1,q_2\in\Z^d$,
then we have $v_p(y+q_1)+p\cdot y+p\cdot q_1=s=v_p(y+q_2)+p\cdot y
+p\cdot q_2=s$, but since $v_p(y+q_1)=v_p(y+q_2)$ it follows that
$p\cdot (q_2-q_1)=0$, hence $q_1=q_2$, since $p$ is totally irrational.

{ } For $R\in \N^*$,
let $\Psi_R$ be a Lipschitz cutoff function equal to $1$ on $[-R,R]^d$,
to $0$ out of $[-(R+1),R+1]^d$, and with $|\nabla \Psi_R|=1$
a.e.~in $K_R=[-(R+1),R+1]^d\setminus [-R,R]^d$. Recalling that
$\Div (z-z')=0$, we compute
\begin{equation}\label{psiRR}
\int_{C_s} \Psi_R(x)(z(x)-z'(x))\cdot e_i\,dx
\,=\, -\int_{C_s} x_i (z(x)-z'(x))\cdot\nabla \Psi_R(x)\,dx,
\end{equation}
which is bounded by
\begin{equation}
L_R \ =\ 2c_0^{-1} (R+1)|K_R\cap C_s|,
\end{equation}
where $c_0$ is the constant in~\eqref{boundF}.
Since $\sum_{R\ge 1} |K_R\cap C_s|\le 1$, we get
$\liminf_{R\to \infty} L_R=0$, otherwise we would
have $|K_R\cap C_s|\gtrapprox c/(R+1)$ for large $R$
and for some constant $c>0$, which would imply $\sum_R |K_R\cap C_s|=+\infty$.
Hence, there exists a subsequence $R_k\to +\infty$ with $L_{R_k}\to 0$,
but then, passing to the limit in~\eqref{psiRR} along this subsequence, we get
\begin{equation}
\int_{C_s} (z(x)-z'(x))\cdot e_i\,dx\,=\,0\,,
\end{equation}
which gives our claim.

We deduce that $\xi=\xi'$. Indeed, from Theorem \ref{zfix} it follows
that $$\int_{\R^d}(z-z')\,dx =\sum_{s} \int_{C_s} (z-z')\,dx=0\,,$$ 
where the sum is on all $s$ such that $C_s$ is an open, nonempty set.
In particular, we obtain that $\int_Q (z-z')\,dx=\xi-\xi'=0$.
\end{proof}

{ } If $p$ is not totally irrational, by taking the quotient of $\R^d$ with respect to all rational directions orthogonal to $p$, 
by the same argument we get the following result.

\begin{corollary}
For every $q \in (V^r(p))^\perp$, the function $\p$ 
is differentiable at $p$ in the direction $q$. 
This amounts to say that, for every $\xi_1, \xi_2 \in \partial \p(p)$ and every $q\in (V^r(p))^\perp$,
\[\xi_1 \cdot q= \xi_2 \cdot q.\]
\end{corollary}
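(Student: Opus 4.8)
The plan is to reduce the corollary to an exact replay of the proof of Proposition~\ref{totir}, but carried out in the quotient space that kills precisely the rational directions orthogonal to $p$. Concretely, let $\T_r:=\R^d/\Gamma(p)$ be the quotient torus from the proof of Theorem~\ref{calibrallPL}, and recall that $V^r(p)=\Span_\R\Gamma(p)$, so that in $\T_r$ the plane-like minimizers of $\CA(p)$ become compact-like in the $V^r(p)$ directions while remaining plane-like in the direction of $p$ within $V^i(p)\oplus\R p$. First I would fix two calibrations $z,z'$ in the direction $p$; since both are periodic and every $E\in\CA(p)$ is invariant under $\Gamma(p)$ (as $q\cdot p=0$ for $q\in\Gamma(p)$), they descend to the quotient, and by Theorem~\ref{zfix} they agree a.e.\ on $\Lambda=\bigcup\{\partial^*E:E\in\CA(p)\}$, hence on $\mathrm{Spt}(|Dv_p+p|)$ for any minimizer $v_p$ of \eqref{defphi}.

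Next I would look at the level sets $C_s=\{v_p+p\cdot x=s\}$, projected to $\T_r$. The key point, as in Proposition~\ref{totir}, is a disjointness/finiteness statement: the translates $C_s-q$ for $q$ ranging over a set of coset representatives of $\Z^d/\Gamma(p)$ are pairwise disjoint in a fundamental domain, because if $y$ lies in $(C_s-q_1)\cap(C_s-q_2)$ then $p\cdot(q_2-q_1)=0$, i.e.\ $q_2-q_1\in\Gamma(p)$, so $q_1$ and $q_2$ represent the same coset. Therefore, summing over these representatives inside a box in the remaining $\dim V^i(p)$ "irrational'' directions, the volumes add up to a finite quantity. This is exactly the hypothesis that made the cut-off argument work: with $\Psi_R$ a Lipschitz cutoff in the $V^i(p)$-directions (equal to $1$ on a box of side $2R$, vanishing outside side $2R+2$, with $|\nabla\Psi_R|=1$ on the shell $K_R$), the identity $\Div(z-z')=0$ gives
\[
\int_{C_s}\Psi_R(x)\,(z(x)-z'(x))\cdot e\,dx
\;=\;-\int_{C_s} (x\cdot e)\,(z(x)-z'(x))\cdot\nabla\Psi_R(x)\,dx
\]
for any direction $e\in V^i(p)$, and the right side is bounded by $2c_0^{-1}(R+1)\,|K_R\cap C_s|$; summability of $|K_R\cap C_s|$ forces a subsequence along which this bound tends to $0$, yielding $\int_{C_s}(z-z')\cdot e\,dx=0$.

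To upgrade this from directions $e\in V^i(p)$ to all directions $q\in(V^r(p))^\perp=V^i(p)\oplus\R p$, I would handle the extra direction $p$ separately using that $[z,\nu^{C_s}]=[z',\nu^{C_s}]$ on $\partial^*C_s$ (both calibrate the plane-like minimizer $C_s$, cf.\ Theorem~\ref{calibrallPL}) together with an integration-by-parts / Green's formula argument on the slab between two translates $C_s+\lambda p$: since $z-z'$ is divergence-free and has matching normal traces on the two boundary pieces $\partial^*C_s$, its flux through the cross-sections perpendicular to $p$ is constant in $\lambda$, and periodicity of $v_p-p\cdot x$ in the $\Gamma(p)$ directions plus the plane-like bound forces that constant flux — hence $\int_{C_s}(z-z')\cdot p\,dx$ — to vanish as well. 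Finally, as in Proposition~\ref{totir}, the sets $C_s$ are nonempty for only countably many $s$, and $z=z'$ a.e.\ off $\bigcup_s C_s$ by Theorem~\ref{zfix}; summing $\int_{C_s}(z-z')\cdot q\,dx=0$ over those $s$ and comparing with the integral over a fundamental domain gives $(\xi_1-\xi_2)\cdot q=0$ for all $q\in(V^r(p))^\perp$, which is the claim. The main obstacle I anticipate is the bookkeeping for the $p$-direction: in Proposition~\ref{totir} total irrationality made \emph{all} directions irrational so a single cutoff sufficed, whereas here one must argue the $\R p$ component by a flux-conservation argument rather than by the shell estimate, and care is needed to justify that the relevant cross-sectional fluxes are finite and independent of the cutting level.
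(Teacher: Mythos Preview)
Your approach---quotient by $\Gamma(p)$ and rerun the cutoff argument of Proposition~\ref{totir} in $\T_r$---is exactly the paper's. The obstacle you flag for the $p$-direction does not actually arise: since $\phi$ is one-homogeneous, every $\xi\in\partial\phi(p)$ satisfies $\xi\cdot p=\phi(p)$, so $(\xi_1-\xi_2)\cdot p=0$ is automatic and only the $V^i(p)$-components require the cutoff argument (alternatively, your $V^i(p)$-cutoff already handles $e=p/|p|$ too, because $|x\cdot p|$ is bounded on $C_s$ by the plane-like estimate, making the right-hand side $O(|K_R\cap C_s|)\to 0$ without the extra factor $R+1$); the separate flux-conservation step is therefore unnecessary.
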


As a direct consequence of Theorem \ref{zfix} we also have
\begin{proposition}\label{cinquepuntosei}
If the   plane-like minimizers of $\CA(p)$ fibrate $\R^d$ then $\phi$ is differentiable at $p$.
\end{proposition}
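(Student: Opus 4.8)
The plan is to use Theorem~\ref{zfix} to show that any calibration $z$ in the direction $p$ is completely determined almost everywhere on $\R^d$, which forces its mean $\int_Q z\,dx$ to be unique, and hence $\partial\p(p)$ to be a singleton. The hypothesis that $\CA(p)$ fibrates (i.e.\ foliates) $\R^d$ means precisely that every point of $\R^d$ lies on $\partial E$ for some plane-like minimizer $E\in\CA(p)$ with the strong Birkhoff property; equivalently, the set $\Lambda=\bigcup\{\partial^*E:E\in\CA(p)\}$ has full measure in $\R^d$ (the difference $\Lambda\setminus\bigcup\{\partial E:E\in\CA(p)\}$ being $\HH$-negligible, hence Lebesgue-negligible, since each $\partial E\setminus\partial^*E$ has $\HH^{d-3}$-measure zero by Proposition~\ref{propdensity}).

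The key steps, in order, are as follows. First, I would invoke Theorem~\ref{calibrallPL}: the calibration $z$ calibrates \emph{every} plane-like minimizer in $\CA(p)$. Second, I would apply Theorem~\ref{zfixPL} (or directly Theorem~\ref{zfix}): at every Lebesgue point $\bar x$ of $z$ lying on $\partial E$ for some $E\in\CA(p)$, we have $z(\bar x)=\nabla_pF(\bar x,\nu^E(\bar x))$. Since almost every point of $\R^d$ is a Lebesgue point of $z$, and — by the foliation hypothesis — almost every point lies on the boundary of some $E\in\CA(p)$, the value $z(\bar x)$ is prescribed for a.e.\ $\bar x$: it equals $\nabla_pF(\bar x,\nu(\bar x))$, where $\nu(\bar x)$ is the normal to the leaf of the foliation through $\bar x$, a quantity that does not depend on $z$. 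Third, from this I conclude that any two calibrations $z,z'$ in the direction $p$ agree a.e.\ on $\R^d$, hence in particular $\int_Q z\,dx=\int_Q z'\,dx$. By the description of $\partial\p(p)$ as $\{\int_Q z\,dx : z\in X \text{ calibrates in direction }p\}$ (which follows from Proposition~\ref{propdual} and the discussion after it), $\partial\p(p)$ is a singleton, so $\p$ is differentiable at $p$.

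The only delicate point is making precise that the foliation hypothesis yields $|\R^d\setminus\Lambda|=0$, and that the normal field $\nu(\cdot)$ is a genuine measurable function independent of the chosen calibration — this is where one must use that distinct leaves of $\CA(p)$ are nested (Corollary~\ref{corcomparison}) and meet only in an $\HH^{d-3}$-negligible set (Proposition~\ref{order}), so that through a.e.\ point there passes an essentially unique leaf with a well-defined normal. Once this measure-theoretic bookkeeping is in place, the argument is immediate from the already-established Theorems~\ref{calibrallPL} and~\ref{zfix}; no further estimates are needed.
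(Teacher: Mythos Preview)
Your proposal is correct and follows exactly the route the paper intends: the paper states this proposition as ``a direct consequence of Theorem~\ref{zfix}'' with no further argument, and you have simply written out that consequence in detail (foliation $\Rightarrow$ $\Lambda$ has full measure $\Rightarrow$ every calibration is prescribed a.e.\ by Theorem~\ref{zfix} $\Rightarrow$ $\int_Q z$ is unique $\Rightarrow$ $\partial\p(p)$ is a singleton). One tiny slip: in your first paragraph you wrote ``$\Lambda\setminus\bigcup\{\partial E\}$'' where you meant the reverse inclusion $\bigcup\{\partial E\}\setminus\Lambda$, but this does not affect the argument, and in fact Theorem~\ref{zfixPL} already guarantees that any Lebesgue point of $z$ lying on $\partial E$ is automatically in $\partial^*E$, so the singular-set bookkeeping is not even needed.
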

We finally investigate the non-differentiability of $\p$ at points
$p$ which are not totally irrational, and such that there is $x\in \R^d$
such that no minimizer passes through $x$. In that case let $G$ be a gap containing $x$,
bounded by two plane-like minimizers $E^\pm$.

{ } We start by investigating what happens for a rational vector $q\in \Gamma(p)=V^r(p)\cap \Z^d$. For such a $q$, let $H_q$ be a heteroclinic solution in the  direction $q$ with $z_1$ an associated calibration. Let also $z_2$ be a calibration associated to $H_{-q}$, the heteroclinic solution in the direction $-q$. We will prove that

 \[\int_Q z_1\cdot q \neq \int_Q z_2\cdot q\,.\]

Let us notice that by Theorem \ref{calibrallPL} $z_1$ and $z_2$ have the same normal component on the boundaries of every gap, moreover by Theorem \ref{zfixPL} they agree outside the gaps. 
Therefore, we can assume that $z_1$ and $z_2$ differ only inside the projection on the torus of the gap $G$, and thus we are reduced to prove that
\[\int_{Q\cap \Pi(G)} (z_1-z_2)\cdot q \neq 0,\]
where $\Pi: \R^d\to Q$ denotes the projection on $Q$. 
We consider a further decomposition of the space $V^r(p)$. 
By a Grahm-Schmidt procedure, we see that $V^r(p)$ is spanned by a family $(q,q_2, \dots, q_k)$ of orthogonal vectors in $\Z^d$.  
Let $\widetilde \Gamma:=\Span_{\Z}(q_2, \dots, q_k)$, 
and let $\widetilde \T_r:= \R^d / \widetilde\Gamma $ 
be the cylinder quotient of $\R^d$ and $\widetilde\Gamma$.
Since the gap $G$, the sets in $\CA(p)$, and the heteroclinic plane-like minimizers in direction $q$ are periodic with respect to vectors in $\widetilde \Gamma$, 
we identify them with their quotient with respect to the group
action of $\widetilde\Gamma$.
Furthermore, since the $q_i$ are orthogonals, the cylinder $\widetilde \T_r$ can be identified with 
$\bigcap_{i=2}^k \left\{x\cdot \frac{q_i}{|q_i|^2} \in [0,1[\right\}$.  
Let 
\[\S^t_s:=\left\{ x \in \widetilde \T_r \,:\, s< x \cdot \frac{q}{|q|^2} < t\right\}
\]
and let $\S:=\S_0^1$ be the unit slab in the direction $q$. We will show that 
\begin{equation}\label{integzslab}
\int_{\S \cap G} (z_1-z_2)\cdot q \neq 0.
\end{equation}
Notice that there exist vectors $\alpha\in \widetilde \T^r$ such that $\alpha\cdot q=0$. 
However, there is only a finite number of vectors $\alpha\in\Gamma(p)$ such that $\alpha\in \bigcap_{i=2}^k \{x\cdot \frac{q_i}{|q_i|^2} \in [0,1[\} $ and $\alpha\cdot \frac{q}{|q|^2} \in [s,t]$  which   implies that $|\S^t_s\cap G| $ is finite for every $(s,t) \in \R$.
We also let 
\[S_t:= \left\{ x\in\widetilde \T^r  \,:\,  x \cdot \frac{q}{|q|^2} = t\right\}
\]
and $S:= S_0$. Since 
\[|\S^t_s\cap G| =|q|\int_s^t \HH(G\cap S_\tau) \ d\tau,\]
the measure $\HH(G\cap S_t) $ is finite for almost every $t\in \R$. 

{ } In particular, without loss of generality, we can assume that $\HH(G\cap S)$ is finite.

\begin{proposition}\label{egalmultipl}
\[
\int_{\S \cap G} (z_1-z_2)\cdot q=N\int_{Q\cap \Pi(G)} (z_1-z_2)\cdot q
\]
for some $N\in \N$, with $N\le C |q| \prod_{i=2}^k |q_i|$ for some $C>0$.
\end{proposition}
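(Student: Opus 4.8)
The plan is to express both integrals as integrals of the single $\Z^d$-periodic field $z_1-z_2$ over fundamental domains of two nested lattices, the comparison lattice being $L:=\Span_\Z(q,q_2,\dots,q_k)$, and then to use that $G$, being a gap of the $\Z^d$-invariant lamination $\CA(p)$, cannot meet any of its translates $G+m$ with $m\in\Z^d\setminus\Gamma(p)$. First I would record that $z_1,z_2$ are periodic calibrations, so $z_1-z_2$ is $\Z^d$-periodic; by the normalisation already made it vanishes outside $\Pi(G)$, so, viewing $h:=(z_1-z_2)\cdot q$ as a function on $\T\cong Q$ supported in $\Pi(G)$, one has $\int_{Q\cap\Pi(G)}(z_1-z_2)\cdot q=\int_\T h$.

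Next I would observe that both $G$ and $z_1-z_2$ are invariant under $\widetilde\Gamma=\Span_\Z(q_2,\dots,q_k)$ and under translation by $q$ (since $q\in\Gamma(p)$), hence under the rank-$k$ lattice $L\subseteq\Gamma(p)$; and since $q,q_2,\dots,q_k$ are orthogonal, the slab $\S$, regarded as a subset of $\R^d$, is exactly a fundamental domain for $L$. Thus, with $T\colon\R^d\to\T$ the canonical projection, $\int_{\S\cap G}(z_1-z_2)\cdot q=\int_{\S\cap G}h\circ T$, a finite integral because $|\S\cap G|<\infty$. Applying the coarea formula for the Riemannian covering $T$ (a local isometry, so fibres are discrete and carry no Jacobian) I would get $\int_{\S\cap G}h\circ T=\int_\T N(\xi)\,h(\xi)\,d\xi$, where $N(\xi)=\#\big(T^{-1}(\xi)\cap(\S\cap G)\big)$.

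The heart of the argument is to evaluate $N(\xi)$. Fixing a lift of $\xi$ to $\R^d$ (still denoted $\xi$): since $\S$ is a fundamental domain for $L$, exactly one $m$ in each coset of $\Z^d/L$ has $\xi+m\in\S$; grouping these cosets by the finite quotient $\Gamma(p)/L$ and using the $\Gamma(p)$-periodicity (hence $L$-periodicity) of $G$, I expect to check that over each coset of $\Z^d/\Gamma(p)$ either all $[\Gamma(p):L]$ of the corresponding points lie in $G$ or none do, so that $N(\xi)=[\Gamma(p):L]\cdot\#\{\bar e\in\Z^d/\Gamma(p):\xi+e\in G\}$. The last count is at most $1$: if $\xi+e$ and $\xi+e'$ both lay in $G$ with $m:=e-e'\notin\Gamma(p)$, then $G\cap(G+m)\neq\emptyset$, whereas $G+m$ is again a gap of $\CA(p)$ (integer translates of strong-Birkhoff minimizers have the strong Birkhoff property, so $\CA(p)$ is $\Z^d$-invariant), two gaps of a lamination that meet at a point coincide, and $G=G+m$ is impossible since $G$ lies in a slab of finite width transverse to $p$ while $p\cdot m\neq0$. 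Hence $N\equiv[\Gamma(p):L]$ on $\Pi(G)$, where $h$ is supported, so $\int_{\S\cap G}(z_1-z_2)\cdot q=[\Gamma(p):L]\int_\T h=[\Gamma(p):L]\int_{Q\cap\Pi(G)}(z_1-z_2)\cdot q$, i.e. one may take $N:=[\Gamma(p):L]\in\N$. For the bound I would note that $[\Gamma(p):L]$ is the ratio of the covolume of $L$ to that of $\Gamma(p)$ in $V^r(p)$; the former equals $|q|\prod_{i=2}^k|q_i|$ by orthogonality, while the latter is the square root of the Gram determinant of an integer basis of $\Gamma(p)$, hence a positive integer, so $N\le|q|\prod_{i=2}^k|q_i|$ (one may take $C=1$).

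The main obstacle is really the geometric non-overlap statement $G\cap(G+m)=\emptyset$ for $m\in\Z^d\setminus\Gamma(p)$, together with the combinatorial bookkeeping in the formula for $N(\xi)$: this is precisely what forces the ratio of the two integrals to be the clean integer $[\Gamma(p):L]$ rather than some weighted average of sheet multiplicities; once it is in place, the remainder is routine manipulation of fundamental domains and the coarea formula.
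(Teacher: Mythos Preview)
Your argument is correct and in fact cleaner than the paper's. The paper proves the identity by tiling $\S$ with unit cubes $Q+\alpha$, then pairing up the cubes that straddle $\partial\S$ via the translations $\tilde q\in\{q,q_2,\dots,q_k\}$ so that each matched family recombines into a full copy of $Q\cap G$; this yields an unspecified integer $N$ bounded by the number $\#\Gamma_\S$ of cubes meeting $\S$. You instead recognise $\S$ as a fundamental domain for the sublattice $L=\Span_\Z(q,q_2,\dots,q_k)\subset\Gamma(p)$ and count the fibre of the covering $T:\R^d\to\T$ over each $\xi\in\Pi(G)$, obtaining the explicit value $N=[\Gamma(p):L]$. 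Both proofs rest on exactly the same geometric input, namely that $G\cap(G+m)=\emptyset$ whenever $m\in\Z^d\setminus\Gamma(p)$, which you deduce correctly from the $\Z^d$-invariance of $\CA(p)$ and the slab bound on $G$. Your route has the advantage of identifying $N$ precisely and avoiding the somewhat fiddly boundary-cube pairing.

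One small correction in your last step: the covolume of $\Gamma(p)$ in $V^r(p)$ is $\sqrt{\det(v_i\cdot v_j)}$ for an integer basis $(v_i)$, and this need not be an integer (e.g.\ $\Gamma(p)=\Z(1,1)$ has covolume $\sqrt{2}$). What is true, and sufficient, is that the Gram determinant is a \emph{positive integer} (being a sum of squares of $k\times k$ integer minors, at least one of which is nonzero), hence the covolume is $\ge 1$; this still gives $N=[\Gamma(p):L]\le |q|\prod_{i=2}^k|q_i|$, so one may take $C=1$ as you claim.
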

\begin{proof}
We will prove that $ \int_{\S \cap G} (z_1-z_2)\cdot q$ is an entire multiple of $ \int_{Q\cap \Pi(G)} (z_1-z_2)\cdot q $. Notice first that 
$$
\S=\left\{x\cdot \frac{q}{|q|^2} \in [0,1[\right\}\cap \bigcap_{i=2}^k \left\{x\cdot \frac{q_i}{|q_i|^2} \in [0,1[\right\}.
$$
Moreover, we have  
\[
\S\cap G=\bigcup_{\alpha\in \Z^d \,:\, (Q+\alpha)\cap \S \neq \emptyset} (\S\cap G)\cap (Q+\alpha).
\]
Let $\Gamma_\S:=\{\alpha\in \Gamma(p) \,:\, Q+\alpha \cap \S \neq \emptyset\}$. 
By the strong Birkhoff property, for every $\alpha_1,\alpha_2\in \Z^d$, if $\alpha_1\cdot p\neq \alpha_2\cdot p$ 
then $(G+\alpha_1)\cap (G+\alpha_2)=\emptyset$, and for every $\alpha_1,\alpha_2\in \Gamma_\S$,  $G+\alpha_1= G+\alpha_2$. 
It is therefore sufficient to prove that, for some $N\in \N$, we have
\[ \sum_{\alpha\in \Gamma_\S}\int_{ \S\cap G\cap (Q+\alpha)} (z_1-z_2)\cdot q\ =\ N\int_{Q\cap G} (z_1-z_2)\cdot q.  \]
Let  $\alpha\in \Gamma_\S$ and  $Q_\alpha:= Q+\alpha$. If $Q_\alpha\cap \partial \S= \emptyset$, since $G+\alpha=G$, we have $G\cap \S \cap Q_\alpha=(G\cap Q)+\alpha$ and thus 
\[
\int_{G\cap \S \cap Q_\alpha} (z_1-z_2)\cdot q= \int_{G\cap Q} (z_1-z_2)\cdot q. 
\]
If $Q_\alpha \cap \partial \S \neq \emptyset$, we assume first that $Q_\alpha$ intersects $\partial \S$ only on one of the facets of $\S$.  
Then, there exists $\tilde q \in (q,q_2, \dots, q_k)$ such that
\[
Q_\alpha \cap \S =Q_\alpha \cap \left\{ x\cdot \frac{\tilde q}{|\tilde q|^2} \ge 0\right\} 
\qquad \textrm{or} \qquad Q_\alpha \cap \S =Q_\alpha \cap \left\{ x\cdot \frac{\tilde q}{|\tilde q|^2} < 1\right\}.
\]
Assume that the first possibility holds (see Figure \ref{Rk}), then  $\tilde q$ is such that 
\[(Q_\alpha+\tilde q) \cap \S=(Q_\alpha+\tilde q) \cap \left\{ (x-\tilde q)\cdot \frac{\tilde q}{|\tilde q|^2} < 1\right\}
=(Q_\alpha+\tilde q)\cap\left(\left\{ x\cdot \frac{\tilde q}{|\tilde q|^2} < 1\right\}+\tilde q\right).\]

\begin{figure}[ht]
\centering{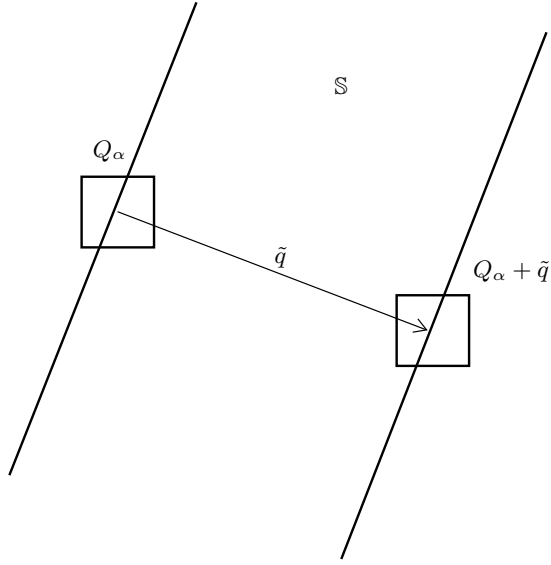}
\caption{The set $\S$ of Proposition \ref{egalmultipl}.}
\label{Rk}
\end{figure}

This shows that  
\[\left(Q_\alpha \cap \S\right)\cap \left([(Q_\alpha+\tilde q) \cap \S]-q\right)=\emptyset \qquad \textrm{and} \qquad 
\left(Q_\alpha \cap \S\right)\cup \left([(Q_\alpha+\tilde q) \cap \S]-q\right)=Q_\alpha\]
whence 
\[
\int_{Q_\alpha \cap \S\cap G} (z_1-z_2)\cdot q+\int_{(Q_\alpha+\tilde q)\cap \S\cap G} (z_1-z_2)\cdot q=\int_{Q\cap G} (z_1-z_2)\cdot q.
\]
If $Q_\alpha$ intersects $m$ facets of $\S$, then we can assume that, for some $\tilde q_1, \dots, \tilde q_m \in (q,q_2, \dots, q_k)$,
\[
Q_\alpha\cap \S =\bigcap_{j=1}^m \left\{ x\cdot \frac{\tilde q_j}{|\tilde q_j|^2} \ge 0	\right\}.
\]
We can then repeat the above argument by pairing the cube $Q_\alpha$ with the $2^m$ cubes of the form $Q_\alpha+\sum_{j=1^m} \delta_{j} \tilde q_j$, 
where $\delta_j$ takes only values $0$ or $1$. This proves that,  for some $N\in \N$,
\[ \sum_{\alpha\in \Gamma_\S}\int_{ \S\cap G\cap (Q+\alpha)} (z_1-z_2)\cdot q\ =\ N\int_{Q\cap G} (z_1-z_2)\cdot q.  \]
Moreover $N\le \# \Gamma_\S\le C|q|  \prod_{i=2}^k |q_i|$.
\end{proof}

\begin{proposition}
For almost every $s, t \in \R$, we have
\begin{equation}\label{segt}
\int_{S_t\cap G} \left[z_1-z_2, \frac{q}{|q|}\right]=\int_{S_s\cap G} \left[z_1-z_2, \frac{q}{|q|}\right].
\end{equation}
In particular,
\begin{equation}\label{egsurf}\int_{\S \cap G} (z_1-z_2)\cdot q =|q|^2 \int_{S\cap G} \left[z_1-z_2, \frac{q}{|q|}\right].
\end{equation}
\end{proposition}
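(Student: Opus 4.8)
The plan is to view $w:=z_1-z_2$ as a bounded divergence-free vector field on $\widetilde\T_r$ which, by the reductions made above, vanishes a.e.\ outside the gap $G$ (all objects involved being now $\widetilde\Gamma$-periodic); in particular $|w|\le 2c_0^{-1}$ and $\Div w=0$ in $\widetilde\T_r$. Morally the flux of $w$ through $S_\tau\cap G$ is $\tau$-independent by the conservation law $\Div w=0$, since $w$ is concentrated in $G$ and has vanishing normal trace on $\partial G$ (by Theorem~\ref{calibrallPL}, both $z_1$ and $z_2$, being calibrations in the direction $p$, calibrate the two plane-like minimizers bounding $G$). The only real nuisance is that $\widetilde\T_r$, being isomorphic to $\R q\oplus\R p\oplus V^i(p)$ times a $(k-1)$-dimensional torus, is non-compact; to get around this I would use a double cutoff argument in the spirit of the proof of Proposition~\ref{totir}.

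Concretely, for fixed $s<t$ I would test $\int_{\widetilde\T_r}w\cdot\nabla\varphi=0$ against $\varphi(x):=\chi_\eps(x\cdot q/|q|^2)\,\psi_R(Px)$, where $\chi_\eps\in C^\infty(\R;[0,1])$ approximates $\1_{[s,t]}$ and is supported in $[s,t]$ (so that $\chi_\eps'$ is supported in $[s,s+\eps]\cup[t-\eps,t]$, with integral $1$ on the first interval and $-1$ on the second), $P$ is the orthogonal projection of $\R^d$ onto $\R p\oplus V^i(p)$ (which descends to $\widetilde\T_r$ since it annihilates $\widetilde\Gamma$), and $\psi_R$ is a Lipschitz cutoff equal to $1$ on $\{|Px|\le R\}$, to $0$ on $\{|Px|\ge R+1\}$, with $|\nabla\psi_R|\le 1$. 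Since such a $\varphi$ is Lipschitz with compact support, this yields
\[
\int_{\widetilde\T_r}\chi_\eps'\!\Big(x\cdot\tfrac{q}{|q|^2}\Big)\,w\cdot\tfrac{q}{|q|^2}\,\psi_R(Px)\,dx\ =\ -\int_{\widetilde\T_r}\chi_\eps\!\Big(x\cdot\tfrac{q}{|q|^2}\Big)\,w\cdot\nabla\psi_R(Px)\,dx.
\]
Because $w$ is supported in $G$ and $\chi_\eps$ in the slab $\S_s^t$, the right-hand side is bounded by $2c_0^{-1}\,|\S_s^t\cap G\cap\{R<|Px|<R+1\}|$, which tends to $0$ as $R\to\infty$ through integers (the annuli being disjoint and $|\S_s^t\cap G|<\infty$); on the left-hand side the integrand is dominated by a multiple of $\1_{\S_s^t\cap G}\in L^1(\widetilde\T_r)$, so dominated convergence gives, in the limit $R\to\infty$,
\[
\int_{\S_s^t\cap G}\chi_\eps'\!\Big(x\cdot\tfrac{q}{|q|^2}\Big)\,w\cdot\tfrac{q}{|q|^2}\,dx\ =\ 0.
\]

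Next I would apply the coarea formula to $u(x)=x\cdot q/|q|^2$, for which $|\nabla u|\equiv 1/|q|$ and $\{u=\tau\}=S_\tau$. Recalling that $\HH(S_\tau\cap G)<\infty$ for a.e.\ $\tau$, and invoking the compatibility of the Anzellotti normal trace with slicing (so that on a.e.\ slice the trace $[z_1-z_2,q/|q|]$ agrees with the pointwise restriction of $w\cdot q/|q|$; see \cite{Anzelotti}), the last identity reads $\int_\R\chi_\eps'(\tau)\,J(\tau)\,d\tau=0$ with
\[
J(\tau)\ :=\ \int_{S_\tau\cap G}\Big[z_1-z_2,\tfrac{q}{|q|}\Big]\,d\HH,
\]
which belongs to $L^1_{\loc}(\R)$ since $\int_a^b|J|\le \tfrac{2c_0^{-1}}{|q|}\,|\S_a^b\cap G|<\infty$. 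Letting $\eps\to0$, the left-hand side converges to $J(s)-J(t)$ at every pair of Lebesgue points of $J$, so $J(s)=J(t)$ for a.e.\ $s,t$, which is \eqref{segt}; equivalently $J$ is a.e.\ equal to a constant. Finally, writing $(z_1-z_2)\cdot q=|q|\,(z_1-z_2)\cdot q/|q|$ and using the same coarea identity on the unit slab $\S=\S_0^1$,
\[
\int_{\S\cap G}(z_1-z_2)\cdot q\,dx\ =\ |q|^2\int_0^1 J(\tau)\,d\tau\ =\ |q|^2\int_{S\cap G}\Big[z_1-z_2,\tfrac{q}{|q|}\Big]\,d\HH,
\]
where the last step uses that $J$ is a.e.\ constant and that, as assumed above, $S=S_0$ is one of the slices on which $\HH(G\cap S)<\infty$ and on which $J$ attains that constant value. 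This establishes \eqref{egsurf}.

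I expect the main obstacle to be the control at infinity on the non-compact cylinder $\widetilde\T_r$: making the two cutoffs cooperate — $\chi_\eps$ confining in the direction $q$ and $\psi_R$ truncating the remaining unbounded directions — and exploiting the finiteness $|\S_s^t\cap G|<\infty$ to kill the error term. A secondary, but entirely standard, technical point is the compatibility of the normal trace $[z_1-z_2,\cdot]$ with the slicing by the hyperplanes $S_\tau$, which is what makes $J$ well defined and locally integrable.
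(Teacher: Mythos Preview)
Your argument is essentially the same as the paper's: both exploit $\Div(z_1-z_2)=0$, a cutoff to tame the non-compact directions of $\widetilde\T_r$, and the finiteness of $|\S_s^t\cap G|$ to kill the error term, followed by Anzellotti's coarea identity \cite[Prop.~2.7]{Anzelotti} for~\eqref{egsurf}. The paper is slightly more direct: it integrates $\Div(\psi_R(z_1-z_2))$ over $\S_s^t\cap G$ and applies Gauss--Green once, the contribution on $\partial G$ vanishing because $[z_1-z_2,\nu^G]=0$; you instead test the weak divergence-free condition against $\chi_\eps\,\psi_R$ and take two limits. Both are fine.

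One small inaccuracy: the reduction recorded before the proposition only gives $z_1=z_2$ outside $\Pi(G)$ on the torus $\T$. Lifted to $\widetilde\T_r$, the field $w=z_1-z_2$ is supported in $\bigcup_{\alpha\in\Z^d}(G+\alpha)$, not in the single gap $G$; so your identity $\int_{\widetilde\T_r}w\cdot\nabla\varphi=0$ would pick up the other integer translates of $G$ as $R\to\infty$, and your error bound $2c_0^{-1}|\S_s^t\cap G\cap\{R<|Px|<R+1\}|$ would no longer control the right-hand side. The fix is immediate and you already have the key ingredient: since $[w,\nu^G]=0$ on $\partial G$ (by Theorem~\ref{calibrallPL}, as you note), the truncated field $w\1_G$ is still distributionally divergence-free, and running your argument with $w\1_G$ in place of $w$ goes through verbatim. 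Equivalently, integrate by parts directly on $\S_s^t\cap G$, which is what the paper does.
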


\begin{proof}
Fix $s<t \in \R$ such that  $\HH(G\cap S_t) $  and $\HH(G\cap S_s) $ are finite. Let $\psi_R$ be the same cutoff function as in Proposition \ref{totir}, then 
\begin{align*}
 \int_{\S_s^t\cap G} \Div (\psi_R (z_1-z_2)) &= \int_{\S_s^t\cap G} \nabla \psi_R \cdot (z_1-z_2)\\
					&=\int_{S_t\cap G} \psi_R [z_1-z_2, q]-\int_{S_s\cap G} \psi_R [z_1-z_2, q].
\end{align*}
As in Proposition \ref{totir}, $ \int_{\S_s^t\cap G} \nabla \psi_R \cdot (z_1-z_2) \rightarrow 0$ and thus letting $R\to \infty$ we find \eqref{segt}. By \cite[Prop. 2.7]{Anzelotti} applied to $u(x)=q\cdot x$, we have 
\[\int_{\S \cap G} (z_1-z_2)\cdot q=\int_0^{|q|^2} \left(\int_{S_t\cap G} \left[z_1-z_2, \frac{q}{|q|}\right]\right) \ dt\]
which gives \eqref{egsurf}.
\end{proof}

\begin{proposition}
Let $\nu$ be the inward normal to $H_q$, then
 \begin{equation}\label{egheter}\int_{S\cap G} \left[z_1-z_2, \frac{q}{|q|}\right]= 
 \int_{\partial^* H_q\cap \widetilde \T_r} [z_1-z_2, \nu].\end{equation}

\end{proposition}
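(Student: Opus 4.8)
The plan is to obtain \eqref{egheter} from a single application of the generalized Gauss--Green formula to the bounded, divergence‑free field $w:=z_1-z_2$ on a slab‑truncation of $H_q$ in the $q$‑direction, followed by a limiting argument as the slab grows. I set $e:=q/|q|$ and $\tau(x):=x\cdot q/|q|^2$, so that $S_t=\{\tau=t\}$ and $e$ points toward increasing $\tau$. I will use throughout that $\Div w=0$; that $[w,\nu^{E^\pm}]=0$ $\HH$‑a.e.\ on $\partial^*E^\pm$, since both $z_1$ and $z_2$ are calibrations in the direction $p$ and hence calibrate $E^\pm\in\CA(p)$ by Theorem~\ref{calibrallPL}; and that $[w,\nu^{H_q}]=F(x,\nu^{H_q})-[z_2,\nu^{H_q}]\ge 0$ $\HH$‑a.e.\ on $\partial^*H_q$, since $z_1$ calibrates $H_q$ and $z_2\in X$. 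Because $E^-\subset H_q\subset E^+$ and $G=\Int E^+\setminus\overline{E^-}$, one has $G\cap H_q=H_q\setminus\overline{E^-}$ up to $\HH$‑null sets; working in $\widetilde\T_r$, this set has finite measure inside any slab $\{s<\tau<t\}$ thanks to the finiteness of $|\S^t_s\cap G|$.

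First I would fix $s<t$ and apply Gauss--Green to $w$ on $\Omega_{s,t}:=(G\cap H_q)\cap\{s<\tau<t\}$ (intersected, if needed, with a large ball $B_{R_j}$ along a sequence for which $\HH(\partial B_{R_j}\cap G\cap\{s<\tau<t\})\to 0$ — admissible since $|G\cap\{s<\tau<t\}|<\infty$ — so that the spherical part of the boundary disappears in the limit). Modulo $\HH$‑null sets, the reduced boundary of $\Omega_{s,t}$ splits into $\partial^*H_q\cap\{s<\tau<t\}$ (inner normal $\nu^{H_q}$), $\partial^*E^-\cap H_q\cap\{s<\tau<t\}$ (inner normal $-\nu^{E^-}$, hence no contribution), and the two cross‑sections $(G\cap H_q)\cap S_t$ and $(G\cap H_q)\cap S_s$ (inner normals $-e$ and $+e$); note that $\partial^*E^+$ does not appear, since $G\cap H_q$ is bounded below by $\partial H_q$ and above by $\partial E^-$. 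Using $\Div w=0$ and $[w,\nu^{E^-}]=0$ this gives
\[
\int_{\partial^*H_q\cap\{s<\tau<t\}}[w,\nu^{H_q}]\ =\ \int_{(G\cap H_q)\cap S_t}[w,e]\ -\ \int_{(G\cap H_q)\cap S_s}[w,e]\,.
\]

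Then I would let $s\to-\infty$ and $t\to+\infty$. On the left side the integrand is nonnegative, so by monotone convergence the integral increases to $\int_{\partial^*H_q\cap\widetilde\T_r}[w,\nu]$. On the right side I would exploit the heteroclinic inclusions $H_q+\alpha\subset H_q$ for $\alpha\in\Gamma(p)$ with $\alpha\cdot q>0$ (in particular $\alpha=q$, as $q\in\Gamma(p)$), together with $H_q^\pm=E^\pm$: these force $H_q$ to exhaust $E^+$ as $\tau\to+\infty$ and to retract to $E^-$ as $\tau\to-\infty$, so that $\HH\big((G\setminus\overline{H_q})\cap S_t\big)\to 0$ and $\HH\big((G\cap H_q)\cap S_s\big)\to 0$. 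Since $|w|\le 2c_0^{-1}$ a.e., the second cross‑section integral then tends to $0$, while $\int_{(G\cap H_q)\cap S_t}[w,e]-\int_{G\cap S_t}[w,e]\to 0$; finally $\int_{G\cap S_t}[w,e]=\int_{S\cap G}[w,q/|q|]$ for every $t$ by \eqref{segt} (recall $e=q/|q|$). Passing to the limit yields exactly \eqref{egheter}, and incidentally shows that $\int_{\partial^*H_q\cap\widetilde\T_r}[w,\nu]$ is finite.

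The hard part will be the limiting argument in the non‑compact cylinder $\widetilde\T_r$: one must make rigorous that the two ``correction'' cross‑sections $(G\setminus\overline{H_q})\cap S_\tau$ and $(G\cap H_q)\cap S_\tau$ shrink to $\HH$‑null sets at the ends $\tau\to+\infty$ and $\tau\to-\infty$ respectively — i.e.\ the monotone convergence of the boundary of $H_q$ to those of $E^\pm$, which one extracts from the heteroclinic inclusions and Proposition~\ref{prophetero} — and to check that the finiteness $|\S^t_s\cap G|<\infty$ (equivalently, the localization of the gap $G$ in the directions of $V^i(p)$) is enough both to legitimize the ball truncation and to apply monotone convergence to the surface integral on $\partial^*H_q$.
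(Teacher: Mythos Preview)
Your argument is correct, but the route you take is genuinely different from the paper's. The paper does not send a slab $\{s<\tau<t\}$ to infinity: instead it cuts by the single section $S=S_0$ and integrates the cutoff $\psi_R(z_1-z_2)$ over the two regions
\[
G^+:=G\cap\{x\cdot q>0\}\cap H_q^c
\qquad\text{and}\qquad
G^-:=G\cap\{x\cdot q<0\}\cap H_q,
\]
using that the $\partial E^+$ (resp.\ $\partial E^-$) contribution vanishes on the $+$ (resp.\ $-$) side. Letting $R\to\infty$ and invoking monotone convergence on $\partial^* H_q$ (exactly your observation that $[z_1-z_2,\nu^{H_q}]\ge 0$) gives
\[
\int_{\Sigma^\pm}[z_1-z_2,\nu^{H_q}]=\int_{S^\pm}\Big[z_1-z_2,\tfrac{q}{|q|}\Big],
\]
with $S^+=S\cap G\cap H_q^c$ and $S^-=S\cap G\cap H_q$; summing yields \eqref{egheter} directly, without ever analyzing the behaviour of $H_q$ on the far cross–sections.

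What each approach buys: the paper's asymmetric choice of $G^\pm$ avoids the end–limit step entirely, so it never needs to argue that $\HH((G\setminus H_q)\cap S_t)\to 0$ as $t\to+\infty$ and $\HH((G\cap H_q)\cap S_s)\to 0$ as $s\to-\infty$; on the other hand it relies on the bulk term $\int_{G^\pm}\nabla\psi_R\cdot(z_1-z_2)$ tending to zero, which the paper dispatches with an ``as usual''. Your slab argument has the virtue that at every stage you integrate over a set of \emph{finite} volume (since $|G\cap\S^t_s|<\infty$), so the ball truncation is completely transparent; the price is the extra end–limit, which you correctly reduce to \eqref{eteroclinix2} via the monotone inclusions $H_q+nq\subset H_q$ (indeed $\bigcup_{n\ge 0}(H_q-nq)=E^+$ and $\bigcap_{n\ge 0}(H_q+nq)=E^-$, whence the cross–section corrections vanish along $t_0+\mathbb Z$ for any admissible $t_0$). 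A minor slip: in your description of $\partial\Omega_{s,t}$ the roles of $\partial H_q$ and $\partial E^-$ are swapped (``below'' and ``above''), but this does not affect the computation.
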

\begin{proof}
We first introduce some additional notation (see Figure \ref{sigmaplus}): let 
\[\Sigma^+:= \partial^* H_q \cap \{ x \cdot q >0\}\cap \widetilde \T_r \qquad G^+:= G\cap \{ x \cdot q >0\} \cap H_q^c\cap \widetilde \T_r\]
and 
\[S^+:= S\cap G \cap H_q^c.\]

\begin{figure}[ht]
\centering{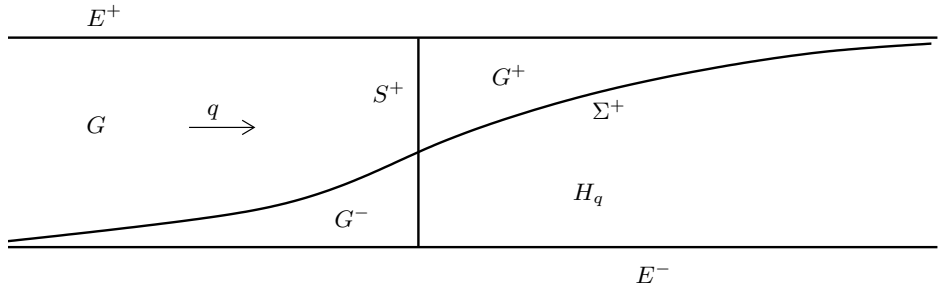}
\caption{Heteroclinic solution in the direction $q$.}
\label{sigmaplus}
\end{figure}

Then, letting $\psi_R$ be the usual cutoff function, we have
\[
 \int_{G^+} \Div (\psi_R (z_1-z_2))=\int_{S^+} \psi_R \left[z_1-z_2, \frac{q}{|q|}\right] -\int_{\Sigma^+} \psi_R [z_1-z_2, \nu]
\]
As usual 
\[ \int_{G^+} \Div (\psi_R (z_1-z_2)) \to 0 \qquad \textrm{and} \qquad 
\int_{S^+} \psi_R \left[z_1-z_2, \frac{q}{|q|}\right]\to \int_{S^+} (z_1-z_2)\cdot \frac{ q}{|q|}\]
On $\Sigma^+$, $[z_1,\nu]=F(x,\nu)\ge [z_2,\nu]$ hence $\psi_R [z_1-z_2,\nu]$ converges monotonically to $[z_1-z_2,\nu]$  and thus passing to the limit when $R\to +\infty$ we get by the Monotone Convergence Theorem, 
\[\int_{\Sigma^+} [z_1-z_2, \nu]= \int_{S^+} \left[z_1-z_2, \frac{q}{|q|}\right].\]
Similarly we define $\Sigma^-$ and $S^-$ and get 
\[\int_{\Sigma^-} [z_1-z_2, \nu]= \int_{S^-} \left[z_1-z_2, \frac{q}{|q|}\right].\]
Summing these two equalities we find \eqref{egheter}.
\end{proof}

%
%

 We then have : 

\begin{proposition}
There exists a constant $C$ such that, if we take $q\in \Gamma(p)$ and we let $(q, q_2, \dots, q_k)\in \Z^q$ be an orthogonal basis of $V^r(p)$ constructed as above, then for every heteroclinic surface $H_q$ in the direction $q$ and every $z\in X$ which calibrates a heteroclinic surface $H_{-q}$ in the direction $-q$ there holds
\begin{equation}\label{egalmultiplheter}\int_{\partial^* H_q\cap \widetilde \T_r} F(x, \nu^{H_q}) -[z,\nu^{H_q}]\ge C\delta \prod_{i=2}^k |q_i|.\end{equation}
\end{proposition}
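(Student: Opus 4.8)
The plan is to bound from below the calibration defect
$\int_{\partial^*H_q\cap\widetilde\T_r}\bigl(F(x,\nu^{H_q})-[z,\nu^{H_q}]\bigr)\,d\HH$ of $H_q$ against the field $z$ adapted to the opposite heterocline $H_{-q}$, by producing inside the cylinder $\widetilde\T_r$ a family of $N$ pairwise disjoint balls of the fixed radius $R$ furnished by Proposition~\ref{estimdelta}, with $N$ of order $\prod_{i=2}^k|q_i|$, in each of which the defect is at least the fixed constant $\delta$ of that proposition. First I would normalize the pair $(H_q,H_{-q})$: replacing $H_q$ by $H_q+nq$ and $H_{-q}$ by $H_{-q}+mq$ with $n,m\in\Z$ changes neither the fact that $z$ calibrates the second set (as $q\in\Z^d$ and $z,F$ are $\Z^d$-periodic) nor the value of the integral above; and since $E^-\subset H_q,H_{-q}\subset E^+$, the translates $H_{\pm q}+nq$ sweep the gap $G$ monotonically, and $\HH^{d-3}(\partial E^+\cap\partial E^-)=0$ by Proposition~\ref{maxprinc}, so a suitable choice makes $\partial^*H_q\cap\partial^*H_{-q}\cap K\neq\emptyset$, $K$ being the compact set of Proposition~\ref{prophetero}. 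Proposition~\ref{estimdelta} then gives $\int_{B_R\cap\partial^*H_q}\bigl(F(x,\nu^{H_q})-[z,\nu^{H_q}]\bigr)\,d\HH\ge\delta$.

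Next, set $\Sigma:=\partial^*H_q\cap\partial^*H_{-q}$. Since each $q_i\in\Gamma(p)$ satisfies $q_i\cdot(\pm q)=0$, the sets $H_q,H_{-q}$, hence $\Sigma$, are invariant under $\widetilde\Gamma=\Span_\Z(q_2,\dots,q_k)$; they descend to $\widetilde\T_r$, whose compact cross-section, spanned by the pairwise orthogonal $q_2,\dots,q_k$, has measure $\prod_{i=2}^k|q_i|$. For any $\alpha\in\Z^d$ the translates $H_q-\alpha$, $H_{-q}-\alpha$ are again heteroclines in the directions $q$, $-q$, $z$ still calibrates $H_{-q}-\alpha$, and $(\Sigma-\alpha)\cap K\neq\emptyset$ exactly when $\Sigma\cap(K+\alpha)\neq\emptyset$; applying Proposition~\ref{estimdelta} to this translated pair and using the $\Z^d$-periodicity of $F$ and $z$ yields $\int_{B_R(\alpha)\cap\partial^*H_q}\bigl(F(x,\nu^{H_q})-[z,\nu^{H_q}]\bigr)\,d\HH\ge\delta$. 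It thus suffices to exhibit integer vectors $\alpha_1,\dots,\alpha_N$, pairwise inequivalent modulo $\widetilde\Gamma$, with $N\ge c_d\,\prod_{i=2}^k|q_i|/R^{k-1}$, such that the balls $B_R(\alpha_j)$ are disjoint in $\widetilde\T_r$ and $\Sigma\cap(K+\alpha_j)\neq\emptyset$ for each $j$. Here I would use that $\Sigma$ is a $(d-2)$-dimensional, $\widetilde\Gamma$-periodic, plane-like set trapped in the bounded slab $\overline G$, and that $G$ itself spans the whole cross-section of $\widetilde\T_r$ (because, again by Proposition~\ref{maxprinc}, $E^-\subset E^+$ touch only on an $\HH^{d-3}$-null set): the $q_i$-directional translates of $\Sigma$ then cover, up to a fixed proportion, the $(k-1)$-dimensional cross-section, which accommodates of order $\prod_{i=2}^k|q_i|/R^{k-1}$ disjoint $R$-balls.

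Summing the $N$ disjoint lower bounds gives
\[
\int_{\partial^*H_q\cap\widetilde\T_r}\bigl(F(x,\nu^{H_q})-[z,\nu^{H_q}]\bigr)\,d\HH\ \ge\ N\delta\ \ge\ \frac{c_d}{R^{k-1}}\,\delta\prod_{i=2}^k|q_i|\ =:\ C\delta\prod_{i=2}^k|q_i|,
\]
with $C$ depending only on $d$, $R$ and $k=\dim V^r(p)$, hence only on $p$ and not on $q$; choosing $C$ small enough that the single-ball bound $\delta$ from the first step already suffices whenever $\prod_{i=2}^k|q_i|$ is bounded disposes of the degenerate cases where some $|q_i|$ is smaller than $R$. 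The main obstacle is the spreading step: one must show rigorously that the crossing locus $\Sigma$ propagates across the entire $(k-1)$-dimensional compact cross-section of $\widetilde\T_r$, so that $\gtrsim\prod_{i=2}^k|q_i|$ well-separated integer translates $\alpha_j$ with $\Sigma\cap(K+\alpha_j)\neq\emptyset$ are available and the corresponding translated instances of Proposition~\ref{estimdelta} legitimately apply. This rests on $\Sigma$ being plane-like and $\widetilde\Gamma$-periodic (which forces it to fill the $q_i$-directions), on the gap $G$ spanning the cross-section, and on the dimension count $k-1\le d-2$; the borderline case $k=d-1$, where $\widetilde\T_r$ is a $2$-dimensional cylinder and $\Sigma$ a $\widetilde\Gamma$-periodic point configuration, should be treated separately but is in fact the simplest.
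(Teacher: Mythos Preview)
Your approach is essentially the paper's: invoke Proposition~\ref{estimdelta} in many disjoint translated balls and count. The paper's proof differs from yours in one organizational choice that simplifies the step you correctly flag as the obstacle. Rather than working with arbitrary $\alpha\in\Z^d$, the paper fixes once and for all the orthogonal basis $(\bar q_1,\dots,\bar q_k)$ of $V^r(p)$ from Section~\ref{notation} and uses the coarse sublattice $\Gamma_R:=\Span_\Z(2\lceil R\rceil\bar q_1,\dots,2\lceil R\rceil\bar q_k)\subset\Gamma(p)$. The balls $B_R+\alpha$, $\alpha\in\Gamma_R$, are then automatically disjoint. Because both $H_q$ and $H_{-q}$ are periodic under $\widetilde\Gamma=\Span_\Z(q_2,\dots,q_k)$, so is $\Sigma=\partial^*H_q\cap\partial^*H_{-q}$; once $\Sigma\cap K\neq\emptyset$, the count of $\alpha\in\Gamma_R$ with $B_R+\alpha\subset\widetilde\T_r$ and $\Sigma\cap(B_R+\alpha)\neq\emptyset$ becomes a pure lattice-point count in the cross-section of $\widetilde\T_r$, whose volume is $\prod_{i=2}^k|q_i|$ while the covolume of $\Gamma_R$ is a fixed constant depending only on $R$ and the $\bar q_i$. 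This yields the factor $\prod_{i=2}^k|q_i|$ without having to argue that $\Sigma$ ``spreads'' as a $(d-2)$-dimensional object: the $\widetilde\Gamma$-periodicity of $\Sigma$ plus the fixed lattice does that work.

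The paper is equally terse on the normalization issue you raise (arranging $\Sigma\cap K\neq\emptyset$), and your $q$-translation argument is the natural way to fill it in. Your remark on the degenerate case (some $|q_i|\lesssim R$) is also a point the paper does not spell out. In short: same strategy, same honest gap; the paper's fixed-lattice device is what makes the counting cleaner than your general-$\Z^d$ formulation.
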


\begin{proof}
  Let $K$, $R$ and $\delta$ be as in Proposition \ref{estimdelta}, so that whenever $\partial^* H_{q}\cap \partial^* H_{-q}\cap K\neq \emptyset$ we have
\[\int_{\partial^* H_q\cap B_R} F(x,\nu^{H_q})-[z,\nu^{H_q}]\ge \delta. \]
For every $\alpha\in \Gamma(p)$, the same estimate holds for the set $B_R+\alpha$. If $(\bar q_1, \dots, \bar q_k)$ is the basis of $V^r(p)$ that we have fixed from the beginning, letting $\Gamma_R:=\Span_\Z(2\lceil R\rceil \bar q_1, \dots, 2\lceil R\rceil \bar q_k)$ we have  $(B_R+\alpha) \cap (B_R+\beta)=\emptyset$ for $\alpha, \beta \in \Gamma_R$ and thus
\[\int_{\partial^* H_q\cap \widetilde \T_r} F(x, \nu^{H_q}) -[z,\nu^{H_q}]\ge \sum_{\stackrel{\alpha \in \Gamma_R}{B_R+\alpha\subset \widetilde \T_r}} \int_{\partial^* H_q\cap (B_R+\alpha)} F(x, \nu^{H_q}) -[z,\nu^{H_q}].\]
The number of vectors  $\alpha\in \Gamma_R$ such that $\partial^* H_q\cap \partial^* H_{-q}\cap (B_R+\alpha)\neq \emptyset$ and $B_R+\alpha \subset \widetilde \T_r$, is proportional to $\prod_{i=2}^k |q_i|$. Since for every such $\alpha$, there holds 
\[ 
\int_{\partial^* H_q\cap (B_R+\alpha)} F(x, \nu^{H_q}) -[z,\nu^{H_q}]\ge\delta\,,
\]
we get \eqref{egalmultiplheter}.

%
%
\end{proof}

\begin{proposition}\label{finalnondiff}
 Let $q \in V^r(p)\setminus\{0\}$ then there exists $\xi_1$ and $\xi_2$ in $\partial \p(p)$ such that $\xi_1\cdot q \neq \xi_2\cdot q$. As a consequence, $\partial \p(p)$ is a convex set of dimension $\dim(V^r(p))$.  
\end{proposition}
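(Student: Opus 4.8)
The plan is to exhibit, for every $q\in V^r(p)\setminus\{0\}$, two calibrations in the direction $p$ whose means over $Q$ differ along $q$, using the chain of identities and estimates already established for heteroclinic surfaces, and then to deduce the dimension statement. The argument splits into a ``rational'' step ($q\in\Gamma(p)$), which is where the heteroclinic machinery is used directly, and a density/compactness step that covers all of $V^r(p)$.

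First I would treat $q\in\Gamma(p)\setminus\{0\}$, in the setup fixed before the statement: $H_q,H_{-q}$ are heteroclinic surfaces in the directions $\pm q$ given by Proposition~\ref{prophetero} with $\partial^*H_q\cap\partial^*H_{-q}\cap K\neq\emptyset$, $z_1,z_2\in X$ are calibrations of $H_q$ and of $H_{-q}$ respectively (both calibrations in the direction $p$, and arranged so that $z_1=z_2$ off $\Pi(G)$), and $\xi_i:=\int_Q z_i\in\partial\p(p)$. Since $z_1$ calibrates $H_q$ we have $[z_1,\nu^{H_q}]=F(x,\nu^{H_q})$ on $\partial^*H_q$, so combining \eqref{egsurf}, \eqref{egheter} and inequality~\eqref{egalmultiplheter} applied with $z=z_2$ (which calibrates $H_{-q}$) gives
\[
\int_{\S\cap G}(z_1-z_2)\cdot q
=|q|^2\int_{\partial^*H_q\cap\widetilde \T_r}\!\bigl(F(x,\nu^{H_q})-[z_2,\nu^{H_q}]\bigr)
\ \ge\ C\,\delta\,|q|^2\prod_{i=2}^{k}|q_i|\ >\ 0 .
\]
By Proposition~\ref{egalmultipl} the left-hand side equals $N\int_{Q\cap\Pi(G)}(z_1-z_2)\cdot q$ with $N\in\N$, $N\le C'|q|\prod_{i=2}^{k}|q_i|$; hence $N\ge 1$, and since $z_1=z_2$ outside $\Pi(G)$,
\[
(\xi_1-\xi_2)\cdot\frac{q}{|q|}
=\frac{1}{N|q|}\int_{\S\cap G}(z_1-z_2)\cdot q\ \ge\ \frac{C\delta}{C'}=:\delta'>0 .
\]
The essential point is that $\delta'$ is a positive constant \emph{independent of $q$}.

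Next, for an arbitrary $q\in V^r(p)\setminus\{0\}$, I would use that $\Gamma(p)$ spans $V^r(p)$ over $\R$: its rational combinations are dense in $V^r(p)$, so there is a sequence $q_n\in\Gamma(p)\setminus\{0\}$ with $q_n/|q_n|\to q/|q|$. The previous step yields $\xi_1^n,\xi_2^n\in\partial\p(p)$ with $(\xi_1^n-\xi_2^n)\cdot q_n/|q_n|\ge\delta'$, and since $\partial\p(p)\subseteq C$ is compact we may extract $\xi_i^n\to\xi_i\in\partial\p(p)$ and pass to the limit to get $(\xi_1-\xi_2)\cdot q/|q|\ge\delta'>0$, i.e.\ $\xi_1\cdot q\neq\xi_2\cdot q$. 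This proves the first assertion.

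For the dimension count, $\dim\partial\p(p)\le\dim V^r(p)$ follows from the Corollary after Proposition~\ref{totir} (equivalently from Theorem~\ref{zfix}), which gives $\partial\p(p)-\partial\p(p)\subseteq V^r(p)$. Conversely, the assertion just proved says that no nonzero vector of $V^r(p)$ is orthogonal to $U:=\Span(\partial\p(p)-\partial\p(p))$, i.e.\ $U^\perp\cap V^r(p)=\{0\}$; together with $(V^r(p))^\perp\subseteq U^\perp$ this forces $\dim U^\perp=d-\dim V^r(p)$, hence $\dim\partial\p(p)=\dim U=\dim V^r(p)$. The only genuinely delicate point is the passage from $\Gamma(p)$ to all of $V^r(p)$: it works precisely because the factors $|q|^2\prod_i|q_i|$ in the heteroclinic lower bound~\eqref{egalmultiplheter} exactly cancel the multiplicity bound $N\le C'|q|\prod_i|q_i|$ of Proposition~\ref{egalmultipl}, so that the limit of the a priori only nonnegative inequalities remains strict.
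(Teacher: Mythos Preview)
Your proof is correct and follows essentially the same route as the paper: chain \eqref{egsurf}, \eqref{egheter}, \eqref{egalmultiplheter} together with Proposition~\ref{egalmultipl} to get a uniform lower bound $(\xi_1-\xi_2)\cdot q/|q|\ge \delta'>0$ for $q\in\Gamma(p)$, then pass to general $q\in V^r(p)$ by density. The only cosmetic difference is that the paper inserts an intermediate homogeneity step through $\mathbb{Q}^d\cap V^r(p)$ before the density argument, whereas you go directly from $\Gamma(p)$ to $V^r(p)$ using compactness of $\partial\p(p)$; your final dimension count is also spelled out more explicitly than in the paper. One small inaccuracy: Proposition~\ref{prophetero} does not literally assert $\partial^*H_q\cap\partial^*H_{-q}\cap K\neq\emptyset$, but you never actually use this, since \eqref{egalmultiplheter} is stated for arbitrary heteroclinic pairs.
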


\begin{proof}
 Let us start with $q\in \Gamma(p) \setminus\{0\}$. Take $\xi_1:=\int_Q z_1$ and $\xi_2:=\int_Q z_2$. If $(q_2, \dots,q_k)\in \Z^d$ is such that $(q,q_2, \dots,q_k)$ is an orthogonal basis of $V^r(p)$ then using the same notation as before, by  Proposition \ref{egalmultipl}, 
\[\int_Q (z_1-z_2)\cdot q=\int_{Q\cap \Pi(G)} (z_1-z_2)\cdot q = \frac{1}{N} \int_{\S\cap G}(z_1-z_2)\cdot q\]
with $N\le C |q|\prod_{i=2}^{k} |q_i|$.
By \eqref{egsurf} we get
\[\int_Q (z_1-z_2)\cdot q= \frac{|q|^2}{N} \int_{S\cap G} \left[z_1-z_2, \frac{q}{|q|}\right].\]
Equation \eqref{egheter} then yields
\[\int_Q (z_1-z_2)\cdot q= \frac{|q|^2}{ N} \int_{\partial^* H_q\cap \widetilde \T_r} \left[z_1-z_2, \nu^{H_q}\right]\ge \frac{1}{C \prod_{i=2}^{k} |q_i|} |q|\int_{\partial^* H_q\cap \widetilde \T_r} [z_1-z_2, \nu^{H_q}] .\]
Using finally \eqref{egalmultiplheter} we find
\begin{equation}\label{estimsub}\int_Q (z_1-z_2)\cdot q\ge C \delta |q|,\end{equation}
for some $C>0$, which gives the desired result for $q\in \Z^d$. If now $q=\alpha/\beta \in \mathbb{Q}^d$ with $\alpha \in \Z^d$ and $\beta\in \N^*$ then applying \eqref{estimsub} to $\beta q$, by homogeneity  \eqref{estimsub} holds also for $q$. Finally, every vector of $V^r(p)\setminus\{0\}$ can be approximated by rational vectors and since the lower bound of \eqref{estimsub} is uniform, it passes to the limit. 
\end{proof}

Theorem \ref{main} now follows from Theorem \ref{convex}, Proposition \ref{totir}, Proposition \ref{cinquepuntosei}, Proposition \ref{finalnondiff}, recalling the fact that the directions of differentiability of $\p$ at a point $p$ form a linear subspace of $\R^d$.
%

\section{Some examples}\label{examples}

Whether or not gaps do occur in laminations by plane-like minimizers is a delicate question. In \cite{Bangexist}, Bangert proved that for every  Riemannian metrics (i.e. if $F(x,p)=a(x)|p|$ and $g=0$), for every periodic open set $V\subset \R^d$, it is possible to modify the function $a(x)$ inside $V$ in such a way that for every direction there always exists a gap. A simple example of a functional for which there are gaps in every direction can be constructed as follows. In dimension two, let $a(x):=c_1$ in a square $D$ strictly contained in $Q$, and $a(x):= c_2$ outside $D$, with $c_1>\sqrt{2}c_2$. Then, any plane-like minimizer must contain or avoid $D$, so that there are gaps in every directions. Notice that by making a little regularization, it is possible to have the same behavior for a functional satisfying the hypotheses of Theorem \ref{main}. In \cite{gilles} the author constructs an example of a functional 
with gaps in every lamination by plane-like minimizers, by considering  the prescribed mean curvature functional (i.e. $F(x,p)=|p|$ and $g\neq 0$)  with $g$ equal to some $\lambda$ in a small ball and $-\lambda$ in another ball. In the next example, the function $\p$ is differentiable in every direction except from one. 

{ } Let $d=2$ and $F(x,y,\nu)=a(x)|\nu|$, with $a(x)$ a periodic function of the first variable (for example $a(x)= \sin(\frac{x}{2\pi})+2$). Then, for $p=\pm(1,0)$, the only plane-like minimizers in the direction $p$ are the planes orthogonal to $p$ passing through the minima of $a(x)$. 
Thus $\p$ is not differentiable at $\pm(1,0)$ whereas for every other $p\in \S^1$, if $E$ is a   plane-like minimizer in the direction $p$, 
by invariance of the functional by translation along the $y$ component, the set $E+t (0,1)$ is also a   plane-like minimizer 
for every $t\in \R$, therefore there is no gap in the direction $p$, and $\p$ is differentiable at $p$ (see Figure \ref{phisin}). 
Clearly, this construction can be done in any dimension.

\begin{figure}[ht]
\centering{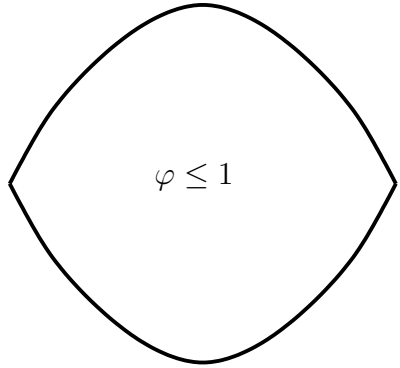}
\caption{Example with a gap for a translation invariant functional.}
\label{phisin}
\end{figure}
{ } One can wonder how non differentiability of $\p$ is related to the invariance by translation of the functional. The relation is not clear at all as shown by our last example where the energy  is not invariant in any direction but for which the associated $\p$ is differentiable in some rational direction.

{ } Indeed, let $\psi$ be a periodic smooth function on $\R^d$ with $\|\nabla \psi\|_{\infty}\le \frac{1}{2}$, let $u(x):=x_1+\psi(x)$, where $x=(x_1,x')$, 
and let $z:=\frac{e_1+\nabla \psi(x)}{|e_1+\nabla \psi(x)|}$ (the condition $\|\nabla \psi\|_{\infty}\le \frac{1}{2}$ is ensures that $z$ is well defined). The vector field $z$ is then normal to all level-sets of $u$, which fibrate $\R^d$.
Letting $g=\Div z$, we see that $z$ is a calibration of the level-sets of $u$ for the energy
\[P(E)+\int_E g\,dx\,.\]
We thus have found
a fibration of the space by plane-like minimizers in the direction $e_1$:
so that the corresponding $\p$ is differentiable along $e_1$. However,
in general $g$ will not be invariant in any direction.

\section{G-closure of isotropic perimeter functionals}\label{Gclosure}
A natural question is to determine the set of anisotropies that one can obtain by homogenization of the isotropic interfacial energies
\[\E(E,A)=\int_{\partial^*E\cap A} a(x) d \HH(x)\]
through the formula \eqref{defphi}. If no bound is imposed on $a(x)$ (that is no restriction is made on the $c_0$ of \eqref{boundF}), we can easily see that the set of such interfacial energies is dense in the set of all symmetric anisotropies $\p$, that is, the convex one-homogenous functions $\p$ with $\p(-p)=\p(p)$.
Indeed, it is sufficient to prove that we can obtain any crystalline energy with rational vertices (these are functions $\p$ for which the unit ball $\{\p \le1\}$ is a polytope whose vertices are rational points). Let $p_1, \dots, p_n$ be the rational  vertices of a given convex symmetric polytope $K$. Let $\Pi$ be the projection form $\R^d$ on the torus. By approximation we can consider a function $a(x)$ defined by
\[a(x)=\begin{cases}
        \lambda_i \qquad \textrm{if } x \in \Pi(\{p_i \cdot y=0\}) \textrm{ for some } i,\\
+\infty \qquad \textrm{otherwise}
       \end{cases}
\]
where $\lambda_i=|p_i|^{\frac{1}{d-1}}$, so that for $p=p_i$ the plane-like minimizers are given by the half-spaces $\{p_i \cdot x>0\}+q$ with $q \in \Z^d$, hence $\p(p_i)=1$ and it follows that $\{\p \le 1\}=K$. Notice that, when the hyperplanes $\{p_i\cdot x=0\}$ cross, $a(x)$ is not well defined but, since the intersection is of dimension $d-2$, it does not contribute to the energy. In Figure \ref{ax}, we show the simplest example where $\p$ is the $L^1$ norm, so that $K$ is the cube with vertices $\pm e_i$ for $e_i$ the canonical basis of $\R^d$. 

\begin{figure}[ht]
\centering{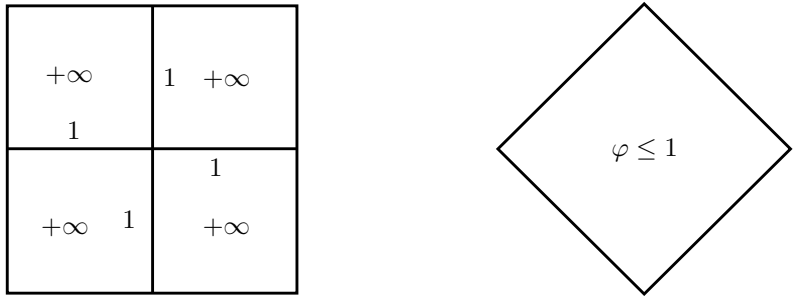}
\caption{Example of the construction of $\p=|\cdot|_1$. Left: the metric $a(x)$. Right: the unit ball $K$}
\label{ax}
\end{figure}
{ } By slightly modifying this construction, it is possible to approximate any even anisotropy $\p$ with $c_0 |p|\le\p(p)\le \frac{1}{c_0}|p|$ by isotropic interfacial energies $\E$ with Lipschitz functions $a(x)$ satisfying the same bounds, $c_0\le a(x)\le \frac{1}{c_0}$ (see \cite{AI}).

{ } This kind of  issues is very related to the famous G-closure problem for composite materials (see the recent paper \cite{BaBar}  and the references therein) or, in a setting closer to ours, 
to the density with respect to $\Gamma$-convergence of Riemannian metrics in the set of all Finsler metrics \cite{BrButFra}. 

\appendix
\section{A discrete ``separation'' result}\label{AppBir}

We show here the following result (see~\cite{UNIBan} for a slightly
more complex proof):
\begin{lemma}\label{lembirkhoffdir}
Assume $E$ is a set which satisfies the Birkhoff property, that is,
for any $q\in\Z^d$, either $q+E\subseteq E$ or $q+E\supseteq E$.
Then there exists $p\in \R^d$, $|p|=1$, such that for any $q\in \Z^d$,
$q\cdot p>0\Rightarrow q+E\subseteq E$ and (obviously)
$q\cdot p<0\Rightarrow q+E\supseteq E$. Moreover, $p$ is unique,
unless $E+q=E$ for all $q\in\Z^d$.
\end{lemma}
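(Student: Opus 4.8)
The plan is to reduce the statement to a purely combinatorial fact about the submonoid $\Gamma^{+}:=\{q\in\Z^{d}: q+E\subseteq E\}$ of $\Z^{d}$. Put $\Gamma^{-}:=-\Gamma^{+}=\{q: q+E\supseteq E\}$ and $\Gamma^{0}:=\Gamma^{+}\cap\Gamma^{-}=\{q: q+E=E\}$; the Birkhoff property is exactly $\Gamma^{+}\cup\Gamma^{-}=\Z^{d}$, and $\Gamma^{0}$ is a subgroup. I would first record the monoid property $(\Gamma^{+}\setminus\Gamma^{0})+\Gamma^{+}\subseteq\Gamma^{+}\setminus\Gamma^{0}$ (if $q+E\subsetneq E$ and $q'+E\subseteq E$, then $(q+q')+E\subseteq q+E\subsetneq E$), together with a saturation property: if $mq'\in\Gamma^{0}$ for some integer $m\ge 1$, then $q'\in\Gamma^{0}$ (when $q'\in\Gamma^{+}$, chain $E=mq'+E\subseteq(m-1)q'+E\subseteq\cdots\subseteq q'+E\subseteq E$; the case $q'\in\Gamma^{-}$ is symmetric). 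Saturation shows that either $\Gamma^{0}=\Z^{d}$, i.e.\ $E$ is $\Z^{d}$-periodic, or the $\mathbb{Q}$-span $W$ of $\Gamma^{0}$ is a proper subspace of $\R^{d}$. In the periodic case every unit $p$ trivially satisfies the conclusion and none is distinguished, so from now on I assume $E$ is not periodic; then in particular $\Gamma^{+}\setminus\Gamma^{0}\neq\emptyset$ (otherwise $\Gamma^{+}\subseteq\Gamma^{-}$, forcing $\Gamma^{-}=\Z^{d}$ and hence $\Gamma^{0}=\Z^{d}$).

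Next I would introduce the closed convex cone $\mathcal{D}:=\overline{\mathrm{cone}(\Gamma^{+}\setminus\Gamma^{0})}\subseteq\R^{d}$ and show that a unit vector $p$ satisfies the conclusion of the Lemma if and only if it lies in the dual cone $\mathcal{D}^{\ast}=\{y: y\cdot x\ge 0\ \forall x\in\mathcal{D}\}$. Indeed, if $p$ works and $r\in\Gamma^{+}\setminus\Gamma^{0}$, then $r+E\subsetneq E$, so $-r\notin\Gamma^{+}$ and therefore $(-r)\cdot p\le 0$, i.e.\ $r\cdot p\ge 0$; hence $\mathcal{D}\subseteq\{x:p\cdot x\ge 0\}$, that is $p\in\mathcal{D}^{\ast}$. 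Conversely, if $p\in\mathcal{D}^{\ast}\setminus\{0\}$ and $q\cdot p>0$, then $q\in\Gamma^{+}$: otherwise $-q\in\Gamma^{+}$ and $-q\notin\Gamma^{0}$ (else $q\in\Gamma^{0}\subseteq\Gamma^{+}$), so $-q\in\Gamma^{+}\setminus\Gamma^{0}\subseteq\mathcal{D}$ and $(-q)\cdot p\ge 0$, a contradiction; the implication for $q\cdot p<0$ follows by replacing $q$ with $-q$ and using $\Gamma^{-}=-\Gamma^{+}$. Consequently, existence of a valid $p$ is equivalent to $\mathcal{D}^{\ast}\neq\{0\}$, and uniqueness (up to positive multiples) is equivalent to $\mathcal{D}^{\ast}$ being a single ray — equivalently, to $\mathcal{D}$ being a closed half-space.

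It then remains to prove that $\mathcal{D}$ is a closed half-space, and I would do this in three steps. First, $\mathcal{D}\cup(-\mathcal{D})=\R^{d}$: the sets $\Gamma^{+}\setminus\Gamma^{0}$ and $\Gamma^{-}\setminus\Gamma^{0}$ together contain $\Z^{d}\setminus\Gamma^{0}$, so $\mathcal{D}\cup(-\mathcal{D})$ contains $\mathbb{Q}^{d}\setminus W$, which is dense in $\R^{d}$ since $W$ is a proper subspace; and $\mathcal{D}\cup(-\mathcal{D})$ is closed. Second — and this is the crux — $\mathcal{D}\neq\R^{d}$: if it were, then $\mathrm{cone}(\Gamma^{+}\setminus\Gamma^{0})$, being a dense convex subset of $\R^{d}$, would be all of $\R^{d}$; picking $q_{0}\in\Gamma^{+}\setminus\Gamma^{0}$ we could write $-q_{0}=\sum_{i}\lambda_{i}r_{i}$ with $\lambda_{i}\ge 0$ and $r_{i}\in\Gamma^{+}\setminus\Gamma^{0}$, hence (the relation being a linear system with integer data) with $\lambda_{i}\in\mathbb{Q}_{\ge 0}$, and after clearing denominators $Nq_{0}+\sum_{i}n_{i}r_{i}=0$ with $N\ge 1$ and $n_{i}\in\Z_{\ge 0}$; but by the monoid property the left-hand side lies in $\Gamma^{+}\setminus\Gamma^{0}$, whereas $0\in\Gamma^{0}$ — a contradiction. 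Third, any closed convex cone $\mathcal{D}$ with $\mathcal{D}\cup(-\mathcal{D})=\R^{d}$ and $\mathcal{D}\neq\R^{d}$ is a half-space: it spans $\R^{d}$ (as $\mathcal{D}$ and $-\mathcal{D}$ have the same span), so $\mathcal{D}^{\ast}$ is pointed and nonzero; were $y_{1},y_{2}\in\mathcal{D}^{\ast}$ linearly independent, then $\mathcal{D}\subseteq\{y_{1}\cdot x\ge 0\}\cap\{y_{2}\cdot x\ge 0\}$ and $-\mathcal{D}$ would lie in the opposite wedge, so $\mathcal{D}\cup(-\mathcal{D})$ would miss any $x$ with $y_{1}\cdot x>0>y_{2}\cdot x$; hence $\mathcal{D}^{\ast}=\R_{\ge 0}\,p$ and $\mathcal{D}=\mathcal{D}^{\ast\ast}=\{x:p\cdot x\ge 0\}$. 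Normalizing $|p|=1$ and invoking the equivalence of the previous paragraph completes the argument. I expect the second step to be the main obstacle: it is precisely there that non-periodicity of $E$ enters in an essential way, through the interaction between the additive structure of $\Gamma^{+}$ and the fact that $0\notin\Gamma^{+}\setminus\Gamma^{0}$; the remaining steps are soft facts from convex geometry and the density of $\mathbb{Q}^{d}$.
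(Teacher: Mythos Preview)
Your proof is correct and shares the paper's central idea: both show that the closed convex cone generated by the ``forward'' monoid cannot be all of $\R^d$, by using Carath\'eodory/rational coefficients to turn a conical combination over integer vectors into an integer relation that contradicts the monoid structure. The packaging differs in two respects worth noting. First, the paper works with $Z=\Gamma^{+}$ itself (after arranging $e_i\in Z$) and shows its closed convex hull is a proper subset of $\R^d$, hence contained in a half-space; it stops there and does not actually argue the uniqueness clause. You instead generate the cone $\mathcal{D}$ from $\Gamma^{+}\setminus\Gamma^{0}$, pass to the dual cone $\mathcal{D}^{\ast}$, and show $\mathcal{D}$ is \emph{exactly} a half-space, which delivers existence and uniqueness of $p$ in one stroke. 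Second, your rationality step (Carath\'eodory with linearly independent integer generators, then clear denominators) is cleaner than the paper's ``perturb the $\theta^k$ to rationals'' argument, and your saturation observation on $\Gamma^{0}$ handles the periodic directions more transparently than the paper's ad hoc reduction $e_i\in Z$. In short: same engine, but your convex-geometric framing via $\mathcal{D}^{\ast}$ is tidier and closes the uniqueness gap the paper leaves open.
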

\begin{proof}
Let $Z=\{ q\in\Z^d\,:\, q+E\subset E\}$, clearly, $0\in Z$, $Z+Z=Z$
(so that $nZ\subset Z$ for any $n\ge 0$),
and the Birkhoff property states that for any $z\in\Z^d$,
either $z\in Z$ or $-z\in Z$. Without loss of generality we
may therefore assume that $e_i\in Z$ for $i=1,\dots,d$, where
$(e_i)$ is the canonical basis of $\R^d$.

We claim that either $Z=\Z^d$, or the closed convex envelope of $Z$
is not $\R^d$. In the latter case, since this envelope is a
convex cone, it must be contained in a semispace, hence the result.

Assume, thus, that $Z\neq \Z^d$ but any point in $\Z^d$ is
in the convexification of $Z$. In particular, it must be
that $-\sum_i e_i\not\in Z$, otherwise, for any $p=(p_1,\dots,p_d)\in \Z^d$
we would obtain that given a nonnegative integer $n\ge -\min_i p_i$,
$p=-n\sum_i e_i + \sum_i (p_i-n)e_i$ is also in $Z$.

A consequence is that as soon as $p_i<0$ for all $i$, then $p\not\in Z$,
otherwise $p+ \sum_i (-p_i-1)e_i = -\sum_i e_i \in Z$, which gives
another contradiction.

Now, by assumption, for any $\eps>0$ there exist $(p^k)_{k=1}^K$,
$(\theta^k)_{k=1}^K$ with $p^k\in Z$, $\theta^k\ge 0$,
$\sum_k \theta^k=1$ such that 
\[
\left| \sum_{k=1}^K \theta^k p^k + \sum_{i=1}^d e_i \right|\ <\ \eps\,.
\]
Possibly changing infinitesimally the $\theta^k$'s we can
assume that they are rational, hence, $\theta^k=n^k/m$ for
some integers $n^k\ge 0$, $m>0$, $\sum_k n^k=m$.

It follows, letting $p=\sum_k n^k p^k \in Z$,
\[
\left| p + m\sum_{i=1}^d e_i \right|\ <\ m\eps\,.
\]
As a consequence, for each $i$, $p_i< -m(1-\eps)\le 0$  as soon as $\eps<1$,
which implies that $p\not\in Z$, a contradiction. Hence, the closed
convex envelope of $Z$ is strictly contained in $\R^d$.
\end{proof}

\section{A generic uniqueness result}
\label{SecGeneric}

In Theorem~\ref{thmane} we have shown that the minimizer
of \eqref{defphi} is unique up to an additive constant, if the direction
$p$ is irrational. In addition, we show here that it is generically unique when $p$ is rational, 
that is, we prove a geometric counterpart of Ma\~n\'e's conjecture \cite{mane}.

For this we follow and adapt the proof of \cite{BC}. Contrary to the non-parametric case, it is no longer true that, if $F$ is an admissible anisotropy and $f\in C^{\infty}(\T)$, then $F(x,p)+f(x)|p|$ is also an admissible anisotropy. Indeed, if $\inf f \le -c_0$, the function $F(x,p)+f(x)|p|$ is not coercive anymore. We will thus restrict ourselves to positive perturbations. For this reason we cannot directly use \cite[Thm. 5]{BC} as in \cite{bessiMassart}. 

We will try to stay as close as possible to the notation of  \cite{BC}. 
A set $\O$ is called a residual set if it is a countable intersection of dense open sets. 
In a complete metric space, by Baire's Theorem, this implies that $\O$ is itself dense.

\begin{theorem}\label{maneDu}
For every rational vector $p$, there exists a residual set $\O(p)$ of $E:=C^{\infty}(\T)\cap\{f\ge 0\}$ such that for every $f\in \O(p)$, the minimizer of \eqref{defphi} for $F+f(x)|p|$ is unique up to an additive constant.
\end{theorem}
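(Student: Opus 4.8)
The plan is to adapt the Bernard--Contreras genericity argument of~\cite{BC} to the present parametric, constrained setting. Fix the rational direction $p$, and for $f\in E:=C^\infty(\T)\cap\{f\ge 0\}$ set $F_f(x,q):=F(x,q)+f(x)|q|$. Because $f\ge 0$, the anisotropy $F_f$ is again admissible (it is $\mathcal{C}^{2,\alpha}$ away from $q=0$, elliptic with the same constant as $F$, and $F_f\ge c_0|q|$), so all of the theory developed above applies to it; in particular the minimum in the cell formula~\eqref{defphi} for $F_f$ (with $g=0$) is attained, and equals
\[
\Phi(f)\ :=\ \min_{v\in BV(\T)}\Big(\int_\T F(x,p+Dv)\,+\int_\T f\,d|p+Dv|\Big)\,.
\]
For each fixed $v$ the quantity in parentheses is affine in $f$, so $\Phi$ is a concave function of $f$. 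Moreover $c_0\int_\T|p+Dv_f|\le\Phi(f)\le c_0^{-1}|p|+\|f\|_\infty|p|$ for any minimizer $v_f$, which gives a uniform bound on $\int_\T|p+Dv_f|$ as $f$ ranges in a $C^0$-bounded set; from this it follows that $\Phi$ is locally Lipschitz for the $\|\cdot\|_{C^0}$ norm, hence continuous on $E$. I will work on the open subset $\{f>0\}\subset E$: its complement $\{f\in E:\min f=0\}$ is closed with empty interior, so $\{f>0\}$ is residual in $E$, and therefore any set residual in $\{f>0\}$ is residual in $E$.

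The key step is: \emph{if $\Phi$ is G\^ateaux differentiable at $f$, then the minimizer of~\eqref{defphi} for $F_f$ is unique up to an additive constant.} Indeed, for any minimizer $v_f$ and any $g\in E$ one has $\Phi(g)\le\int_\T F(x,p+Dv_f)+\int_\T g\,d|p+Dv_f|=\Phi(f)+\int_\T(g-f)\,d|p+Dv_f|$, so the measure $|p+Dv_f|$ lies in the superdifferential $\partial^+\Phi(f)$; if $\Phi$ is differentiable at $f$ this forces $|p+Dv_f|=D\Phi(f)=:\mu$ for \emph{every} minimizer $v_f$. Given two minimizers $v_1,v_2$, convexity of $F(x,\cdot)$, subadditivity of the total variation and $f\ge 0$ show that $v:=(v_1+v_2)/2$ satisfies $\int_\T F(x,p+Dv)+\int_\T f\,d|p+Dv|\le\Phi(f)$, hence $v$ too is a minimizer and $|p+Dv|=\mu$. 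Writing the $\R^d$-valued measures as $p+Dv_i=\theta_i\,\mu$ and $p+Dv=\theta\,\mu$ with $|\theta_i|=|\theta|=1$ $\mu$-a.e., we get $\theta=(\theta_1+\theta_2)/2$ $\mu$-a.e.; strict convexity of the Euclidean ball then forces $\theta_1=\theta_2$ $\mu$-a.e., i.e.\ $Dv_1=Dv_2$, so $v_1-v_2$ is constant on the connected torus.

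It remains to prove that the set $\O(p)$ of G\^ateaux-differentiability points of $\Phi$ is residual in $E$. Choose a countable family $\{h_j\}_{j\ge 1}\subset C^\infty(\T)$ which is dense in $C^0(\T)$ (trigonometric polynomials with rational coefficients, say). For fixed $j$ and $f\in\{f>0\}$ the one-sided derivatives $D^{\pm}_{h_j}\Phi(f)$ exist by concavity, with $D^-_{h_j}\Phi(f)\ge D^+_{h_j}\Phi(f)$; by concavity $f\mapsto D^+_{h_j}\Phi(f)$ is lower semicontinuous and $f\mapsto D^-_{h_j}\Phi(f)$ is upper semicontinuous, so the set $Z_j:=\{f>0:\,D^-_{h_j}\Phi(f)=D^+_{h_j}\Phi(f)\}$ is a $G_\delta$. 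It is also dense, because along every line $t\mapsto\Phi(f+th_j)$ is concave and hence differentiable for a.e.\ $t$. Thus $Z:=\bigcap_j Z_j$ is residual in $\{f>0\}$, hence in $E$. Finally, if $f\in Z$ then, using $D^+_h\Phi(f)=\min_{\nu\in\partial^+\Phi(f)}\langle\nu,h\rangle$ and $D^-_h\Phi(f)=\max_{\nu\in\partial^+\Phi(f)}\langle\nu,h\rangle$, the functional $\langle\,\cdot\,,h_j\rangle$ is constant on $\partial^+\Phi(f)$ for every $j$; since $\partial^+\Phi(f)$ is nonempty (Hahn--Banach, $f$ being an interior point) and $\{h_j\}$ is dense in $C^0(\T)$, it reduces to a single measure, i.e.\ $\Phi$ is G\^ateaux differentiable at $f$. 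Hence $\O(p)\supseteq Z$ is residual in $E$, and by the previous paragraph every $f\in\O(p)$ has the required uniqueness property.

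The main obstacle is exactly the issue flagged before the statement: one cannot invoke a ready-made differentiability theorem for concave functions on Banach spaces (as~\cite{bessiMassart} does through~\cite[Thm.~5]{BC}), because the admissible perturbations are constrained to $f\ge 0$ and because the conclusion is required in the Fr\'echet space $C^\infty(\T)$, not in a Banach space. Both difficulties are absorbed above — the positivity constraint by passing to the residual open cone $\{f>0\}$ on which all directions are admissible, and the $C^\infty$/Fr\'echet issue by the explicit countable construction $Z=\bigcap_j Z_j$ with \emph{smooth} directions $h_j$ forming a $C^0$-dense family, which dispenses with any appeal to Banach-space results. The only other point requiring care is the (elementary) identification $|p+Dv_f|\in\partial^+\Phi(f)$, which rests just on the affine minorants $\Phi(\cdot)\le\int_\T F(x,p+Dv_f)+\int_\T(\cdot)\,d|p+Dv_f|$ of $\Phi$ coming from each minimizer.
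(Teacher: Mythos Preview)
Your proof is correct and takes a genuinely different, more direct route than the paper's. The paper follows the Mather--Bernard--Contreras scheme literally: it lifts the cell problem to a linear problem on measures on $\T\times\S^{d-1}$, projects to measures on $\T$, and then uses the finite-dimensional reduction $T_m$ of~\cite[Lemma~7]{BC} together with the dimension estimate for the non-differentiability set of a finite-dimensional convex function to conclude that generically the projected minimizer set $M_{K_r}(F+f)$ is a singleton; the passage from uniqueness of $|Du+p|$ to uniqueness of $Du$ is then done via Proposition~\ref{Proprecurrent} and the structure of level sets. You instead work directly with the value function $\Phi(f)$, identify each $|p+Dv_f|$ as a supergradient of the concave functional $\Phi$, and prove generic G\^ateaux differentiability by an explicit countable $G_\delta$ construction using a $C^0$-dense family of smooth test directions---this sidesteps both the measure lift and the finite-dimensional reduction, and handles the positivity constraint cleanly by restricting to the open dense cone $\{f>0\}$. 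Your final step, deducing $Dv_1=Dv_2$ from $|p+Dv_1|=|p+Dv_2|$ via the midpoint $v=(v_1+v_2)/2$ and strict convexity of the Euclidean ball, is more elementary than the paper's appeal to Proposition~\ref{Proprecurrent} and works without using that $p$ is rational. What the paper's approach buys is a closer parallel with the Lagrangian/Mather formalism of~\cite{BC,bessiMassart}, which may be preferable if one wants to read off statements about minimizing measures or currents; what your approach buys is a shorter, self-contained argument that uses only elementary convex analysis.
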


Following an idea of Mather, we first rewrite \eqref{defphi} as a linear problem. Notice that, for $u\in BV(\T)$, if we set $\mu_u:=  |Du+p|\otimes\delta_{\frac{Du+p}{|Du+p|}}$ (which is a Radon measure on $\T\times \S^{d-1}$) then
\[
\int_{\T} F(x,Du+p)\ =\ \int_{\T\times \S^{d-1}} F(x,\nu) d\mu_u
\]
is linear in $\mu_u$. Let $\widetilde H_r$ be the set of measures $\mu_u$ for $u\in BV(\T)$ with total variation less than $r$. 
Let $H_r$ be the weak-$*$ closure of the convex hull of $\widetilde H_r$. 
By Banach-Alaoglu's Theorem, $H_r$ is compact, convex and metrizable. 
Let $F$ be the space of Borel measures on $\T\times \S^{d-1}$, $G$ be the space of Borel measures on $\T$ and $K_r\subset G$ 
be the metrizable compact convex set of Radon measures on $\T$ with total variation less than $r$. 
Then, if $\pi: F\to G$ is the projection on the first marginal, for every $\mu_u\in \widetilde H_r$, 
$\pi(\mu_u)=|Du+p|\in K_r$ and thus $\pi(H_r)\subset K_r$. Letting 
\[
MA(F,\mu)\ :=\ \int_{\T\times \S^{d-1}} F(x,\nu) d\mu \qquad \mu\in H_r\,,
\]
we have that $MA(L,\cdot)$ is linear and continuous for the weak-$*$ topology on $H_r$. 
If $u$ is a minimizer of \eqref{defphi} for $F+f$ with $f\in E$, we have 
\[
\left(\frac{1}{c_0}+|f|_\infty\right)|p| \ \ge\ \int_{\T} F(x,Du+p)+f(x)|Du+p|
\ \ge\ c_0 |Du+p|(\T)
\]
and thus for $r\ge (c_0^{-1}+|f|_\infty)|p|/c_0$ the measure $\mu_u$ is the minimizer of $MA(F+f,\cdot)$ in $\widetilde H_r$. Since $MA(F+f,\cdot)$ is linear, the minimum over $H_r$ and over $\widetilde H_r$ coincide. Hence, for every minimizing $u$, the measure $\mu_u$ is also a minimizer of $MA(F+f,\cdot)$ in $H_r$. We are thus going to prove that such minimizers in $H_r$ are generically unique.
Following \cite{BC}, let $\MHrF:=\argmin_{\mu\in H_r} MA(F,\mu)$ and $\MKF:=\pi(\MHrF)$.
\begin{proposition}
 For every $r>0$ there exists a residual set $\O_r\subset E$ such that for every $f\in \O_r$, the set $\MKFf$ is reduced to a single element. 
\end{proposition}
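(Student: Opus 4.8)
The plan is to adapt the scheme of \cite{BC}, turning the statement into a question of generic differentiability of a concave functional. First I would introduce, for $f\in C^\infty(\T)$ (the formula in fact makes sense for every $f\in C(\T)$, irrespective of whether $F+f$ is admissible), the \emph{action}
\[
\mathbb{A}(f)\ :=\ \min_{\mu\in H_r} MA(F+f,\mu)\ =\ \min_{\mu\in H_r}\Big( MA(F,\mu)+\int_\T f\, d(\pi\mu)\Big),
\]
the minimum being attained by compactness of $H_r$. Being an infimum of functionals affine in $f$, $\mathbb{A}$ is concave; and since $\pi\mu\in K_r$ (so $|\pi\mu|(\T)\le r$) for every $\mu\in H_r$, one has $|\mathbb{A}(f)-\mathbb{A}(g)|\le r\,\|f-g\|_{C^0(\T)}$, hence $\mathbb{A}$ is continuous on $C^\infty(\T)$, its superdifferential $\partial\mathbb{A}(f)$ is nonempty at every point (Hahn--Banach), and $\partial\mathbb{A}(f)\subset K_r\subset\M(\T)$. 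The crucial algebraic fact is that
\[
\partial\mathbb{A}(f)\ =\ \MKFf\qquad\text{for all } f,
\]
i.e.\ the standard formula for the superdifferential of a value function whose integrand is affine in both variables (cf.\ \cite[(4.19)]{DMBou} and \cite{BC}): if $\mu\in\MHrFf=\argmin_{\mu\in H_r}MA(F+f,\mu)$ then $\pi\mu\in\partial\mathbb{A}(f)$ straight from the definition, while the reverse inclusion is Danskin's theorem for the compact convex set $H_r$. Thus $\MKFf$ is a single element precisely when $\mathbb{A}$ has a unique supergradient at $f$.

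Next I would reduce to the interior of $E$. Set $E_0:=\{f\in C^\infty(\T):f>0\text{ on }\T\}$; this is an open subset of the Fr\'echet space $C^\infty(\T)$, hence a Baire space, it is dense in $E$, and $E\setminus E_0=\{f\in E:\min_\T f=0\}$ is closed and has empty interior in $E$ (add a small positive constant). Consequently any subset of $E_0$ residual in $E_0$ is residual in $E$, so it suffices to find a residual subset of $E_0$ along which $\mathbb{A}$ has a unique supergradient. Fix a countable set $\{\phi_n\}_{n\in\N}\subset C^\infty(\T)$, each $\phi_n\ne 0$, dense in $C(\T)$ (e.g.\ trigonometric polynomials with rational coefficients). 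For $f\in E_0$ and $\phi\in C^\infty(\T)$ the segment $f+t\phi$ stays in $E_0$ for $|t|$ small, so the one-sided derivatives $\mathbb{A}'_\pm(f;\phi):=\lim_{t\to 0^\pm}t^{-1}(\mathbb{A}(f+t\phi)-\mathbb{A}(f))$ exist, and from $\mathbb{A}(f+t\phi)\le\mathbb{A}(f)+t\langle\phi,\xi\rangle$ (valid for $\xi\in\partial\mathbb{A}(f)$, all $t$) one reads off $\mathbb{A}'_+(f;\phi)\le\langle\phi,\xi\rangle\le\mathbb{A}'_-(f;\phi)$. Hence if $\mathbb{A}'_+(f;\phi_n)=\mathbb{A}'_-(f;\phi_n)$ for every $n$, every $\xi\in\partial\mathbb{A}(f)$ integrates each $\phi_n$ to the same value; these $\xi$ being uniformly bounded measures and $\{\phi_n\}$ dense in $C(\T)$, $\partial\mathbb{A}(f)$ is then a singleton.

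It remains to show that for each $n$ the set $N_n:=\{f\in E_0:\mathbb{A}'_-(f;\phi_n)>\mathbb{A}'_+(f;\phi_n)\}$ is meager in $E_0$. For fixed $t\ne 0$ the difference quotient $f\mapsto t^{-1}(\mathbb{A}(f+t\phi_n)-\mathbb{A}(f))$ is continuous, and by concavity $\mathbb{A}'_+(f;\phi_n)=\sup_{t>0}t^{-1}(\mathbb{A}(f+t\phi_n)-\mathbb{A}(f))$, so $f\mapsto\mathbb{A}'_+(f;\phi_n)$ is lower semicontinuous while $f\mapsto\mathbb{A}'_-(f;\phi_n)=-\mathbb{A}'_+(f;-\phi_n)$ is upper semicontinuous; therefore $N_n=\bigcup_{k\ge1}C_{n,k}$ with each $C_{n,k}:=\{f\in E_0:\mathbb{A}'_-(f;\phi_n)-\mathbb{A}'_+(f;\phi_n)\ge 1/k\}$ closed in $E_0$. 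On the other hand, along any line $\ell=f_0+\R\phi_n$ the function $t\mapsto\mathbb{A}(f_0+t\phi_n)$ is concave, hence differentiable off an at most countable set of $t$'s, so $\ell\cap N_n$, a fortiori $\ell\cap C_{n,k}$, is countable. A closed set meeting every translate of $\R\phi_n$ in a countable set cannot have nonempty interior (an interior point would yield an uncountable segment in the direction $\phi_n$ contained in the set), so each $C_{n,k}$ is nowhere dense and $N_n$ is meager. Hence $\O_r:=E_0\setminus\bigcup_n N_n$ is residual in $E_0$, therefore in $E$, and by the preceding paragraphs $\MKFf$ is a single element for every $f\in\O_r$.

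The main obstacle is exactly this last paragraph: showing that non-differentiability of $\mathbb{A}$ in a fixed direction carves out a meager set. One cannot simply invoke a Mazur-type theorem for the Banach completion $C(\T)$, since $C^\infty(\T)$ is itself meager in $C(\T)$; the argument has to be carried out intrinsically in the Fr\'echet space $C^\infty(\T)$ and must respect the one-sided constraint $f\ge 0$ — which is why one first passes to the open set $E_0$ and then uses the one-dimensional slicing just described, following \cite{BC}.
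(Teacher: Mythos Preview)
Your argument is correct and follows a genuinely different route from the paper's. The paper stays closer to \cite{BC}: for each $\eps>0$ it defines $\O(\eps)=\{f\in E:\MKFf$ lies in a ball of radius $\eps\}$, shows $\O(\eps)$ is open by continuity, and proves density by a finite-dimensional reduction --- it picks $w_1,\dots,w_m\in E$ so that the map $T_m(\eta)=(\int w_i\,d\eta)_i$ has fibers of diameter $<\eps$, and then invokes the fact that the non-differentiability set of a finite convex function on $\R^m$ has dimension $\le m-1$ (citing \cite{AA}). The positivity constraint is handled by choosing the $w_i\ge 0$ and perturbing only with $y\in\R^m_+$.

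You instead identify $\MKFf$ with the superdifferential of the global concave functional $\mathbb{A}$ (via a Danskin argument), reduce to the open set $E_0=\{f>0\}$, and run a Mazur-type argument: one-dimensional slicing in countably many $C^\infty$ directions dense in $C(\T)$, using only that a concave function of one real variable has at most countably many non-differentiability points. This is more elementary --- no need for the rectifiability result of \cite{AA} --- and avoids the $\eps$-scale finite-dimensional machinery entirely. The price is the extra step of justifying $\partial\mathbb{A}(f)=\MKFf$ (which you do correctly: both sets are compact convex with the same support function $h\mapsto\mathbb{A}'_-(f;h)$), and the passage from ``residual in $E_0$'' to ``residual in $E$'', which is clean since $E_0$ is open and dense in $E$. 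Both approaches cope with the one-sided constraint $f\ge 0$, but by different devices: the paper restricts to nonnegative finite-dimensional perturbations, while you exploit that $\mathbb{A}$ is defined and concave on all of $C^\infty(\T)$, so that near an interior point of $E$ one can move freely in any $C^\infty$ direction.
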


\begin{proof}
 Let now $\O(\eps)$ be the set of points $f\in E$ such that $\MKFf$ is contained in a ball of radius $\eps$ in $K_r$. We will prove that the Proposition holds for 
\[\O_r:=\bigcap_{\eps>0} \O(\eps).\]
Indeed, if $f\in \O_r$, and if $\MKFf$ is not a singleton then it is a convex set of positive dimension which would not be included in a ball of radius $\eps$ for $\eps $ small enough, contradicting the hypothesis $f\in \O_r$. It is thus enough to prove that for every $\eps>0$, the sets $\O(\eps)$ are open and dense.  
\end{proof}

The fact that $\O(\eps)$ is open is a direct consequence of the continuity of the map 
\[
(f,\mu)\ \to\ \int_{\T\times \S^{d-1}} F(x,\nu)+f(x)|\nu| d\mu
\]
which implies that for every open subset $U$ of $H_r$, the set $\{f\in E \, :\, \MHrFf\subset U\}$ is an open set of $E$ and similarly for $\MKFf$.
The density argument is more involved. Let $w\in E$, we want to prove that $w$ is in the closure of $\O(\eps)$. Repeating verbatim the proof of \cite[Lemma 7]{BC} there holds:
\begin{lemma}\label{Tm}
There exists an integer $m$ and a continuous map $T_m : \, K_r\to \R^m$ 
\[
T_m(\eta)\ := \ \left( \int_{\T} w_1 d\eta,\cdots, \int_{\T} w_m d\eta\right)
\]
with $w_i\in E$ and such that 
\[
\forall\, x\in \R^m \qquad \textup{diam} \, T_m^{-1}(x)<\eps
\]
where the diameter is taken for the distance on $K_r$.
\end{lemma}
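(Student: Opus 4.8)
The plan is to realise $K_r$, with its weak-$*$ topology, as a subset of a countable product of real lines through ``test integrals'' and then to truncate the product. First I would fix once and for all a metric on $K_r$: since $\T$ is a compact metric space, $C(\T)$ is separable and $C^\infty(\T)$ is uniformly dense in it, so one can choose a countable family $(g_n)_{n\ge1}\subset C^\infty(\T)$ with $\|g_n\|_\infty\le1$ whose uniform closure is the whole unit ball of $C(\T)$. Then
\[
d(\eta,\eta')\ :=\ \sum_{n\ge1}2^{-n}\Big|\int_\T g_n\,d(\eta-\eta')\Big|
\]
is a distance that induces the weak-$*$ topology on $K_r$ (a standard fact, and the one implicitly used in~\cite[Lemma~7]{BC}); this is the distance we work with.

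The only genuine issue is that the $g_n$ change sign, whereas the coordinate functions of $T_m$ must lie in $E=C^\infty(\T)\cap\{f\ge0\}$. I would circumvent this by a shift. Set $\widehat w_0\equiv1$, which is in $E$, and $\widehat w_n:=1+g_n\in C^\infty(\T)$, which is nonnegative because $\|g_n\|_\infty\le1$ and hence also in $E$. Then $\int_\T g_n\,d\eta=\int_\T\widehat w_n\,d\eta-\int_\T\widehat w_0\,d\eta$, so the $m$ numbers $\big(\int_\T\widehat w_0\,d\eta,\int_\T\widehat w_1\,d\eta,\dots,\int_\T\widehat w_{m-1}\,d\eta\big)$ determine the numbers $\int_\T g_j\,d\eta$ for $j=1,\dots,m-1$; the presence of the constant $\widehat w_0$ is exactly what makes the shift invertible.

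Finally I would choose $m$. For $\eta,\eta'\in K_r$ the total variation of $\eta-\eta'$ is at most $2r$, so $\big|\int_\T g_n\,d(\eta-\eta')\big|\le2r$ for every $n$, and hence the tail of the series defining $d$ satisfies $\sum_{n\ge m}2^{-n}\big|\int_\T g_n\,d(\eta-\eta')\big|\le 2r\,2^{1-m}$. Fix $m$ so large that $2r\,2^{1-m}<\eps$, relabel $\widehat w_0,\dots,\widehat w_{m-1}$ as $w_1,\dots,w_m$, and set $T_m(\eta):=\big(\int_\T w_1\,d\eta,\dots,\int_\T w_m\,d\eta\big)\in\R^m$. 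Since each $w_i\in C(\T)$, the map $T_m$ is continuous for the weak-$*$ topology on $K_r$. If $T_m(\eta)=T_m(\eta')$, then by the previous paragraph $\int_\T g_j\,d\eta=\int_\T g_j\,d\eta'$ for $j=1,\dots,m-1$, so in the series for $d(\eta,\eta')$ only the terms of index $\ge m$ survive and
\[
d(\eta,\eta')\ \le\ \sum_{n\ge m}2^{-n}\Big|\int_\T g_n\,d(\eta-\eta')\Big|\ \le\ 2r\,2^{1-m}\ <\ \eps,
\]
so $\mathrm{diam}\,T_m^{-1}(x)<\eps$ for every $x\in\R^m$. The point requiring care is precisely the reduction to nonnegative smooth test functions (the role of $\widehat w_0$) together with the fact that the chosen $d$ metrises the prescribed topology; granting these, the statement follows from the elementary tail estimate above, exactly as in~\cite[Lemma~7]{BC}.
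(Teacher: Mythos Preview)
Your argument is correct and is essentially the one the paper invokes by citing~\cite[Lemma~7]{BC}: metrize $K_r$ by a weighted $\ell^1$ sum of test integrals against a dense countable family, then truncate. The paper says the proof of~\cite{BC} applies ``verbatim'', but you are right that one adaptation is needed here because the coordinate functions $w_i$ must lie in $E=C^\infty(\T)\cap\{f\ge 0\}$ rather than in all of $C^\infty(\T)$; your shift $\widehat w_0\equiv 1$, $\widehat w_n=1+g_n$ is exactly the fix, and the tail estimate then gives $\mathrm{diam}\,T_m^{-1}(x)\le 2r\,2^{1-m}<\eps$ uniformly in $x$.
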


Define the function $\Lambda_m: \R^m \to \R\cup\{+\infty\}$ as 
\[
\Lambda_m(x)\ :=\ \min_{\stackrel{\mu\in H_r}{T_m\circ \pi(\mu)=x}} MA(F+w,\mu)
\]
if $x\in T_m(\pi(H_r))$ and $+\infty$ otherwise. For $y=(y_1, \cdots , y_m)\in \R^m$, let 
\[
M_m(y)\ :=\ \argmin_{x\in \R^m} \, \Lambda_m(x)+y\cdot x
\]
then for $y\in \R^m_+$
\[
M_{K_r}(F+w+\sum_{i=1}^m y_i w_i)\ \subset\ T_m^{-1}(M_m(y)).
\]
Letting $\O_m:=\{y\in \R^m \, :\, M_m(y) \textrm{ is reduced to a point}\}$, we have from Lemma \ref{Tm}
\[
y\in \O_m \textrm{ and } y\in\R^m_+ \, \Longrightarrow \, w+\sum_{i=1}^m y_i w_i \in \O(\eps)
\]
it is thus enough to prove that $0$ can be approximated by positive vectors of  $\O_m$.

For this consider the convex conjugate of $\Lambda_m$,
\begin{align*}
 G_m(y)\ :=\ &\sup_{x\in \R^m} y\cdot x -\Lambda_m(x)\\
	=\ &\sup_{\mu\in H_r} MA\left(\sum_{i=1}^m y_i w_i - w-F,\mu\right).
\end{align*}
Since $H_r$ is compact, it is a convex and finite valued function which is then continuous on $\R^m$. Letting $\Sigma:= \{y\in \R^m \, : \, 
\textup{dim}\partial G_m(y)\ge 1\}$ we have that
$\textup{dim}\Sigma \le m-1$  (see \cite[App. A]{BC} or \cite{AA}) and thus the complement of $\Sigma$ is dense in $\R^m$. Since for every $y\in \R^m_+$ we have  $M_m(y)=\partial G_m(-y)$, it follows that for every $y\in \R_m^+\cap \Sigma^c$ the set $M_m(y)$ is reduced to a point, which proves the claim.

\medskip

We can finally end the proof of Theorem \ref{maneDu}. Let $\O(p):=\bigcap_{r>0} \O_r$ then by Baire's Theorem, $\O(p)$ is still a residual set of $E$. If $f\in \O(p)$ then for  $r\ge (c_0^{-1}+|f|_\infty)|p|/c_0$, the set  $\MKFf$ is reduced to a single element and if $u$ and $v$ are two different minimizers of \eqref{defphi} for $F+f$, it follows that $|Du+p|=|Dv+p|$. For $s\in \R$ and $E_s:=\{u+p\cdot x>s\}$, we can construct as in Proposition~\ref{Proprecurrent}, a minimizer $\widetilde u$ such that the levelsets of $\widetilde u +p\cdot x$ corresponds exactly to the projection $\Pi(\partial E_s)$ of $\partial E_s$ in the torus $\T$. Therefore, the measure $|Du+p|$ is concentrated on $\Pi(\partial E_s)$ and since on $\partial E_s$ there holds $\frac{Du+p}{|Du+p|}=\nu^{E_s}$, we find that $Du+p=D\widetilde u +p$ and hence $Du$ is unique. \qed

\begin{remark}\rm
From this uniqueness result, it can be easily proved that every plane-like minimizer with the Birkhoff property  generically induces the same current. 
Since it is not the main focus of our work and it would consist in repeating the arguments in \cite{bessiMassart}, we just sketch the proof. For $p\in \S^{d-1}$, $u$ a minimizer of the cell problem, $E$ a plane-like minimizer with the Birkhoff property and $v\in C^{\infty}(\T)$ a periodic vector field, define the currents $T_u$ and $T_E$ by
\[T_u(v):= \int_{\T} v\cdot (Du+p) \qquad \textrm{and} \qquad T_E(v):=\lim_{R\to +\infty} \frac{1}{\HH(\partial B_R)} \int_{\partial^*E \cap B_R} v\cdot \nu^E  \, d\HH,\] 
where the limit defining $T_E$ exists arguing as in \cite{bessi}. If $Du$ is unique and $E$ is recurrent then $T_u=T_E$. Now, for every plane-like minimizer $E$ with the Birkhoff property and every $q\in \Z^d$, we have $T_{E+k}=T_E$ (see \cite[Lem. 3.1]{bessiMassart}). Thus, letting $\widetilde E:= \bigcap_{q\cdot p>0} (E+q)$, there holds $T_{\widetilde E}=T_E$ (see \cite[Lem. 4.4]{bessiMassart}). Since $\widetilde E$ is recurrent, 
this implies that $T_E=T_u$ and therefore every plane-like minimizer with the Birkhoff property induces the same current $T_u$. 
\end{remark}

\end{document}